\def\qed{\hfill\ifhmode\unskip\nobreak\fi\quad\ifmmode\Box\else\hfill$\Box$\fi}
\def\ite#1{\hfill\break${}$\hbox to 50pt {\quad(#1)\hfill}}
\def\cA{{\mathcal A}}
\def\cB{{\mathcal B}}
\def\cC{{\mathcal C}}
\def\cH{{\mathcal H}}
\newtheorem{thm}{Theorem}[section]
\newtheorem{const}[thm]{Construction}
\newtheorem{definition}[thm]{Definition}
\newtheorem{lemma}[thm]{Lemma}
\newtheorem{claim}[thm]{Claim}
\newtheorem{fact}[thm]{Fact}
\begin{document}

\title{\vspace{-0.5in} On 2-connected hypergraphs with  no long cycles 
 }

\author{
{{Zolt\'an F\"uredi}}\thanks{
\footnotesize {Alfr\' ed R\' enyi Institute of Mathematics, Hungary.
E-mail:  \texttt{z-furedi@illinois.edu}. 
Research supported in part by the Hungarian National Research, Development and Innovation Office NKFIH grant KH-130371, and  by the Simons Foundation Collaboration Grant 317487.
}}
\and{{Alexandr Kostochka}}\thanks{
\footnotesize {University of Illinois at Urbana--Champaign, Urbana, IL 61801
 and Sobolev Institute of Mathematics, Novosibirsk 630090, Russia. E-mail: \texttt {kostochk@math.uiuc.edu}.
 Research 
is supported in part by NSF grant  DMS-1600592
and grants 18-01-00353A and 19-01-00682  of the Russian Foundation for Basic Research.
}}
\and{{Ruth Luo}}\thanks{University of Illinois at Urbana--Champaign, Urbana, IL 61801, USA. E-mail: {\tt ruthluo2@illinois.edu}. Research of this author
is supported in part by Award RB17164 of the Research Board of the University of Illinois at Urbana-Champaign.
}}

\date{ \today}
\maketitle

\vspace{-0.3in}

\begin{abstract}
 We give an upper bound for the maximum number of edges in an $n$-vertex 2-connected $r$-uniform hypergraph with  no Berge cycle of length $k$ or greater, where $n\geq k \geq 4r\geq 12$. For $n$ large with respect to $r$ and $k$, this bound is  sharp and is significantly stronger than the bound without restrictions on connectivity. It turned out that it is simpler to prove the bound for the broader class of Sperner families where the size of each set is at most $r$. For such families, our bound is sharp for all  $n\geq k\geq r\geq 3$.
 
\medskip\noindent
{\bf{Mathematics Subject Classification:}} 05D05, 05C65, 05C38, 05C35.\\
{\bf{Keywords:}} Berge cycles, extremal hypergraph theory, upper rank.
\end{abstract}

\section{Introduction}

\subsection{Basic definitions}

The {\em upper rank} of a hypergraph $\cH$ is the size of a largest edge. For brevity, instead of saying 
``a hypergraph of upper rank $r$" we will say ``an $r^-$-{\em graph}". When every edge has size $r$, i.e., $\cH$ is $r$-uniform, we call $\cH$ an ``$r$-graph".

A hypergraph $\cH$ is {\em Sperner} if no edge of $\cH$ is contained in another edge. In particular,
a Sperner hypergraph has no multiple edges, and all simple uniform hypergraphs are Sperner.

%
%

\begin{definition} A {\bf Berge cycle} of length $\ell$ in a hypergraph is a set of $\ell$ distinct vertices $\{v_1, \ldots, v_\ell\}$ and $\ell$ distinct edges $\{e_1, \ldots, e_\ell\}$ such that $\{ v_{i}, v_{i+1} \}\subseteq   e_i$ with indices taken modulo $\ell$. The vertices $\{v_1, \ldots, v_\ell\}$ are called {\bf base vertices} of the Berge cycle.

  A {\bf Berge path} of length $\ell$ in a hypergraph  is a set of $\ell+1$ distinct vertices $\{v_1, \ldots, v_{\ell+1}\}$ and $\ell$ distinct hyperedges $\{e_1, \ldots, e_{\ell}\}$ such that $\{ v_{i}, v_{i+1} \}\subseteq   e_i$ for all $1\leq i\leq \ell$. The vertices $\{v_1, \ldots, v_{\ell+1}\}$ are called {\bf base vertices} of the Berge path.
\end{definition}

\begin{definition}
The {\bf incidence bigraph} of a hypergraph $\cH$ is the bipartite graph $I(\cH) = (A, Y; E)$ such that $A = E(\cH)$, $Y = V(\cH)$ and for $a \in A$, $y \in Y$, $ay \in E(I(\cH))$ if and only if $y \in a$ in $\cH$.
\end{definition}

A cycle $C$ of length $2 \ell$ in $I(\cH)$ corresponds to a Berge cycle of length $\ell$ in $\cH$ with the set of base vertices $C \cap Y$ and the set of edges $C \cap A$. Similarly, a path $P$ of length $2\ell+1$ (vertices) in $I(\cH)$ with endpoints in $Y$ corresponds to a Berge path of length $\ell$ in $\cH$ with the set of base vertices $P \cap Y$ and the set of edges $C \cap A$. 

\begin{definition}A hypergraph $\cH$ is called {\bf 2-connected} if its incidence bigraph is $I(\cH)$ is $2$-connected.
\end{definition}

So $\cH$ is 2-connected if it is connected and has no {\em cut
vertex} $v \in V(\cH)$
 (i.e., a partition of $V(\cH)=\{ v\}\cup V_1\cup V_2$, $|V_i|\geq 1$, such
that every edge is contained in either $\{ v\}\cup V_1$ or $\{ v\}\cup
V_2$), nor does it have a {\em cut edge} (i.e., an edge $e \in \cH$ and a
partition of $V(\cH) = V_1 \cup V_2$, $|V_i|\geq 1$, such that every edge
$f\neq e$ is contained in either $V_1$ or in $V_2$).

Let $\cH$ be a hypergraph  and  $p$ be an integer. The {\em $p$-shadow}, $\partial_p \cH$, is the collection of the $p$-sets that lie in some edge of $\cH$. In particular, we will often consider the 2-shadow $\partial_2 \cH$ of an $r$-uniform hypergraph $\cH$.  Each edge $e$ of $\cH$ yields in  $\partial_2 \cH$ a  clique on $|e|$ vertices.

\subsection{Graphs without long cycles}

The classic Tur\' an-type result on graphs without long cycles is:
\begin{thm}[Erd\H{o}s and Gallai~\cite{ErdGal59}]\label{ErdGallaiCyc}
Let $k\geq 3$ and let $G$ be an $n$-vertex graph with more than $\frac{1}{2}(k-1)(n-1)$ edges.
Then $G$ contains a cycle of length at least $k$.
\end{thm}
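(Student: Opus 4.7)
The plan is to prove the theorem by induction on $n$. The base $n\le k-1$ is vacuous, since $\binom{n}{2}\le\tfrac12(k-1)(n-1)$ in that range. For the inductive step I assume $n\ge k$ and first dispose of the non-2-connected case before attacking the 2-connected one.

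If $G$ is not 2-connected, take its block decomposition $G = B_1\cup\cdots\cup B_m$ (bridges counted as single-edge blocks). If some $B_i$ contains a cycle of length $\ge k$, so does $G$. Otherwise each block has fewer than $n$ vertices, so by induction $e(B_i)\le\tfrac12(k-1)(|V(B_i)|-1)$, and using the standard identity $\sum_i(|V(B_i)|-1)=n-c$ with $c\ge 1$ the number of components, summing gives $e(G)\le\tfrac12(k-1)(n-1)$, contradicting the hypothesis.

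Now suppose $G$ is 2-connected. If some vertex $v$ has $d(v)\le\lfloor(k-1)/2\rfloor$, I would delete it: a short parity-of-$k$ check yields $e(G-v)>\tfrac12(k-1)(n-2)$, so for $n>k$ the induction hypothesis applied to $G-v$ produces a cycle of length $\ge k$ lying in $G$. The corner case $n=k$ requires a little care: the same inequality combined with the trivial bound $e(G-v)\le\binom{k-1}{2}=\tfrac12(k-1)(n-2)$ rules out any such low-degree vertex, forcing us into the next subcase. There, $\delta(G)\ge\lceil k/2\rceil$, and Dirac's circumference bound for 2-connected graphs, $c(G)\ge\min(n,2\delta(G))$, immediately delivers a cycle of length at least $\min(n,k)=k$.

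The hard part is Dirac's circumference lemma itself. The standard proof fixes a longest cycle $C$, picks a vertex $v\notin C$ together with two internally disjoint paths from $v$ to $C$ (available by 2-connectivity), and applies a pigeonhole argument on the $\ge 2\delta$ neighbors that the attachment points on $C$ have on $C$ (assuming $|C|<2\delta$) to splice out a strictly longer cycle, contradicting maximality of $C$. Everything else above is routine block-tree bookkeeping and arithmetic.
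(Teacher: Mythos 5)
The paper does not prove Theorem~\ref{ErdGallaiCyc}; it simply cites Erd\H{o}s--Gallai. So there is no in-paper proof to compare against, and the question is just whether your argument is sound. It is, and it is a classical route to the theorem: induct on $n$, reduce the non-$2$-connected case to blocks via the identity $\sum_i(|V(B_i)|-1)=n-c$, peel off a vertex of degree at most $\lfloor(k-1)/2\rfloor$ (which raises the average just enough because $\lfloor(k-1)/2\rfloor\le(k-1)/2$, so no parity case split is really needed), and apply Dirac's circumference bound $c(G)\ge\min(n,2\delta)$ when the minimum degree is forced up to $\lceil k/2\rceil$. Your handling of the corner case $n=k$ is correct. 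This is a genuinely more elementary route than what the paper relies on downstream (Kopylov's theorem, which refines Erd\H{o}s--Gallai for $2$-connected graphs via core/disintegration structure); Dirac's bound is much weaker than Kopylov's but suffices here.

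The one place I would push back is the sketch of Dirac's circumference lemma itself. You fix a longest cycle $C$, take $v\notin C$ with two internally disjoint $v$--$C$ paths, and then "pigeonhole on the $\ge 2\delta$ neighbors that the attachment points on $C$ have on $C$." But the attachment points are arbitrary vertices of $C$, and nothing forces their neighbors to lie on $C$; they can have most of their $\ge\delta$ neighbors off the cycle, so the count $\ge 2\delta$ "on $C$" is not available, and the splice does not follow. The clean versions of Dirac's bound work instead with a longest \emph{path} $x_0\cdots x_\ell$: by maximality all neighbors of $x_0$ and $x_\ell$ lie on the path, so the sets $S=\{i:x_0x_i\in E\}$ and $T=\{i:x_ix_\ell\in E\}$ really do have sizes $d(x_0)$ and $d(x_\ell)$ and live inside $\{0,\dots,\ell\}$; the crossing/pigeonhole then yields a cycle on $V(P)$ or forces $\ell\ge d(x_0)+d(x_\ell)\ge 2\delta$, and $2$-connectivity is used to close the long path into a long cycle. (Alternatively one can argue via bridges of a longest cycle, but that argument is also about the degree of the exterior vertex $v$, not about the attachment points' neighbors on $C$.) This is a fixable gap in the sketch, not in the overall strategy, but as written that step would not compile into a proof.
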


This bound is sharp for infinitely many $n$: when $k-2$ divides $n-1$, the circumference of each connected $n$-vertex graph whose blocks (maximal connected subgraphs with no cut vertices) are cliques of order $k-1$ is only $k-1$. 

There have been several alternate proofs and sharpenings of the Erd\H{o}s-Gallai theorem
including results by Woodall~\cite{WoodallA}, Lewin~\cite{Lewin}, Faudree and Schelp~\cite{FaudScheB,FaudSche75}, and Kopylov~\cite{Kopy}. See~\cite{FS224} for further details.

The strongest version was that of Kopylov who improved the Erd\H{o}s--Gallai bound for 2-connected graphs. To state the theorem, we first introduce the family of extremal graphs. 

\begin{const}\label{cons1}
Fix $k\geq 4$, $n \geq k$, $\frac{k}{2} > a\geq 1$. Define the $n$-vertex graph $H_{n,k,a}$ as follows.
 The vertex set of $H_{n,k,a}$ is partitioned into three sets $A,B,C$ such that $|A| = a$, $|B| = n - k + a$ and $|C| = k - 2a$
 and the edge set of $H_{n,k,a}$ consists of all edges connecting $A$ with $B$ and all edges in $A \cup C$.
 \end{const}

Note that when $a \geq 2$, $H_{n,k,a}$ is 2-connected, has no cycle of length $k$ or longer, and $e(H_{n,k,a}) = {k-a \choose 2} + (n-k+a)a$.

\begin{figure}[!ht]
  \centering
    \includegraphics[width=0.25\textwidth]{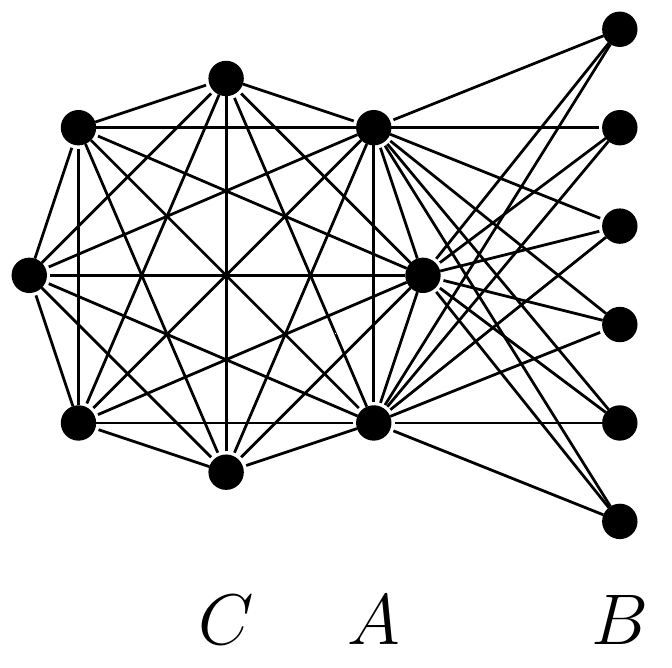}
  \caption{$H_{14,11,3}$}
\end{figure}

\begin{thm}[Kopylov~\cite{Kopy}]\label{kopycycle} Let $n \geq k \geq 5$ and let $t = \lfloor \frac{k-1}{2}\rfloor$. If $G$ is a $2$-connected $n$-vertex graph with \[e(G) > \max\{e(H_{n,k,2}), e(H_{n,k,t})\},\]
then  $G$ has a cycle of length at least $k$.
\end{thm}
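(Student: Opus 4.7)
\emph{Proof plan.} My plan is to prove the contrapositive: assume $G$ is a $2$-connected $n$-vertex graph with circumference $c(G) \leq k-1$, and show that $e(G) \leq \max\{e(H_{n,k,2}),\, e(H_{n,k,t})\}$. The primary tool is the classical circumference bound of Dirac: every $2$-connected graph $H$ satisfies $c(H) \geq \min\{|V(H)|,\, 2\delta(H)\}$. Since $n \geq k$ and $2(t+1) \geq k$, having $\delta(G) \geq t+1$ would force $c(G) \geq k$, a contradiction; hence $\delta(G) \leq t$.

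This suggests a Kopylov-style \emph{disintegration}: set $G_0 = G$, and whenever $G_i$ is $2$-connected, has at least $k$ vertices, and contains a vertex $v$ of degree at most $t$, let $G_{i+1} = G_i - v$. Each such step removes at most $t$ edges, so the terminal graph $G_m$ satisfies $e(G) \leq e(G_m) + t(n - |V(G_m)|)$. Three termination modes arise. First, if $|V(G_m)| \leq k-1$, then $e(G_m) \leq \binom{k-1}{2}$, giving $e(G) \leq \binom{k-1}{2} + t(n-k+1) = e(H_{n,k,t})$, as desired. Second, if $|V(G_m)| \geq k$ and $G_m$ remains $2$-connected with $\delta(G_m) \geq t+1$, then the Dirac bound applied to $G_m$ yields $c(G_m) \geq k$, contradicting $c(G) \leq k-1$. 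The third, and crucial, mode is when $G_m$ still has a vertex of degree at most $t$ whose removal would destroy $2$-connectivity; I would treat this by passing to the block structure of $G_m$ minus the offending vertex, isolating a leaf block $B$, and running the argument inductively on $B$ while tracking the relevant cut vertex as an element of a growing ``apex set'' $A$ mirroring Construction~\ref{cons1}.

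The hardest step will be this block-decomposition case: one must carefully track which original vertices of $G$ survive into $B$, how many edges are charged to the apex set $A$ as it grows, and confirm that each restart respects the inductive hypothesis with the appropriate value of $k$ and $n$. The outcome of this accounting is a bound of the form $e(G) \leq e(H_{n,k,a})$ for some integer $a$ with $2 \leq a \leq t$. A concavity argument then closes the proof: since $e(H_{n,k,a}) = \binom{k-a}{2} + (n-k+a)a$ is a downward-opening quadratic in $a$, its maximum over the integer interval $[2,t]$ is attained at one of the endpoints $a=2$ or $a=t$, which is exactly $\max\{e(H_{n,k,2}), e(H_{n,k,t})\}$.
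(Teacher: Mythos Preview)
The paper does not prove this theorem; it is quoted from Kopylov~\cite{Kopy} and used as a black box (alongside the structural Theorem~\ref{le:Kopy}). So there is no proof in the paper to compare your plan against.

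That said, your plan has genuine gaps. The arithmetic in your first termination mode is wrong: $\binom{k-1}{2}+t(n-k+1)$ is \emph{not} equal to $e(H_{n,k,t})=\binom{k-t}{2}+t(n-k+t)$; for example with $k=5$, $t=2$, $n=6$ one gets $10$ versus $9$. Stopping the peeling at $k-1$ vertices and bounding the remainder by $\binom{k-1}{2}$ is too crude; to reach $e(H_{n,k,t})$ you must continue down to $k-t$ vertices, which requires that every intermediate graph still has a vertex of degree at most $t$, i.e.\ that the $(t{+}1)$-core is empty --- and nothing in your setup guarantees that. Also, your final convexity sentence has the orientation backwards: $e(H_{n,k,a})=\binom{k-a}{2}+(n-k+a)a$ has $a^2$-coefficient $3/2>0$, so it is an \emph{upward}-opening (convex) quadratic; that is precisely why its maximum on $[2,t]$ is at an endpoint, not because it is downward-opening.

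More seriously, your third termination mode (a low-degree vertex whose removal kills $2$-connectivity) is where the entire content of Kopylov's theorem resides, and ``pass to a leaf block, grow an apex set, induct'' is not yet a plan: you have not said what statement you are inducting on, why the leaf block inherits a usable circumference bound, or how the accounting yields exactly $e(H_{n,k,a})$ for some $2\le a\le t$. The standard route avoids maintaining $2$-connectivity during the peeling and instead invokes Kopylov's structural result (Theorem~\ref{le:Kopy}): either the $(t{+}1)$-core is empty, giving $e(G)\le e(H_{n,k,t})$ by $t$-disintegration all the way down to $k-t$ vertices, or the core has size $s$ with $t+2\le s\le k-2$ and the rest is $(k{-}s)$-disintegrable, giving $e(G)\le\binom{s}{2}+(k-s)(n-s)=e(H_{n,k,k-s})$; convexity then finishes. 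Your outline does not yet contain the idea that produces this dichotomy.
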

Furthermore, Kopylov's proof yields that the only sharpness examples are the graphs $G_{n,k,t}$ and $G_{n,k,2}$. 
See~\cite{FS224} for details.

\subsection{Hypergraphs without long Berge cycles}

%
%
%
%

Recently, several interesting results were obtained for Berge paths and cycles. Notably, the results depend on the relationship between $k$ and $r$.

\begin{thm}[Gy\H{o}ri, Katona, and Lemons~\cite{lemons}]\label{EGp} Let $\mathcal H$ be an $n$-vertex $r$-graph  with no Berge path of length $k$. If $r \geq k \geq 3$, then $e(\mathcal H) \leq \frac{(k-1)n}{r+1}$. If $k > r+1 > 3$, then $e(\mathcal H) \leq \frac{n}{k}{k \choose r}$. 
\end{thm}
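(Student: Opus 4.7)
The plan is to apply the Erd\H{o}s--Gallai theorem on graph paths to a suitable auxiliary graph associated with $\cH$, treating the two regimes $r \ge k$ and $k > r+1$ separately. As a preliminary reduction, since both bounds are linear in $n$ and the forbidden-Berge-path hypothesis passes to connected components, we may restrict to the case that $\cH$ is connected. The natural auxiliary graph is the incidence bigraph $I(\cH)$: a Berge path of length $\ell$ in $\cH$ is exactly a path on $2\ell+1$ vertices in $I(\cH)$ with both endpoints in $Y$, and any path on $2k+2$ vertices in $I(\cH)$ produces (after trimming its $A$-endpoint) a Berge path of length $k$. Hence the absence of Berge paths of length $k$ forces $I(\cH)$ to have no path on $2k+2$ vertices, and a direct application of Erd\H{o}s--Gallai gives only $(r-k)e(\cH)\le kn$; this is far from the desired bound, so further structure must be used---namely, that every vertex of $I(\cH)$ on the $A$-side has degree exactly $r$.

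For the regime $r \ge k$, I would carry out a refined longest-path analysis on $I(\cH)$ in the style of Erd\H{o}s--Gallai's original proof: consider a longest path, use that the neighbourhoods of its endpoints are contained inside it, and exploit the uniform $A$-side degree $r$ to bound the total number of hyperedges. This should match the extremal construction consisting of disjoint gadgets on $r+1$ vertices with $k-1$ hyperedges each (a subfamily of $K_{r+1}^{(r)}$), which has exactly $(k-1)n/(r+1)$ hyperedges. For the regime $k > r+1$, I would instead pass to the $2$-shadow $\partial_2 \cH$: given $e(\cH)>\binom{k}{r}n/k$, extract a graph path of length $k$ in $\partial_2\cH$ via Erd\H{o}s--Gallai, and then lift it to a Berge path of length $k$ in $\cH$ by choosing distinct witnessing hyperedges for each consecutive pair. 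Since $r < k$, no single hyperedge contains all $k+1$ vertices of the shadow path, which is the input for a Hall-type system-of-distinct-representatives argument producing $k$ distinct hyperedges witnessing the $k$ consecutive pairs.

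The main obstacle is obtaining the sharp constants. In the first regime, the generic Erd\H{o}s--Gallai bound on $I(\cH)$ is loose by a large factor, and squeezing out $(k-1)/(r+1)$ requires a careful rotation or block-decomposition argument exploiting the uniform $A$-side degree of $I(\cH)$. In the second regime, the lifting is delicate: overlaps between hyperedges can shrink $\partial_2\cH$ drastically, so one must either lower-bound $e(\partial_2\cH)$ carefully (controlling overlap by using that too much overlap would itself create long Berge paths through Berge cycles), or reorganise the counting to produce the distinct witnessing hyperedges directly, perhaps by inducting on $e(\cH)$ and deleting an extremal hyperedge.
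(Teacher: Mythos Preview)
The paper does not contain a proof of this statement: Theorem~\ref{EGp} is quoted as a result of Gy\H{o}ri, Katona, and Lemons~\cite{lemons} and is used only as background. There is therefore nothing in the paper to compare your proposal against.

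That said, your proposal is not a proof but a strategy outline, and you have correctly identified its own gaps. In the regime $r\ge k$, applying Erd\H{o}s--Gallai to $I(\cH)$ gives only $(r-k)e(\cH)\le kn$, which is vacuous when $r=k$ and far from $(k-1)n/(r+1)$ in general; ``a refined longest-path analysis in the style of Erd\H{o}s--Gallai'' is a hope, not an argument, and you have not indicated what rotation or counting would extract the extra factor. In the regime $k>r+1$, the $2$-shadow route has the obstacle you name: $e(\partial_2\cH)$ need not be large just because $e(\cH)$ is, since many hyperedges can share the same $2$-shadow. Without a mechanism to bound the overlap (and ``too much overlap creates long Berge paths'' is again a hope, not a lemma), Erd\H{o}s--Gallai on $\partial_2\cH$ cannot be invoked to produce the graph path you want to lift. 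The Hall-type lifting step itself is also not immediate: the condition ``no single hyperedge contains all $k+1$ vertices'' is far weaker than the Hall condition you would need to guarantee a system of distinct representatives for the $k$ consecutive pairs.

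In short, neither regime is handled; what you have written is a reasonable first reconnaissance of the problem, but each branch ends at an obstacle you have not overcome. The original proof in~\cite{lemons} proceeds differently (and is not reproduced in this paper).
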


Later, the remaining case $k = r+1$ was resolved by Davoodi, Gy\H{o}ri, Methuku, and Tompkins~\cite{davoodi}.

Furthermore, the bounds in Theorem~\ref{EGp} and in~\cite{davoodi} are sharp for each $k$ and $r$ for infinitely many $n$.

Gy\H{o}ri, Methuku, Salia, Tompkins, and Vizer~\cite{connp} proved an asymptotic version of the Erd\H{o}s--Gallai theorem for Berge paths in {\em connected} hypergraphs whenever $r$ is fixed and $n$ and $k$ tend to infinity.  

\begin{thm}[Gy\H{o}ri, Methuku, Salia, Tompkins, and Vizer~\cite{connp}]\label{connpath}
Let $r$ be given. 
Let $\cH_{n,k}$ be a largest $r$-uniform connected $n$-vertex hypergraph with no Berge path of length $k$. Then \[\lim_{k \to \infty} \left(\lim_{n \to \infty} \frac{e(\cH_{n,k})}{k^{r-1}n}\right) = \frac{1}{2^{r-1}(r-1)!}.\]
\end{thm}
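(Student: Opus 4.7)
The plan is to establish matching asymptotic lower and upper bounds on $e(\cH_{n,k})$, each with leading coefficient $1/(2^{r-1}(r-1)!)$.

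\textbf{Lower bound (construction).} Set $t=\lfloor(k-1)/2\rfloor$, fix a vertex set $V$ with $|V|=n$, and distinguish a subset $A\subset V$ with $|A|=t$. Let $\cH_0$ consist of all $r$-subsets of $V$ that meet $A$ in at least $r-1$ vertices. Then $\cH_0$ is connected and $e(\cH_0)=\binom{t}{r}+(n-t)\binom{t}{r-1}$. To verify that $\cH_0$ contains no Berge path of length $k$, observe that every edge has at most one vertex outside $A$, so consecutive base vertices along any Berge path cannot both lie in $V\setminus A$. Hence the off-$A$ base vertices form an independent set in the sequence of base vertices and number at most $\lceil(\ell+1)/2\rceil$, while the $A$-base vertices number at most $t$; combining gives $\ell+1\le t+\lceil(\ell+1)/2\rceil$, so $\ell\le 2t\le k-1$. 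Letting $n\to\infty$ with $k$ fixed yields $e(\cH_0)/n\to\binom{t}{r-1}$, and as $k\to\infty$ this, divided by $k^{r-1}$, tends to $1/(2^{r-1}(r-1)!)$.

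\textbf{Upper bound.} Given an extremal connected $\cH=\cH_{n,k}$, I would fix a longest Berge path $P=u_1f_1u_2\cdots f_\ell u_{\ell+1}$ (necessarily of length $\ell\le k-1$) and carry out rotation-extension on the incidence bigraph $I(\cH)$, using that $P$ corresponds to a path of $2\ell+1$ vertices in $I(\cH)$ with endpoints in $Y=V(\cH)$. A P\'osa-style rotation at either endpoint, together with the connectedness of $\cH$, should force every edge that reaches a vertex outside $V(P)$ to attach through a single off-path vertex only -- otherwise a longer Berge path arises. Thus, up to lower-order corrections, each edge of $\cH$ consists of $r-1$ vertices in some ``heavy'' subset of $V(P)$ together with at most one off-path vertex, and a finer Kopylov-style analysis identifies this heavy subset as a core $A^*\subseteq V(P)$ of size roughly $\lfloor(k-1)/2\rfloor$. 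Summing contributions then yields $e(\cH)\le(n-|A^*|)\binom{|A^*|}{r-1}+\binom{|A^*|}{r}+o(n)$, matching the construction in both limits.

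\textbf{Main obstacle.} The delicate step is the Berge-analogue of rotation-extension: each hyperedge participates in the Berge path through only two of its base vertices, so rotations must simultaneously preserve distinctness of base vertices and of hyperedges. The cleanest route I see is to recast the rotation as a path operation inside the bipartite graph $I(\cH)$, and then to reduce the connected problem to a 2-connected one via the block decomposition of $I(\cH)$, applying on each block a Kopylov-type theorem for 2-connected $r$-uniform hypergraphs with no long Berge path -- the natural path-analogue of the cycle statements developed in the remainder of the present paper. Once the per-block error is uniformly $o(n)$ with $k$ fixed, the inner limit exists, and the outer limit $k\to\infty$ isolates the leading constant $1/(2^{r-1}(r-1)!)$.
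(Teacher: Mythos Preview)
The first thing to note is that Theorem~\ref{connpath} is not proved in this paper at all: it is quoted as a prior result of Gy\H{o}ri, Methuku, Salia, Tompkins, and Vizer~\cite{connp}, and the paper's contribution is to \emph{improve} it via Theorems~\ref{main_paths} and~\ref{c25}. So there is no ``paper's own proof'' to compare your attempt with. That said, the paper's Theorem~\ref{c25} does imply the upper bound half of Theorem~\ref{connpath} immediately: for $k\ge 4r$ and $n\ge n'_{k,r}$ one has
\[
e(\cH_{n,k})\le f^*(n,k,r,\lfloor(k-1)/2\rfloor)=\binom{\lceil(k+1)/2\rceil}{r}+(n-\lceil(k+1)/2\rceil)\binom{\lfloor(k-1)/2\rfloor}{r-1},
\]
and dividing by $nk^{r-1}$ and sending $n\to\infty$ then $k\to\infty$ gives the constant $1/(2^{r-1}(r-1)!)$. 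Your lower-bound construction is exactly the paper's $\cH_{n,k,r,\lfloor(k-1)/2\rfloor}$ (Construction~\ref{cons2}), and your verification that it has no Berge $P_k$ is correct.

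Where your proposal is genuinely incomplete is the upper bound. What you have written is an outline, not a proof: phrases like ``should force,'' ``up to lower-order corrections,'' and ``a finer Kopylov-style analysis identifies'' are placeholders for arguments you have not supplied, and your own ``Main obstacle'' paragraph concedes this. The specific gap is the claim that rotation--extension on $I(\cH)$ forces every edge touching $V\setminus V(P)$ to use only one off-path vertex. This is false in general for Berge paths: an edge $e$ with $|e\cap V(P)|\ge 2$ and $|e\setminus V(P)|\ge 2$ need not create a longer Berge path, because the off-path vertices of $e$ may have no other edges to continue along, and $e$ itself can only be used once. Handling this is precisely why the paper develops the machinery of happy bigraphs, shrinkings, and Lemma~\ref{c3} before it can invoke Kopylov-type structure on the $2$-shadow. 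Without an equivalent device your rotation argument does not close, and the reduction to a ``heavy core $A^*$ of size roughly $\lfloor(k-1)/2\rfloor$'' is unjustified.
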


For Berge cycles, the exact result for $k\geq r+3$ was obtained in~\cite{FKL}:
\begin{thm}[F\"uredi, Kostochka and Luo~\cite{FKL}]\label{FKL}
Let $k \geq r+3 \geq 6$, and let $\mathcal H$ be  an $n$-vertex $r$-graph   with no Berge cycles of length $k$ or longer. Then $e(\mathcal H) \leq \frac{n-1}{k-2}{k-1 \choose r}$. 
\end{thm}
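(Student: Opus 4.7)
The plan is to prove Theorem~\ref{FKL} by induction on $n$, reducing to the 2-connected case by a block decomposition and then handling the 2-connected case by a Kopylov-style analysis.

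For the base case I would take $n \leq k-1$, where the trivial bound $e(\cH) \leq \binom{n}{r}$ already beats $\frac{n-1}{k-2}\binom{k-1}{r}$ after a short calculation. For the inductive step, suppose $n \geq k$. If $\cH$ is disconnected, apply induction to each component and sum. If $\cH$ has a cut vertex $v$, write $V(\cH) = \{v\}\cup V_1\cup V_2$ as in the definition and set $\cH_i = \cH[\{v\}\cup V_i]$; every Berge cycle uses distinct vertices and so stays in one block, so each $\cH_i$ has no Berge cycle of length $\geq k$. With $n_i = |V_i|+1$ and $n_1 + n_2 = n + 1$, induction gives
\[
e(\cH) = e(\cH_1) + e(\cH_2) \leq \frac{(n_1-1)+(n_2-1)}{k-2}\binom{k-1}{r} = \frac{n-1}{k-2}\binom{k-1}{r}.
\]
For a cut edge $e$ with partition $V_1\cup V_2$, take $\cH_i$ to be the hypergraph on $V_i$ with edge set $\{f\in\cH\setminus\{e\}: f\subseteq V_i\}$; induction gives $e(\cH)\le \tfrac{n-2}{k-2}\binom{k-1}{r}+1$, and the inequality $\binom{k-1}{r}\ge k-2$ (immediate from $k\ge r+3$) closes the bound.

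The heart of the proof is the 2-connected case, where the goal is $e(\cH) \leq \frac{n-1}{k-2}\binom{k-1}{r}$. Here I would fix a longest Berge cycle $C=v_1 e_1 v_2 e_2 \cdots v_\ell e_\ell v_1$; by hypothesis $\ell \leq k-1$. Edges whose vertex sets lie entirely in $\{v_1,\ldots,v_\ell\}$ contribute at most $\binom{\ell}{r}\leq \binom{k-1}{r}$. Any other edge uses a vertex outside the base of $C$, and 2-connectivity provides an internally disjoint Berge path from this edge back to $C$; maximality of $C$ then forbids attachment configurations that would produce a Berge cycle of length $\geq k$. An alternative book-keeping uses the incidence bigraph $I(\cH)$: it is 2-connected and contains no cycle of length $\geq 2k$, so a bipartite Kopylov-type inequality applies, and the fact that every edge-vertex of $I(\cH)$ has degree exactly $r$ translates a bound on $e(I(\cH))$ back into the desired bound on $e(\cH)$.

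The main obstacle is precisely the 2-connected case. Kopylov's graph proof pivots on the tight interaction of a longest cycle with its chords, but in the Berge setting each hyperedge offers $r$ attachment points and can play the role of any of its $\binom{r}{2}$ graph edges, so the analog of a chord is considerably more flexible. Controlling this flexibility while squeezing out the sharp inequality — tight only on the clique $K_{k-1}^{(r)}$ — is the technical core; I expect it to rest either on a careful case analysis around a longest Berge path or cycle in $\cH$, or on a clean transcription of a bipartite Kopylov/Jackson-type estimate applied to $I(\cH)$ together with an extremal analysis of hypergraph incidence bigraphs.
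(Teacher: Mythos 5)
Theorem~\ref{FKL} is quoted from~\cite{FKL}; the present paper does not prove it, so there is no in-paper proof to compare against. Judged on its own terms, the outer shell of your proposal is sound. The observation that a Berge cycle cannot cross a cut vertex $v$ is correct: labelling each base vertex by its side and noting that no hyperedge meets both $V_1$ and $V_2$, a $1\!\to\!2$ transition is impossible, and the label $0$ (for $v$ itself) can occur at most once, so the whole cycle lies in one part. The bookkeeping $(n_1-1)+(n_2-1)=n-1$ for the cut-vertex split, the absorption of a cut edge via $\binom{k-1}{r}\ge k-2$ (which holds for $k\ge r+3\ge 6$), and the base case $n\le k-1$ (where $\binom{n}{r}/(n-1)$ is increasing in $n$ once $n\ge r\ge 2$) are all correct.

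The genuine gap is that the 2-connected case, which is the entire content of the theorem, is left as a two-sentence sketch, and neither of the two routes you indicate survives scrutiny as stated. The ``bipartite Kopylov on $I(\cH)$'' route fails arithmetically: $I(\cH)$ has $n+e(\cH)$ vertices and $r\,e(\cH)$ edges, and the hypothesis says $I(\cH)$ has no cycle of length $\ge 2k$. Any Erd\H{o}s--Gallai/Kopylov-type estimate applied naively to $I(\cH)$ has the shape $e(I(\cH))\lesssim k\cdot|V(I(\cH))|$, i.e. $r\,e(\cH)\lesssim k\,(n+e(\cH))$, equivalently $(r-k)\,e(\cH)\lesssim kn$; since $k\ge r+3$ the coefficient $r-k$ is negative and you learn nothing. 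Extracting a real bound requires exploiting the fixed degree $r$ on the $A$-side and the Sperner condition, which is exactly the labor carried out in Sections~4--6 of the present paper to prove the strictly stronger Theorem~\ref{main2conn} for 2-connected hypergraphs. The ``longest Berge cycle'' route is the right instinct but the phrase ``maximality of $C$ forbids attachment configurations'' conceals all the work: a single hyperedge off $C$ may contain several base vertices of $C$, may realize $\binom{r}{2}$ different base pairs as chords, and may double as a link in an augmenting path, so graph-style rotation arguments do not transfer without an additional counting device (a representative/shadow-type lemma) to control how many hyperedges can pile onto a given base segment; none of that is supplied.

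In short: correct and routine reduction to the 2-connected (or connected) case, but the substance of Theorem~\ref{FKL} remains unproven in your proposal, and the one concrete alternative you propose (the naive bound on $I(\cH)$) demonstrably cannot work when $k>r$.
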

This theorem is a hypergraph version of Theorem~\ref{ErdGallaiCyc} for $k\geq r+3$. 
The case of $ k \leq r-1$ was resolved by Kostochka and Luo~\cite{KL}.

\begin{thm}[Kostochka and Luo~\cite{KL}]\label{KL}
Let $k \geq 4, r \geq k+1$, and let $\mathcal H$ be an $n$-vertex $r$-graph with no Berge cycles of length $k$ or longer. Then $e(\mathcal H) \leq \frac{(k-1)(n-1)}{r}$. 
\end{thm}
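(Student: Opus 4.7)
The plan is to prove the bound by induction on $n$, in fact establishing the stronger inequality $\sum_{e\in\cH}|e|\leq(k-1)(n-1)$ for every Sperner hypergraph $\cH$ on $n$ vertices with all edges of size at most $r$ and no Berge cycle of length $\geq k$; the theorem is the special case of an $r$-uniform $\cH$, for which $\sum_{e\in\cH}|e|=r\cdot e(\cH)$. The reason for passing to Sperner hypergraphs is to make the cut-edge step of the induction go through: the ``trace'' $e^*\cap V_i$ of a cut edge $e^*$ on either side of the splitting partition has size smaller than $r$ in general, and so it fits into the Sperner class but not into the $r$-uniform one.

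The inductive step splits into cases according to the structure of the incidence bigraph $I(\cH)$. An isolated vertex is removed and the inductive hypothesis applied to the remaining $n-1$ vertices. If $I(\cH)$ has a cut vertex $v\in V(\cH)$, write $\cH=\cH_1\cup\cH_2$ with $V(\cH_1)\cap V(\cH_2)=\{v\}$ and $n_1+n_2=n+1$; apply the inductive hypothesis to each piece and add. If $\cH$ has a cut edge $e^*$ with splitting $V(\cH)=V_1\sqcup V_2$, set $\cH_i:=\cH[V_i]\cup\{e^*\cap V_i\}$; these are Sperner families of upper rank at most $r$ (with a small extra step if a trace is already contained in an existing edge, since naively adding it would violate the Sperner condition). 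Any Berge cycle of length $\geq k$ in $\cH_i$ that uses the trace can be lifted to a Berge cycle in $\cH$ by substituting $e^*$ for the trace, so the no-long-Berge-cycle hypothesis is preserved on each side. Summing the two inductive inequalities and using $|e^*\cap V_1|+|e^*\cap V_2|=|e^*|$ yields $\sum_{e\in\cH}|e|\leq(k-1)(n-2)$, which is even stronger than required.

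What remains, and is the main obstacle, is the case when $I(\cH)$ is 2-connected. Here I would show that $e(\cH)\leq k-1$, from which the desired bound follows since 2-connectedness forces $n\geq r+1$ (two distinct $r$-edges in which every vertex has degree $\geq 2$ already use more than $r$ vertices). Assume for contradiction that $e(\cH)\geq k$ and take a longest Berge path $P=v_1e_1v_2\cdots e_\ell v_{\ell+1}$ in $\cH$. By 2-connectedness of $I(\cH)$, Menger's theorem provides two internally disjoint paths between any pair of its vertices, and the condition $r\geq k+1$ ensures that each edge $e_i$ has at least $r-2\geq k-1$ vertices outside $\{v_i,v_{i+1}\}$. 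These free vertices give the flexibility to perform P\'osa-style rotations on $P$, and the two internally disjoint Menger paths give the means to close a rotated $P$ into a Berge cycle. The delicate step is verifying that the rotation-and-close argument produces a cycle of length at least $k$ whenever $\cH$ has at least $k$ edges; the extremal ``blocks'' on $r+1$ vertices (consisting of the $k-1$ hyperedges obtained by omitting each element of a fixed $(k-1)$-subset from a common $(r+1)$-set) should emerge as precisely the configurations where the argument is on the verge of succeeding.
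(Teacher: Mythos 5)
Your plan rests on a generalization that is actually false. You propose to prove $\sum_{e\in\cH}|e|\leq(k-1)(n-1)$ for all $n$-vertex Sperner hypergraphs of upper rank at most $r$ (with $r\geq k+1$) that have no Berge cycle of length $\geq k$, and to handle the $2$-connected case by showing $e(\cH)\leq k-1$ there. Both claims break. Take $\cH$ to be the family of all $\lfloor(k-1)/2\rfloor$-subsets of a $(k-1)$-element vertex set (say $k\geq 6$). This $\cH$ is Sperner, its edges have size $\lfloor(k-1)/2\rfloor<r$, its incidence bigraph is $2$-connected, and it has no Berge cycle of length $\geq k$ simply because it has only $k-1$ vertices. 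Yet $e(\cH)=\binom{k-1}{\lfloor(k-1)/2\rfloor}$, which is exponentially larger than $k-1$, and $\sum_{e\in\cH}|e|=\lfloor(k-1)/2\rfloor\binom{k-1}{\lfloor(k-1)/2\rfloor}$ is exponentially larger than $(k-1)(n-1)=(k-1)(k-2)$. So the $r^-$-Sperner strengthening cannot be the inductive statement, and no Pósa-rotation argument can rescue the $2$-connected case under these hypotheses.

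The underlying issue is one of direction. The hypothesis $r\geq k+1$ only has teeth if edges are forced to be \emph{large}. Your sketch in fact uses this implicitly when you write ``each edge $e_i$ has at least $r-2\geq k-1$ vertices outside $\{v_i,v_{i+1}\}$'' --- that is simply untrue for an $r^-$-graph, where edges may be tiny; it requires a lower bound on edge sizes. Passing to the Sperner $r^-$-class to accommodate the traces $e^*\cap V_i$ in the cut-edge step therefore sacrifices exactly the property that makes the theorem true. Indeed, the concluding remarks of this paper point out that the cited source~\cite{KL} for this statement works with $r^+$-graphs (lower rank at least $r$), the opposite of what you propose; the $r^-$ relaxation is the right move for the $k>r$ regime of the present paper (Theorem~\ref{main2conn}), not for the $r\geq k+1$ regime of Theorems~\ref{KL} and~\ref{KL2}. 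So the decomposition-into-blocks skeleton of your argument is plausible, but the inductive invariant, the class of hypergraphs it should range over, and the treatment of cut edges all need to be rethought; as written, the argument is not repairable.
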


Recently,  Ergemlidze, Gy\H{o}ri, Methuku, Salia, Thompkins, and Zamora~\cite{EGMSTZ} extended the results 
to $k\in \{  r+1,r+2\}$, and Gy\H{o}ri, Lemons, Salia, and Zamora~\cite{k=r} extended the results to $k=r$. 
\begin{thm}[Ergemlidze et al.~\cite{EGMSTZ}]\label{EGMSTZ}
If $k \geq 4$
 and  $\mathcal H$ is  an $n$-vertex $r$-graph  with no Berge cycles of length $k$ or longer, then $k =r+1$ and $e(\mathcal H) \leq n-1$, or $k= r+2$ and $e(\mathcal H) \leq \frac{n-1}{k-2}{k-1 \choose r}$. 
\end{thm}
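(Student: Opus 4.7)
My approach is to handle both cases in parallel by first reducing to the 2-connected setting via block decomposition, and then adapting the Kopylov/P\'osa path-rotation method to this parameter range.

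\emph{Step 1: Block decomposition.} Assume (without loss of generality) that $\mathcal{H}$ is connected, and decompose it into 2-connected blocks $B_1,\dots,B_b$ with $n_i := |V(B_i)|$ and $e_i := e(B_i)$. Since every edge of $\mathcal{H}$ lies in exactly one block, $\sum_i e_i = e(\mathcal{H})$, and a standard count in the block-cut tree (which has $b + c$ nodes and $b + c - 1$ edges, where $c$ is the number of cut vertices) gives $\sum_i n_i = n + b - 1$ and hence $\sum_i (n_i - 1) = n - 1$. Thus, it suffices to prove a per-block bound $e_i \leq c_0(n_i - 1)$ with $c_0 = 1$ when $k = r+1$ and $c_0 = \binom{k-1}{r}/(k-2) = (r+1)/r$ when $k = r+2$; summing over $i$ then yields the claimed global bound.

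\emph{Step 2: The 2-connected bound.} For a 2-connected block $B$, I would take a longest Berge path $P$ in $B$ and exploit the 2-connectivity of the incidence bigraph $I(B)$. Maximality of $P$ forces every edge incident to an endpoint of $P$ to have all of its base vertices on $P$, and 2-connectivity allows P\'osa-style rotations that produce a family of equally-long Berge paths with distinct endpoint-pairs. If this family is large enough, joining two such paths gives a Berge cycle of length $\geq k$, contradicting the hypothesis; otherwise the edges of $B$ are confined to a small, structured vertex set from which a direct count yields the bound. The extremal 2-connected blocks should be the complete $r$-uniform hypergraph on $r+1$ vertices minus one edge when $k = r+1$ (which has $r+1$ vertices and exactly $r$ edges, matching $e = n - 1$), and a hypergraph analogue of Kopylov's graph $H_{n,k,t}$ when $k = r+2$; iteratively gluing such blocks at cut vertices realizes the global bound.

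\emph{Main obstacle.} The technical heart is Step 2 in the case $k = r+1$: edges have size $r$ while the forbidden cycle length is $r+1$, so two edges may share up to $r-1$ vertices, and Berge cycles of \emph{every} length $2, 3, \dots, r$ are permitted. This abundance of short cycles makes the rotation argument delicate --- many rotations that would normally create a new long Berge cycle instead merely retrace a short cycle already present in $B$. One must therefore carefully distinguish rotations that yield a genuine length-$(r+1)$ Berge cycle from those blocked by a short cycle already in $B$, and pin down the structure of $B$ tightly enough to reach the exact constant $c_0 = 1$. The case $k = r+2$ should be somewhat more comfortable, since one additional length of short cycle is ruled out and the argument more closely follows the template of~\cite{FKL}.
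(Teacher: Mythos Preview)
This theorem is not proved in the present paper at all: it is quoted in the introduction as a result of Ergemlidze, Gy\H{o}ri, Methuku, Salia, Tompkins, and Zamora~\cite{EGMSTZ}, and the paper gives no argument for it. There is therefore no ``paper's own proof'' against which to compare your proposal.

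As for the proposal itself, it is a strategy rather than a proof. The block reduction in Step~1 is the right shape, though you should note that in the hypergraph setting cut \emph{edges} (edges of $\mathcal H$ that are cut vertices of $I(\mathcal H)$) can occur in addition to cut vertices, and a cut edge of size $r\ge 3$ does not fit the simple vertex-based block--cut tree accounting $\sum_i(n_i-1)=n-1$ without further justification. More importantly, Step~2 is left at the level of an outline: you identify the difficulty in the $k=r+1$ case (many short Berge cycles are permitted, so rotations need not produce a cycle of length $r+1$), but you do not resolve it. Until that step is actually carried out and the per-block bound $e(B)\le n(B)-1$ is established, the argument is incomplete.
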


\begin{thm}[Gy\H{o}ri et al.~\cite{k=r}]
If $r \geq 3$
 and  $\mathcal H$ is  an $n$-vertex $r$-graph  with no Berge cycles of length $r$ or longer, then $e(\cH) \leq \max\{\lfloor\frac{n-1}{r}\rfloor (r-1), n-r+1\}$.  
\end{thm}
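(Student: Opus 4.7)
The plan is to proceed by induction on $n$, with $f(n) := \max\{\lfloor(n-1)/r\rfloor(r-1),\,n-r+1\}$. The two pieces of $f$ are matched by two extremal constructions: several disjoint ``small blocks'' of $r-1$ edges on $r+1$ vertices glued at a common vertex (first piece), and a single book with spine of $r-1$ fixed vertices and $n-r+1$ pages (second piece). For $n \leq r$, $\cH$ carries at most one edge and the claim is immediate.

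For the inductive step, if $I(\cH)$ is not 2-connected, pick a cut vertex or cut edge and split $\cH$ into two sub-hypergraphs $\cH_1, \cH_2$ on $n_1,n_2$ vertices with $n_1+n_2 = n+1$ (cut vertex) or $n_1+n_2 = n+r$ (cut edge), satisfying $e(\cH) = e(\cH_1)+e(\cH_2)$ or $e(\cH_1)+e(\cH_2)-1$ respectively. Each $\cH_i$ inherits the no-Berge-$r$-cycle property, so induction combined with the key lemma below will yield $e(\cH) \leq f(n)$. The arithmetic is straightforward when $f(n)$ lies on the second piece (the slope-$1$ bound is essentially additive under the cut-edge decomposition, since $n_1+n_2-r = n$), but subtle in the first-piece regime. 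There the naive sum $f(n_1)+f(n_2)$ can exceed $f(n)$, and one must rule out the corresponding extremal gluings by showing they create a Berge $r$-cycle across the cut (for instance, two copies of $K_{r+1}^{(r)}$-minus-two-edges glued along a cut edge admit a Berge $r$-cycle whose base vertices alternate between the two sides).

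The main obstacle is the key lemma: every 2-connected $r$-uniform hypergraph with no Berge $r$-cycle has at most $r-1$ edges, tight for $K_{r+1}^{(r)}$ minus two edges when $r \geq 4$. Assume for contradiction that $\cH$ is 2-connected with $e(\cH) \geq r$, and take a longest Berge path $P = v_0 e_1 v_1 \cdots e_\ell v_\ell$. By maximality of $P$, any hyperedge $e \ni v_0$ with $e \neq e_i$ satisfies $e \subseteq V(P)$; if $v_0 \in e_i$ for some $i \geq r$, the Berge cycle $v_0 e_1 v_1 \cdots v_{i-1} e_i v_0$ has length $i \geq r$ and we are done. Otherwise all edges through $v_0$ use indices $\leq r-1$ and lie inside $V(P)$, and symmetrically for $v_\ell$. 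Here I would run a rotation-extension argument in the incidence bigraph: each $e_i$ has $r-1$ non-base pivot vertices, and 2-connectivity (via Menger's theorem) provides an alternative Berge path between $v_0$ and $v_\ell$ internally disjoint from $P$; combining a pivot rotation with this alternative path yields either a longer Berge path (contradicting maximality) or directly a Berge cycle of length $\geq r$. The delicate point is coordinating the pivot rotations with the Menger path while keeping track of base versus non-base vertices so that the final cycle has length at least $r$ rather than $r-1$; this bipartite rotation-extension bookkeeping is where the main difficulty lies.
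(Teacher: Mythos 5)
The paper does not prove this statement; it is quoted as a theorem of Gy\H{o}ri, Lemons, Salia, and Zamora~\cite{k=r}, so there is no in-paper argument to compare yours against. On its own terms, your proposal is a sensible outline but has two genuine gaps. The central tool you dub the key lemma --- that a $2$-connected $r$-uniform hypergraph with no Berge cycle of length at least $r$ has at most $r-1$ edges --- is left unproven. The setup (take a longest Berge path; maximality forces every edge through $v_0$ or $v_\ell$ to have index below $r$ and vertex set inside the base vertices; invoke Menger's theorem in $I(\cH)$) is the natural opening move, but the concluding step is exactly the content of the lemma and you explicitly defer it: ``this bipartite rotation-extension bookkeeping is where the main difficulty lies.'' The lemma is equivalent to a circumference bound of $2r$ for a $2$-connected bipartite graph one of whose sides has $\geq r$ vertices all of degree exactly $r$ (a statement in the spirit of Jackson's theorem on cycles in bipartite graphs), and proving that cleanly is where the theorem's substance lies. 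Without it the argument does not close.

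Second, even granting the key lemma, the inductive recombination is only gestured at. You note yourself that in the first-piece regime the naive sum $f(n_1)+f(n_2)$ can exceed $f(n)$ and that the offending extremal gluings must be ruled out by exhibiting a Berge $r$-cycle across the cut; that case analysis is not carried out, and it is not obviously short. The split at a cut edge also needs care: one must check that both pieces remain $r$-uniform, that the no-long-Berge-cycle property passes down (it does, as they are sub-hypergraphs), and that the count $e(\cH)=e(\cH_1)+e(\cH_2)-1$ is correct in every configuration, including when the cut edge lies entirely inside one side of the vertex partition. None of these obstacles is plainly fatal, but together with the unproven key lemma they leave the proposal a plan rather than a proof.
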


%

Theorems~\ref{FKL} and~\ref{KL} are sharp for each $k$ and $r$ for infinitely many $n$. Furthermore, the present authors also proved  in~\cite{FKL_2} exact bounds for all $n$ when $k \geq r+4$. 

For $r \geq k+1$, bounds for 2-connected hypergraphs stronger than for the general case were  found  in~\cite{KL}, although
 they are not known to be sharp. 

\begin{thm}[Kostochka and Luo~\cite{KL}]\label{KL2}
Let $k \geq 4, r \geq k + 1$, and let $\cH$ be an $n$-vertex $2$-connected, $r$-uniform hypergraph with no Berge cycle of length $k$ or longer. Then $e(\cH) \leq \max\{k-1, \frac{k}{2r-k+2}(n-1)\}$.
\end{thm}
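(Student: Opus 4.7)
The plan is to argue by contradiction: assume $\cH$ is an $n$-vertex, 2-connected, $r$-uniform hypergraph with no Berge cycle of length $\geq k$ and $e(\cH) > \max\{k-1,\ \frac{k(n-1)}{2r-k+2}\}$, chosen with $n+e(\cH)$ as small as possible. Since $e(\cH) \geq k$ and $r \geq k+1$, $\cH$ certainly contains some Berge cycle; let $C$ be a longest one, with base vertices $V(C)=\{u_1,\ldots,u_\ell\}$ (so $\ell\leq k-1$) and cycle-edges $g_1,\ldots,g_\ell$ satisfying $\{u_i,u_{i+1}\}\subseteq g_i$ modulo $\ell$. Write $W := V(\cH)\setminus V(C)$. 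Because $r \geq k+1 > \ell$, no edge of $\cH$ lies inside $V(C)$, so every edge meets $W$; in particular $W\neq\emptyset$, and 2-connectivity of the incidence bigraph $I(\cH)$ forces $d_\cH(w)\geq 2$ for every $w\in W$.

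The technical core is a structural lemma on how any edge $f$ attaches to $C$: writing $a_f := |f \cap V(C)|$ and $b_f := |f \cap W| = r - a_f$, I would show $b_f \geq r-k+2$, equivalently $a_f \leq k-2$. Given such an $f$, 2-connectivity of $I(\cH)$ yields two internally vertex-disjoint paths in $I(\cH)$ from the vertex representing $f$ to the subgraph of $I(\cH)$ corresponding to $C$, landing at distinct cycle-points $x,y$. Splicing these two paths with the \emph{longer} arc of $C$ between $x$ and $y$ and with the vertices of $f\setminus V(C)$ produces a Berge cycle, and the requirement that its length be $\leq k-1$ forces the desired inequality on $a_f$; borderline configurations (where $a_f=k-1$, or where attachments fall on consecutive cycle-vertices, or where the two alternative paths share a $W$-vertex) must be ruled out by a small additional rerouting argument that uses $|f|=r\geq k+1$ to absorb spare vertices. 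An analogous local argument applied at each $w\in W$ bounds how many edges can meet $V(C)$ in each cycle-vertex.

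The proof then closes with a double count. From the identity $r\cdot e(\cH) = \sum_{u\in V(C)} d_\cH(u) + \sum_{w\in W} d_\cH(w)$, the structural lemma gives $\sum_f b_f \geq (r-k+2)\,e(\cH)$, which combined with the companion bound $\sum_f a_f \leq (k-2)\,e(\cH)$ and the observations $|V(C)|\leq k-1$, $|W|\leq n-\ell$, should collapse to the target inequality $(2r-k+2)\,e(\cH) \leq k(n-1)$. I expect the main obstacle to be the structural lemma: the Menger-style rerouting through $I(\cH)$ is straightforward in principle, but tracking exactly how many base vertices are contributed by each of the two alternative paths and by the arc of $C$ — especially in the degenerate cases above — is delicate, and it is precisely this bookkeeping that decides between the sharp constant $2r-k+2$ and a weaker one.
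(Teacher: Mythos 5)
The paper you are working from does not prove Theorem~\ref{KL2}: it is quoted as an external result from Kostochka and Luo~\cite{KL}, so there is no in-paper proof to compare your attempt against. What follows is therefore an assessment of your sketch on its own terms, and it has two concrete gaps that block it as written.

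First, the structural lemma $a_f\le k-2$ is not established, and it need not hold in the form you state. Since $r\ge k+1>\ell$, an edge $f$ can contain \emph{all} of $V(C)$, giving $a_f=\ell$, and nothing in your setup forbids $\ell=k-1$. Excluding this requires exactly the rerouting through a second neighbor of some $w\in f\cap W$ that you defer as ``delicate bookkeeping''; as it stands the inequality $b_f\ge r-k+2$ is unproved. Note also that the ``companion bound'' $\sum_f a_f\le(k-2)e(\cH)$ is literally the same statement as $\sum_f b_f\ge(r-k+2)e(\cH)$, since $a_f+b_f=r$; you are not combining two independent constraints.

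Second, and more seriously, the closing double count does not close even if the structural lemma is granted. From $a_f\le k-2$ you obtain a \emph{lower} bound $\sum_{w\in W}d(w)=\sum_f b_f\ge(r-k+2)e(\cH)$, but to conclude $e(\cH)\le\frac{k(n-1)}{2r-k+2}$ you need an \emph{upper} bound on $\sum_{w\in W}d(w)$ (or on $\sum_{u\in V(C)}d(u)$), and none is supplied: the observations $|V(C)|\le k-1$ and $|W|\le n-\ell$ by themselves say nothing about vertex degrees, so the inequality $(2r-k+2)e(\cH)\le k(n-1)$ simply does not follow from the ingredients you list. You would need a separate argument, driven by the absence of long Berge cycles and by $2$-connectivity, that caps the total degree on $W$; this is the missing idea, not just missing bookkeeping.

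Finally, a sanity check against the obvious alternative: applying Kopylov's theorem directly to the $2$-connected bipartite incidence graph $I(\cH)$ (which has $n+e$ vertices, $re$ edges, and no cycle of length $\ge 2k$) yields only $e(\cH)\le\frac{(k-1)(n-1)}{r-k+1}$, and one can check that $\frac{k-1}{r-k+1}>\frac{k}{2r-k+2}$ for all $k\ge 4$. So the constant in Theorem~\ref{KL2} is genuinely sharper than what incidence-graph Kopylov alone gives, and the step you leave unfinished is exactly where the improvement must come from.
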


In this paper, we find sharp bounds on the maximum number of edges in a 2-connected $r$-uniform hypergraph without Berge cycle of length $k$ or longer in the case $k \geq 4r$ for $n>k2^r$. We do this by proving a more general sharp bound for 
 Sperner  $r^-$-graphs. 

\section{Results}

\subsection{$2$-connected hypergraphs without long Berge cycles}

Our goal is to prove a version of Kopylov's theorem for hypergraphs, i.e., to find the maximum number of edges in a 2-connected hypergraph with no Berge cycle of length $k$ or greater.

Define \[f(n,k,r,a) := {k-a \choose \min\{r, \lfloor\frac{k-a}{2}\rfloor\}} + (n-k+a){a \choose \min\{r-1, \lfloor a/2 \rfloor\}}.\]

Also define \[f^*(n,k,r,a) : = {k-a \choose r} + (n-k+a) {a \choose r-1}.\] Note that $f(n,k,r,a) = f^*(n,k,r,a)$ whenever $r \leq \lfloor (k-a)/2 \rfloor$ and $r-1 \leq \lfloor a/2 \rfloor$.  Our main result is:

\begin{thm}\label{main2conn}
Let $n \geq k \geq r\geq 3$. If $\cH$ is an $n$-vertex Sperner $2$-connected $r^-$-hypergraph with no Berge cycle of length $k$ or longer, then $e(\cH) \leq \max\{f(n,k,r,\lfloor (k-1)/2 \rfloor), f(n,k,r,2)\}$. 
\end{thm}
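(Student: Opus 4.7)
The plan is to follow Kopylov's strategy for 2-connected graphs without long cycles, adapted to the Sperner $r^-$-graph setting. Two extremal configurations (for $a = \lfloor (k-1)/2 \rfloor$ and for $a = 2$) should emerge from the analysis, paralleling Construction~\ref{cons1} and the maximum appearing in Theorem~\ref{kopycycle}.

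First I would fix a longest Berge cycle $C$ in $\cH$; by hypothesis its length $c$ satisfies $c \leq k-1$. Using 2-connectivity of the incidence bigraph $I(\cH)$, every vertex or edge of $\cH$ not lying on $C$ is joined to $C$ by two internally disjoint paths. A P\'osa-type rotation-extension argument executed in $I(\cH)$ (pivoting on a pair of base vertices with many ``free'' edge-neighbors in $\cH$) then shows that either the cycle can be extended to one of length at least $k$, or the collection of vertices of ``high weight'' must be small. Concretely, I would extract a core $A \subseteq V(\cH)$ with $2 \leq |A| \leq \lfloor (k-1)/2 \rfloor$ such that every edge of $\cH$ either (i) is contained in a single set $W$ with $A \subseteq W$ and $|W| = k-|A|$, or (ii) meets $V(\cH)\setminus W$ in exactly one vertex and has its remaining vertices in $A$.

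Once this structure is in hand, the edge count is immediate. The Sperner condition implies that the edges of $\cH$ lying inside any fixed vertex set of size $s$ form an antichain of sets of size at most $r$, so by Sperner's theorem their number is at most $\binom{s}{\min\{r,\lfloor s/2\rfloor\}}$; applied to $W$ this bounds the ``inside'' contribution by $\binom{k-|A|}{\min\{r,\lfloor (k-|A|)/2\rfloor\}}$. Similarly, for each external vertex $v \in V(\cH)\setminus W$, the $A$-parts of edges through $v$ form an antichain of subsets of $A$ of size at most $r-1$, contributing at most $\binom{|A|}{\min\{r-1,\lfloor |A|/2\rfloor\}}$ edges per vertex. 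Summing over the $n-k+|A|$ external vertices yields $e(\cH) \leq f(n,k,r,|A|)$, and a convexity/monotonicity check of $f(n,k,r,\cdot)$ on the range $[2,\lfloor(k-1)/2\rfloor]$ shows that the maximum is attained at one of the endpoints, giving the claimed bound.

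The main obstacle is the hypergraph adaptation of Kopylov's disintegration used to locate the core $A$. In the graph case one iteratively removes vertices of small degree to force a 2-connected residue of large minimum degree and then invokes classical circumference bounds. In the hypergraph setting I must work in the incidence bigraph $I(\cH)$, defining a weight on $V(\cH)$ that captures both the hypergraph degree and the Sperner-forced edge contribution, and arguing that iterative removal preserves the 2-connectivity needed for the rotation step until the core stabilizes. A secondary technical point is verifying for intermediate $a$ that no configuration with $2 < a < \lfloor(k-1)/2\rfloor$ beats the stated maximum; this should reduce to a comparison of binomial coefficients after the convexity analysis.
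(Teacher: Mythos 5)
Your proposal has two serious gaps, and it takes a quite different route from the paper, so I'll describe both.

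The first gap is the structural claim you propose to extract: that after a P\'osa-type rotation argument in $I(\cH)$ one can find a core $A$ of size $a\in[2,\lfloor(k-1)/2\rfloor]$ and a set $W\supseteq A$ with $|W|=k-a$ such that every edge of $\cH$ is contained in $W$ or meets $V(\cH)\setminus W$ in exactly one vertex and otherwise lies in $A$. This is exactly the structure of the extremal example $\cH_{n,k,r,a}$, and it is far stronger than what actually holds. Even in the pure graph case, $2$-connected graphs with no cycle of length $\geq k$ do not all have a Kopylov-type $H_{n,k,a}$ decomposition; Kopylov's theorem gives only a \emph{disintegration} statement (Theorem~\ref{le:Kopy}): the $t$-core either is empty or has size $s\in[t+2,k-2]$ and the rest can be peeled off by $(k-s)$-disintegration. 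Edges are then counted \emph{during} disintegration, step by step, not via a global decomposition into ``inside $W$'' and ``outside $W$ through $A$.'' Your counting step is correct once such a decomposition is assumed, but the decomposition itself will not materialize; you need to argue about edge counts at each deletion step, applying Sperner's theorem (Theorem~\ref{sperner}) locally to the link of the vertex being deleted, as the paper does in Theorem~\ref{kopyblock}.

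The second and more fundamental gap is hypergraph-specific: you do not address the fact that a cycle in the $2$-shadow $\partial_2\cH$ need not lift to a Berge cycle of the same length in $\cH$. A rotation-extension argument carried out in $I(\cH)$ repeatedly needs to pivot through \emph{distinct} edge-vertices $a\in A$, and a single hyperedge may serve many different base-vertex pairs, so the matching you need may fail. This is precisely what the paper's ``happy'' machinery (Lemma~\ref{c2}, Lemma~\ref{c2'}) and the shrinking operations (Lemma~\ref{c30}) are for: one first reduces $\cH$ to a happy Sperner $r^-$-graph, for which shadow cycles are guaranteed to lift, and only then applies the Kopylov-type bound on $\partial_2\cH'$; the cost of shrinking is then carefully accounted against the savings in vertices and edges (Lemma~\ref{c3}, Lemma~\ref{c3k-1}, and the bookkeeping in Section 7). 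Your proposal has no analogue of this reduction, and without it the rotation step does not go through. In short: the convexity/Sperner counting you describe is fine as a final step, but the two key lemmas on which the whole argument rests (disintegration rather than global decomposition, and the happy-graph reduction guaranteeing that shadow cycles lift) are missing.
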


This bound is sharp. To see this, we construct a series of hypergraphs (not necessarily uniform). The following can be viewed as a hypergraph version of Construction~\ref{cons1}.

\begin{const}\label{cons2}
 For $n \geq k \geq r$, $1\leq a \leq \lfloor (k-1)/2 \rfloor$, let $\cH_{n,k,r,a}$ be the hypergraph with vertex set $A \cup B \cup C$ such that $|A| = k-2a$, $|B| = a, |C| = n-(k-a)$. The edge set of $\cH_{n,k,r,a}$ is the family  \[\{e \subseteq A \cup B: |e| = \min \{r, \lfloor (k-a)/2 \rfloor\}\} \cup \{c \cup e': c \in C, e' \subseteq B, |e'| = \min\{r-1, \lfloor a/2 \rfloor\}\}.\]
 \end{const}

For $a \geq 2$, $\cH_{n,k,r,a}$ is 2-connected and contains no Berge cycle of length $k$ or longer. We have that $|E(\cH_{n,k,r,a})| = f(n,k,r,a)$, which is maximized when $a = \lfloor (k-1)/2 \rfloor$ or $a = 2$ by the convexity of $f$ (as a function of $a$, see 
Claim~\ref{convex} in the appendix). 
Furthermore, when $r \leq \lfloor (k-a)/2 \rfloor$ and $r-1 \leq \lfloor a/2 \rfloor$, $\cH_{n,k,r,a}$ is $r$-uniform with $f^*(n,k,r,a)$ edges.

For integers $k \geq r$, let $n_{k,r}$ be the smallest positive integer $n$ such that $f(n,k,r, \lfloor (k-1)/2 \rfloor) \geq f(n,k,r,2)$. Asymptotically $n_{k,r}$ is about $2^{r-1}k/r$. Then as a corollary of Theorem~\ref{main2conn} we obtain the following result for 
$r$-graphs.

\begin{thm}\label{c23}
Let $n \geq n_{k,r} \geq k \geq 4r\geq 12$. If $\cH$ is an $n$-vertex $2$-connected $r$-graph with no Berge cycle of length $k$ or longer, then $e(\cH) \leq f(n,k,r,\lfloor (k-1)/2 \rfloor) = f^*(n,k,r,\lfloor(k-1)/2 \rfloor)$. 
\end{thm}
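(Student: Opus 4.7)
The plan is to deduce Theorem \ref{c23} directly from Theorem \ref{main2conn}, so the work is essentially a bookkeeping comparison of the functions $f$ and $f^*$ rather than a new combinatorial argument. First I note that any (simple) $r$-uniform hypergraph is automatically Sperner: any two distinct edges have the same cardinality $r$, so neither can properly contain the other. Thus Theorem \ref{main2conn} applies with upper rank equal to $r$ and yields
\begin{equation*}
e(\cH) \leq \max\{f(n,k,r,\lfloor (k-1)/2 \rfloor),\ f(n,k,r,2)\}.
\end{equation*}

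The first step is to show that under $n \geq n_{k,r}$ this maximum is attained at $a_1 := \lfloor(k-1)/2\rfloor$. By definition $n_{k,r}$ is the smallest $n$ with $f(n,k,r,a_1) \geq f(n,k,r,2)$, and both quantities are affine in $n$ with slope $\binom{a}{\min\{r-1,\lfloor a/2\rfloor\}}$. For $k \geq 4r$ we have $a_1 \geq 2r-1$, which forces $\min\{r-1,\lfloor a_1/2\rfloor\}=r-1$, so the slope $\binom{a_1}{r-1}$ at $a=a_1$ comfortably exceeds the slope $\binom{2}{1}=2$ at $a=2$ (already $\binom{2r-1}{r-1}\geq 10$ for $r\geq 3$). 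Hence the difference $f(n,k,r,a_1)-f(n,k,r,2)$ is nondecreasing in $n$, and the inequality $f(n,k,r,a_1)\geq f(n,k,r,2)$ is preserved for all $n\geq n_{k,r}$.

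The second step is to verify that at $a=a_1$ the expressions $f$ and $f^*$ coincide, which reduces to checking that $r \leq \lfloor(k-a_1)/2\rfloor$ and $r-1 \leq \lfloor a_1/2\rfloor$. Using $k\geq 4r$, I get $k-a_1 \geq (k+1)/2 \geq 2r+\tfrac12$, so $\lfloor(k-a_1)/2\rfloor \geq r$, and $a_1\geq 2r-1$ gives $\lfloor a_1/2\rfloor \geq r-1$. Substituting these back into the piecewise definition of $f$ produces $f(n,k,r,a_1)=f^*(n,k,r,a_1)$, and chaining with the two previous displays yields Theorem \ref{c23}.

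I do not anticipate a genuine obstacle in this derivation, since every inequality follows from the generous hypothesis $k\geq 4r$, which places both $a_1$ and $k-a_1$ safely past the thresholds at which the $\min$'s in the definition of $f$ switch. The only mildly subtle point is the monotonicity of $f(n,k,r,a_1)-f(n,k,r,2)$ in $n$, but this is immediate from comparing the two linear coefficients; everything else is arithmetic with floors and binomial coefficients.
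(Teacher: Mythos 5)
Your proof is correct and follows exactly the route the paper intends: the paper states Theorem~\ref{c23} as a corollary of Theorem~\ref{main2conn} without writing out the verification, and you have simply supplied the necessary (and correct) bookkeeping — $r$-uniform implies Sperner, the definition of $n_{k,r}$ together with the monotonicity of $f(n,k,r,a_1)-f(n,k,r,2)$ in $n$ (from comparing the linear coefficients) picks out the term $a_1=\lfloor(k-1)/2\rfloor$, and $k\ge 4r$ guarantees $\lfloor a_1/2\rfloor\ge r-1$ and $\lfloor(k-a_1)/2\rfloor\ge r$ so that $f=f^*$ at $a_1$.
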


For $n$ large, this bound is almost $2^{r-1}/r$ stronger than the (exact) bound in Theorem~\ref{FKL} with no restriction on connectivity.
Again we have sharpness example $\cH_{n,k,r,\lfloor (k-1)/2 \rfloor}$.

\subsection{Connected hypergraphs without long Berge path}
We also obtain a  result for connected graphs with no Berge path of length $k$. 

\begin{thm}\label{main_paths} Let $n \geq k \geq r\geq 3$. If $\cH$ is an $n$-vertex Sperner connected $r^-$-graph with no Berge path of length $k$, then $e(\cH) \leq \max\{f(n,k,r,\lfloor (k-1)/2 \rfloor), f(n,k,r,1)\}$.
\end{thm}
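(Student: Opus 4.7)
The plan is to prove Theorem~\ref{main_paths} by adapting the argument behind Theorem~\ref{main2conn} from longest Berge cycles to longest Berge paths, retaining the same ``pushing''/structural framework. The only substantive change is that the constraint $a\geq 2$ (needed in the 2-connected setting to prevent the unique vertex of $B$ from becoming a cut vertex) is now relaxed to $a\geq 1$: a single ``attachment'' vertex in $B$ is permitted because we only assume connectedness, not 2-connectedness. This newly allowed value $a=1$ is precisely what produces the extra candidate $f(n,k,r,1)$ in the maximum.

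I would work in the incidence bigraph $I(\cH)=(A,Y;E)$, which is connected by hypothesis, and fix a longest Berge path $P$ in $\cH$ with base vertices $v_0,\dots,v_s$ and edges $f_1,\dots,f_s$, where $s\leq k-1$. By P\'osa rotations at $v_0$ and $v_s$, any edge of $\cH$ meeting an endpoint of $P$ lies inside $V(P)$, and a rich set of ``exchangeable'' endpoints is produced. Combined with the Sperner property, this confines the ``core'' edges—those inside $V(P)$—to at most $\binom{k-a}{\min\{r,\lfloor(k-a)/2\rfloor\}}$ edges, for some effective attachment parameter $a$. Each vertex $c\in V(\cH)\setminus V(P)$ is, by connectivity of $I(\cH)$, attached to $V(P)$ via ``pendant'' edges; the absence of a Berge path of length $k$ together with the Sperner condition force these pendants to use a single attachment set $B\subseteq V(P)$ of size $a$, with $1\leq a\leq\lfloor(k-1)/2\rfloor$, contributing at most $(n-k+a)\binom{a}{\min\{r-1,\lfloor a/2\rfloor\}}$ edges. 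Summing gives $e(\cH)\leq f(n,k,r,a)$ for some $a\in[1,\lfloor(k-1)/2\rfloor]$, and convexity of $f$ in $a$ (Claim~\ref{convex}) concentrates the maximum at $a\in\{1,\lfloor(k-1)/2\rfloor\}$.

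The hard part will be the pendant analysis: one must show that all off-path vertices attach through a single consistent set $B$ with a uniform ``pendant profile,'' rather than fragmenting across several attachment sets. This is achieved by a pushing/stability argument parallel to the one used in the proof of Theorem~\ref{main2conn}: if pendants split across two distinct attachment sets $B_1,B_2$, one locally modifies $\cH$ (preserving Sperner, connectedness, and the no-long-Berge-path property) so as to consolidate pendants onto a single $B$ while not decreasing the edge count. The subtlety specific to the connected-but-not-2-connected case is that the consolidation is allowed to terminate at $|B|=1$ (which corresponds to a single cut vertex of $\cH$, admissible here but not in Theorem~\ref{main2conn}); verifying that no further pushing is possible at $|B|=1$ relies on the longest-path hypothesis $s\leq k-1$ together with the convexity of $f$. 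A secondary technical point is carefully handling the case where the cut point arising in $I(\cH)$ is an edge of $\cH$ rather than a vertex, which requires the full Sperner $r^-$-graph framework used in Theorem~\ref{main2conn}.
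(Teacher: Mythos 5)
Your proposal takes a genuinely different route from the paper, and it has a gap at its crux that is acknowledged but not filled.

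The paper's proof of Theorem~\ref{main_paths} is a reduction, not an adaptation. It takes a minimal counterexample, observes (via a lemma of Gy\H{o}ri--Katona--Lemons) that there is no Berge cycle of length $\geq k$, handles cut edges by shrinking, and then decomposes $\cH$ into its blocks $\cB_1,\dots,\cB_p$. A key block inequality --- if $s_i$ is the longest Berge-cycle length in $\cB_i$, then $s_1+s_i\leq k+1$ for $i\geq 2$ --- controls the interaction between blocks. Theorem~\ref{main2conn} is then applied to each $\cB_i$ (with cycle threshold $s_i+1$), and convexity of $f$ is used to aggregate. The heavy lifting (happy/unhappy shrinking, Kopylov disintegration) is thus inherited wholesale from the cycle case rather than redone. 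Your proposal, by contrast, works directly with a longest Berge path $P$ and P\'osa rotations in $I(\cH)$, which is neither what Theorem~\ref{main2conn}'s proof does (that proof uses the shrinking machinery plus Kopylov's core/disintegration, not rotations) nor what the paper does for this theorem.

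The concrete gap is in your pendant analysis. You claim that (i) the edges inside $V(P)$ are bounded by $\binom{k-a}{\min\{r,\lfloor(k-a)/2\rfloor\}}$ for some effective parameter $a$, and (ii) all off-path vertices attach through a single set $B\subseteq V(P)$ of size $a$ with a uniform pendant profile. Neither is established. The Sperner bound on edges inside $V(P)$ gives only $\binom{|V(P)|}{\min\{r,\lfloor|V(P)|/2\rfloor\}}$ with $|V(P)|$ up to $k$; there is no explanation of how $a$ vertices get subtracted from the core count, which is precisely where the savings relative to $f(n,k,r,0)$ must come from. And claim (ii) --- that all pendant edges funnel through one consistent $a$-set --- is exactly the structure of the extremal examples $\cH_{n,k,r,a}$, but P\'osa rotations alone do not force it; a priori distinct off-path vertices could attach through different, overlapping, or variable-size subsets of $V(P)$. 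Your ``pushing/stability argument'' is invoked to consolidate these, but no consolidation step is described, and it is not clear that a local modification would preserve the no-long-Berge-path property while not decreasing the edge count (in the cycle setting, the analogous consolidation is what the entire Lemma~\ref{c30}/Lemma~\ref{c3} apparatus exists to do, and it is nontrivial). As written, the proposal identifies the target structure but does not prove that a maximizer must have it.

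If you want an argument that mirrors the connected/path setting directly rather than reducing to the 2-connected/cycle case, you would essentially have to redevelop the shrinking lemmas and a path-analogue of Kopylov's disintegration inside your framework; the paper avoids all of this by routing through blocks and Theorem~\ref{main2conn}. I would recommend switching to the block-decomposition reduction: it is shorter, it reuses the already-proved hard theorem, and the only genuinely new ingredient is the block inequality $s_1+s_i\leq k+1$ together with the convexity bookkeeping.
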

For integers $k \geq r$, let $n'_{k,r}$ be the smallest positive integer $n$ such that $f(n,k,r, \lfloor (k-1)/2 \rfloor) \geq f(n,k,r,1)$. Then we obtain the following result for $r$-uniform graphs with no Berge path of length $k$ as a corollary of Corollary~\ref{main_paths}. This improves Theorem~\ref{connpath}.

\begin{thm}\label{c25}
Let $n \geq n'_{k,r} \geq k \geq 4r\geq 12$. If $\cH$ is an $n$-vertex connected $r$-graph with no Berge path of length $k$, then $e(\cH) \leq f(n,k,r,\lfloor (k-1)/2 \rfloor) = f^*(n,k,r, \lfloor (k-1)/2 \rfloor)$. 
\end{thm}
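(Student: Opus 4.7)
\textbf{Proof plan for Theorem~\ref{c25}.} Since Theorem~\ref{main_paths} already establishes the Sperner $r^-$-graph version, my plan is to derive Theorem~\ref{c25} as a direct corollary together with two bookkeeping verifications. First I would observe that every simple $r$-uniform hypergraph is automatically Sperner (equal-size edges cannot properly contain each other) and is trivially an $r^-$-hypergraph; since $\cH$ is also connected and has no Berge path of length $k$, the hypotheses of Theorem~\ref{main_paths} are met, and that theorem immediately yields
\[e(\cH) \;\leq\; \max\bigl\{f(n,k,r,\lfloor (k-1)/2 \rfloor),\ f(n,k,r,1)\bigr\}.\]

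Next, I would show that for $n \geq n'_{k,r}$ the first term in this maximum dominates. As a function of $n$, each $f(n,k,r,a)$ is linear with slope $\binom{a}{\min\{r-1,\lfloor a/2\rfloor\}}$. For $a=1$ this slope equals $\binom{1}{0}=1$, while for $a=\lfloor(k-1)/2\rfloor$, which by the hypothesis $k\geq 4r$ satisfies $a\geq 2r-1$, the inner minimum simplifies to $r-1$, so the slope equals $\binom{a}{r-1}\geq \binom{2r-1}{r-1}>1$. Consequently $f(n,k,r,\lfloor(k-1)/2\rfloor)$ grows strictly faster in $n$ than $f(n,k,r,1)$; once the former catches the latter at $n=n'_{k,r}$, the inequality $f(n,k,r,\lfloor(k-1)/2\rfloor)\geq f(n,k,r,1)$ persists for every larger $n$, so the maximum reduces to $f(n,k,r,\lfloor(k-1)/2\rfloor)$.

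Finally, I would verify the identity $f(n,k,r,\lfloor(k-1)/2\rfloor)=f^*(n,k,r,\lfloor(k-1)/2\rfloor)$ using the criterion stated just after the definition of $f^*$: the two agree whenever $r\leq\lfloor(k-a)/2\rfloor$ and $r-1\leq\lfloor a/2\rfloor$. With $a=\lfloor(k-1)/2\rfloor$ and $k\geq 4r$ one has $a\geq 2r-1$, so $\lfloor a/2\rfloor\geq r-1$, and $k-a\geq \lceil(k+1)/2\rceil\geq 2r$, so $\lfloor(k-a)/2\rfloor\geq r$; both conditions hold, completing the reduction.

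All of the substantive extremal content is packaged inside Theorem~\ref{main_paths}; the only thing to watch here is that the two regimes imposed in the statement, namely $n\geq n'_{k,r}$ and $k\geq 4r\geq 12$, are precisely what force (i) the dominant term in the Sperner max to be the one corresponding to $a=\lfloor(k-1)/2\rfloor$, and (ii) the associated extremal family $\cH_{n,k,r,\lfloor(k-1)/2\rfloor}$ from Construction~\ref{cons2} to be genuinely $r$-uniform. Since no new hypergraph argument is required beyond Theorem~\ref{main_paths}, I expect no real obstacle here.
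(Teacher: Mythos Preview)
Your proposal is correct and matches the paper's approach: the paper explicitly presents Theorem~\ref{c25} as an immediate corollary of Theorem~\ref{main_paths} without giving a separate proof, and your three verification steps (applying Theorem~\ref{main_paths} to the $r$-uniform case, using the definition of $n'_{k,r}$ together with the linear-in-$n$ slopes to collapse the maximum, and checking the $f=f^*$ identity via $k\geq 4r$) are exactly the routine bookkeeping that this reduction requires.
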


The family $\cH_{n, k, r, \lfloor(k-1)/2 \rfloor}$ again shows sharpness of our bounds.

\section{Proof outline}
The basic idea of the proof is to consider instead of the family of $r$-graphs the larger family of Sperner $r^-$-graphs. Then we can
in some situations shrink some edges keeping the $r^-$-graph Sperner.

We start with a dense Sperner $r^-$-graph $\cH$. By definition, each edge $e$ in $\cH$ yields a  clique of order $|e|$ in the $2$-shadow
of $\cH$. If $\cH$ contains a long Berge cycle $C$, then $\partial_2 \cH$ contains a cycle of the same length.
However, the converse is not always true. So, our first goal is to reduce $\cH$ to a smaller  dense Sperner $r^-$-graph $\cH'$ for which we know that the existence of a long cycle in $\partial_2 \cH'$ implies the existence of a long cycle in $\cH'$ itself.

Our second goal is to  give an upper bound on the maximum size of a Sperner family of cliques of order at most $r$ in the shadow $\partial_2 \cH'$ that does not have long cycles. This automatically yields a bound on $|\cH'|$.

We systematically consider incidence graphs of $r^-$-graphs instead of the $r^-$-graphs themselves, because we find the language of $2$-connected bipartite graphs convenient for our goals.

In Section 4, we prove two results for the maximum number of cliques in graphs without long cycles or paths which will later be applied to the $2$-shadows of $r^-$-graphs. Specifically, we give upper bounds for the size of Sperner families of cliques of size at most $r$ in graphs with bounded circumference and graphs that do not contain long paths between every pair of vertices. 

In Sections 5 and 6, we prove that our hypergraphs have such a dense subhypergraph that we may reduce to, working in the language of incidence bigraphs in Section 5 and the language of hypergraphs in Section 6. In Section 7, we combine the results from Sections 4-6 to prove Theorem~\ref{main2conn}. Finally, in Section 8 we prove Theorem~\ref{main_paths} for Berge paths in connected hypergraphs.

\section{Sperner cliques in graphs}

A set family $H$ is called {\em Sperner} if no element of $H$ is contained in another element of $H$. In particular, every uniform family is Sperner.

The classic proof of LYM Inequality yields also the following result.

\begin{thm}\label{sperner}
Let $H$ be a set of $h$ elements. Let $\cC$ be a Sperner family of subsets of $H$ such that $|C|\leq r$ for each $C \in \cC$. Then $|\cC| \leq {h \choose \min\{r, \lfloor h/2 \rfloor\}}$.
\end{thm}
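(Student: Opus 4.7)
The plan is to deduce this directly from the LYM inequality, which asserts that for any Sperner family $\cC$ on a ground set of size $h$,
$$\sum_{C \in \cC} \binom{h}{|C|}^{-1} \le 1.$$
I would first recall the one-paragraph permutation proof of LYM: count pairs $(C, \sigma)$ where $\sigma$ is a linear ordering of $H$ and $C$ is an initial segment of $\sigma$. Because $\cC$ is Sperner, each $\sigma$ contributes at most one pair (otherwise one of the two initial segments would sit inside the other), while a fixed $C \in \cC$ of size $s$ is an initial segment of exactly $s!(h-s)!$ orderings. Summing yields $\sum_{C \in \cC} |C|!(h-|C|)! \le h!$, which rearranges to LYM.

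With LYM available, set $m := \min\{r, \lfloor h/2 \rfloor\}$. The plan is to check that $\binom{h}{|C|} \le \binom{h}{m}$ for every $C \in \cC$; once this is established, LYM gives
$$\frac{|\cC|}{\binom{h}{m}} \le \sum_{C\in \cC} \binom{h}{|C|}^{-1} \le 1,$$
which is exactly the desired inequality $|\cC| \le \binom{h}{m}$.

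To verify $\binom{h}{|C|} \le \binom{h}{m}$ I would split into the two cases dictated by the $\min$. If $r \le \lfloor h/2 \rfloor$, then $m = r$, and for any $C \in \cC$ we have $|C| \le r \le \lfloor h/2 \rfloor$; by the unimodality of the binomial coefficients on $\{0,1,\dots,\lfloor h/2\rfloor\}$ this forces $\binom{h}{|C|} \le \binom{h}{r}$. If instead $r > \lfloor h/2 \rfloor$, then $m = \lfloor h/2 \rfloor$, and the inequality $\binom{h}{|C|} \le \binom{h}{\lfloor h/2 \rfloor}$ is just the standard statement that the central binomial coefficient is the largest entry of its row.

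I do not expect any real obstacle here, since both ingredients (LYM and the unimodality of $\binom{h}{\cdot}$) are classical. The only point requiring attention is correctly carrying out the case split on whether $r$ is at most $\lfloor h/2 \rfloor$, which is exactly what determines where in the unimodal row the allowed sizes $|C| \le r$ lie and hence which binomial coefficient realises the maximum.
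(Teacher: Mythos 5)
Your proof is correct and matches the paper's intent: the paper simply asserts that ``the classic proof of LYM Inequality yields also the following result,'' and your argument (permutation proof of LYM, then bounding each $\binom{h}{|C|}^{-1}$ from below by $\binom{h}{m}^{-1}$ via unimodality of the binomial coefficients, with the case split on whether $r \le \lfloor h/2\rfloor$) is precisely that derivation.
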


\subsection{Cliques in graphs with bounded circumference}

In~\cite{luo}, Luo proved an upper bound for the maximum number of cliques in a 2-connected graph with bounded circumference.

\begin{thm}[Luo~\cite{luo}]\label{L}
Let $n, k, r$ be positive integers with $n \geq k$. Let $G$ be an $n$-vertex $2$-connected graph with no cycle of length $k$ or longer. Then the number of copies of $K_r$ in $G$ is at most \[\max\left\{{k-2 \choose r} + (n-k+2){2 \choose r-1}, {\lceil (k+1)/2 \rceil \choose r } + (n-\lceil (k+1)/2 \rceil) {\lfloor(k-1)/2 \rfloor \choose r-1} \right\}.\]

\end{thm}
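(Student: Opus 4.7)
Let $t := \lfloor (k-1)/2 \rfloor$ and note that the two quantities in the claimed maximum are precisely the numbers of copies of $K_r$ in the Kopylov extremal graphs $H_{n,k,2}$ and $H_{n,k,t}$ from Construction~\ref{cons1}. Set
\[
g(a) \;:=\; \binom{k-a}{r} + (n-k+a)\binom{a}{r-1}.
\]
One checks that $g$ is convex in $a$ on $\{2, \dots, t\}$, so its maximum over this range is attained at $a=2$ or $a=t$. The plan is to adapt Kopylov's disintegration proof of Theorem~\ref{kopycycle}, tracking copies of $K_r$ instead of edges.

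For an integer $\alpha \ge 1$, let the $\alpha$-\emph{core} $C_\alpha(G)$ be the graph obtained by iteratively deleting vertices of current degree at most $\alpha$. Every $K_r$ in $G$ either lies entirely in $C_\alpha(G)$ or has a well-defined first-deleted vertex $u$; at the moment of deletion, $u$ has at most $\alpha$ neighbors in the surviving graph, so at most $\binom{\alpha}{r-1}$ copies of $K_r$ are charged to $u$. Hence
\[
\#K_r(G) \;\le\; \#K_r(C_\alpha(G)) + \bigl(n - |V(C_\alpha(G))|\bigr)\binom{\alpha}{r-1}.
\]
The key structural ingredient, extracted from Kopylov's proof of Theorem~\ref{kopycycle}, is that for $2 \le \alpha \le t$ the core $C_\alpha(G)$ is either empty or is $2$-connected with minimum degree exceeding $\alpha$ and at most $k-\alpha$ vertices. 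Writing $m := |V(C_\alpha(G))|$ and using $\#K_r(C_\alpha(G)) \le \binom{m}{r}$, the bounds $\alpha+1 \le m \le k-\alpha$ (valid because $\alpha \le t$) and the telescoping identity $\binom{k-\alpha}{r}-\binom{m}{r} = \sum_{j=m}^{k-\alpha-1}\binom{j}{r-1}$ give
\[
\binom{k-\alpha}{r} - \binom{m}{r} \;\ge\; (k-\alpha-m)\binom{m}{r-1} \;\ge\; (k-\alpha-m)\binom{\alpha}{r-1},
\]
which on rearrangement yields $\#K_r(G) \le g(\alpha)$. Choosing $\alpha^*$ to be the largest value in $\{2, \dots, t\}$ with $C_{\alpha^*}(G) \ne \emptyset$ (the degenerate case in which even $C_2(G) = \emptyset$ forces $G$ to be $2$-degenerate and is handled by a direct bound of $n\binom{2}{r-1}$, dominated by $g(2)$ for $n \ge k$) and invoking convexity of $g$ yields $\#K_r(G) \le g(\alpha^*) \le \max\{g(2), g(t)\}$, as desired.

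The main obstacle is the structural bound $|V(C_\alpha(G))| \le k-\alpha$. Bondy's theorem cheaply gives only $|V(C_\alpha(G))| \le k-1$ at $\alpha = t$; sharpening this down to $k-\alpha$ requires the P\'osa rotation--extension technique applied to a longest path inside the core, which is the technical heart of Kopylov's original proof of Theorem~\ref{kopycycle}. Once this structural fact is in hand, passing from Kopylov's edge count to the clique count is a straightforward modification via the first display combined with the convexity of binomial coefficients.
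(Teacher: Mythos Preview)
The paper does not prove this theorem (it is quoted from \cite{luo}), but it does prove the Sperner variant, Theorem~\ref{kopyblock}, and that argument is the right benchmark.

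Your overall plan---Kopylov-style disintegration plus convexity of $g$---is correct and matches both Luo's argument and the paper's proof of Theorem~\ref{kopyblock}. But the central structural claim as you phrase it is false: it is \emph{not} true that for every $2\le\alpha\le t$ the $\alpha$-core has at most $k-\alpha$ vertices. Take $G=H_{n,k,2}$ with $k\ge 7$: the $t$-core is the clique on $A\cup C$, which has $k-2>k-t$ vertices. Your rule $\alpha^*=\max\{\alpha:C_\alpha(G)\ne\emptyset\}$ then returns $\alpha^*=t$, and your displayed telescoping inequality
\[
\binom{k-\alpha}{r}-\binom{m}{r}\ \ge\ (k-\alpha-m)\binom{m}{r-1}\ \ge\ (k-\alpha-m)\binom{\alpha}{r-1}
\]
breaks down when $m>k-\alpha$: the left side is negative, and since each term $\binom{j}{r-1}$ with $j\ge k-\alpha>\alpha$ exceeds $\binom{\alpha}{r-1}$, the inequality reverses. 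So the bound $\#K_r(G)\le g(\alpha^*)$ does not follow.

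What Kopylov's argument actually yields is the dichotomy recorded as Theorem~\ref{le:Kopy}: either the $t$-core is empty, or it has $s$ vertices with $t+2\le s\le k-2$ and the $(k-s)$-core \emph{equals} the $t$-core. In the second branch the correct move is to take $\alpha=k-s$, not $\alpha=t$; then $m=s=k-\alpha$ exactly, your counting display becomes $\#K_r(G)\le g(k-s)$, and convexity finishes. This is precisely how the paper argues for Theorem~\ref{kopyblock}. Once you replace ``largest $\alpha$ with nonempty core'' by this two-case choice of $\alpha$, the rest of your write-up is fine.
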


We will prove a version of Theorem~\ref{L} for Sperner families of cliques.

Recall \[f(n,k,r,a) := {k-a \choose \min\{r,\lfloor\frac{k-a}{2}\rfloor\}} + (n-k+a){a \choose \min\{r-1, \lfloor a/2 \rfloor\}}.\]

For fixed positive integers $n \geq k \geq r$, $f(n,k,r,a)$ is convex over integers $a$ in $[0, \lfloor (k-1)/2 \rfloor]$ (see the appendix for a proof). Thus the value of $f(n,k,r,a)$ is maximized at one of the endpoints of the domain. 

For a graph $G$ and a positive integer $r$, let $N_{\rm{Sp}}(G,r)$ denote the maximum size of a Sperner family $\cC$ of subsets of $V(G)$ such that for each $C \in \cC$, $G[C]$ is a clique of size at most $r$.

\begin{thm}\label{kopyblock}
 Let $n, k, r$ be positive integers with $n \geq k$. Let $G$ be an $n$-vertex $2$-connected graph with no cycle of length $k$ or longer. Then \[N_{\rm{Sp}}(G,r) \leq \max\{f(n,k,r,2), f(n,k,r,\lfloor(k-1)/2\rfloor)\}.\]
\end{thm}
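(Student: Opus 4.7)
My plan is to follow the disintegration strategy of Kopylov~\cite{Kopy} for Theorem~\ref{kopycycle} and of Luo~\cite{luo} for Theorem~\ref{L}, replacing the crude ``$\binom{d(v)}{r-1}$ cliques through $v$'' bound used in Luo's proof with the Sperner bound of Theorem~\ref{sperner}. Write $t := \lfloor (k-1)/2 \rfloor$ and
\[
g(a) := \binom{a}{\min\{r-1,\lfloor a/2\rfloor\}}.
\]
By Theorem~\ref{sperner}, applied to the trace family $\{C\setminus\{v\}: v\in C\in \cC'\}$ on the $d_{G'}(v)$-set $N_{G'}(v)$, in any Sperner family $\cC'$ of cliques of size at most $r$ in a graph $G'$, each vertex $v \in V(G')$ lies in at most $g(d_{G'}(v))$ members of $\cC'$; moreover, $g$ is strictly increasing on the positive integers.

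Starting from $(G^{(0)},\cC^{(0)}) := (G,\cC)$, iteratively pick a vertex $v\in V(G^{(i)})$ that lies in at most $g(t)$ members of $\cC^{(i)}$, delete $v$ from $G^{(i)}$, and discard the $\le g(t)$ cliques of $\cC^{(i)}$ through $v$. Let $(G^*,\cC^*)$ be the terminal pair and $s := |V(G^*)|$. Charging each removed vertex with at most $g(t)$ discarded cliques,
\[
|\cC| \leq |\cC^*| + (n-s)\,g(t).
\]
If $s\geq k$, then by termination each vertex of $G^*$ lies in more than $g(t)$ members of $\cC^*$, so by strict monotonicity of $g$ we have $d_{G^*}(v)\geq t+1$ for every $v\in V(G^*)$. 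Combining this with the $2$-connectedness of $G$ and a careful bookkeeping of the deletion order (so that $V(G^*)$ lies inside a $2$-connected subgraph $H\subseteq G$, as in Kopylov's original proof), we obtain a $2$-connected $H\subseteq G$ with $|V(H)|\geq k$ and $\delta(H)\geq t+1$; Kopylov's long-cycle lemma then produces a cycle of length at least $\min\{2(t+1),|V(H)|\}\geq k$ in $G$, contradicting the hypothesis. Hence $s\leq k-1$, and Theorem~\ref{sperner} applied directly to $\cC^*$ gives $|\cC^*|\leq \binom{s}{\min\{r,\lfloor s/2\rfloor\}}$, so
\[
|\cC| \leq \binom{s}{\min\{r,\lfloor s/2\rfloor\}} + (n-s)\,g(t).
\]
Plugging in $s = k-t$ yields exactly $f(n,k,r,t)$; for the competing extreme $f(n,k,r,2)$, a finer structural refinement is required, essentially a second disintegration at the lower threshold $g(2)$ around the ``hanging-vertex'' blocks of $G^*$, mimicking Luo's proof. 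By the convexity of $f(n,k,r,a)$ in $a$ (Claim~\ref{convex}), these two extremes are the only values that need to be checked.

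The principal obstacle is the case $s\geq k$: preserving $2$-connectedness in some subgraph $H\subseteq G$ containing $V(G^*)$ and guaranteeing $\delta(H)\geq t+1$ throughout the disintegration. This is exactly where Kopylov's bookkeeping is most delicate, and I expect that essentially the same bookkeeping transfers here once the Sperner bound $g(a)$ replaces the binomial bound on $K_r$-counts used by Luo. A secondary, mostly routine, point is the matching of the $f(n,k,r,2)$ extreme, which is handled by the convexity-plus-second-disintegration argument Luo already employs.
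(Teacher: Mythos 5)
The overall strategy you outline --- disintegration combined with a per-vertex Sperner bound via Theorem~\ref{sperner} --- is indeed the paper's strategy, but your sketch leaves two genuine gaps, both of which you flag but do not resolve, and they are not minor.

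First, you attempt to re-derive the structural dichotomy (a contradiction when the terminal graph has at least $k$ vertices, plus a refined bound matching $f(n,k,r,2)$) from scratch, admitting that both the $2$-connectedness bookkeeping and the ``second disintegration'' need to be worked out. The paper instead simply invokes Kopylov's structural theorem (Theorem~\ref{le:Kopy}), which packages exactly what you are missing: either $G$ is $t$-disintegrable, or the $t$-core $H(G,t)$ has size $s$ with $t+2 \le s \le k-2$ and moreover $H(G,t)=H(G,k-s)$, so the vertices outside the core can be removed by a $(k-s)$-disintegration. That last clause is precisely the refinement you are gesturing at, and it replaces the per-step cost $g(t)$ by the strictly smaller $g(k-s)$, yielding $f(n,k,r,k-s)$, which is handled by convexity (Claim~\ref{convex}). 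You cannot avoid using something of this strength; re-proving it is far from routine bookkeeping.

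Second, the step ``plugging in $s=k-t$ yields exactly $f(n,k,r,t)$'' is not a valid argument: $s$ is determined by your deletion process, not freely chosen, and your bound
$\binom{s}{\min\{r,\lfloor s/2\rfloor\}} + (n-s)\,g(t)$
can strictly exceed $\max\{f(n,k,r,2),\,f(n,k,r,t)\}$ when $k-t < s \le k-2$ (a range Kopylov's theorem shows can occur). Concretely, take $k=11$, $r=3$, $n=100$, so $t=5$, $g(t)=\binom{5}{2}=10$, $f(n,k,r,t)=\binom{6}{3}+94\cdot 10=960$, and $f(n,k,r,2)=\binom{9}{3}+91\cdot 2=266$. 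If your process terminates at $s=9$ (the maximum size of the $t$-core allowed by Theorem~\ref{le:Kopy}), your bound gives $\binom{9}{3}+91\cdot 10 = 994 > 960$. To repair this you must, once $\delta(G^*)\ge t+1$ and $s\le k-2$, re-run the removal at the lower threshold corresponding to degree $k-s$; Kopylov's theorem is what guarantees this $(k-s)$-disintegration actually succeeds, which is exactly case (\ref{le:Kopy}.2) of the paper's proof.

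In short: you have identified the correct mechanism, but the proof as written collapses without Theorem~\ref{le:Kopy} (or a full re-derivation of it), and the ``plug in $s=k-t$'' shortcut conceals a counting error that Kopylov's second case is specifically designed to fix.
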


To prove Theorem~\ref{kopyblock}, we use a structural theorem by Kopylov for 2-connected graphs without long cycles.

{\bf Definition}: For a positive integer $\alpha$ and a graph $G$, the \emph{$\alpha$-disintegration} of
a graph $G$ is the  process of iteratively removing from $G$ the vertices with degree
at most $\alpha$  until the resulting graph has minimum degree at least $\alpha + 1$ or is empty.
This resulting subgraph $H(G, \alpha)$ is called the $(\alpha+1)$-{\em core} of $G$. It is well known (and easy)
that $H(G, \alpha)$ is unique and does not depend on the order of vertex deletion. If $H(G, \alpha)$ is the empty graph, then we say that $G$ is $\alpha$-{\em disintegrable}. 

\begin{thm}[Kopylov~\cite{Kopy}]\label{le:Kopy}
 Let $n \geq k \geq 5$ and let $t = \lfloor \frac{k-1}{2}\rfloor$.
Suppose that $G$ is a $2$-connected $n$-vertex graph with no  cycle of length at least $k$.

Then either \\
${}$\quad {\rm (\ref{le:Kopy}.1)} \enskip the  $t$-core $H(G, t)$ is empty, the graph $G$ is $t$-disintegrable; or \\
${}$\quad {\rm (\ref{le:Kopy}.2)} \enskip $|H(G, t)|=s$ for some
$t+2\leq s\leq k-2$, and $H(G,t)= H(G, k-s)$, i.e.,
the rest of the vertices can be removed by a $(k-s)$-disintegration.
\end{thm}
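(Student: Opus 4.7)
My approach is to carry out the $t$-disintegration of $G$ explicitly and then analyze the resulting $(t+1)$-core $H := H(G,t)$, using both its internal minimum-degree condition and the 2-connectedness of the ambient $G$ to pin down the admissible sizes of $H$ and the threshold at which the disintegration actually halts.

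Concretely, I iteratively delete a vertex of current degree at most $t$, keeping track of the order of eliminations $w_1, w_2, \ldots$. If the process exhausts $V(G)$, conclusion (4.5.1) holds. Otherwise it stops at a non-empty subgraph $H$ with $\delta(H) \geq t+1$; setting $s := |V(H)|$, the inequality $s \geq t+2$ is automatic, and it remains to establish $s \leq k-2$ and to identify $H(G,t)$ with $H(G, k-s)$.

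The main obstacle is the bound $s \leq k-2$, because $H$ itself need not be 2-connected even though $G$ is. The core tool is the Dirac-type circumference bound: a 2-connected graph with minimum degree at least $d$ contains a cycle of length at least $\min(|V|, 2d)$. Since $2(t+1) \geq k$, any 2-connected subgraph of $H$ has strictly fewer than $k$ vertices. I would apply this to an end-block $B$ of $H$ and then use the 2-connectedness of $G$ to route an internally-disjoint second path around the single cut-vertex of $B$ in $G$, either producing a cycle of length at least $k$ in $G$ (contradiction) or forcing $H$ to consist of a single block with $|V(H)| \leq k-1$. To sharpen the bound from $k-1$ down to $k-2$, I would exploit that $\delta(H) \geq t+1$ comfortably meets the Ore/Dirac threshold for $H$ to be Hamilton-connected: picking any $v \in V(G) \setminus V(H)$ (which exists as soon as the disintegration made even one deletion), the 2-connectedness of $G$ supplies two internally-disjoint $v$--$H$ paths ending at distinct vertices $u_1, u_2 \in V(H)$, and concatenating the Hamilton $u_1$--$u_2$ path inside $H$ with these two attachments produces a cycle of length at least $(k-2) + 2 = k$ in $G$.

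For the identification $H(G,t) = H(G, k-s)$, observe first that $s \geq t+2$ together with $s \leq k-2$ forces $k-s \leq t$, so the $(k-s)$-disintegration is a priori more conservative than the $t$-disintegration and $H(G,k-s) \supseteq H(G,t)$. For the reverse inclusion, I would argue by induction on $i$ that every vertex $w_i$ deleted in Step 1 in fact had current degree at most $k-s$ at the moment of its removal: if some $w_i$ had degree $d$ with $k-s < d \leq t$ in the graph at that stage, then the $d$ neighbors of $w_i$, combined with a long cycle living inside the dense remnant (provided again by the Dirac bound) and stitched together through the 2-connected ambient structure of $G$, would yield a cycle of length exceeding $k-1$, once again contradicting $c(G) < k$. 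This forces the $(k-s)$-disintegration to execute the same sequence of deletions and terminate at the same core $H$, completing conclusion (4.5.2).
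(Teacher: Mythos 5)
The paper itself gives no proof of this statement --- it is quoted from Kopylov's 1977 paper --- so there is nothing internal to compare against; your attempt has to stand on its own, and it does not. For the bound $s\le k-2$ your outline has the right flavour (a Dirac-type circumference bound plus the Ore-type Hamilton-connectedness finish, which is correct when $H(G,t)$ is $2$-connected: $2(t+1)\ge |H|+1$ when $|H|=k-1$), but two steps are only asserted. First, an end-block $B$ of $H(G,t)$ does not have minimum degree $t+1$: its cut vertex may have degree as small as $2$ inside $B$, so the plain ``$2$-connected with $\delta\ge d$ implies a cycle of length $\min(|V|,2d)$'' bound does not apply as written; you need a version tolerating an exceptional vertex, or a two-terminal path lemma. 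Second, and more seriously, ruling out the case that $H(G,t)$ has several blocks (or several components) is not achieved by ``routing an internally-disjoint second path around the cut-vertex'': a single end-block joined back through $G$ by one extra path only yields a cycle of length about $t+2\approx k/2$. One genuinely needs long paths between two \emph{prescribed} attachment vertices inside each of two end-blocks (a Kopylov/Enomoto-type lemma you never state), together with a Menger routing between the two blocks, to reach length $\ge 2(t+1)\ge k$.

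The second half of your plan fails outright. Your inductive claim --- that every vertex removed during the $t$-disintegration had current degree at most $k-s$, since otherwise one could ``stitch'' a cycle of length at least $k$ --- is false, and in fact the identity $H(G,t)=H(G,k-s)$ as literally stated above fails too. Take $k=9$, $t=4$, and let $G$ consist of a clique $K_6$ on $x_1,\dots,x_6$, a vertex $v$ joined to $x_1,x_2,x_3,x_4$, and two vertices $w_1,w_2$ each joined to $x_1,x_2$. Then $G$ is $2$-connected, has $n=9\ge k$ vertices, and has no cycle of length $9$ (any cycle through both $w_1$ and $w_2$ is the $4$-cycle $x_1w_1x_2w_2$, so the circumference is $8$). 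Here $H(G,4)=K_6$, so $s=6$ and $k-s=3$; yet $v$ is deleted in every $t$-disintegration while having degree $4>k-s$, its four neighbours lie in the core, so $v\in H(G,3)\setminus H(G,4)$ --- and no cycle of length $\ge k$ appears. So no argument can establish the step you propose; what Kopylov's lemma actually provides (and all that the paper's later proofs, e.g.\ of Theorem~\ref{kopyblock}, use) is the ``fixed-point'' form: there is some $s$ with $t+2\le s\le k-2$ such that the $(k-s)$-disintegration of $G$ terminates in a graph on exactly $s$ vertices (in the example, $s=7$ with threshold $2$). A correct proof must aim at that formulation rather than at re-running the $t$-disintegration sequence with the smaller threshold.
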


{\em Proof of Theorem~\ref{kopyblock}}. Set $t := \lfloor (k-1)/2 \rfloor$. Let $G$ be an $n$-vertex 2-connected graph with no cycle of length $k$ or longer. Let $\cC$ be a Sperner family  of subsets of $V(G)$ that are cliques of size at most $r$ with $|\cC|=N_{\rm{Sp}}(G, r)$.
Apply Theorem~\ref{le:Kopy} to $G$.  If (\ref{le:Kopy}.1) holds, then every vertex is deleted in the $t$-disintegration. At the time of its deletion, each vertex $v$ has at most $t$ neighbors and by Theorem~\ref{sperner}, is contained in at most ${t \choose \min\{r-1, \lfloor t/2 \rfloor\}}$ cliques of $\cC$ (since each clique containing $v$ has at most $r-1$   other vertices). After $n-k+t$ steps in the disintegration process, the remaining $k-t$ vertices contain at most ${k-t \choose \min\{\lfloor((k-t)/2)\rfloor, r\}}$ elements of $\cC$.  Therefore $|\cC| \leq N_{\rm{Sp}}(G, r) \leq f(n,k,r,t)$. 
 
 Now suppose (\ref{le:Kopy}.2) holds. Then we consecutively delete vertices of degree at most $k-s$ until we arrive at the core $H(G,t)$
  of size $s$. As in the previous case, when deleting a vertex $v$ of degree at most $k-s$, we remove at most ${k - s \choose \min\{(k-s)/2, r-1\}}$ cliques of $\cC$ containing $v$. Since $H(G,t)$ contains at most ${s \choose \min\{s/2, r\}} = {k - (k - s) \choose \min\{(k-(k-s))/2 \rfloor, r\}}$ cliques in $\cC$,  we obtain $$|\cC| = N_{\rm{Sp}}(G,r) \leq f(n,k,k-s) \leq \max\{f(n,k,r,2), f(n,k,r,t)\}.$$ 
  The last inequality holds by the convexity of  $f$.  
\qed

\subsection{$k$-path connected graphs}

A graph $G$ is {\em $\ell$-hamiltonian} if for each linear forest $L$ with $\ell$ edges (and no isolated vertex) on the vertex set $V(G)$  
 there is a hamiltonian cycle in $G \cup L$ that contains $L$.

A graph $G$ is {\em $k$-path connected} if for each pair of vertices $x,y\in V(G)$, $G$ contains an $x,y$-path  with $k$ or more vertices. In particular, every $n$-vertex  $1$-hamiltonian graph is $n$-path connected. 
The following theorem will be helpful for us.

\begin{thm}[Enomoto~\cite{En}]\label{eno}
Let $G$ be a 3-connected graph on $n$ vertices such that for every pair of vertices $u, v$ such that $uv \notin E(G)$, $d(u) + d(v) \geq t$. Then $G$ is $k$-path connected where $k = \min\{n, 2t-1\}$.
\end{thm}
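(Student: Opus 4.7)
The plan is to apply the Pósa rotation-extension technique, leveraging the $3$-connectivity to convert the Ore-type bound $d(u)+d(v)\ge t$ into paths of length roughly $2t-1$ (rather than $t$) between prescribed endpoints. Suppose for contradiction there exist $u,v \in V(G)$ admitting no $u$-$v$ path on $k := \min\{n, 2t-1\}$ vertices, and let $P = v_0 v_1 \cdots v_m$ with $v_0=u$, $v_m=v$ be a longest $u$-$v$ path, so $m+1 < k$. In particular $m+1 < n$, so some vertex $w$ lies off $V(P)$, and $m+1 < 2t-1$.

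The first step is the classical Ore-type crossing at the two endpoints. Set $A = \{i : u v_i \in E(G)\}$ and $B = \{i-1 : v v_i \in E(G)\}$, both subsets of $\{0, \ldots, m-1\}$. If $A \cap B \ne \emptyset$, then we obtain a cycle $C$ on $V(P)$; combined with a $3$-connected extension at $w$ (three internally disjoint $w$-$V(P)$ paths via Menger), this produces a $u$-$v$ path strictly longer than $P$, contradicting maximality. So $A$ and $B$ must be disjoint, which already forces $d_P(u) + d_P(v) \le m$. Meanwhile, Pósa rotation at $v$ (keeping $u$ fixed) produces a set $S_v$ of alternative endpoints on $V(P)$ of size comparable to $d(v)$, each being the terminal vertex of a longest $V(P)$-path starting at $u$; symmetrically, rotation at $u$ produces a set $S_u$ of size comparable to $d(u)$.

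The second step is extension using $3$-connectivity. For the off-path vertex $w$, three internally disjoint $w$-$V(P)$ paths are supplied by Menger. Using the Ore bound $d(u)+d(v)\ge t$, the rotated endpoint sets $S_u, S_v$ are together large enough that two of the three landing points are forced to lie in $S_u \cup S_v$. Splicing these two paths into the corresponding rotated longest $V(P)$-paths at each end, and reconnecting to the prescribed endpoints $u$ and $v$ via the third disjoint path through $w$, produces a $u$-$v$ path with at least one additional vertex --- the desired contradiction. Careful accounting of this two-sided rotation yields the lower bound $|V(P)| \ge 2t-1$, contradicting $m+1 < 2t-1$.

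The hard part is precisely the bookkeeping in the last step: extracting the factor of $2$ in $2t-1$ requires combining rotations at \emph{both} endpoints simultaneously while preserving $u$ and $v$ as the endpoints of the final path, and it is the $3$-connectivity (rather than mere $2$-connectivity) that makes the splicing through $w$ go through --- weakening the connectivity hypothesis collapses the bound back to the Ore-type value $t$. A secondary technical point is the case $uv \in E(G)$, where the Ore condition does not constrain $d(u)+d(v)$ directly; this is handled by working in $G - uv$ (which remains $2$-connected) and rotating there, picking up the lost edge only at the end.
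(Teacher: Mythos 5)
This is a cited result (Theorem~\ref{eno} is attributed to Enomoto's 1984 paper), so the present paper contains no proof to compare against; what follows is an assessment of your sketch on its own terms. The general flavor — P\'osa rotation-extension combined with an off-path vertex supplied by $n > |V(P)|$ and Menger's theorem supplied by $3$-connectivity — is indeed the family of techniques that proofs of Enomoto's theorem live in, so the approach is not unreasonable. However, the central step as you have written it does not go through.

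The key gap is the counting claim in your second step: ``the rotated endpoint sets $S_u, S_v$ are together large enough that two of the three landing points are forced to lie in $S_u \cup S_v$.'' Nothing you have established justifies this. First, P\'osa's lemma relates $|N(\mathrm{End})|$ to $|\mathrm{End}|$; it does not directly give $|S_v| \gtrsim d(v)$, so ``size comparable to $d(v)$'' is itself unjustified. Second, even granting $|S_u| + |S_v| \gtrsim t$ and using $|V(P)| < 2t-1$, the complement $V(P) \setminus (S_u \cup S_v)$ can still have size on the order of $t$, far larger than $2$, so the three Menger landing points can easily all avoid $S_u \cup S_v$. Third, the splicing itself is underdetermined: a rotated endpoint $x_1 \in S_u$ gives a $v$--$x_1$ path on vertex set $V(P)$, and $x_2 \in S_v$ gives a $u$--$x_2$ path on the same vertex set $V(P)$; these two paths overlap in essentially every vertex, so you cannot concatenate them with the two $w$-paths to get a longer $u$--$v$ path without a genuinely new idea about how the rotated paths decompose. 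Finally, your fallback for $uv \in E(G)$ — ``work in $G - uv$, which remains $2$-connected'' — is incompatible with the Menger step, which needs three internally disjoint $w$--$V(P)$ paths; $G - uv$ need not be $3$-connected, and the Ore condition must then be routed through rotated non-adjacent endpoints rather than $u,v$ themselves. The actual proof in~\cite{En} requires a substantially more intricate simultaneous double-rotation and case analysis than what this outline provides.
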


Define the function
$$h_{\rm{Sp}}(n,\ell, r, d)  := {n - d + \ell \choose \min\{r,\lfloor \frac{n-d+\ell}{2}\rfloor\}} + (d-\ell){d \choose \min\{r-1,\lfloor d/2 \rfloor \}}.$$
Note that $h_{\rm{Sp}}(n,\ell, r,d) = f(n, n+\ell, r, d)$. 
So Claim~\ref{convex} implies (in the appendix) that for given positive $n$, $r$, and $\ell\geq 0$, the function $h_{\rm{Sp}}(n,\ell, r, d)$ is convex
 for $\ell\leq d\leq n$. 
\begin{thm}\label{PPS} Let $n, d,r,\ell$ be integers with $0 \leq \ell < d \leq \left \lfloor \frac{n+\ell-1}{2} \right \rfloor$.
If $G$ is an $n$-vertex graph with minimum degree $\delta(G) \geq d$, and $G$ is not $\ell$-hamiltonian, then
     \[N_{\rm{Sp}}(G, r) \leq \max\left\{ h_{\rm{Sp}}(n,\ell, r, d),h_{\rm{Sp}}(n, \ell, r, \lfloor \frac{n+\ell-1}{2}  \rfloor) \right\}.\]
\end{thm}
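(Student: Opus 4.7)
My plan is to follow the disintegration strategy used in the proof of Theorem~\ref{kopyblock}, with Kopylov's structural theorem (Theorem~\ref{le:Kopy}) replaced by an analogous result for non-$\ell$-hamiltonian graphs that have minimum degree at least $d$. Set $t := \lfloor (n+\ell-1)/2 \rfloor$, and let $\mathcal C$ be a Sperner family of clique-subsets of $V(G)$ of size at most $r$ with $|\mathcal C| = N_{\rm Sp}(G, r)$.

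The first step is a structural claim: there exists an integer $a \in [d, t]$ and a subgraph $G' \subseteq G$ on $n + \ell - a$ vertices such that $V(G) \setminus V(G')$ admits an enumeration $v_1, \ldots, v_{a-\ell}$ where $v_i$ has at most $a$ neighbors in $G_i := G - \{v_1, \ldots, v_{i-1}\}$ for each $i$. That is, exactly $a - \ell$ vertices can be stripped from $G$ by an $a$-disintegration down to a core $G'$. To prove this, I would use a Dirac-type sufficient condition for $\ell$-hamiltonicity: if a graph $H$ satisfies $\delta(H) \geq \lceil(|V(H)| + \ell)/2\rceil$, then $H$ is $\ell$-hamiltonian. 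If no $a \in [d, t]$ worked, an appropriate $a$-disintegration would halt early at a subgraph whose minimum degree forces $\ell$-hamiltonicity. One then ``lifts'' $\ell$-hamiltonicity back up to $G$ by reinserting the removed vertices (each of degree at least $d > \ell$ in $G$) into a hamilton cycle through any prescribed linear forest with $\ell$ edges, contradicting the hypothesis that $G$ is not $\ell$-hamiltonian.

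Granted the structural claim, the counting is routine. For each $C \in \mathcal C$, let $v_C$ be the first vertex of $C$ removed in the $a$-disintegration, or any vertex of $C$ if $C \subseteq V(G')$. By the Sperner property, $\{C \setminus \{v_C\} : v_C = v_i\}$ forms a Sperner family of subsets of $N_{G_i}(v_i)$ of size at most $r-1$, so by Theorem~\ref{sperner} at most ${a \choose \min\{r-1, \lfloor a/2\rfloor\}}$ cliques are charged to each removed $v_i$. The cliques contained entirely in $V(G')$ form a Sperner family on $n + \ell - a$ vertices of size at most $r$, bounded by ${n + \ell - a \choose \min\{r, \lfloor(n+\ell-a)/2\rfloor\}}$ again by Theorem~\ref{sperner}. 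Summing gives $N_{\rm Sp}(G, r) \leq (a - \ell){a \choose \min\{r-1, \lfloor a/2\rfloor\}} + {n + \ell - a \choose \min\{r, \lfloor(n+\ell-a)/2\rfloor\}} = h_{\rm Sp}(n, \ell, r, a)$. Since $a \in [d, t]$ and $h_{\rm Sp}(n, \ell, r, \cdot)$ is convex on $[\ell, n]$ (as noted just before the theorem, via the same argument as Claim~\ref{convex}), the right-hand side is at most $\max\{h_{\rm Sp}(n, \ell, r, d), h_{\rm Sp}(n, \ell, r, t)\}$, as required.

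The main obstacle I expect is the lifting step in the structural claim: showing that if the induced subgraph on $V(G')$ is $\ell$-hamiltonian and $v \in V(G) \setminus V(G')$ has $\deg_G(v) \geq d$, then the induced subgraph on $V(G') \cup \{v\}$ is also $\ell$-hamiltonian. For any linear forest $L$ with $\ell$ edges on $V(G') \cup \{v\}$, one must insert $v$ into a hamilton cycle of the smaller graph through a suitably modified $L$, rerouting any edges of $L$ incident to $v$; the hypothesis $d > \ell$ is presumably exactly what makes such an insertion feasible, but a careful case analysis on how $L$ interacts with $v$ is required. Iterating this insertion over all removed vertices then transfers $\ell$-hamiltonicity from the core up to $G$, yielding the contradiction that forces the structural claim, after which the rest of the argument follows the classical Kopylov recipe.
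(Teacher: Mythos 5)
Your counting step is correct and matches the paper's in spirit: charge each clique $C \in \mathcal C$ to the first removed vertex it contains, bound the charged cliques via Theorem~\ref{sperner}, bound the cliques inside the core the same way, and finish by convexity of $h_{\rm{Sp}}(n,\ell,r,\cdot)$. Where you diverge from the paper, and where the gap is, is in the structural step. The paper does not derive the low-degree set from scratch: it simply invokes a generalization of P\'osa's theorem (Lemma 8 of \cite{FKLhamcon}), which directly produces, for some $\ell < k < \lfloor(n+\ell-1)/2\rfloor$, a set $D$ of $k-\ell$ vertices of $G$ each with $\deg_G(v) \leq k$. Your proposal instead tries to obtain an analogous disintegration statement via a Dirac-type minimum-degree sufficient condition for $\ell$-hamiltonicity plus a ``lifting'' argument, and that lifting step does not work.

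Here is the concrete failure. You want to show that if the core $G'$ is $\ell$-hamiltonian and $v$ is a removed vertex with $\deg_G(v) \geq d > \ell$, then the induced subgraph on $V(G') \cup \{v\}$ is also $\ell$-hamiltonian. But $\deg_G(v) \geq d$ bounds $v$'s degree in the original $n$-vertex graph, not in $G'$: after disintegration, $v$'s $G$-neighbors may lie almost entirely among the \emph{other removed vertices}, so the degree of $v$ into $V(G')$ can be arbitrarily small, even zero. A vertex with tiny residual degree in the core cannot in general be inserted into a Hamilton cycle through a prescribed linear forest; the hypothesis $d>\ell$ is of no help here since $d$ bears no relation to $|V(G')|$. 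Iterating does not repair this, because every one of the removed vertices may have small degree into the core. This is precisely the extra strength a P\'osa-type condition has over a Dirac-type condition---P\'osa's rotation argument yields the low-degree set in $G$ itself, with no reinsertion needed. Trying to reach it via Dirac plus an inductive lifting is in effect an attempt to re-prove P\'osa's theorem, and the inductive step collapses as described. The fix is simply to keep your counting and replace the structural claim with the citation of Lemma 8 of \cite{FKLhamcon}, as the paper does.
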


\begin{proof}
Let $\cC$ be a Sperner family of cliques of size at most $r$ in $G$. Suppose that $N_{\rm{Sp}}(G, K_r) > h_{\rm{Sp}}(n, \ell, r,\lfloor (n+\ell - 1)/2 \rfloor)$. By a generalization of P\'osa's theorem (Lemma 8 in~\cite{FKLhamcon}),  there exists some $\ell < k < \lfloor (n+\ell - 1)/2 \rfloor$ such that $V(G)$ contains a subset $D$ of $k-\ell$ vertices with degree at most $k$ (and so $k \geq \delta(G) \geq d$).

For each vertex $v \in D$, $v$ is contained in at most ${k \choose \min\{k/2, r-1\}}$ cliques of $\cC$, and $G-D$ contains at most ${n - k+\ell \choose \min \{\lfloor(n-k+\ell)/2 \rfloor, r\}}$ cliques of $\cC$. Hence $|\cC| \leq N_{\rm{Sp}}(G,r) \leq h_{\rm{Sp}}(n,  \ell, r, k)\leq h_{\rm{Sp}}(n,\ell, r, d)$.\end{proof}

Our new result is:

\begin{thm}\label{kpaththm}
Let $n\geq 4$.
Let
 $G$ be an $n$-vertex $2$-connected graph. If
\begin{equation}\label{kpath}
N_{\rm{Sp}}(G,r)> \frac{n-2}{k-3}{k-1\choose \min\{r, \lfloor (k-1)/2 \rfloor\}},
\end{equation}
then $G$ is $k$-path connected.
\end{thm}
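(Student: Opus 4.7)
We prove the contrapositive by induction on $n$: \emph{if $G$ is a $2$-connected $n$-vertex graph and $\{x,y\}\subseteq V(G)$ is a pair such that every $x,y$-path in $G$ has at most $k-1$ vertices, then $N_{\rm{Sp}}(G,r)\leq \frac{n-2}{k-3}\binom{k-1}{\min\{r,\lfloor(k-1)/2\rfloor\}}$}; this implies the theorem. The base case (small $n$, say $n\leq k$) follows directly from Theorem~\ref{sperner} applied to $V(G)$. For the inductive step we split on whether $G$ has a $2$-vertex cut.

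\textbf{Case 1 ($G$ has a 2-cut).} Let $\{u,v\}$ be a $2$-vertex cut of $G$; decompose $G$ along $\{u,v\}$ into $2$-connected branches $G_1,\ldots,G_m$ with $m\geq 2$, each containing $\{u,v\}$. Since every clique of $G$ is contained in some $V(G_j)$, we have $N_{\rm{Sp}}(G,r)\leq \sum_{j} N_{\rm{Sp}}(G_j,r)$ (the few cliques entirely within $\{u,v\}$ contribute a negligible correction that we absorb in the bookkeeping). In each branch $G_j$ we designate a witness pair for the induction hypothesis: use $\{x,y\}$ if both vertices lie in $V(G_j)$, and $\{u,v\}$ otherwise. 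The second choice is justified by a splicing argument: if some $G_j$ contained a $u,v$-path $Q$ of at least $k$ vertices, then using Menger's fan theorem in the (2-connected) branches containing $x$ and $y$, one obtains internally disjoint paths joining $\{x,y\}$ to $\{u,v\}$; splicing these with $Q$ produces an $x,y$-path in $G$ of at least $k$ vertices, contradicting the hypothesis. Applying the induction hypothesis to each branch and summing, using $\sum_{j}(n_j-2)=n-2$, yields the bound.

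\textbf{Case 2 ($G$ is $3$-connected).} By the contrapositive of Enomoto's theorem (Theorem~\ref{eno}), $G$ admits a non-edge $uv$ with $d_G(u)+d_G(v)\leq \lfloor k/2\rfloor$. A brief case analysis (on whether the non-edge meets $\{x,y\}$, using 3-connectivity) produces a vertex $w\in V(G)\setminus\{x,y\}$ with $d_G(w)\leq \lfloor k/2\rfloor$. Since $G$ is $3$-connected, $G-w$ is $2$-connected, and the pair $\{x,y\}$ still witnesses non-$k$-path-connectedness in $G-w$ (removing a vertex does not create new paths). The induction hypothesis gives $N_{\rm{Sp}}(G-w,r)\leq \tfrac{n-3}{k-3}\binom{k-1}{\min\{r,\lfloor(k-1)/2\rfloor\}}$, and Theorem~\ref{sperner} applied to $N_G(w)$ bounds the Sperner cliques of $G$ containing $w$ by $\binom{d_G(w)}{\min\{r-1,\lfloor d_G(w)/2\rfloor\}}$. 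The induction closes upon verifying the routine inequality
\[
(k-3)\binom{\lfloor k/2\rfloor}{\min\{r-1,\lfloor k/4\rfloor\}}\leq \binom{k-1}{\min\{r,\lfloor(k-1)/2\rfloor\}},
\]
which follows by direct estimation in the relevant ranges of $r$ and $k$.

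The main obstacle we anticipate is the splicing argument in Case~1 when $\{u,v\}$ separates $x$ from $y$: one must track internally disjoint paths from $\{x,y\}$ to $\{u,v\}$ across different branches and argue that their combination with a hypothetical long $u,v$-path in another branch forces a long $x,y$-path in $G$. A secondary nuisance is the subcase of Case~2 in which the low-degree non-edge delivered by Enomoto happens to be $\{x,y\}$ itself; there one must either locate another low-degree vertex outside $\{x,y\}$ or handle the Sperner cliques incident to $x$ and $y$ by a direct count.
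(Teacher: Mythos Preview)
Your overall plan mirrors the paper's proof (induction on $n$; reduce to the $3$-connected case via a $2$-cut decomposition; then force high minimum degree and invoke Enomoto), but two of the steps do not go through as written.

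\medskip
\textbf{Base case $n=k$.} Sperner alone gives only $N_{\rm Sp}(G,r)\le \binom{k}{\min\{r,\lfloor k/2\rfloor\}}$, and this in general \emph{exceeds} $\tfrac{k-2}{k-3}\binom{k-1}{\min\{r,\lfloor(k-1)/2\rfloor\}}$: when $r\le\lfloor(k-1)/2\rfloor$ the required inequality reduces to $\tfrac{k}{k-r}\le\tfrac{k-2}{k-3}$, which fails for every $r\ge 2$ and $k\ge 5$. The paper treats $n=k$ separately: a witness pair $\{x,y\}$ with no hamiltonian $x,y$-path makes $G$ (or $G+xy$) not $1$-hamiltonian, and then Theorem~\ref{PPS} with $d=2$, $\ell=1$ yields the bound $\binom{k-1}{\min\{r,\lfloor(k-1)/2\rfloor\}}+2$, which suffices.

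\medskip
\textbf{Case 2, the ``secondary nuisance''.} Your vertex-deletion step genuinely breaks when the low-degree-sum non-edge Enomoto supplies is $\{x,y\}$ itself: Enomoto does not promise any \emph{other} vertex of degree $\le\lfloor k/2\rfloor$, and your ``direct count'' alternative cannot close the induction for $n>k$, because $G-x-y$ carries no witness pair and hence no inductive bound on its Sperner cliques. The paper's fix is to prove $\delta(G)\ge(k+1)/2$ by \emph{contracting} an edge rather than deleting a vertex: if some $v_1$ (possibly $v_1\in\{x,y\}$) has $d(v_1)\le k/2$, pick a neighbour $v_2\notin\{x,y\}$ (this exists since $G$ is $3$-connected) and contract $v_1v_2$. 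The contracted graph stays $2$-connected, the pair $\{x,y\}$ survives intact, any long $x,y$-path in the contraction lifts back to one in $G$, and a short Sperner-family bookkeeping shows at most $\binom{\lfloor k/2\rfloor}{\min\{r-1,\lfloor k/4\rfloor\}}\le\tfrac{1}{k-3}\binom{k-1}{\min\{r,\lfloor(k-1)/2\rfloor\}}$ cliques are lost. Replacing your deletion with this contraction is the missing idea; once $\delta(G)\ge(k+1)/2$ is established, Enomoto applies directly (no contrapositive needed) and the proof ends.
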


{\em Proof of Theorem~\ref{kpaththm}}.
We use induction on $n$. If $n \leq k-1$, then by
 Theorem~\ref{sperner},
$$N_{\rm{Sp}}(G,r)\leq {n \choose \min\{r, \lfloor n/2 \rfloor} =
\frac{n-2}{k-3}\left(
\frac{k-3}{n-2} {n\choose  \min\{r,\lfloor n/2\rfloor\}} \right).
 $$

And for $n \leq k-1$, $$\frac{k-3}{n-2} {n\choose  \min\{r,\lfloor n/2\rfloor\}} \leq \frac{k-3}{(k-1)-2}{k-1 \choose \min\{r, \lfloor (k-1)/2 \rfloor\}}={k-1 \choose \min\{r, \lfloor (k-1)/2 \rfloor\}}.$$ 
Hence~\eqref{kpath} does not hold.

If $n=k$, consider any $x,y\in V(G)$ such that there is no hamiltonian $x,y$-path in $G$. If $xy \in E(G)$, then $G$ is not $1$-hamiltonian, then by Theorem~\ref{PPS} with $d=2$ (since $G$ is $2$-connected),
$$N_{\rm{Sp}}(G, r) \leq \max\{h_{\rm{Sp}}(n,1,r,2), h_{\rm{Sp}}(n, 1,r, \lfloor n/2 \rfloor)) = h_{\rm{Sp}}(n,1,r,2)$$
$$={k-1\choose \min\{r, \lfloor(k-1)/2 \rfloor\}}+2<{k-1\choose \min\{r, \lfloor(k-1)/2 \rfloor\}}
\frac{k-2}{k-3}={k-1\choose \min\{r, \lfloor(k-1)/2 \rfloor\}}
\frac{n-2}{k-3},
$$
and~(\ref{kpath}) again does not hold. If $xy \notin E(G)$, then the graph $G' := G \cup xy$ satisfies $N_{\rm{Sp}}(G', r) \geq N_{\rm{Sp}}(G,r)$, and $G'$ is not 1-hamiltonian. So again we obtain $N_{\rm{Sp}}(G,r) \leq N_{\rm{Sp}}(G', r) \leq {k-1\choose \min\{r, \lfloor(k-1)/2 \rfloor\}}
\frac{n-2}{k-3}$.

Thus from now on we may assume $n \geq k+1$.

\begin{claim}
$G$ is $3$-connected.
\end{claim}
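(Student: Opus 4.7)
I would prove the claim by contradiction. Suppose $G$ has a $2$-cut $\{u,v\}$, so $V(G) = V_1 \sqcup V_2 \sqcup \{u,v\}$ with $V_1, V_2$ nonempty and no edge between $V_1$ and $V_2$. For each $i \in \{1,2\}$, let $G_i := G[V_i \cup \{u,v\}]$, adding the edge $uv$ if needed so that $G_i$ is $2$-connected; then $n_i := |V_i|+2$ satisfies $n_1 + n_2 = n + 2$ and $3 \leq n_i \leq n-1$. Since no edge of $G$ joins $V_1$ to $V_2$, every clique of $G$ lies entirely in one $G_i$, so restricting a maximum Sperner family of cliques in $G$ to the two sides yields
\[
N_{\rm{Sp}}(G,r) \leq N_{\rm{Sp}}(G_1,r) + N_{\rm{Sp}}(G_2,r).
\]

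Next I would invoke the inductive hypothesis (or the already-established base-case bounds when $n_i \leq k$) on each $G_i$: either $G_i$ is $k$-path connected, or
\[
N_{\rm{Sp}}(G_i,r) \leq \frac{n_i-2}{k-3}\binom{k-1}{\min\{r,\lfloor(k-1)/2\rfloor\}}.
\]
If neither $G_i$ is $k$-path connected, summing the bounds and using $(n_1-2)+(n_2-2) = n-2$ yields $N_{\rm{Sp}}(G,r) \leq \tfrac{n-2}{k-3}\binom{k-1}{\min\{r,\lfloor(k-1)/2\rfloor\}}$, directly contradicting \eqref{kpath}. Otherwise, without loss of generality $G_1$ is $k$-path connected; in this branch I will verify that $G$ itself is $k$-path connected, which gives the enclosing theorem's conclusion and lets me absorb the case under the reduction ``we may assume $G$ is $3$-connected.'' For $x, y \in V(G_1)$ the long path inside $G_1$ already works. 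For $x \in V_1$, $y \in V_2$, I take a long $x,u$-path $P$ in $G_1$ and extend it through $G_2$, rerouting the $G_2$-piece around $v$ via a $u,y$-path in $G_2 - v$ whenever $v \in V(P)$; this is possible since $G_2$ is $2$-connected. For $x, y \in V(G_2) \setminus \{u,v\}$, I apply Menger's theorem in $G_2$ to get internally disjoint paths $x \to u$ and $y \to v$, then splice in a long $u,v$-path from $G_1$.

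The main obstacle is ensuring the glued paths are simple: the $2$-connectivity of $G_2$ supplies the rerouting around either $u$ or $v$ needed when a sub-path of $G_1$ already passes through the other cut vertex, and Menger's theorem handles the sub-case when both endpoints sit inside $V_2$. A minor bookkeeping item is that the phantom edge $uv$, if added to $G_i$, can produce at most one extra clique $\{u,v\}$ counted in $N_{\rm{Sp}}(G_i,r)$ but absent from $G$; this is easily absorbed by the slack in the inductive/base-case inequality.
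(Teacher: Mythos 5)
Your argument mirrors the paper's: split $G$ along the $2$-cut $\{u,v\}$, observe that $N_{\rm{Sp}}$ is subadditive over the two glued-up pieces, deduce from the inductive hypothesis that one side still satisfies~\eqref{kpath} and hence is $k$-path connected, and splice long paths across the cut. Two small patches: (1) your case analysis of $\{x,y\}$ omits the configuration where one endpoint lies in $\{u,v\}$ and the other in $V_2$ --- easy, since a long $u,v$-path in $G_1$ has at least $k\ge 4$ vertices, so cannot use the phantom edge, and it extends into $G_2$ avoiding the other cut vertex; and (2) your diagnosis of the phantom-edge issue is misplaced: adding $uv$ can only increase $N_{\rm{Sp}}(G_i,r)$, so the subadditivity inequality is unharmed, and the genuine point of care (also left implicit in the paper) is that a long path built inside $G_i$ might traverse the edge $uv$, in which case one drops an endpoint of that edge from the path and compensates by picking up at least two fresh vertices on the other side of the cut.
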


{\bf Proof.} Suppose $\{v_1,v_2\}$ is a separating set. Let $C_1$ be the vertex set of a component of $G-\{v_1,v_2\}$ and $C_2=V(G)-C_1$. For $i=1,2$, let $G_i$ be obtained from $G-C_{3-i}$ by adding edge $v_1v_2$ if it is not in $G$. Let $n_i=|V(G_i)|$.
By construction, each of $G_1$ and $G_2$ is $2$-connected.
Also,
\begin{equation}\label{s1n}
n_1+n_2=n+2\quad\mbox{and}\quad N_{\rm{Sp}}(G, r)\leq N_{\rm{Sp}}(G_1, r)+N_{\rm{Sp}}(2, r).
\end{equation}
By~(\ref{s1n}), some of $G_i$ satisfies~(\ref{kpath}). By symmetry, suppose $G_2$ does. If $x,y\in V(G_2)$, then we are done by induction.
Suppose neither of $x$ and $y$ is in $V(G_2)$. Then by induction, $G_2$ has
a $v_1,v_2$-path $P$ with at least $k$ vertices. Also, the $2$-connected graph $G_1$ has two disjoint paths $P_1$ and $P_2$ from 
$\{x,y\}$ to $\{v_1,v_2\}$. Then $P_1\cup P\cup P_2$ forms a long $x,y$-path.

 Finally, suppose $x\in V(G_2)$ and $y\notin V(G_2)$. Again by induction,  $G_2$ has
a $v_1,x$-path $P$ with at least $k$ vertices. Also, the $2$-connected graph $G_1$ has a $v_1,y$-path $P_1$ that avoids $v_2$.
Then $P\cup P_1$ is what we need.\qed

\begin{claim}
$\delta(G)\geq \frac{k+1}{2}$.
\end{claim}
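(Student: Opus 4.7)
The plan is to mimic the previous claim: suppose for contradiction some vertex $v \in V(G)$ has $\deg(v) = d < (k+1)/2$, delete $v$, apply the inductive hypothesis to $G - v$, and lift the resulting $k$-path connectivity back to $G$ (completing the proof of Theorem~\ref{kpaththm}). So without loss of generality we may then assume $\delta(G) \geq (k+1)/2$. By the previous claim $G$ is $3$-connected; hence $3 \leq d \leq \lfloor k/2 \rfloor$ and $G - v$ is a $2$-connected graph on $n - 1 \geq k$ vertices.

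Let $\cC$ be an extremal Sperner family of cliques of size at most $r$ in $G$, and split it as $\cC = \cC_v \sqcup (\cC \setminus \cC_v)$ according to whether $v \in C$. Each element of $\cC_v$ corresponds via $C \mapsto C \setminus \{v\}$ to a clique of size at most $r - 1$ in $G[N(v)]$, and the resulting family is still Sperner. Applying Theorem~\ref{sperner} to the $d$-element ground set $N(v)$ gives $|\cC_v| \leq \binom{d}{\min\{r-1, \lfloor d/2 \rfloor\}}$. The key arithmetic step -- and the only real obstacle -- is to verify
\[
(k - 3) \binom{d}{\min\{r - 1, \lfloor d/2 \rfloor\}} \leq \binom{k - 1}{\min\{r, \lfloor (k-1)/2 \rfloor\}} \qquad \text{for all } 3 \leq d \leq \lfloor k/2 \rfloor,
\]
a routine case analysis depending on whether $r \leq \lfloor(k-1)/2\rfloor$ and whether $r - 1 \leq \lfloor d/2 \rfloor$; the tightest instance is $r = 2$ with $k$ even and $d = k/2$, where the two sides differ by exactly $1$.

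Granting this, write $m := \min\{r, \lfloor(k-1)/2\rfloor\}$. Since $\cC \setminus \cC_v$ is a Sperner family of $\leq r$-cliques contained in $G - v$,
\[
N_{\rm{Sp}}(G - v, r) \geq N_{\rm{Sp}}(G, r) - |\cC_v| > \frac{n - 2}{k - 3}\binom{k - 1}{m} - \frac{1}{k - 3}\binom{k - 1}{m} = \frac{(n-1) - 2}{k - 3}\binom{k - 1}{m},
\]
so $G - v$ satisfies hypothesis~(\ref{kpath}) with $n$ replaced by $n - 1$ and is $k$-path connected by the inductive hypothesis. To finish, I lift this property to $G$: for $x, y \in V(G) \setminus \{v\}$, an $x, y$-path on at least $k$ vertices in $G - v$ works unchanged in $G$; and for $x = v$ and $y \neq v$, the $3$-connectivity of $G$ supplies a neighbor $u$ of $v$ distinct from $y$, so extending a $u, y$-path on at least $k$ vertices in $G - v$ by the edge $vu$ produces a $v, y$-path on at least $k + 1$ vertices in $G$. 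Hence $G$ is $k$-path connected, which completes the proof of Theorem~\ref{kpaththm} in this sub-case and lets us assume $\delta(G) \geq (k+1)/2$ in the remaining arguments.
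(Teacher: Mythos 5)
Your proof is correct and arrives at the same conclusion by a very similar strategy, but the surgery you perform on $G$ is different: you delete the low-degree vertex $v$ outright and pass to $G-v$, whereas the paper contracts $v_1$ with a carefully chosen neighbor $v_2\notin\{x,y\}$. The underlying mechanism is identical — bound the clique loss by applying Theorem~\ref{sperner} to $N(v)$, verify the key arithmetic estimate, invoke the inductive hypothesis on the smaller $2$-connected graph, and lift the long path back to $G$. Your deletion makes the Sperner verification trivial (the subfamily of $\cC$ avoiding $v$ is automatically Sperner) at the price of a small case split in the lifting when $v\in\{x,y\}$, which you handle correctly using $3$-connectivity; the paper's contraction keeps the contracted vertex alive so the endpoints survive automatically, but then needs the short argument that the renamed clique family stays Sperner. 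Both are sound. One side remark: the inequality you write,
\[
(k-3)\binom{d}{\min\{r-1,\lfloor d/2\rfloor\}}\leq\binom{k-1}{\min\{r,\lfloor(k-1)/2\rfloor\}},
\]
is the correct one to check; the paper's displayed intermediate bound $\binom{\lfloor k/2\rfloor}{\min\{r,\lfloor k/4\rfloor\}}\leq\frac{1}{k-3}\binom{k-1}{\min\{r,\lfloor(k-1)/2\rfloor\}}$ has a typo ($r$ should read $r-1$ in the middle binomial; as printed it fails for, e.g., $r=3$, $k=16$), so your more careful bookkeeping of the exponent is actually an improvement on the text.
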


{\bf Proof.} Suppose $v_1\in V(G)$ and $d(v_1)\leq k/2$. Since $G$ is $3$-connected, we can
choose a neighbor $v_2$ of $v_1$ so that $v_2\notin \{x,y\}$.
Let $G'$ be obtained from $G$ by contracting $v_1$ and $v_2$
into a new vertex that we again will call $v_1$. Since $G$ was $3$-connected, $G'$ is $2$-connected.

Let ${\cal S}_G$ be a maximum Sperner family of cliques of size at most $r$ in $G$.
We construct a  family ${\cal S}'$ of cliques of size at most $r$ in $G'$ from ${\cal S}_G$
by \\
(a) deleting from ${\cal S}_G$ all cliques containing $v_1$; and\\
(b) replacing each clique $S\in {\cal S}_G$ with $v_2\in S$ and $v_1\notin S$ with the clique $S-v_2+v_1$.

We claim that ${\cal S}'$ is Sperner. Indeed, suppose $S_1,S_2\in {\cal S}'$ and $S_1\subset S_2$.
Since ${\cal S}_G$ was Sperner, $v_1\in S_2-S_1$. But then $S_2-v_1+v_2\in  {\cal S}_G$ and
 $S_1\subset S_2-v_1+v_2$.

By construction and Theorem~\ref{sperner}, 
$$|{\cal S}_G|-|{\cal S}'| \leq {d(v_1) \choose \min\{r-1, \lfloor d(v_1)/2 \rfloor\}} \leq {\lfloor k/2 \rfloor \choose \min\{r, \lfloor k/4 \rfloor\}}.$$
But
$${\lfloor k/2 \rfloor \choose \min\{r, \lfloor k/4 \rfloor\}}\leq  \frac{1}{k-3}{k-1\choose \min\{r, \lfloor (k-1)/2\rfloor\}},$$
and hence $G'$ satisfies~(\ref{kpath}). So by the minimality of $G$, graph $G'$ has a long
$x,y$-path. But then $G$ also does.\qed

Applying Theorem~\ref{eno} completes the proof of our theorem.
\qed

\section{Constructing happy incidence bigraphs}

\subsection{Language of layered $r^-$-bigraphs}

A {\em layered bigraph} is a bigraph $G=(A,Y;E)$ in which parts $A$ and $Y$ are ordered.

An $r^-$-{\em bigraph} is a layered bigraph $G=(A,Y;E)$ with $d(a)\leq r$ for each $a\in A$.

A  layered bigraph $G=(A,Y;E)$ is {\em Sperner} if the family $\{N(a): a\in A\}$ is Sperner.
 By definition, if $N(a)=\{v,u\}$  in a Sperner bigraph,
then the codegree of the pair $vu$ is $1$.




In particular, the incidence graph $G_\cH$ of an $r^-$-graph $\cH$ is a Sperner $r^-$-bigraph if and only if $\cH$ is Sperner.


A vertex $a\in A$ of a layered bigraph $G=(A,Y;E)$ is {\em happy}, if the the codegree $d(x,y)$ of each pair
$\{x,y\}\subseteq N(a)$ is at least $d(a)-1$ (and {\em unhappy} otherwise). A  layered bigraph $G=(A,Y;E)$
is {\em happy} if every vertex $a\in A$ is happy.

A vertex $y \in Y$ of degree 2 in is {\em special}, if each of the two neighbors is either unhappy or also has degree 2.

Vertices $x, y \in Y$ and $a \in A$ form a {\em special triple} if $x$ and $y$ are special (in particular they have degree 2), $N(a) = \{x,y\}$, and the other neighbors of $x$ and $y$ are unhappy. 

Given a layered bigraph $G=(A,Y;E)$, let {\em the shadow} $\partial(G)$ be the graph $F$ with vertex set $Y$ such that
$xy\in E(F)$ iff there is $a\in A$ with $\{x,y\}\subseteq N(a)$.

For each graph $H$, the {\em circumference}, $c(H)$, is the length of a longest cycle in $H$.

We first prove a simple corollary of Hall's Theorem.

\begin{lemma}[Folklore] Let $G = (A, B; E)$ be a bipartite graph with no isolated vertices such that for each $a \in A$ and every $b \in N(A)$, $d(a) \geq d(b)$. Then $G$ has a matching  covering $A$.
\end{lemma}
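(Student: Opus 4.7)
My plan is to reduce the claim to Hall's marriage theorem by verifying the marriage condition $|N(S)| \geq |S|$ for every $S \subseteq A$. Once that is in hand, the existence of a matching covering $A$ is immediate.

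First I would unpack the hypothesis. Because $G$ has no isolated vertices, $N(A) = B$, so the condition ``$d(a) \geq d(b)$ for every $a \in A$ and every $b \in N(A)$'' is equivalent to saying the minimum degree of $A$ is at least the maximum degree of $B$. Write $D := \max_{b \in B} d(b)$, so $D \geq 1$ and $d(a) \geq D$ for every $a \in A$.

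Next I would run the standard edge count. Fix $S \subseteq A$ and set $T := N(S)$. Since every edge incident to $S$ has its other endpoint in $T$, the sum $\sum_{a \in S} d(a)$ equals the number of edges between $S$ and $T$, which is at most $\sum_{b \in T} d(b)$. Combining this with the two uniform degree bounds produced above gives
\[
|S|\cdot D \;\leq\; \sum_{a \in S} d(a) \;=\; e_G(S,T) \;\leq\; \sum_{b \in T} d(b) \;\leq\; |T| \cdot D,
\]
and dividing by $D > 0$ yields $|S| \leq |T| = |N(S)|$, which is Hall's condition. Hall's theorem then supplies the desired matching.

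There is essentially no obstacle. The only step worth double-checking is the use of ``no isolated vertices'' in two places: it guarantees $N(A) = B$, so that the given inequality covers all of $B$, and it guarantees $D \geq 1$, so that the division at the end is legitimate. (I note in passing that the identical argument works under the weaker, and probably intended, reading ``$b \in N(a)$'' if one replaces the uniform bound by the reciprocal-weighted double count $|S| = \sum_{a \in S}\sum_{b \in N(a)} 1/d(a) \leq \sum_{a\in S}\sum_{b \in N(a)} 1/d(b) = \sum_{b \in T}|N(b)\cap S|/d(b) \leq |T|$.)
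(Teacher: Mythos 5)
Your proof is correct, but it takes a genuinely different route than the paper. You verify Hall's condition $|N(S)| \geq |S|$ directly for all $S \subseteq A$ by a degree double-count: under the stated global hypothesis ($d(a) \geq d(b)$ for all $a \in A$, $b \in B$), the chain $|S|\cdot D \leq \sum_{a \in S} d(a) = e_G(S,T) \leq \sum_{b\in T} d(b) \leq |T|\cdot D$ immediately gives the inequality, and your parenthetical remark correctly handles the local reading ($b \in N(a)$ only) via the fractional weighting $\sum_{a\in S}\sum_{b\in N(a)} 1/d(a) \leq \sum_{b\in T}|N(b)\cap S|/d(b)$. The paper instead argues by contrapositive: it assumes Hall's condition fails, takes a \emph{minimum} violating set $S = \{a_1,\dots,a_s\}$, uses minimality to extract a matching covering $S - a_s$ onto $N(S) = \{b_1,\dots,b_{s-1}\}$, and then compares $\sum_i d(a_i)$ with $\sum_i d_S(b_i)$ to reach a contradiction. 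Your approach is more elementary (no appeal to a minimal counterexample or to an auxiliary matching) and makes the role of the degree hypothesis transparent; the paper's approach is a classic extremal-set argument and has the side benefit of explicitly pairing $a_i$ with $b_i \in N(a_i)$, which makes it robust to the local reading without needing the fractional trick. Both proofs are valid under either reading of the hypothesis.
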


\begin{proof}
Suppose  that $G$ has no matching covering  $A$. By  Hall's Theorem, there is $S\subseteq A$ with $|S| > |N(S)|$. 
Choose a minimum such $S$, say $S = \{a_1, \ldots, a_s\}$. By the minimality of $S$, $G$ has a matching  $M$ covering 
 $S':= S - a_s$, say  $M=\{ a_ib_i: 1 \leq i \leq s-1\}$. Since $|N(S)| \leq s-1$,  we have $N(S) = \{b_1, \ldots, b_{s-1}\}$. 
 So,
\[d(a_1) + \ldots + d(a_{s-1}) + d(a_s) = e(S, N(S)) = d_S(b_1) + \ldots + d_S(b_{s-1}) \leq d(a_1) + \ldots d(a_{s-1}),\] a contradiction.
\end{proof}


\begin{lemma}\label{c2}
Let $r\geq 3$. If $G=(A,Y;E)$ is a happy Sperner $r^-$-bigraph and $\partial(G)$ contains a cycle of length $\ell \geq r$, then $G$ contains a cycle of length $2\ell$.
\end{lemma}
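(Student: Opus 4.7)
Given a cycle $C=y_1y_2\cdots y_\ell y_1$ of length $\ell\geq r$ in $\partial(G)$, my goal is to choose distinct $a_1,\ldots,a_\ell\in A$ with $\{y_i,y_{i+1}\}\subseteq N(a_i)$ for every $i$ (indices mod $\ell$); such a choice immediately yields the cycle $y_1a_1y_2a_2\cdots y_\ell a_\ell y_1$ of length $2\ell$ in $G$. I form the auxiliary bipartite graph $H=(E_C,A';F)$ with $E_C=\{e_i:=y_iy_{i+1}\}$, $A'=\{a\in A:e_j\subseteq N(a)\text{ for some }j\}$, and $ae_i\in F$ iff $\{y_i,y_{i+1}\}\subseteq N(a)$; a matching in $H$ covering $E_C$ is exactly what is needed, and the preceding folklore lemma is the natural tool.

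Two observations drive the verification of the degree condition $d_H(e)\geq d_H(a)$. First, since $a$ is happy and $\{y_i,y_{i+1}\}\subseteq N(a)$, one has $d_H(e_i)=d(y_i,y_{i+1})\geq d(a)-1$. Second, $d_H(a)$ counts the edges of $C$ spanned by $N(a)\cap V(C)$; if this intersection is a proper subset of $V(C)$, then it partitions into $k\geq 1$ maximal arcs along $C$, giving $d_H(a)\leq|N(a)\cap V(C)|-k\leq d(a)-1$. When no $a\in A$ has $N(a)\supseteq V(C)$, the inequality $d(a)\leq r\leq\ell$ makes $N(a)\cap V(C)$ always a proper subset of $V(C)$, the two observations combine to establish the degree condition, and the folklore lemma delivers the matching.

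The remaining case is that some $a^*\in A$ has $N(a^*)\supseteq V(C)$, which by $|N(a^*)|\leq r\leq\ell$ forces $N(a^*)=V(C)$ and $\ell=r$. Sperner makes $a^*$ unique and, more importantly, forces $N(a)\not\subseteq V(C)$ for every other $a\in A'$; hence $|N(a)\cap V(C)|\leq d(a)-1<\ell$, and the arc-counting sharpens to $d_H(a)\leq d(a)-2$. I use $a^*$ to cover $e_1$ and apply the folklore lemma to the bipartite graph $H''$ obtained from $H$ by removing $a^*$ and $e_1$ together with any vertex of $A'-a^*$ that becomes isolated (such a vertex can only have covered $e_1$ alone). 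For each surviving $e$ and $a\in N_{H''}(e)$, one has $d_{H''}(e)=d_H(e)-1\geq d(a)-2$ and $d_{H''}(a)\leq d_H(a)\leq d(a)-2$, so the degree condition holds; moreover, since $a^*$ is happy and $r\geq 3$, $d_{H''}(e)\geq d(a^*)-2=\ell-2\geq 1$, so no cycle edge is isolated. The matching supplied by the folklore lemma, combined with the pair $(e_1,a^*)$, provides the required distinct $a_i$.

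The main obstacle is precisely the wrap-around configuration $N(a^*)=V(C)$: the happy property alone yields only $d_H(e)\geq d(a)-1$, which after stripping the $a^*$-incidence from each cycle edge is one short of what the folklore lemma requires. Sperner is the tool that simultaneously downgrades $d_H(a)$ by one for every $a\neq a^*$ and rebalances the bookkeeping.
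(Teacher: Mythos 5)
Your proof is correct and follows essentially the same route as the paper: build the auxiliary bipartite graph between cycle edges and $A$, verify the degree condition $d(e)\geq d(a)$ via happiness on the $E(C)$ side and arc-counting on the $A$ side, and invoke the Hall-type folklore lemma, with a separate treatment of the wrap-around case $N(a^*)=V(C)$ forced by $\ell=r$. The only (inessential) difference is in that exceptional case: you pre-assign $a^*$ to $e_1$ and match the remaining $\ell-1$ edges in $H-a^*-e_1$, whereas the paper discards $a^*$ outright and matches all $\ell$ edges in $F-a^*$; both choices work with the same Sperner-sharpened degree bound $d(a')\leq d(a')-2$ for $a'\neq a^*$.
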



{\bf Proof.} Let $C=x_1,\ldots,x_\ell$ be a cycle of length  $\ell\geq r$ in $\partial(G)$. 
Let $F$ be the bipartite graph with parts $Q=E(C)$ and $A$ such that a pair $(x_ix_{i+1}, a)$ is an edge in $F$
if and only if $\{x_ix_{i+1}\}\subseteq N(a)$. If $\ell \geq r+1$, then since each $a \in A$ has degree less than $\ell$, $a$ is adjacent to at most $d(a) - 1$ pairs $x_ix_{i+1}$. On the other hand, for each edge $(x_ix_{i+1}, a)$  in $F$, $d_F(\{x_ix_{i+1}\})\geq d(a)-1$ since $G$ is happy.
So by the previous lemma, $F$ has a matching that covers $E(C)$, say with $x_ix_{i+1}$ matched to $f(x_ix_{i+1}) \in A$. Then we obtain the cycle $x_1, f(x_1x_2), x_2, f(x_2x_3),\ldots, x_{\ell},f(x_\ell x_1),x_1$ of length $2\ell$ in $G$.

Now suppose $\ell = r$. If for every $a \in A$, $N_G(a) \neq \{x_1, \ldots, x_r\}$, then $d_F(a) \leq d(a) - 1$, and we are done as in the previous case. So suppose there exists an $a$ such that $N_G(a) = \{x_1, \ldots, x_r\}$. Then because $G$ is Sperner, each $a' \in A-a$ is adjacent to at most $r-1$ vertices in $\{x_1, \ldots, x_r\}$, and hence $d_F(a') \leq (r-1)-1$. Consider the graph $F - a$. For $a' \in A - a$, $$d_{F-a}(a') = d_{F}(a') \leq \min\{r-2, d(a') - 1\}.$$
 If some vertex $x_ix_{i+1}$ was adjacent to $a$ in $F$, then $d_F(x_ix_{i+1}) \geq d(a) - 1 = r-1$ and so $d_{F-a}(x_ix_{i+1}) \geq r-2$. Otherwise, for each $x_ix_{i+1}$ not adjacent to $a$ in $F$, and each $a' \in N_F(x_i,x_{i+1})$, $d_{F-a}(x_ix_{i+1}) = d_F(x_ix_{i+1}) \geq d(a') -1$, so we are finished as in the first case.
\qed

The same proof also yields the following Lemma for paths of any length.

\begin{lemma}\label{c2path}
Let $G=(A,Y;E)$ be a happy $r^-$-bigraph. If $\partial(G)$ contains a path with $\ell$ vertices, then $G$ contains a path with $2\ell-1$ vertices with endpoints in $Y$.
\end{lemma}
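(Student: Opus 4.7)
The plan is to mirror the proof of Lemma~\ref{c2} for paths instead of cycles, but the argument will in fact be cleaner since no analogue of the cycle's special case is needed. Given a path $P = x_1, x_2, \ldots, x_\ell$ in $\partial(G)$, I would build the auxiliary bipartite graph $F$ with parts $Q = E(P) = \{x_1x_2, x_2x_3, \ldots, x_{\ell-1}x_\ell\}$ and $A$, where $(x_ix_{i+1}, a)$ is an edge of $F$ iff $\{x_i, x_{i+1}\} \subseteq N_G(a)$. A matching of $F$ covering $Q$ would assign a distinct $a_i \in A$ to each edge $x_ix_{i+1}$ of $P$, and then $x_1, a_1, x_2, a_2, \ldots, x_{\ell-1}, a_{\ell-1}, x_\ell$ is a path in $G$ with $2\ell-1$ vertices and endpoints $x_1, x_\ell \in Y$, as required.

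To produce such a matching via the Folklore lemma, I need to verify that $d_F(x_ix_{i+1}) \geq d_F(a)$ for every edge $(x_ix_{i+1}, a)$ of $F$. The happy condition gives immediately $d_F(x_ix_{i+1}) = d_G(x_i, x_{i+1}) \geq d(a)-1$. On the other hand, the edges of $F$ incident to $a$ are in bijection with edges of $P$ whose two endpoints both lie in $N_G(a)$; these edges form a subgraph of the path $P$ on at most $|N_G(a) \cap V(P)| \leq d(a)$ vertices, hence they form a linear forest with at most $d(a)-1$ edges. Thus $d_F(a) \leq d(a)-1 \leq d_F(x_ix_{i+1})$, and the Folklore lemma delivers the desired matching.

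The key contrast with Lemma~\ref{c2} is that the bound $d_F(a) \leq d(a)-1$ holds unconditionally in the path setting, because every subgraph of a path on $s$ vertices has at most $s-1$ edges. In the cycle case a cycle on $s$ vertices can carry $s$ edges, which is precisely why the proof of Lemma~\ref{c2} needed an extra case for $\ell = r$ with $N(a) = V(C)$, and needed the Sperner hypothesis. Here no such case arises and no Sperner assumption appears in the statement, so no real obstacle is expected. The only points to mention are the trivial small cases: $\ell = 1$ is handled by the single-vertex path $x_1 \in Y$, and $\ell = 2$ by the $3$-vertex path $x_1, a, x_2$ for any $a$ with $\{x_1, x_2\} \subseteq N_G(a)$.
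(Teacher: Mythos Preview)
Your proposal is correct and follows essentially the same approach as the paper, which simply states that ``the same proof also yields'' this lemma without spelling out details. You have correctly filled in those details and, in particular, cleanly explained why the special case $\ell = r$ from Lemma~\ref{c2} disappears here: any subgraph of a path on $s$ vertices has at most $s-1$ edges, so $d_F(a)\le d(a)-1$ holds unconditionally and the Sperner hypothesis is not needed.
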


We will often use the following known property of $2$-connected graphs.

\begin{lemma}\label{c0}
Let $G$ be a $2$-connected graph, $xy\in E(G)$ and $S\subset V(G)$ with $|S|\leq |V(G)|-2$.\\
${}$\quad {\rm (1)} $G-xy$ is $2$-connected iff $G-xy$ has a cycle containing $x$ and $y$;\\ 
${}$\quad {\rm (2)}  the graph $G/S$ obtained by gluing the vertices of $S$ into one vertex $s^*$ is  $2$-connected
iff $s^*$ is not a cut vertex of $G/S$.
\end{lemma}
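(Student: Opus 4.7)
\textbf{Proof proposal for Lemma~\ref{c0}.}

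\emph{Part (1).} For the forward direction I would just invoke the standard fact that in a $2$-connected graph, any two specified vertices lie on a common cycle; applying this to $x,y$ in $G-xy$ does the job. The content is in the reverse direction. Assume $C$ is a cycle of $G-xy$ through both $x$ and $y$; I will show $G-xy$ has no cut vertex (it is clearly connected, since $C$ already connects $x$ and $y$ and $G$ was connected). Suppose for contradiction that $v$ is a cut vertex of $G-xy$. First, $v\notin\{x,y\}$: if $v=x$, then $(G-xy)-x=G-x$ would be disconnected, contradicting $2$-connectedness of $G$; similarly for $v=y$. So $v\in V(G)\setminus\{x,y\}$, and some vertex $w$ lies in a different component of $(G-xy)-v$ than $x$. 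Since $G$ is $2$-connected, $G-v$ is connected, so there is a $w,x$-path $P$ in $G-v$. This $P$ must use the edge $xy$ (else $P$ would live in $(G-xy)-v$). Deleting $xy$ from $P$ produces a $w,y$-path in $(G-xy)-v$. Combining this with the $y,x$-arc of $C$ in $(G-xy)-v$ (available since $v\notin V(C)\cap\{x,y\}$ and $C-v$ still contains a $y,x$-path) yields a $w,x$-walk in $(G-xy)-v$, contradicting the choice of $w$.

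\emph{Part (2).} The forward implication is immediate from the definition of $2$-connectedness. For the reverse, assume $s^\ast$ is not a cut vertex of $G/S$; I must show no $v^\ast\in V(G)\setminus S$ is a cut vertex of $G/S$ either, and that $G/S$ is connected with $|V(G/S)|\geq 3$. The vertex count is clear from $|V(G/S)|=|V(G)|-|S|+1\geq 3$ using $|S|\leq |V(G)|-2$. Connectedness of $G/S$ follows from connectedness of $G$ together with the fact that the contraction map $\pi\colon G\to G/S$ (which is the identity off $S$ and sends $S$ to $s^\ast$) is a surjective graph homomorphism, and such maps send connected graphs to connected graphs. Finally, suppose some $v^\ast\in V(G)\setminus S$ were a cut vertex of $G/S$. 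Since $G$ is $2$-connected, $G-v^\ast$ is connected. But $\pi$ restricts to a surjective graph homomorphism $G-v^\ast\to (G/S)-v^\ast$ (the preimage of every vertex is a nonempty set lying entirely in $V(G)\setminus\{v^\ast\}$, since $v^\ast\notin S$). Thus $(G/S)-v^\ast$ is connected, contradicting the hypothesis that $v^\ast$ is a cut vertex.

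\emph{Main obstacle.} The only nontrivial part is the reverse direction of (1), where one has to juggle two separate paths: the $w,y$-path extracted from a $G-v$ path through the bridge edge $xy$, and the $y,x$-arc of the cycle $C$ avoiding $v$. Both parts, however, are standard consequences of $2$-connectedness and of the structural role played by the cycle $C$ or the set $S$, so I expect the write-up to be short.
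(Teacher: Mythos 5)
Your proof is correct. Note that the paper offers no proof of Lemma~\ref{c0} at all; it is stated as a ``known property of $2$-connected graphs,'' so there is nothing to compare against. Both parts of your argument are sound. In part (1), the key observation that the hypothetical cut vertex $v$ cannot be $x$ or $y$ (because $(G-xy)-x = G-x$ is connected) and the rerouting of the $w,x$-path through the cycle $C$ are exactly the right moves. In part (2), the reduction $(G/S)-v^* = (G-v^*)/S$ for $v^*\notin S$ and the fact that contracting a vertex subset of a connected graph yields a connected graph close the argument. One tiny stylistic caveat: the contraction map $\pi$ is not literally a graph homomorphism when $S$ spans an edge (it would map that edge to a loop at $s^*$), but the underlying claim you need --- that images of connected graphs under vertex-contraction are connected --- is still correct, so the reasoning survives; it would just be cleaner to phrase it directly in terms of walks being mapped to walks rather than invoking homomorphisms.
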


\subsection{Unhappy $r^-$-bigraphs}


\begin{definition}Let $G=(A,Y;E)$ be a Sperner layered $2$-connected $r^-$-bigraph $G = (A,Y;E)$. A {\bf shrinking of $G$} 
is one of the following operations:\\
${}$\quad  {\rm (1)}  deleting an edge of $G$ incident to an unhappy vertex,\\
${}$\quad  {\rm (2)}  deleting a special vertex  $y \in Y$ and all neighbors $b \in N(y)$ with $d(b) = 2$,\\
${}$\quad  {\rm (3)}  deleting a special triple $x,y \in Y$ and $a \in A$, or\\
${}$\quad  {\rm (4)}  gluing together all but one of the neighbors of some unhappy vertex $a \in X$.
\end{definition}

The goal of this subsection is to prove that unhappy Sperner layered 2-connected 
$r^-$-bigraphs not admitting a shrinking have a special structure and high maximum average degree. The main result of the subsection is
the following lemma.
\begin{lemma}\label{c30}
Suppose  $k \geq r\geq 3$ are integers.
Let   $G=(A,Y;E)$ be a Sperner layered $2$-connected $r^-$-bigraph with $c(G)<2k$ 
that is not happy.
Then either $G$  admits a shrinking  such that the resulting graph $G'$ satisfies\\
${}$\quad  {\rm (S1)}  $G'$ is $2$-connected;\\
${}$\quad  {\rm (S2)}  $|E'|\leq |E|$, $|Y'|\leq |Y|$, and $|E'|+|Y'|<|E|+|Y|$; \\
${}$\quad  {\rm (S3)}  $G'$ is Sperner;\\
${}$\quad  {\rm (S4)}  $|A|-|A'|\leq |Y|-|Y'|$; and \\
${}$\quad  {\rm (S5)}  $c(G')<2k$,\\
 or for every unhappy vertex $a \in A$, there exists three vertices $y_1, y_2, y_3 \in N(a)$ and three subgraphs $B_1, B_2, B_3$ of $G$ such that for $i \in \{1,2,3\}$\\
${}$\quad  {\rm (B1)}  $y_i \in V(B_i)$, $a \notin V(B_i)$, and $y_i$ is the only neighbor of $a$ in $B_i$;\\
${}$\quad  {\rm (B2)}  $B_i$ is $2$-connected and Sperner; \\
${}$\quad  {\rm (B3)}  there exists a $x_i \in Y$ such that $\{a,x_i\}$ separates $B_i$ from $G - B_i$;\\
${}$\quad  {\rm (B4)}  $G - (B_i - x_i) - a$ is Sperner and $2$-connected; and\\
${}$\quad  {\rm (B5)}  for $j \in \{1,2,3\} - \{i\}$, $|V(B_i) \cap V(B_j)| \leq 1$ with equality if and only if $x_i = x_j$. 
\end{lemma}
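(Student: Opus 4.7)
The plan is to fix an unhappy vertex $a \in A$ and show that either $a$ admits the three-block structure (B1)-(B5) or one of the four shrinking operations applied somewhere in $G$ yields a graph $G'$ satisfying (S1)-(S5). I would first observe that $d(a) \geq 3$, since any vertex of degree at most $2$ is automatically happy (if $d(a) = 2$ with $N(a) = \{u, v\}$, then $a$ itself contributes to $d(u, v) \geq 1 = d(a) - 1$). The Sperner hypothesis will be used throughout: no $N(b)$ contains or is contained in $N(a)$ for $b \neq a$.

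Next, for each $y \in N(a)$, I plan to consider deleting the edge $ay$. By Lemma~\ref{c0}(1), $G - ay$ is $2$-connected iff $G$ contains a cycle through $ay$. Moreover, $G - ay$ remains Sperner iff no $b \neq a$ has $N(a) \setminus \{y\} \subseteq N(b)$, a condition I will call a \emph{Sperner obstruction}. If some $y$ avoids both obstructions, operation (1) succeeds: $|E|$ drops by $1$ while $|Y|, |A|$ are unchanged, so (S2), (S4), (S5) hold automatically. Otherwise, for every $y \in N(a)$, either (CO) $G - ay$ is not $2$-connected, so $G$ has a $2$-vertex cut $\{a, x_y\}$ separating $y$ from some other neighbor of $a$, or (SO) there is $b_y \neq a$ with $N(b_y) \supseteq N(a) \setminus \{y\}$ and $y \notin N(b_y)$.

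When (SO) holds for some $y$, I would apply operation (4): glue $N(a) \setminus \{y\}$ into a single vertex $v^*$. Then $|Y'| = |Y| - (d(a) - 2)$ and $|E|$ drops by at least $d(a) - 2$ (from $a$'s own edges collapsing into a single edge to $v^*$), so (S2) and (S4) hold since $d(a) \geq 3$. One then verifies that Sperner is preserved (using the detailed structure of $b_y$'s neighborhood, in particular that $b_y$ has a neighbor outside $N(a)$) and that $v^*$ is not a cut vertex of $G'$ via $2$-connectivity of $G$ and Lemma~\ref{c0}(2), giving (S3) and (S1). When (CO) holds for $y$, I would let $B_y$ denote the subgraph induced by the component of $G - \{a, x_y\}$ containing $y$ together with $x_y$, choosing $x_y$ to be the cut vertex of $G - a$ closest to $y$ in the block-cut tree; this forces $y$ to be the only neighbor of $a$ in $B_y$ and makes $B_y$ a $2$-connected Sperner bigraph with $G - (B_y - x_y) - a$ also $2$-connected and Sperner.

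If at least three neighbors $y_1, y_2, y_3$ of $a$ produce such pendants $B_i = B_{y_i}$, conditions (B1)-(B5) follow directly, with (B5) coming from the pendants sitting in distinct branches of the block-cut tree of $G - a$ (sharing at most the cut vertex $x_i = x_j$ if they happen to hang off the same block). Otherwise, at most two neighbors of $a$ produce (CO) pendants, so some neighbor has only an (SO) obstruction, which was handled by operation (4). Operations (2) and (3) would handle degenerate pendants where $y$ itself is a degree-$2$ vertex of $Y$ and its other neighbor is either degree-$2$ or unhappy, making $B_y$ trivial; in such cases deleting the special vertex or special triple is the correct shrinking. The hardest part will be the careful verification of Sperner preservation and $2$-connectivity after operation (4), especially when multiple (SO) obstructions exist simultaneously or when a vertex has both (CO) and (SO) obstructions---this requires tracking how gluing affects neighborhoods and cut structure in detail and ensuring that the resulting $G'$ does not accidentally create new set containments or cut vertices.
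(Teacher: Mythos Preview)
Your dichotomy misplaces where the blocks $B_i$ actually come from. In the paper the pendants do \emph{not} arise from connectivity obstructions (CO) to edge deletion; they arise from the failure of the gluing operation (4) to remain Sperner. The actual flow is: five claims first show that if no $G-ay_i$ satisfies (S1)--(S5), then every pair in $N(a)$ has codegree exactly $1$ (is ``thin''). This step alone requires several ad hoc cycle constructions. Only after thinness does one glue $N(a)-y_t$ into $y^*$; conditions (S1), (S2), (S4), (S5) then hold automatically (in particular (S4) needs thinness to guarantee no edges collapse except those at $a$, and (S1) needs that no proper subset of $N(a)$ separates, which in turn needs that no $y_i$ is an $a$-menace---and any menace would force a thick pair). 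The sole possible failure is (S3), and it fails precisely when there exist $b_1,b_2\in A-a$, each meeting $N(a)$ in a single distinct vertex $y_1,y_2$, with $N_G(b_2)-y_2\subset N_G(b_1)$. This produces a specific vertex $x\in N(b_1)\cap N(b_2)\setminus N(a)$, and nine further claims establish that $\{a,x\}$ cuts off pendants $B_1,B_2$ around $y_1,y_2$ satisfying (B1)--(B4). The third pendant $B_3$ is obtained by re-running the gluing with a different neighbor held fixed.

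Two concrete gaps in your plan: first, pulling $B_y$ out of the block--cut tree of $G-a$ does not yield (B1); nothing in (CO) forces $y$ to be the \emph{only} neighbor of $a$ in that component, and the paper needs separate arguments (its Claims~9, 11, 12) precisely for this. Second, your assertion that (SO) for some $y$ makes operation (4) succeed is unjustified and in fact backwards. After gluing $N(a)\setminus\{y\}$, a containment can appear between two other vertices $c,c'\in A$ that each had one neighbor among the glued vertices (so both acquire $v^*$) and satisfied $N_G(c)-y_i\subseteq N_G(c')-y_j$ in $G$; this is exactly the scenario the paper exploits, and it is where the block structure is born, not where the shrinking succeeds. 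The missing ingredient in your outline is the thinness reduction: without it you cannot control (S1), (S3), or (S4) for the gluing, and you cannot identify the vertex $x$ that actually bounds the pendants.
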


{\bf Proof.} Suppose,  $G=(A,Y;E)$ is a Sperner layered $2$-connected $r^-$-bigraph with $c(G)<2k$ that is not happy.
Then it has an unhappy vertex $a\in A$. Let $N_G(a)=\{y_1,\ldots,y_t\}$. Since $a$ is unhappy, $t\geq 3$.
Assume that there are no $G'$ satisfying the lemma. We derive a series of properties of such $G$.

A vertex $y_i\in N(a)$ is an $a$-{\em menace}, if there is a vertex $m(a,y_i)\in A-a$ such that $N(a)-y_i\subseteq N(m(a,y_i))$.
Since $G$ is Sperner,
\begin{equation}\label{s0}
\mbox{\em  $G-ay_i$ is Sperner if and only if $y_i$ is not an $a$-menace.}
\end{equation}

For brevity, we call pairs of vertices in $Y$ of codegree $1$ {\em thin} and of  codegree at least $2$ ---  {\em thick}.

\begin{claim}
$N(a)$ contains a thin pair.
\end{claim}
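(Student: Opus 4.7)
The plan is to argue by contradiction: suppose every pair in $N(a)$ is thick and derive either a direct contradiction or a valid shrinking of $G$ (contradicting the standing assumption that no $G'$ satisfying (S1)--(S5) exists). Since $a$ is unhappy, some pair $\{y_i,y_j\}\subseteq N(a)$ has codegree at most $t-2$, so the witness set $T_{ij}:=\{a'\in A-a:\{y_i,y_j\}\subseteq N(a')\}$ has size at most $t-3$. If every pair of $N(a)$ were thick, then in particular $|T_{ij}|\geq 1$, forcing $t\geq 4$; the case $t=3$ is immediate.

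Assuming $t\geq 4$, I would split on $a$-menaces. \emph{Sub-case A:} some $y_\ell\in N(a)$ is not an $a$-menace. Here I apply shrinking (1) by deleting the edge $ay_\ell$; this preserves the Sperner property by~(\ref{s0}). To verify the 2-connectedness needed for (S1), I would use Lemma~\ref{c0}(1) and produce a cycle through $a$ and $y_\ell$ in $G-ay_\ell$ as follows. For each $k\neq\ell$ the thick pair $\{y_\ell,y_k\}$ is covered by some $a^*_k\in A-a$, giving a length-3 path $a,y_k,a^*_k,y_\ell$ that avoids the deleted edge. If some pair $k_1\neq k_2$ admits $a^*_{k_1}\neq a^*_{k_2}$, the two such paths are internally disjoint (their internal vertices lie in different layers of the bigraph) and close up to the desired cycle. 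Otherwise every selection of the $a^*_k$ collapses to a single $a^*\in A-a$, which must then satisfy $N(a^*)\supseteq N(a)$, contradicting Sperner. Deleting an edge does not affect $|A|$, $|Y|$, or the circumference, so (S2)--(S5) follow at once, and the existence of this shrinking contradicts the standing assumption.

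\emph{Sub-case B:} every $y_\ell\in N(a)$ is an $a$-menace, with witness $a_\ell\in A-a$ satisfying $N(a)-y_\ell\subseteq N(a_\ell)$. The $a_\ell$ are pairwise distinct, since $a_\ell=a_m$ with $\ell\neq m$ would force $N(a_\ell)\supseteq(N(a)-y_\ell)\cup(N(a)-y_m)=N(a)$, violating Sperner. But then for each of the $t-2\geq 2$ indices $k\notin\{i,j\}$ we have $\{y_i,y_j\}\subseteq N(a)-y_k\subseteq N(a_k)$, so $a_k\in T_{ij}$. This produces $|T_{ij}|\geq t-2$, contradicting the bound $|T_{ij}|\leq t-3$ coming from unhappiness of $\{y_i,y_j\}$.

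The main obstacle I anticipate is the 2-connectedness check in Sub-case A: one must argue carefully that failure to find two internally disjoint length-3 paths really does collapse every choice of witnesses $a^*_k$ to a single vertex of $A-a$, rather than merely to a common value for some fixed selection. Once that step is secured, conditions (S2)--(S5) are routine, and the two sub-cases together exhaust the possibilities and yield the claim.
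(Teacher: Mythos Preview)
Your proof is correct and follows essentially the same approach as the paper: both argue by contradiction, use thick pairs to certify 2-connectedness of $G-ay_\ell$ via a short cycle through $a$ and $y_\ell$, and reach a contradiction by showing that if every such deletion fails to be Sperner then the resulting $a$-menace witnesses force each pair in $N(a)$ to have codegree at least $d(a)-1$. Your organization differs slightly (you split on menaces before checking 2-connectedness, and you are more explicit than the paper about why the witnesses $a^*_k$ can be chosen distinct --- the paper simply asserts this), but the substance is the same.
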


{\bf Proof.} Suppose that all pairs of $N(a)$ are thick pairs. For each  $y_i \in N(a)$, the graph $G_i := G - ay_i$ trivially satisfies (S2), (S4), and (S5) in the definition of shrinking. We will show that $G_i$ is also 2-connected, i.e., it satisfies (S1). Let $y_j, y_k \in N(a) - y_i$. Because every pair of $N(a)$ is thick, there exists distinct vertices $b_{ij}, b_{ik} \neq a$ such that $\{y_i, y_j\} \in N(b_{ij})$ and $\{y_i, y_k\} \in N(b_{ik})$. Applying Lemma~\ref{c0} with the cycle $y_ib_{ij}y_jay_kb_{ik}y_i$ certifies that $G_i$ is 2-connected. 

If for some $1\leq i \leq t$, the graph $G_i$ is Sperner, i.e., satisfies (S3), then we are done. Assume not. Because $a$ is the only vertex with a changed neighborhood in $G_i$, for all $i$ there exists a vertex $b_i$ in $G$ such that $\{y_1, \ldots, y_t\} - \{y_i\} \subset N(b_i)$. Furthermore, for $i\neq j$, $b_i \neq b_j$, otherwise some $N(b_i)$ contains $N(a)$, contradicting the fact that $G$ is Sperner.

In particular, each pair in $N(a)$ belongs in the neighborhoods of $a$ and $d(a) - 2$ additional vertices, contradicting that $a$ is unhappy.\qed

\begin{claim}\label{la2}
All distinct thick pairs  in $N(a)$ are disjoint.
\end{claim}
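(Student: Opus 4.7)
Assume for contradiction that $\{y_i,y_j\}$ and $\{y_i,y_k\}$ are two distinct thick pairs in $N(a)$ meeting at $y_i$; I aim to construct a shrinking of $G$, contradicting the standing hypothesis of the proof. Choose witnesses $b\in (N(y_i)\cap N(y_j))\setminus\{a\}$ and $c\in (N(y_i)\cap N(y_k))\setminus\{a\}$. The natural first attempt is to delete the edge $ay_i$ (operation~(1), since $a$ is unhappy). By observation~\eqref{s0}, $G-ay_i$ is Sperner iff $y_i$ is not an $a$-menace; by Lemma~\ref{c0}(1), $G-ay_i$ is $2$-connected iff it contains a cycle through $a$ and $y_i$. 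When $b\neq c$, the cycle $a,y_j,b,y_i,c,y_k,a$ (with six pairwise distinct vertices, as $b,c\in A$ and $y_j,y_k\in Y$) lies in $G-ay_i$ and certifies $2$-connectedness; so if additionally $y_i$ is not an $a$-menace, $G-ay_i$ gives the desired shrinking.

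The main obstruction is when $y_i$ is an $a$-menace, witnessed by some $m_i\in A-a$ with $N(m_i)\supseteq N(a)-y_i$. Then $m_i\sim y_j,y_k$ while $m_i\not\sim y_i$ (otherwise $N(m_i)\supseteq N(a)$, violating Sperner). Since $b\sim y_i$ but $m_i\not\sim y_i$, we have $m_i\neq b$, so $m_i$ is a common neighbor of $y_j,y_k$ distinct from $b$, enabling the alternative shrinking $G-ay_j$: the cycle $a,y_i,b,y_j,m_i,y_k,a$ certifies its $2$-connectedness. By~\eqref{s0}, Sperner holds iff $y_j$ is not an $a$-menace. Symmetrically, $G-ay_k$ is $2$-connected via the cycle $a,y_i,c,y_k,m_i,y_j,a$. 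Crucially, these cycles do not require $b\neq c$, so this step uniformly handles both $b=c$ and $b\neq c$ once $y_i$ is an $a$-menace. Hence unless all three of $y_i,y_j,y_k$ are $a$-menaces, one of these deletions yields a shrinking.

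In the remaining triply-menaced case, let $m_i,m_j,m_k\in A-a$ be the respective witnesses. They are pairwise distinct: if $m_\alpha=m_\beta$ for $\alpha\neq\beta$ in $\{i,j,k\}$, then $N(m_\alpha)\supseteq (N(a)-y_\alpha)\cup(N(a)-y_\beta)=N(a)$, violating Sperner. I claim this forces every pair in $N(a)$ to be thick, contradicting the previous Claim (that $N(a)$ contains a thin pair). For distinct $\alpha,\beta\in\{i,j,k\}$ with third index $\gamma$, we have $\{y_\alpha,y_\beta\}\subseteq N(a)-y_\gamma\subseteq N(m_\gamma)$, so $\{y_\alpha,y_\beta\}$ has common neighbors $a,m_\gamma$. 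For any $y_\ell\in N(a)\setminus\{y_i,y_j,y_k\}$ (which occurs only if $d(a)\geq 4$), each $m_\alpha$ satisfies $y_\ell\in N(a)-y_\alpha\subseteq N(m_\alpha)$, so any pair $\{y_\beta,y_\ell\}$ with $y_\beta\in N(a)$ has $a$ and any $m_\alpha$ with $\alpha\neq\beta$ as common neighbors. Thus every pair of $N(a)$ is thick in all cases, yielding the desired contradiction.

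I anticipate the main technical difficulty to be the residual subcase in which the witnesses for the two thick pairs are forced to coincide ($b=c$) but $y_i$ is not an $a$-menace: the cycle used above breaks down, and the menace structure from the argument using $m_i$ is unavailable. In that case, an appeal to the $2$-connectedness of $G$, together with the uniqueness of the common witness $b$ and the existence of a vertex $y_\ell\in N(a)\setminus\{y_i,y_j,y_k\}$ not adjacent to $b$ (which can be shown to exist since $y_i$ is not an $a$-menace, forcing $d(a)\geq 4$), should furnish an alternate path from $a$ to $y_j$ avoiding both $y_i$ and $b$, letting $G-ay_j$ serve as the required shrinking.
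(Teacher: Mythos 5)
Your proof takes a genuinely different route from the paper's. The paper exploits the thin pair guaranteed by the preceding claim: by a relabeling step it arranges that the deleted edge is $ay_{i^*}$ with a thin pair sitting inside $N(a)-y_{i^*}$ (which gives Sperner immediately, without reference to menaces), and in the crucial subcase the chosen thin pair is $\{y_{j^*},y_{k^*}\}$ itself, which forces the two thick-pair witnesses $b_{i^*j^*},b_{i^*k^*}$ to be distinct (a common witness would make that pair thick). Your proof instead case-analyzes on which of $y_i,y_j,y_k$ are $a$-menaces and uses a menace witness $m_i$ as one arc of the certifying cycle; the all-three-menaces case colliding with the previous claim is a nice independent argument.

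However, the gap you flag at the end is real, and the sketched fix does not close it. In the residual case ($b=c$ and $y_i$ not an $a$-menace) you propose to find an $a$--$y_j$ path avoiding $b$ and $y_i$ and then delete $ay_j$; but nothing rules out $N(y_j)=\{a,b\}$, in which case $G-ay_j$ has a degree-one vertex and cannot be $2$-connected, no matter what alternate paths exist. Concretely, take $A=\{a,b,d\}$, $Y=\{y_i,y_j,y_k,y_\ell,z\}$ with $N(a)=\{y_i,y_j,y_k,y_\ell\}$, $N(b)=\{y_i,y_j,y_k,z\}$, $N(d)=\{y_\ell,z\}$ (so $r\ge 4$). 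This is a $2$-connected Sperner layered bigraph in which $a$ is unhappy, $\{y_i,y_j\}$ and $\{y_i,y_k\}$ are intersecting thick pairs whose only non-$a$ common witness is $b$ (so $b=c$), none of $y_i,y_j,y_k$ is an $a$-menace, and yet deleting any edge at $a$ leaves a degree-one vertex, so operation (1) never applies. The Claim is not thereby falsified (in this $G$ the vertex $y_i$ is special and $G-y_i$, operation (2), is a valid shrinking), but your argument never invokes operations (2)--(4) and so genuinely does not close this case; in particular the strategy of only ever deleting an edge incident to $a$ cannot be made to work.
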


{\bf Proof.} Suppose not. First we show that there exist some thick pairs $\{y_{i^*}, y_{j^*}\}, \{y_{i^*}, y_{k^*}\}$ and a thin pair $\{y_{s^*},y_{t^*}\}$ such that $s^*, t^* \neq i^*$. Let $\{y_i,y_j\}, \{y_i, y_k\}$ and $\{y_s, y_t\}$ be any intersecting thick pairs of $N(a)$ and a thin pair respectively where without loss of generality, $y_s \notin \{y_i,y_j\}$. If $y_t \neq y_i$ then we are done. If not then consider instead the pair $\{y_s,y_j\}$. If it is thin, then we take this pair instead of $\{y_s,y_t\}$. If it is thick, then we let $\{y_i, y_j\}, \{y_s,y_j\}$ be our intersecting thick pairs with $y_j$ playing the role of $y_{i^*}$ and $\{y_s,y_t\} = \{y_s,y_i\}$ be the thin pair. 

Now consider the graph $G - ay_{i^*}$. As in the previous claim, it satisfies (S2), (S4), and (S5) as well as (S1) in the definition of 
shrinking where we define vertices $b_{i^*j^*}, b_{i^*k^*}$ similarly. Since no other vertex contains the pair $\{y_{s^*},y_{t^*}\}$ in its neighborhood, $G - ay_{i^*}$ is Sperner. \qed

\begin{claim}\label{la3}
The codegree of each  pair  in $N(a)$ is at most $2$.
\end{claim}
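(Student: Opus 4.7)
The plan is to assume for a contradiction that some pair in $N(a)$ has codegree at least $3$. Relabel this pair as $\{y_1, y_2\}$, and let $b_1, b_2 \in A \setminus \{a\}$ be two further vertices with $\{y_1, y_2\} \subseteq N(b_1) \cap N(b_2)$. Since $G$ is Sperner, neither $N(b_1)$ nor $N(b_2)$ is contained in $N(a)$, so each $b_i$ has a neighbor $z_i \in Y \setminus N(a)$, and in particular $d(b_i) \geq 3$. Moreover, Claim~\ref{la2} applied to the thick pair $\{y_1, y_2\}$ shows every pair $\{y_1, y_s\}$ or $\{y_2, y_s\}$ with $s \in \{3,\ldots,t\}$ is thin (codegree exactly $1$).

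Next, the plan is to produce a type-(1) shrinking by deleting $ay_1$ (by symmetry, $ay_2$ would work equally) and to verify that $G' := G - ay_1$ satisfies (S1)--(S5). Conditions (S2), (S4), (S5) are immediate since only a single edge is removed. For (S3), by (\ref{s0}) it suffices that $y_1$ is not an $a$-menace: if some $m \neq a$ contained $N(a) - y_1 = \{y_2, y_3, \ldots, y_t\}$ in its neighborhood, then because $t \geq 3$ the pair $\{y_2, y_3\}$ would be thick, contradicting Claim~\ref{la2} since $\{y_2, y_3\}$ and $\{y_1, y_2\}$ share the vertex $y_2$.

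The principal remaining task is (S1), which by Lemma~\ref{c0}(1) reduces to exhibiting a cycle through $a$ and $y_1$ in $G - ay_1$. The local structure on $\{a, b_1, b_2, y_1, y_2\}$ alone does not suffice, since the only short $(a,y_1)$-walks available, namely $a - y_2 - b_1 - y_1$ and $a - y_2 - b_2 - y_1$, both pass through $y_2$. The plan is to combine one of these walks with an $(a,y_1)$-path in $G - y_2$ supplied by the $2$-connectivity of $G$: provided this auxiliary path avoids the edge $ay_1$, concatenating with $a-y_2-b_1-y_1$ yields the required cycle.

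The main obstacle is the case in which every $(a,y_1)$-path in $G - y_2$ uses the edge $ay_1$; equivalently, $ay_1$ is a bridge of $G - y_2$, so that the pair $\{y_2, ay_1\}$ behaves as a $2$-cut in $G$. In that degenerate scenario, the external neighbors $z_1, z_2$ of $b_1, b_2$, together with the Sperner condition and $d(b_i) \geq 3$, should force enough structural rigidity to supply an alternative valid shrinking: either some $b_i$ turns out to be unhappy, permitting a type-(1) deletion at $b_i$, or a type-(4) gluing can be applied to a carefully chosen subset of $N(a)$ without violating Sperner or $2$-connectivity. In either case, a valid shrinking is produced, contradicting the standing hypothesis that $G$ admits no shrinking, and the claim follows.
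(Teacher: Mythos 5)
Your identification of the correct shrinking (delete $ay_1$), and your verification of (S2)--(S5) -- in particular the observation that $y_1$ cannot be an $a$-menace because $\{y_2,y_3\}$ would otherwise be thick, violating Claim~\ref{la2} -- all match the paper's proof and are correct. The divergence is in how you attempt to certify (S1), and it is there that your argument has a genuine gap.

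You build a cycle through $a$ and $y_1$ in $G-ay_1$ by looking for an $(a,y_1)$-path $Q$ in $G-y_2$ disjoint from the edge $ay_1$ and then closing up through $y_2$ and one of $b_1,b_2$. Even setting aside the minor bookkeeping you omit (if a shortest such $Q$ passes through $b_1$ you must close through $b_2$, and conversely; a shortest path can hit at most one of them, with $y_1$ immediately following), the case where $ay_1$ is a bridge of $G-y_2$ is explicitly left open: you say the external neighbors $z_1,z_2$ and the Sperner condition ``should force enough structural rigidity to supply an alternative valid shrinking,'' but you never exhibit one, and nothing you have established shows that a type-(1) deletion at some $b_i$ or a type-(4) gluing at $a$ preserves $2$-connectivity and the Sperner property in that configuration. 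The bridge case is not vacuous: it is exactly the situation where $y_2$ alone separates $a$ from $y_1$ in $G-ay_1$, which is compatible with $2$-connectivity.

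The paper's proof avoids this obstruction entirely by routing through $y_3$ rather than through $y_1$ and $y_2$: take a shortest path $P$ in $G-a$ from $y_3$ to $\{y_1,y_2\}$. Since $P$ is shortest and $b_1,b_2$ are both adjacent to $\{y_1,y_2\}$, at most one of $b_1,b_2$ lies on $P$, and then only as the penultimate vertex; by symmetry $P$ ends at $y_1$ with $b_2\notin P$. The cycle $y_3Py_1b_2y_2ay_3$ lives in $G-ay_1$, passes through both $a$ and $y_1$, and does so unconditionally -- there is no residual ``bridge'' case because the path $P$ never involves $a$ and only touches one of $y_1,y_2$. If you rewrite your argument to start the auxiliary path at a third neighbor $y_3\in N(a)$ (which exists since $a$ is unhappy, so $d(a)\ge 3$) and work in $G-a$, the proof closes.
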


{\bf Proof.} Suppose there exist distinct vertices $b_1, b_2 \neq a$ both adjacent to $y_1$ and $y_2$. Since $\{y_1,y_2\}$ is a thick pair, $\{y_1,y_3\}$ and $\{y_2, y_3\}$ are thin by the previous claim. Let $P$ be a shortest path in $G-a$ from $y_3$ to $\{y_1,y_2\}$. Note that if $P$ contains $b_1$ or $b_2$, then by the minimality of $|P|$, either $y_1$ or $y_2$ follows directly after. Therefore we may assume by symmetry that $y_1 \in P$ and $b_2 \notin P$. Consider the graph $G - ay_1$. Trivially it satisfies (S2), (S4), and (S5). Because $\{y_2,y_3\}$ is thin, it also satisfies (S3). Finally, the cycle $y_3Py_1b_2y_2ay_3$ certifies that (S1) is satisfied. \qed
\begin{claim}\label{la4}
If a proper subset  $S$ of $N(a)$ is a separating set in $G$, then $S$ contains an $a$-menace.
\end{claim}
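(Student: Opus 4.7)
I would prove Claim~\ref{la4} by contradiction: suppose $S\subsetneq N(a)$ is a separating set with no $a$-menace. By~\eqref{s0}, $G-ay_i$ is Sperner for every $y_i\in S$; this deletion already satisfies (S2), (S3), (S4), (S5), so it suffices to find $y_i\in S$ making $G-ay_i$ also satisfy (S1) (i.e., $2$-connected), producing a type-(1) shrinking that contradicts the standing hypothesis.

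Setting up, let $U_1\ni a, U_2,\ldots,U_m$ be the components of $G-S$. Then $m\geq 2$ (because $S$ separates), $|S|\geq 2$ (else $S$ would be a cut vertex, contradicting $2$-connectivity), and we may fix $y_0\in N(a)\setminus S\subseteq U_1$. Replacing $S$ by a minimal separator contained within it if necessary, I may assume each $y_i\in S$ has a neighbor in every $U_j$; in particular, for each $j\geq 2$ at least two vertices of $S$ have a neighbor in $U_j$.

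By Lemma~\ref{c0}(1), to show $G-ay_i$ is $2$-connected I need two internally disjoint $a$-$y_i$ paths in $G-ay_i$. My plan is to pick $y_i,y_\ell\in S$ sharing a neighbor-component $U_j$ with $j\geq 2$, and build path $P_1$ from $a$ to $y_i$ as $a\to y_\ell\to a_\ell\to\cdots\to a_i'\to y_i$ using edges inside $U_j\cup\{y_\ell,y_i\}$, and path $P_2$ from $a$ to $y_i$ as $a\to y_0\to\cdots\to b\to y_i$ using edges inside $U_1\cup\{y_i\}$ and avoiding $a$ internally, where $b\in N(y_i)\cap U_1\cap A$ is distinct from $a$. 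Since the internal vertices of these paths lie in disjoint sets, the result is the required cycle through $a$ and $y_i$.

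The main obstacle is that $P_2$ requires a $U_1$-neighbor of $y_i$ other than $a$ and that the $y_0$-to-$b$ subpath stay inside $U_1\setminus\{a\}$. If some $y_i$'s only $U_1$-neighbor is $a$, I would try routing the second path through another $U_{j'}$ with $j'\neq j$, $j'\geq 2$, which works whenever $m\geq 3$. The hardest residual case is when $m=2$ and every $y_i\in S$ has $a$ as its sole $U_1\cap A$-neighbor; there I would leverage the no-$a$-menace hypothesis together with the codegree-at-most-$2$ property from Claim~\ref{la3} and the disjointness of thick pairs from Claim~\ref{la2} to force a specific vertex $b\in U_1\cap A$ with $N(b)\supseteq N(a)\setminus\{y_i\}$ for some $y_i\in S$, yielding the sought $a$-menace and thus the desired contradiction.
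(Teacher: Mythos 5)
Your overall strategy matches the paper's: assume the claim fails, note that by~\eqref{s0} deleting $ay_i$ for any $y_i\in S$ already satisfies (S2)--(S5), and then try to certify (S1) by exhibiting a cycle through $a$ and $y_i$ in $G-ay_i$ (via Lemma~\ref{c0}). But your cycle construction is genuinely different from the paper's, and it has a gap that the paper's construction avoids by design.

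The gap is your residual case: $m=2$ with every $y_i\in S$ having $a$ as its sole $U_1$-neighbor. You propose to ``leverage the no-$a$-menace hypothesis together with Claims~\ref{la2} and~\ref{la3} to force an $a$-menace,'' but this is a plan, not a proof, and it is not clear those claims can deliver it. What you have missed is that this case is \emph{vacuous}: if every $y_i\in S$ has only $a$ as its $U_1$-neighbor, then in $G-a$ there is no edge from $U_1-a$ to $S$ and none to $U_2$, so $U_1-a$ is disconnected from the rest of $G-a$; since $S\subsetneq N(a)$ forces $U_1-a\supseteq\{y_0\}\neq\emptyset$, this contradicts $2$-connectivity. (Even in the non-vacuous cases, you fix $y_0$ at the outset, but the path from $y_0$ to $b$ must lie in $U_1-a$, which need not be connected; the fix is to choose the $N(a)$-neighbor in $b$'s component of $U_1-a$, which always exists, but you do not say this.)

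The paper's construction sidesteps this entire difficulty. Instead of trying to route one of the two paths from $a$ through $U_1-a$ to a pre-specified $y_i$, it applies the fan lemma from the ``leftover'' vertex $y_t\in N(a)\setminus S$ (which sits in $D_1$) to $S$: $2$-connectivity gives two $y_t,S$-paths meeting only at $y_t$, at most one of which can pass through $a$. Taking the one, $P_1$, that avoids $a$, its $S$-endpoint $y_1$ is \emph{forced} to have a $D_1$-neighbor other than $a$, and the $D_1$-side of the cycle comes for free as $a y_t P_1 y_1$. The $D_2$-side $P_3$ is supplied by the minimality of $S$. The resulting cycle $y_2\,a\,y_t\,P_1\,y_1\,P_3\,y_2$ requires no case split on $m$ and no analysis of $U_1-a$'s connectivity. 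So while your top-level plan is sound, the cycle you try to build does not always exist as described, and the paper chooses a different, cleaner cycle precisely to avoid the case analysis you leave incomplete.
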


{\bf Proof.} If the claim does not hold, choose a smallest separating subset $S=\{y_1,\ldots,y_s\}$ of $N(a)$
not containing $a$-menaces. Since
$S$ is a proper subset of $N(a)$, $s<t$. Let $D_1$ and $D_2$ be  components of $G-S$, where $D_1$ contains $a$.
By the minimality of $S$,
\begin{equation}\label{s1}
\mbox{\em each $y_i\in S$ has a neighbor in $D_2$.}
\end{equation}
Since $G$ is $2$-connected, there are two $v_t, S$-paths $P_1$ and $P_2$ sharing only $v_t$. 
By symmetry we may assume that $P_1$  avoids $a$. Let  $y_1$
be the end of $P_1$ in $S$. By~(\ref{s1}), there is a $y_1,y_2$-path $P_3$ all whose internal vertices are in $D_2$.

Consider $G'=G-ay_1$. Properties (S2), (S4) and (S5) in the claim of the lemma hold for $G'$ by definition. 
Since $y_1$ is not an $a$-menace, by~(\ref{s0}), $G'$ is Sperner, i.e. (S3) holds.
Cycle
$y_2av_tP_1y_1P_3y_2$ together with Lemma~\ref{c0} show that $G'$ is $2$-connected. Thus, $G'$ satisfies the lemma.\qed

\medskip

\begin{claim}\label{la5}
$N(a)$ has no thick pairs.
\end{claim}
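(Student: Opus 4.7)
I argue by contradiction: suppose $\{y_1,y_2\}\subseteq N(a)$ is a thick pair. By Claim~\ref{la3} its codegree equals $2$, witnessed by $a$ and a unique $b\in A\setminus\{a\}$. The goal is to exhibit a shrinking of $G$ satisfying (S1)--(S5), contradicting the standing hypothesis.

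The first attempt, following the template of Claims~\ref{la2}--\ref{la4}, is shrinking~(1) on the edge $ay_1$. Properties (S2), (S4), (S5) are automatic. For (S3), by~\eqref{s0} I must show $y_1$ is not an $a$-menace: otherwise a witness $c\neq a$ with $N(a)\setminus\{y_1\}\subseteq N(c)$ would yield $\{y_2,y_3\}\subseteq N(c)$ for any $y_3\in N(a)\setminus\{y_1,y_2\}$ (present because $d(a)=t\geq 3$), making $\{y_2,y_3\}$ a thick pair sharing $y_2$ with $\{y_1,y_2\}$, contradicting Claim~\ref{la2}. For (S1), Lemma~\ref{c0}(1) reduces the problem to producing a cycle through $a$ and $y_1$ in $G-ay_1$. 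Since $a,y_2,b,y_1$ is a path in $G-ay_1$, such a cycle exists iff some $\ell\in\{3,\dots,t\}$ admits a $y_\ell,y_1$-path in $G-\{a,y_2,b\}$; if so, prepending $ay_\ell$ and appending $y_1\,b\,y_2\,a$ closes the cycle and we are done.

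The main obstacle is the bottleneck case: $\{a,y_2,b\}$ separates $y_1$ from every $y_\ell$, $\ell\geq 3$, in $G$. Let $L$ be the component of $G-\{a,y_2,b\}$ containing $y_1$. Claim~\ref{la2} then forces any $c\in N(y_1)\setminus\{a,b\}$ to satisfy $N(c)\cap N(a)=\{y_1\}$, since otherwise some pair $\{y_1,y_j\}$, $j\geq 2$, acquires codegree $\geq 2$ via $a$ and $c$, producing a second thick pair at $y_1$. This insulates $L$ from $N(a)\setminus\{y_1\}$ and rules out closing the missing path through $L$.

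To escape the bottleneck, I plan to pivot to shrinking~(1) on a different edge $ay_j$, $j\geq 3$. Since $G$ is Sperner, $N(b)\not\supseteq N(a)$, so there is some $y_m\in N(a)\setminus N(b)$. For every $j\in\{3,\dots,t\}\setminus\{m\}$, the vertex $y_j$ is not an $a$-menace (otherwise $b$ would have to contain $N(a)\setminus\{y_j\}\ni y_m$), so $G-ay_j$ is Sperner. I then verify (S1) for some such $G-ay_j$ by building a cycle through $a$ and $y_j$: the $4$-cycle $a\,y_1\,b\,y_2\,a$ supplies a cycle through $a$ meeting both $y_1$ and $y_2$, and a $y_j$-to-$\{y_1,y_2\}$ path outside $L$ (guaranteed by $2$-connectedness of $G$ applied to the larger side of the $3$-cut $\{a,y_2,b\}$) completes the cycle. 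If no such $y_j$ succeeds, the residual configuration is so rigid that either a special vertex appears among $\{y_1,y_2\}$ (with $b$ of degree $2$ and the other neighbor $a$ unhappy), unlocking shrinking~(2), or a special triple forms, unlocking shrinking~(3); Claim~\ref{la2} again prevents new Sperner violations in these cases. The main difficulty is this bottleneck pivot --- a Menger-style construction combined with an exhaustive case analysis that leverages Claims~\ref{la2} and~\ref{la4} to rule out spurious structure.
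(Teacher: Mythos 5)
Your plan starts correctly: the codegree of $\{y_1,y_2\}$ is exactly $2$ (by Claim~\ref{la3}), witnessed by $a$ and a unique $b$, and neither $y_1$ nor $y_2$ is an $a$-menace (by Claim~\ref{la2}), so $G-ay_1$ stays Sperner. But from there the proposal diverges from the paper and, more importantly, does not close.

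Two concrete problems. First, your ``iff'' characterization of when $G-ay_1$ is $2$-connected is wrong: a cycle through $a$ and $y_1$ in $G-ay_1$ need not use the arc $a\,y_2\,b\,y_1$, so failure to find a $y_\ell,y_1$-path in $G-\{a,y_2,b\}$ does not mean $G-ay_1$ has lost $2$-connectivity. This makes the ``bottleneck case'' ill-posed. Second, and more seriously, the escape route is not a proof: the pivot to $ay_j$ with $j\in\{3,\dots,t\}\setminus\{m\}$ can be vacuous (when $t=3$), and the final sentence---that the leftover configuration ``is so rigid'' that a special vertex or special triple must appear---is asserted, not argued. The one detail you do supply there is false: you say $b$ has degree $2$, but since $G$ is Sperner we have $N(b)=\{y_1,y_2,z_1,\dots,z_s\}$ with $s\ge 1$, so $d(b)\ge 3$. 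What actually makes $y_1$ special in the degree-$2$ subcase is that $b$ is \emph{unhappy} (because $d(y_1z_1)=1$), which requires its own verification.

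The paper's proof uses a different lever that your plan is missing. It invokes Claim~\ref{la4} to produce an $a,b$-path $P_1$ in $G-y_1-y_2$ (if $\{y_1,y_2\}$ separated $a$ from $b$, it would have to contain an $a$-menace, contradiction). With $P_1$ in hand, the argument splits on $d(y_1)$: if $d(y_1)=2$ it deletes the special vertex $y_1$ (a type-(2) shrinking, certified $2$-connected via the cycle through $y_2$, $a$, $P_1$, $b$); if $d(y_1)\ge 3$ it takes $c\in N(y_1)\setminus\{a,b\}$ and a shortest path $P_2$ from $c$ to $V(P_1)\cup\{y_2\}$, and depending on where $P_2$ lands it deletes $ay_1$ or $by_1$. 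The crucial move that your plan never considers is deleting $by_1$ (the paper's Case 2.2); restricting attention to edges of the form $ay_j$ is exactly what traps you in the bottleneck. So this is a genuine gap, not a stylistic difference: without the $a,b$-path from Claim~\ref{la4} and without the option of removing an edge at $b$, the rigid residual case is not handled.
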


{\bf Proof.} Suppose pair $y_1y_2$ is thick. By Claims~\ref{la2} and~\ref{la3}, $d(y_1y_2)=2$ and the common neighbor 
$b\in A-a$ of $y_1$ and $y_2$ has no other neighbors in $N(a)$. Let $N(b)=\{y_1,y_2,z_1,\ldots,z_s\}$.
Since $G$ is Sperner, $s\geq 1$.

By Claim~\ref{la2}, neither of $y_1$ and $y_2$ is an $a$-menace. So, by Claim~\ref{la4}, $G-y_1-y_2$ contains
an $a,b$-path $P_1$. We may assume that $v_t$ is the second and $z_1$ is the second to last vertices of $P_1$.
Since $d(y_1y_2)=2$, by Claim 5, $z_1 \notin N(a)$. So $v_t \neq z_1$.

{\bf Case 1:} $d(y_1)=2$. Then $d(y_1z_1)=1$ and hence $b$ is unhappy.  So, since $d(y_1y_2)=2$, by Claim~\ref{la2},
$d(y_2z_1)=1$. Consider $G'=G-y_1$. As in the proof of Claim~\ref{la4}, (S2), (S4) and (S5)  hold for $G'$ by definition. 
Cycle
$y_2ay_2P_1b y_2$ together with Lemma~\ref{c0} sertify that $G'$ is $2$-connected, i.e., (S1) holds.
Only the neighborhoods of $a$ and $b$ in $A'$ are distinct from those in $A$. So the fact that
$d(y_2z_1)=d(y_2y_t)=1$ shows that $G'$ is Sperner. This proves Case 1.

{\bf Case 2:} $d(y_1)\geq 3$. Let $c\in N(y_1)-a-b$, where if possible we choose $c$ to be adjacent to $z_1$. Since $G$ is $2$-connected, $G-y_1$ has a shortest path $P_2$ from $c$ 
to $V(P_1)\cup \{y_2\}$. Let $x$ be the end of $P_2$ in $V(P_1)\cup \{y_2\}$.

{\bf Case 2.1:} $x\neq b$. Consider $G'=G-ay_1$. As above, (S2), (S4) and (S5)  trivially hold for $G'$. 
Since only the neighborhood of $a$ in $A'$ is distinct from those in $A$ and  
$d(y_2y_t)=1$,  $G'$ is Sperner. We need now only to show that $G'$ is  $2$-connected. If
$x=y_2$, then cycle $cP_2y_2aP_1by_1c$ certifies this. If $x\in V(P_1)-b$, then our certificate is cycle
$cy_1by_2aP_1(a,x)xP_2c$, where $P_1(a,x)$ denotes the subpath of $P_1$ from $a$ to $x$.

{\bf Case 2.2:} $x= b$. Note that because $x \neq z_1$, by the choice of $c$ and the choice of $P_2$, $z_1 \notin N(c)$ for any $c \in N(y_1) - a - b$. In particular, $d(y_1z_1) = 1$, and so $b$ is unhappy. The second to last vertex of $P_2$ is none of $z_1,y_1,y_2$, so we may assume it is $z_2$.
Consider $G'=G-by_1$. Cycle $cP_2by_2ay_1c$ shows that $G'$ is  $2$-connected. As above, (S2), (S4) and (S5)  trivially hold for $G'$. 
Thus if $G'$ is Sperner, then the claim is proved. If  $G'$ is not Sperner, then $y_1$ is a $b$-menace, and there is a vertex
$g\in A-b$ such that $N(g)\supset \{y_2,z_1,z_2\}$. Since $z_1a\notin E$, $g\neq a$. But then instead of the path
$P_2$, we can consider the path $P_2(c,z_2)z_2gz_1$, and will have Case 2.1.\qed


\begin{equation}\label{s4}
\mbox{\em Let $G'$ be obtained from $G$ by gluing all vertices in $N(a)-y_t$ into one vertex
$y^*$.
}
\end{equation}
(S2) holds for $G'$ trivially. When gluing the vertices, we lose edges only if some pair $y_i, y_j \in N(a)$ have a common neighbor. But because $\{y_i, y_j\}$ is thin, they have no common neighbors other than $a$. Hence $|E'| = |E| - (t-2)$ and $|Y'| = |Y| - (t-2)$ so (S4) holds. Property (S5) is less clear but still is true: If $G'$ has a cycle $C$ of length at least
$2k$, then it must go through $y^*$. Furthermore, if $C$ does not go through $a$, then either $C$ is present in $G$ with
$y^*$ replaced by some $y_i$, or it can be extended through $a$ connecting some $y_i$ and $y_j$. If $C$ does through
$a$, then it uses edges $ay_t$ and $ay^*$; we can modify $C$  in $G$ to a cycle of the same length. Thus, (S5) also holds.

 Since all pairs in $N(a)$ are thin, none of $y_i$ is an $a$-menace. So by Claim~\ref{la4} and Lemma~\ref{c0}, $G'$ is 
 $2$-connected. Again, since all pairs in $N(a)$ are thin, $N_{G'}(a)$ is not contained in any other neighborhood.
 Hence, in order the lemma to fail, by symmetry there are $b_1,b_2\in A - a$ such that $N_G(b_2)-y_2\subset N_G(b_1)$ and
 $y_1b_1\in E$. Note that $b_1$ and $b_2$ each contain exactly one vertex in $N(a)$ ($y_1$ and $y_2$ respectively), and there is $x\in N(b_1)\cap N(b_2)$ such that $x \notin N(a)$.

\begin{claim}\label{la6}
$d(b_2)=2$.
\end{claim}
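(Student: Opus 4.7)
My plan is to prove $d(b_2)=2$ by contradiction, building on the structural facts already established. Suppose $d(b_2)\geq 3$ and fix $z\in N(b_2)\setminus\{y_2,x\}$. By Claim~\ref{la5}, all pairs in $N(a)$ are thin, so $b_2$'s unique neighbor in $N(a)$ is $y_2$, which gives $z\notin N(a)$; combining this with the hypothesis $N(b_2)-y_2\subseteq N(b_1)$ we get $z\in N(b_1)$, and the analogous statement for $b_1$ shows $N(b_1)-y_1\subseteq Y\setminus N(a)$. In particular the neighbors $x$ and $z$ lie outside $N(a)$ and belong to both $N(b_1)$ and $N(b_2)$.

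The first step is to show $b_2$ is unhappy. I would examine the pair $\{y_2,z\}\subseteq N(b_2)$: neither $a$ (since $z\notin N(a)$) nor $b_1$ (since $y_2\notin N(b_1)$) is a common neighbor, so $d(y_2,z)\leq 1$ unless some other $c\in A-\{a,b_1,b_2\}$ has $\{y_2,z\}\subseteq N(c)$. In that alternative, $c$'s unique $N(a)$-neighbor would have to be $y_2$ by thinness, making $c$ another candidate for the role of $b_2$; a brief subcase analysis using Claims~\ref{la2}--\ref{la4} (in particular the $a$-menace argument) either substitutes $c$ for $b_2$ or produces an immediate shrinking. Thus we may assume $d(y_2,z)=1<d(b_2)-1$, so $b_2$ is unhappy.

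With $b_2$ unhappy, I would carry out the type-(1) shrinking $G':=G-b_2 z$, chosen because the alternative $G-b_2y_2$ fails Sperner at once via $N(b_2)-y_2\subseteq N(b_1)$. Conditions (S2), (S4), (S5) follow from $|E'|=|E|-1$, $|A'|=|A|$, $|Y'|=|Y|$, and $c(G')\leq c(G)<2k$; for (S1), Lemma~\ref{c0}(1) applies via the cycle $b_2\,y_2\,a\,y_1\,b_1\,z\,b_2$, which survives in $G'$ and witnesses a cycle through $b_2$ and $z$ in $G-b_2z$. The main obstacle is (S3): since $N_{G'}(b_2)=N(b_2)-z$ is a proper subset of $N(b_2)$, Sperner of $G$ does not automatically forbid $N_{G'}(b_2)\subseteq N(c)$ for some $c\neq b_2$. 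I would rule this out by noting that any such $c$ must contain $y_2$ and hence mirror $b_2$'s structure, then derive a contradiction from thinness of pairs in $N(a)$ combined with the minimality of the no-shrinking $G$. If some stubborn subcase resists, I would pivot either to deleting $b_2x$ or to applying shrinking~(4) directly to the unhappy $b_2$ to glue $N(b_2)-y_2$ into a single vertex, in each case forcing $d(b_2)=2$ and contradicting the assumption $d(b_2)\geq 3$.
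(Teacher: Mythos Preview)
Your first step---showing that $b_2$ is unhappy---is not salvageable, and in fact the opposite is true. You correctly observe that $z\in N(b_1)$; together with $x\in N(b_1)\cap N(b_2)$ this makes $\{x,z\}$ a \emph{thick} pair inside $N(b_2)$ (codegree at least $2$, witnessed by $b_1$ and $b_2$). But Claims~\ref{la2}--\ref{la5} were proved for an arbitrary unhappy vertex of $G$ under the standing hypothesis that $G$ admits no shrinking; in particular Claim~\ref{la5} (``$N(a)$ has no thick pairs'') applies with $b_2$ in place of $a$ whenever $b_2$ is unhappy. Since $\{x,z\}\subseteq N(b_2)$ is thick, $b_2$ must be happy. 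Consequently $d(y_2,z)\geq d(b_2)-1\geq 2$, so the ``unless some other $c$'' branch you treat as a side case is exactly what always happens, and your substitution of $c$ for $b_2$ cannot terminate: any such $c$ containing a thick pair is again happy by the same argument. With $b_2$ happy, neither $G-b_2z$, nor $G-b_2x$, nor the type-(4) gluing at $b_2$ is a legal shrinking, so all of your proposed moves collapse.

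The paper's route is different in kind: it exploits, rather than fights, the happiness of $b_2$ (and of $b_1$). Once $b_1,b_2$ are known to be happy, the shrinking is performed at $a$ via $G'=G-ay_2$. Sperner-ness of $G'$ is automatic because all pairs in $N(a)$ are thin (so $y_2$ is not an $a$-menace). For $2$-connectivity one takes a shortest path $P$ in $G-a$ from $y_t$ to $\{y_1,y_2,b_1,b_2,x_1,x_2\}$ and uses the happiness of $b_2$ to produce, when needed, an auxiliary $b_3\neq a,b_1,b_2$ with $\{y_2,x_2\}\subseteq N(b_3)$; this yields a cycle through $a$ and $y_2$ in $G'$ in every case. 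The moral: the contradiction comes from shrinking at the vertex you already know is unhappy, namely $a$, not at $b_2$.
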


{\bf Proof.} Suppose $N(b_2)\supseteq \{y_2,x_1,x_2\}$. Then by the definition of $b_1$, $N(b_1)\supseteq \{y_1,x_1,x_2\}$.
So by Claim~\ref{la5} applied to $b_1$ and $b_2$, because the pair $\{x_1,x_2\}$ is thick, both $b_1$ and $b_2$ are happy . Since $G$ is $2$-connected, $G-a$ has a shortest path $P$ from $v_t$ to
$Z=\{y_1,y_2,b_1,b_2,x_1,x_2\}$. Let $z$ be the last vertex of $P$. By symmetry, we may assume 
$z\in \{y_2,b_2,x_2\}$. Consider $G'=G-ay_2$. As before, (S2),(S4) and (S5) hold for $G'$. Since all pairs in $N(a)$ are thin,
$G'$ is Sperner. If $z=y_2$, then the cycle $aPy_2b_2x_2b_1y_1a$ shows that $G'$ is $2$-connected.

So suppose $z\in \{b_2,x_2\}$. Since $b_2$ is happy, there is another $b_3$ adjacent to $y_2$ and $x_2$. By definition, it is distinct 
from $b_1$ and $a$.
So if $z=x_2$ and $P$ does not pass through $b_3$, then we have
cycle $aPx_2b_3y_2b_2x_2b_1y_1a$. Similarly, if $z=b_2$ and $P$ does not pass through $b_3$, then we have
cycle $aPb_2y_2b_3x_1b_1y_1a$. Finally, if $P$  passes through $b_3$, then we have
cycle $aP(a,b_3)b_3y_2b_2x_1b_1y_1a$.\qed

\begin{claim}\label{la7}
$d(y_2)\geq 3$.
\end{claim}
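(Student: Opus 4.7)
I would suppose for contradiction that $d(y_2)=2$. Since both $a$ and $b_2$ lie in $N(y_2)$, this forces $N(y_2)=\{a,b_2\}$, and combined with Claim~\ref{la6} (so $d(b_2)=2$ and $N(b_2)=\{y_2,x\}$), the vertex $y_2$ becomes a \emph{special} vertex: it has degree $2$ with one neighbor ($a$) unhappy and the other ($b_2$) of degree~$2$. Hence shrinking operation~(2) applies: set $G':=G-\{y_2,b_2\}$, and we would aim to verify that $G'$ satisfies (S1)--(S5), contradicting the standing assumption that $G$ admits no shrinking.

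Conditions (S2), (S4), and (S5) follow immediately from the deletion: we remove exactly one vertex from each part, the edge count drops, and $c(G')\leq c(G)<2k$. For (S3), only $a$'s neighborhood changes in $G'$, shrinking to $N(a)-y_2$. A Sperner violation would yield either $N(b)\subseteq N(a)-y_2\subsetneq N(a)$ for some $b\in A'-a$ (contradicting $G$ Sperner), or $N(a)-y_2\subseteq N(b)$ for some $b\in A'-a$, which would force two vertices of $N(a)$ to have codegree at least~$2$ (via $a$ and $b$), contradicting the thinness of all pairs in $N(a)$ established in Claim~\ref{la5}.

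The heart of the argument is (S1), the $2$-connectivity of $G'$. Observe that $N(y_2)\cup N(b_2)\setminus\{y_2,b_2\}=\{a,x\}$, so $\{a,x\}$ is a $2$-separator of $G$ with $\{y_2,b_2\}$ on one side and the non-empty set $H_2:=V(G)\setminus\{a,x,y_2,b_2\}$ on the other (non-empty since $t\geq 3$). Neither $a$ nor $x$ can be a cut vertex of $G'$: if, say, $a$ were, then the components of $G'-a$ not containing $x$ would persist in $G-a$ because the pendant path $y_2$-$b_2$ reattaches to $G-a$ only at $x$, contradicting the $2$-connectivity of $G$; the case of $x$ is symmetric. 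It remains to exclude a cut vertex $v\in H_2$ of $G'$; in that situation the $2$-connectivity of $G$ would force $G'-v$ to have exactly two components $D_a\ni a$ and $D_x\ni x$ (else further components would also be components of $G-v$), making $\{v,y_2\}$ a $2$-separator of $G$ that splits $G$ into $D_a$ and $D_x\cup\{b_2\}$.

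The main obstacle I anticipate is precisely this last subcase. I would handle it by exploiting the separator $\{v,y_2\}$ to produce either an alternative shrinking---for example, deleting an edge $ay_i$ for an index $i$ chosen so that, in $G$, the two internally disjoint $a,y_i$-paths guaranteed by $2$-connectivity can be arranged to avoid the edge $ay_i$ (using the $b_1$-$x$-$b_2$-$y_2$ detour on one side of the separator)---or the alternative block decomposition $B_1,B_2,B_3$ from the lemma, using the $x$-side of the separator as the block around $y_2$ together with analogous blocks produced at two other neighbors of $a$ by a symmetric analysis. A careful case split based on the distribution of $y_1,y_3,\ldots,y_t$ between $D_a$ and $D_x$, combined with the structural facts accumulated in Claims~1--6, should close the argument.
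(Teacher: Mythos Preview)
Your setup through (S2)--(S5) is fine, and noting that $y_2$ is special with $G'=G-\{y_2,b_2\}$ is exactly how the paper begins. The Sperner check is correct (thinness from Claim~\ref{la5} kills any containment).

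The gap is in (S1). You correctly reduce to the case of a cut vertex $v\in H_2$ of $G'$ and extract a separator $\{v,y_2\}$, but then you stop and offer only a plan: ``an alternative shrinking \ldots\ A careful case split \ldots\ should close the argument.'' That is not a proof, and in fact the particular $G'=G-\{y_2,b_2\}$ you chose does \emph{not} always end up being the shrinking that works. The paper's argument is concrete and uses structure you never invoke: by Claim~\ref{la4} (no proper subset of $N(a)$ is separating, since all pairs are thin) there is a shortest $a,x$-path $P$ in $G-y_1-y_2$. If its penultimate vertex is not $b_1$, the cycle $aPxb_1y_1a$ lies in $G'$ and (via Lemma~\ref{c0}) certifies 2-connectivity. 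If the penultimate vertex \emph{is} $b_1$, the paper abandons $G-\{y_2,b_2\}$ and switches to a different shrinking: when $b_1$ is unhappy and $d(x)=2$ it deletes the special triple $\{x,y_2,b_2\}$; when $d(x)\ge 3$ it deletes an edge $ay_1$ or $b_1x$ instead, building the certifying cycle from an auxiliary path out of a third neighbour $b_4$ of $x$.

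So the missing idea is precisely the use of the vertex $b_1$ (and the path guaranteed by Claim~\ref{la4}) to build an explicit cycle through $a$ and $x$, together with the recognition that when $b_1$ obstructs this cycle you must change the shrinking operation rather than persist with $G-\{y_2,b_2\}$. Your abstract cut-vertex analysis does not reach this, and the final paragraph's ``should close the argument'' is where the real work lies.
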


{\bf Proof.} Recall $x = N(b_1) \cap N(b_2)$. Assume $N(y_2)=\{a,b_2\}$. By Claim~\ref{la4}, $G-y_1-y_2$ has an $a,x$-path $P$. We can choose a shortest such path.  Let $c$ be the second to last vertex in $P$.

{\bf Case 1:} $c\neq b_1$. Consider $G'=G-b_2-y_2$. As before, (S2),(S4) and (S5) hold for $G'$. Since all pairs in $N(a)$ are thin,
$G'$ is Sperner. The cycle $aPxb_1y_1a$ shows that $G'$ is $2$-connected.

{\bf Case 2:} $c= b_1$. Let $z$ be the previous to $c$ vertex of $P$. Since all pairs in $N(a)$ are thin, $z\neq v_t$.
If $b_1$ is happy, then there exists a vertex $b_3\neq b_1$ with $\{y_1,x\}\subseteq N(b_3)$. Then $b_3$ can play the role of
$b_1$ in the definition of $b_1$ and $b_2$. In this case, we get Case 1 and are done. Thus, $b_1$ is unhappy. Hence
all pairs in $N(b_1)$ are thin.

If $d(x)=2$, consider $G'=G-b_2-y_2-x$. As before, (S2),(S4) and (S5) hold for $G'$. Since all pairs in $N(a)$ and in $N(b_1)$ are thin,
$G'$ is Sperner. The cycle $aPb_1y_1a$ shows that $G'$ is $2$-connected.

So suppose $b_4\in N(x)-b_1-b_2$. Since $G$ is $2$-connected, $G-x$ has an $b_4,a$-path $P_1$. If $P_1$ does not intersect
$\{b_1,y_1\}$, then we have Case 1 with $P=aP_1b_4x$. So, suppose $u$ is the first vertex in $\{b_1,y_1\}$ that is hit by $P_1$.
Note that if $P_1$ meets $P-u$ before $u$, then we can modify it to avoid intersecting with $\{b_1,y_1\}$. Thus we assume below that this is not the case.

If $u=y_1$, consider $G'=G-ay_1$. As before, (S2),(S4) and (S5) hold for $G'$. Since all pairs in $N(a)$  are thin,
$G'$ is Sperner. The cycle $aPb_1y_1P_1(y_1,b_4)xb_2y_2a$ shows that $G'$ is $2$-connected.
Finally, if $u=b_1$, consider $G'=G-b_1x$. As before, (S2),(S4) and (S5) hold for $G'$. Since all pairs  in $N(b_1)$ are thin,
$G'$ is Sperner. The cycle $aPb_1P_1(b_1,b_4)xb_2y_2a$
 shows that $G'$ is $2$-connected.\qed

\begin{claim}\label{la8}
Set $\{x,y_1,y_2\}$ separates $a$ from $b_1$.
\end{claim}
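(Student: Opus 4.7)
I would prove the claim by contradiction, in the spirit of Claims~\ref{la4}--\ref{la7}. Suppose $\{x,y_1,y_2\}$ does not separate $a$ from $b_1$, and let $Q$ be a shortest $a,b_1$-path in $G-\{x,y_1,y_2\}$. Then $Q$ starts with some edge $ay_j$ with $j\ge 3$, and since $N(b_2)=\{y_2,x\}$ by Claim~\ref{la6}, the vertex $b_2$ also avoids $V(Q)$.

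The aim is to produce a shrinking $G'$ of $G$ satisfying (S1)--(S5), contradicting the standing assumption that no such $G'$ exists. The natural first try is $G'=G-ay_2$: because all pairs in $N(a)$ are thin by Claim~\ref{la5}, $y_2$ is not an $a$-menace, so $G-ay_2$ is Sperner by~\eqref{s0}, giving (S3); properties (S2), (S4), (S5) are immediate for a single edge deletion. The only real task is (S1), which by Lemma~\ref{c0}(1) reduces to exhibiting a cycle through $a$ and $y_2$ in $G-ay_2$. One $a,y_2$-path in $G-ay_2$ is available directly from $Q$, namely $a\,Q\,b_1\,x\,b_2\,y_2$, using $N(b_2)=\{y_2,x\}$ to substitute for the removed edge $ay_2$. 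A second internally disjoint $a,y_2$-path can be sought via a third neighbor $b_3\in A\setminus\{a,b_2\}$ of $y_2$, which exists by Claim~\ref{la7}, combined with the 2-connectivity of $G$.

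The principal obstacle is constructing this second disjoint path cleanly, since $b_3$ need not reach $N(a)\setminus\{y_2\}$ without revisiting $V(Q)\cup\{b_1,x,b_2\}$; this forces a case analysis on local degrees. If $d(y_1)\ge 3$ I would instead take $G'=G-ay_1$ and close a cycle through $a$ and $y_1$ via a third neighbor of $y_1$ together with $Q$ and the edge $b_1y_1$. If $d(y_1)=2$ then $N(y_1)=\{a,b_1\}$, and since $a\notin N(x)$ the pair $\{y_1,x\}$ has codegree exactly $1$; hence whenever $d(b_1)\ge 3$ the vertex $b_1$ is automatically unhappy, so in both sub-subcases $y_1$ becomes a special vertex and shrinking move~(2) applies (deleting $y_1$, together with $b_1$ when $d(b_1)=2$). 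In every case the existence of $Q$ is precisely what certifies 2-connectivity (S1) after shrinking, while thinness of pairs in $N(a)$ and in $N(b_1)$ preserves Sperner (S3).
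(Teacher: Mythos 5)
The overall strategy---assume an $a,b_1$-path $P$ avoiding $\{x,y_1,y_2\}$ exists, then exhibit a shrinking---matches the paper, and your handling of the $d(y_1)=2$ sub-case (observe $\{y_1,x\}$ is thin, so $b_1$ is unhappy when $d(b_1)\ge 3$ and $y_1$ is special) is essentially correct. But there are two genuine gaps.

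First, the $d(y_1)\ge 3$ branch is where all the work is, and your proposal waves at it rather than completing it. Taking $G'=G-ay_1$ requires a cycle through $a$ and $y_1$ in $G-ay_1$; you have the path $a\,P\,b_1\,y_1$, but the return path from $y_1$ must leave through a neighbor $c\in N(y_1)\setminus\{a,b_1\}$, and the difficulty is where $c$'s shortest route (in $G-y_1$) back to $V(P)\cup\{x,y_2\}$ terminates. If that route lands at $b_1$, the two $a,y_1$-paths share $b_1$ and you get a theta, not a cycle; deleting $ay_1$ fails. The paper handles this by deleting $b_1y_1$ instead, which requires all pairs in $N(b_1)$ to be thin, i.e.\ $b_1$ unhappy---a hypothesis you have not established in this sub-case. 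That in turn forces the paper's separate case where $b_1$ is happy: there one invokes a second common neighbor $b_3$ of $y_1$ and $x$ and returns to deleting $ay_1$. Your proposal simply asserts $G-ay_1$ works for all $d(y_1)\ge 3$, with no case split on where the auxiliary path lands and no mention of the happy-$b_1$ alternative, so the central step is missing.

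Second, your opening move $G'=G-ay_2$ is a red herring: as you note, you cannot find a second internally disjoint $a,y_2$-path in general, and this attempted shrinking plays no role in the paper. That detour is harmless, but it signals the proposal has not pinned down the right edge to delete, which is precisely the content of the omitted case analysis.
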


{\bf Proof.} Suppose not. Then $G-\{x,y_1,y_2\}$ has an $a,b_1$-path $P$. Note that $b_2 \notin P$ since $N(b_2) = \{x,y_2\}$. Let the second vertex of $P$
be $v_t$.

If $b_1$ is happy, then there is $b_3\in A-b_1$ with 
$N(b_3)\supseteq \{y_1,x\}$. Consider $G'=G-ay_1$. As before, (S2),(S4) and (S5) hold for $G'$. Since all pairs in $N(a)$  are thin,
$G'$ is Sperner. We need to show that $G'$ is $2$-connected. If $b_3\in P$, then the cycle
$aP(a,b_3)b_3y_1b_1xb_2y_2a$ certifies this. Otherwise,  the cycle
$aPb_1y_1b_3xb_2y_2a$ certifies this.

So,  $b_1$ is unhappy, and all pairs  in $N(b_1)$ are thin. If $d(y_1)=2$, consider $G'=G-y_1$. 
As before, (S2),(S4) and (S5) hold for $G'$. Since all pairs  in $N(b_1)$ and in $N(a)$ are thin,
$G'$ is Sperner. The cycle $aPb_1xb_2y_2a$
 shows that $G'$ is $2$-connected. 
 
 Thus, $d(y_1)\geq 3$. Let $c\in N(y_1)-a-b_1$. Let $P_1$ be a shortest path in $G-y_1$ from $c$ to
 $V(P)\cup \{x,y_2\}$. Let $z$ be the last vertex of $P_1$. If $z\in V(P)-b_1$, consider $G'=G-ay_1$. As before, (S2),(S4) and (S5) hold for $G'$. Since all pairs in $N(a)$  are thin,
$G'$ is Sperner. The cycle
$aP(a,z)zP_1cy_1b_1xb_2y_2a$ certifies that $G'$ is $2$-connected.

If  $z\in \{b_1,x,y_2\}$, consider $G'=G-b_1y_1$. As before, (S2),(S4) and (S5) hold for $G'$. Since all pairs in $N(b_1)$  are thin,
$G'$ is Sperner. Let $P_2$ denote the path $ay_2b_2xb_1$. Then the cycle
$ay_1cP_1zP_2(z,b_1)b_1Pa$ certifies that $G'$ is $2$-connected.\qed

\begin{claim}\label{la9}
Set $\{x,a\}$ separates $y_2$ from $N(a)-y_2$.
\end{claim}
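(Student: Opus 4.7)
\textbf{Proof proposal for Claim \ref{la9}.}
I will argue by contradiction in the style of Claims~\ref{la4}--\ref{la8}. Assume that $\{x,a\}$ does \emph{not} separate $y_2$ from $N(a)-y_2$, so there is a path $P$ in $G-x-a$ from $y_2$ to some $y_i\in N(a)-y_2$. Choose $P$ to be shortest, which forces $\mathrm{int}(P)\cap N(a)=\emptyset$. Also $b_2\notin V(P)$, because $N(b_2)=\{x,y_2\}$ and both of these are forbidden from the interior of $P$.

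The primary shrinking to test is $G':=G-ay_2$. Properties (S2), (S4), (S5) are immediate. Claim~\ref{la5} (all pairs in $N(a)$ are thin) prevents any $a'\in A-a$ from containing $N(a)-y_2$, so $y_2$ is not an $a$-menace and $G'$ is Sperner, giving (S3). For (S1), by Lemma~\ref{c0} it suffices to exhibit a cycle in $G-ay_2$ through $a$ and $y_2$; the natural candidate is
$$C:\ a\to y_1\to b_1\to x\to b_2\to y_2\to P^{\mathrm{rev}}\to y_i\to a.$$
Its two arcs have internal vertex sets $\{y_1,b_1,x,b_2\}$ and $\{y_i\}\cup\mathrm{int}(P)$, which are disjoint exactly when $y_i\neq y_1$ and $b_1\notin\mathrm{int}(P)$. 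If both hold, we obtain the desired shrinking.

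The crucial reduction: \emph{if} $b_1\in\mathrm{int}(P)$, \emph{then} $y_i=y_1$. Indeed, $b_1\sim y_1$, so truncating $P$ at $b_1$ and appending $y_1$ gives a $y_2,y_1$-path in $G-x-a$; by minimality $b_1$ must be second-to-last on $P$, and then $y_i$ is a neighbor of $b_1$ lying in $N(a)$. If $y_i\neq y_1$, both $a$ and $b_1$ would contain $\{y_1,y_i\}$ in their neighborhoods, producing a thick pair in $N(a)$ and contradicting Claim~\ref{la5}. Hence, after this reduction, the only outstanding case is $y_i=y_1$, so $P$ is a $y_2,y_1$-path in $G-x-a$.

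For the residual case $y_i=y_1$, the primary candidate $C$ revisits $y_1$, so we switch to $G'':=G-ay_1$. Again (S2)--(S5) pose no difficulty and (S3) holds via Claim~\ref{la5}. For (S1) we need a cycle through $a$ and $y_1$ in $G-ay_1$. One natural arc is $a-y_2-b_2-x-b_1-y_1$; the second arc will be $a-y_j-Q-y_1$ for some $y_j\in N(a)-\{y_1,y_2\}$ (which exists since $t\ge 3$), where $Q$ is a $y_j,y_1$-path in $G-a$ internally avoiding $\{y_2,b_2,x,b_1\}$. Existence of $Q$ follows from the $2$-connectedness of $G-a$ combined with Claim~\ref{la8} (which confines $b_1$ to a component of $G-\{x,y_1,y_2\}$ disjoint from $a$ and from all other $y_j$), so the alternate route from $y_j$ to $y_1$ does not need the forbidden vertices.

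\emph{The main obstacle} is the case where no such $Q$ exists, exactly paralleling the branches in the proof of Claim~\ref{la8}. I expect to resolve it by splitting on the status of $b_1$ and on $d(y_1)$: if $b_1$ is happy, some twin $b'\neq b_1$ with $\{y_1,x\}\subseteq N(b')$ lets us replace $b_1$ in the first arc by a vertex off $P$; if $b_1$ is unhappy and $d(y_1)\geq 3$, we instead delete the edge $b_1y_1$ (operation~(1)), using that pairs in $N(b_1)$ are thin to keep $G''$ Sperner and a cycle through $c\in N(y_1)-a-b_1$ to keep it $2$-connected; and if $d(y_1)=2$, then $y_1$ becomes a special vertex (both its neighbors being unhappy or of degree $2$) and can be removed by operation~(2) together with any degree-$2$ neighbors. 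In every branch we produce a valid shrinking, contradicting the standing assumption and completing the proof.
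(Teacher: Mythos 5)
Your approach diverges from the paper's. The paper takes a shortest $a,x$-path $P$ in $G-y_1-y_2$ (which exists since $\{y_1,y_2\}$ is not a separator by Claim~\ref{la4}, and avoids $b_1$ by Claim~\ref{la8}), then a shortest path $P_1$ in $G-a-x$ from $y_2$ to the \emph{flexible} target set $(N(a)-y_2)\cup V(P)$. Because $P_1$ is allowed to stop either at a neighbor of $a$ or at a vertex of $P$, and because $b_1\in V(P_1)$ can always be truncated to reach $y_1$, the single shrinking $G-ay_2$ works in all cases with one of two explicit cycles. Your version replaces this two-path construction with a single shortest $y_2,y_i$-path in $G-x-a$ landing in $N(a)-y_2$, which removes the flexibility and forces the residual case $y_i=y_1$ that the paper's argument never encounters.

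The residual case is where the gap lies. You assert that the second arc $Q$ (a $y_j,y_1$-path in $G-a$ internally avoiding $\{y_2,b_2,x,b_1\}$) ``follows from the $2$-connectedness of $G-a$ combined with Claim~\ref{la8}.'' But $G-a$ need not be $2$-connected, and even if it were, removing two further vertices $\{x,y_2\}$ can disconnect it, so $2$-connectedness alone cannot produce $Q$. Claim~\ref{la8} shows a $\{x,y_2\}$-avoiding path from $y_j$ will automatically skip $b_1$ and $b_2$, but it gives no reason for such a path to exist. Indeed, if $d(y_1)=2$ and $d(b_1)=2$ (i.e.\ $N(y_1)=\{a,b_1\}$, $N(b_1)=\{y_1,x\}$), then $\{a,x\}$ separates $y_1$ from everything else and no $Q$ exists. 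Your obstacle cases are sketches that do not fit together: the ``$b_1$ happy'' case proposes replacing $b_1$ in the \emph{first} arc, which does nothing to make $Q$ exist in the second arc; and the ``$d(y_1)=2$'' case requires $b_1$ to be unhappy or degree~$2$ for $y_1$ to be special, which is not ensured (a happy $b_1$ of degree $\geq 3$ falls through both). Also note that a degree-$2$ vertex in $A$ is automatically happy under the paper's definition, so ``$b_1$ unhappy'' and ``$d(b_1)=2$'' are mutually exclusive and you never assemble a case analysis that provably covers all possibilities. To repair the argument you would essentially need to reintroduce the auxiliary $a,x$-path and the flexible stopping set, which is the paper's mechanism.
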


{\bf Proof.} Suppose not. Let $P$ be a shortest $a,x$-path in $G-y_1-y_2$. 
By Claim~\ref{la8}, $P$ does not go through $b_1$.  Let the second vertex of $P$
be $v_t$. Let $P_1$  be a shortest path in $G-a-x$
from $y_2$ to $(N(a)-y_2)\cup V(P)$. 
 Let $z$ be the last vertex of $P_1$. If $b_1\in V(P)$, then we can take $z=y_1$.
Consider $G'=G-ay_2$. As before, (S2),(S4) and (S5) hold for $G'$. Since all pairs in $N(a)$  are thin,
$G'$ is Sperner. If $z\in N(a)-v_t$ then the cycle
$y_2P_1zaPxb_2y_2$ certifies that $G'$ is $2$-connected. Otherwise, the cycle
$y_2P_1zP(z,a)ay_1b_1xb_2y_2$ does it.\qed 

\bigskip
Let $C_2$ be the vertex set of the component of $G-a-x$ containing $y_2$ and let $G_2=G[C_2\cup \{a,x\}]$.
By Claim~\ref{la9}, $C_2\cap N(a)=\{y_2\}$. If $x$ has no neighbors in $C_2-b_2$, then  by Claim~\ref{la8},
$y_2$ would be a cut vertex, a contradiction. Thus, in view of $b_2$, no vertex in $G_2-a$ separates $x$ from $y_2$.
Since no vertex in $G_2-a$ may separate $\{y_2,x\}$ from any other vertex, we conclude
\begin{equation}\label{s42}
\mbox{\em $G_2-a$ is $2$-connected and the unique neighbor of $a$ in $C_2$ is $y_2$.}
\end{equation}

\begin{claim}\label{la11}
Set $\{x,a\}$ separates $y_1$ from $N(a)-y_1$.
\end{claim}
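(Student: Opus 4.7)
The plan is to mirror the proof of Claim~\ref{la9} verbatim, interchanging the roles $y_1\leftrightarrow y_2$ and $b_1\leftrightarrow b_2$. Suppose for contradiction that $\{x,a\}$ does not separate $y_1$ from $N(a)-y_1$. By Claim~\ref{la9}, $y_2$ lies alone in its component $C_2$ of $G-a-x$, so there must exist $y_j$ with $j\ge 3$ that lies in the same component $C_1$ as $y_1$. My candidate shrinking is $G'=G-ay_1$. Properties (S2), (S4), (S5) hold by definition, and (S3) holds because, by Claim~\ref{la5}, every pair in $N(a)$ is thin, so $y_1$ is not an $a$-menace and by~(\ref{s0}), $G'$ is Sperner. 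It remains to verify (S1); by Lemma~\ref{c0}, it is enough to produce a cycle in $G'$ containing both $a$ and $y_1$.

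The cycle is built exactly as in Claim~\ref{la9}. Let $P$ be a shortest $a,x$-path in $G-y_1-y_2$; by Claim~\ref{la8} $P$ avoids $b_1$, and since $N(b_2)=\{y_2,x\}$ with $y_2$ removed, $P$ also avoids $b_2$. Let $v_t$ be the second vertex of $P$. Let $P_1$ be a shortest path in $G-a-x$ from $y_1$ to $(N(a)-y_1)\cup V(P)$ (which exists since $y_j\in C_1$), and let $z$ be its endpoint. If $z\in N(a)-v_t$, I take the cycle $y_1\,P_1\,z\,a\,P\,x\,b_1\,y_1$; otherwise $z\in V(P)$ and I take $y_1\,P_1\,z\,P(z,a)\,a\,y_2\,b_2\,x\,b_1\,y_1$. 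Both cycles close via the edge $b_1y_1$ and contain $a$ and $y_1$; distinctness of the listed vertices is verified by the same routine checks as in Claim~\ref{la9}, using that $P_1\subseteq C_1$ (so $P_1$ avoids $b_2,y_2\in C_2$) and that by the minimality of $P_1$, the internal vertices of $P_1$ are disjoint from $V(P)$.

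The single place where this mirror does not go through automatically is the requirement $b_1\notin V(P_1)$: in Claim~\ref{la9} the analogous property was free because $P_1\subseteq C_2$ and $b_1\in C_1$, but now $b_1\in C_1$ is the same component as $y_1$, so $b_1$ may lie on $P_1$, in which case the closing segment $x\,b_1\,y_1$ would traverse $b_1$ twice. This is the main obstacle of the proof. By minimality of $P_1$ together with $y_1b_1\in E$, if $b_1\in V(P_1)$ then $b_1$ must be the second vertex of $P_1$, so I plan to handle this residual subcase by switching shrinkings: either delete the edge $b_1y_1$ instead (verifying $2$-connectedness of $G-b_1y_1$ via the two internally disjoint $y_1,b_1$-paths $y_1,a,y_2,b_2,x,b_1$ and $y_1,P_1(y_1,z),z,P^{\pm}(z,a),a,\ldots$-routed so that $b_1$ is reached from $z$'s side through the already chosen $P_1$), or, in the degenerate subcase where $y_1$ has degree $2$, apply the special-vertex shrinking to $y_1$ and any degree-$2$ neighbor. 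This case analysis, which rests on the local structure of $C_1$ around $b_1$, is where the symmetry with Claim~\ref{la9} breaks and is the only technically delicate point.
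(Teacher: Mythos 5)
Your plan is right in broad strokes, and you correctly spot the central obstacle: when you mirror Claim~\ref{la9}, the path $P_1$ from $y_1$ may go through $b_1$, ruining the cycle that closes via $x\,b_1\,y_1$. But your handling of that obstacle has genuine gaps.

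First, the case $b_1$ \emph{happy} is not addressed. Your fallback shrinkings are ``delete $b_1y_1$'' or ``delete $y_1$ as a special vertex,'' but both require $b_1$ to be unhappy (a type-(1) shrinking must delete an edge incident to an unhappy vertex, and $y_1$ is special only if both neighbors $a$ and $b_1$ are unhappy or of degree~$2$). If $b_1$ is happy and $d(b_1)\geq 3$, neither fallback is a legitimate shrinking, and furthermore you cannot invoke Claim~\ref{la5} for $N(b_1)$ to guarantee Spernerness after deleting $b_1y_1$. The paper's Case~2 handles this precisely: since $b_1$ is happy, there is a second common neighbor $b_1'$ of $x$ and $y_1$, and the cycle is rerouted through $b_1'$, with a switch between $b_1$ and $b_1'$ when $b_1'$ happens to lie on $P_1$. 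This entire branch is missing from your proposal.

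Second, even when $b_1$ \emph{is} unhappy, your $2$-connectedness certificate for $G-b_1y_1$ is broken. The two proposed $y_1,b_1$-paths both pass through $a$, so they are not internally disjoint; and worse, if $b_1$ is the second vertex of $P_1$, the path ``$y_1,P_1(y_1,z),\ldots$'' traverses the very edge $y_1b_1$ you just deleted. The paper avoids this by defining $P_1$ as a shortest path in $G-a-x$ from the \emph{pair} $\{y_1,b_1\}$ (not just from $y_1$), splitting on whether the first vertex of $P_1$ is $y_1$ or $b_1$, and in the latter case building cycles that route around $y_1$ entirely (shrinkings $G-xb_1$, $G-y_1$, or $G-y_1b_1$ with an auxiliary $b_0$-path $P_2$, depending on $d(y_1)$ and on $z_2$). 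Your proposal also does not explicitly dispatch the base case $d(b_1)=2$, which the paper kills immediately by the literal $b_1\leftrightarrow b_2$ symmetry and Claim~\ref{la9}, though this subcase does happen to be benign in your argument since $b_1$ then has no second neighbor in $G-a-x$ and thus cannot lie on $P_1$.
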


{\bf Proof.} Suppose not. If $d(b_1) = 2$, then by symmetry of $b_1$ and $b_2$ and the previous claim, we are done. So $d(b_1) \geq 3$. Let $x'\in N(b_1)-y_1-x$. 
Let $P$ be a shortest $a,x$-path in $G-y_1-y_2$. 
By  Claim~\ref{la9}, $P$ does not go through $b_2$.  Let the second vertex of $P$
be $v_t$.

 Let $P_1$  be a shortest path in $G-a-x$
from $\{y_1,b_1\}$ to $ V(P)\cup (N(a)-y_1-y_2)$. Let $z_1$ be the first vertex of $P_1$ and $z_2$ --- the last. If $z_1=y_1$,
consider $G'=G-ay_1$. As above,  (S2),(S4) and (S5) hold for $G'$. Since all pairs in $N(a)$  are thin,
$G'$ is Sperner.  If $z_2\in N(a)-v_t$ then the cycle
$y_1P_1z_2ay_2b_2xb_1y_1$ certifies that $G'$ is $2$-connected. Otherwise, the cycle
$y_1P_1z_2P(z_2,a)ay_2b_2xb_1y_1$ does it. 

So suppose $z_1=b_1$. 

{\bf Case 1:} $b_1$ is unhappy. 
If $z_2\in V(P)$,
then we consider $G'=G-xb_1$. As above,  (S2),(S4) and (S5) hold for $G'$. Since $b_1$ is unhappy, all pairs in $N(b_1)$  are thin, and hence
$G'$ is Sperner.  The cycle
$b_1P_1z_2P(z_2,x)xb_2y_2ay_1b_1$ certifies that $G'$ is $2$-connected. So below  we assume $z_2=y_3$ and $t\geq 4$.

If $d(y_1)=2$,
then we consider $G'=G-y_1$. As above,  (S2),(S4) and (S5) hold for $G'$. Since  all pairs in $N(a)$ and $N(b_1)$  are thin, $G'$ is Sperner.  The cycle
$b_1P_1y_3ay_2b_2xb_1$ certifies that $G'$ is $2$-connected.

Thus there is $b_0\in N(y_1)-a-b_1$. If $G-b_1-y_1$ has a path from $b_0$ to $N(a)-y_1$, then we would have the case $z_1=b_1$ above. Hence there is no such path. But then $G-V(P)-N(a)$ has a $b_0,b_1$-path $P_2$. In this case, we 
consider $G'=G-y_1b_1$. As above,  (S2),(S4) and (S5) hold for $G'$. Since  all pairs in $N(b_1)$  are thin, 
$G'$ is Sperner.  The cycle
$y_1b_0P_2b_1xb_2y_2ay_1$ certifies that $G'$ is $2$-connected.

{\bf Case 2:} $b_1$ is happy. Then there is another common neighbor $b'_1$ of $x$ and $y_1$. Again,
consider $G'=G-ay_1$. As above,  (S2),(S4) and (S5) hold for $G'$. Since all pairs in $N(a)$  are thin,
$G'$ is Sperner.  If $b'_1\notin P_1$ and
$z_2\in N(a)-v_t$ then the cycle
$b_1P_1z_2ay_2b_2xb'_1y_1b_1$ certifies that $G'$ is $2$-connected.  If $b'_1\notin P_1$ and
$z_2\in V(P)$ then the cycle
$b_1P_1z_2P(z_2,a)ay_2b_2xb'_1y_1b_1$ does it. If $b'_1\in P_1$, then we switch the roles of $b_1$ and $b'_1$:
consider the path $P'_1=P_1(b'_1,z_2)$.\qed

\begin{claim}\label{la12}
Vertex $a$ has only one neighbor (namely, $y_1$) in the component $C_1$ of $G-x-a$ containing $y_1$ and $b_1$.
\end{claim}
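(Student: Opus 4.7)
The plan is to derive Claim~\ref{la12} as an essentially immediate corollary of Claim~\ref{la11}. By Claim~\ref{la11}, the set $\{x,a\}$ separates $y_1$ from $N(a)-y_1 = \{y_2,\ldots,y_t\}$ in $G$. By the standard meaning of ``separates,'' this says that every path in $G$ from $y_1$ to any $y_i$ with $i\geq 2$ must pass through $x$ or $a$; equivalently, in the graph $G-x-a$ the vertex $y_1$ lies in a component that contains none of $y_2,\ldots,y_t$. Since $C_1$ is by definition the component of $G-x-a$ containing $y_1$, we get $C_1\cap\{y_2,\ldots,y_t\}=\emptyset$, and hence $N(a)\cap C_1=\{y_1\}$, which is the conclusion sought.

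I should also verify that the parenthetical assertion in the statement, namely that $b_1$ lies in the same component $C_1$ as $y_1$, is consistent. This follows from the edge $b_1 y_1\in E$: since $b_1\neq a$ (by the definition of $b_1$ as a vertex of $A-a$) and $b_1\neq x$ (because $x\in Y$ and $b_1\in A$), the edge $b_1y_1$ survives in $G-x-a$, placing $b_1$ and $y_1$ in the same component. So the component $C_1$ referenced in the claim is well-defined and the statement we must prove is exactly $N(a)\cap C_1=\{y_1\}$.

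Because the derivation is a single logical unpacking of Claim~\ref{la11}, there is no genuine obstacle; the only real bookkeeping is confirming that the separating set $\{x,a\}$ in Claim~\ref{la11} includes both $x\in Y$ and $a\in A$, so that removing them from $G$ cleanly produces the graph $G-x-a$ whose components partition $V(G)\setminus\{x,a\}$ into pieces each lying in a unique component. Once this is noted, the claim follows in a line.
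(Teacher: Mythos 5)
Your proof is correct and is the same one-line deduction the paper uses: Claim~\ref{la12} is an immediate consequence of Claim~\ref{la11}, since if $a$ had another neighbor in $C_1$ then $\{x,a\}$ would not separate $y_1$ from $N(a)-y_1$. Your extra bookkeeping (checking $b_1\in C_1$ via the edge $b_1y_1$) is harmless but not needed.
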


{\bf Proof.} Otherwise, $\{x,a\}$ would not separate $y_1$ from $N(a)-y_1$, a contradiction to   Claim~\ref{la11}.\qed

\bigskip
Similarly to the definition of $G_2$, let $C_1$ be the vertex set of the component of $G-a-x$ containing $y_1$ and let 
$G_1=G[C_1\cup \{a,x\}]$.
By Claim~\ref{la12}, $C_1\cap N(a)=\{y_1\}$.

\begin{claim}\label{la14}
 $G_1-a$ is $2$-connected. 
\end{claim}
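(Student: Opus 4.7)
The plan is to prove Claim~\ref{la14} by contradiction: assume some $v \in V(G_1)\setminus\{a\}$ is a cut vertex of $G_1 - a$, and derive either a direct contradiction with the $2$-connectivity of $G$ or a valid shrinking of $G$, contradicting the standing assumption that $G$ admits no shrinking.

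First, I would locate $v$. Since $G_1 - a - x = G[C_1]$ is a component of $G - a - x$ and hence connected, $v \neq x$. For any $v \in C_1$, any component $K$ of $G_1 - a - v$ containing no vertex of $\{y_1, b_1, x\}$ would be isolated in $G - v$: it has no edges to $\{y_1, b_1, x\}$ by the component decomposition, no edges to $a$ by Claim~\ref{la12} (which says $N(a) \cap C_1 = \{y_1\}$), and no edges to $V(G) - C_1 - \{a, x\}$ because $C_1$ is a component of $G - a - x$. This contradicts $2$-connectivity. Since $y_1 - b_1 - x$ is a path in $G_1 - a$, the vertices $y_1, b_1, x$ all lie in one component of $G_1 - a - v$ whenever $v \notin \{y_1, b_1, x\}$. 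Combined with the cases $v = x$ and $v = y_1$ (excluded by the isolation argument), this forces $v = b_1$, and the same isolation argument forces $G_1 - a - b_1 = D \sqcup D'$ with $y_1 \in D$ and $x \in D'$.

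Next, I would rule out $v = b_1$ through three sub-cases. \emph{If $b_1$ is happy and $d(b_1) \geq 3$:} the pair $\{y_1, x\} \subseteq N(b_1)$ has codegree at least $d(b_1) - 1 \geq 2$, yielding $b_1' \in A \setminus \{a, b_1\}$ (note $b_1' \neq a$ since $x \notin N(a)$) with $\{y_1, x\} \subseteq N(b_1')$; then the path $y_1 - b_1' - x$ in $G_1 - a - b_1$ contradicts the separation. \emph{If $b_1$ is unhappy} (so $d(b_1) \geq 3$): I would apply shrinking~(1) by deleting a well-chosen edge incident to $b_1$. The required cycle through the endpoints of the deleted edge is supplied by the $6$-cycle $b_1 - x - b_2 - y_2 - a - y_1 - b_1$, augmented if necessary by a path through a neighbor of $b_1$ in $D$, certifying $2$-connectivity via Lemma~\ref{c0}; the pair in $N(b_1)$ witnessing unhappiness prevents $N_{G'}(b_1)$ from being contained in another neighborhood, preserving the Sperner property. \emph{If $d(b_1) = 2$:} then $N(b_1) = \{y_1, x\}$ and $b_1$ is trivially happy, so the separation imposes rigid structure; a brief sub-analysis on $d(y_1)$ shows that either $y_1$ is special (its neighbors $a$ and $b_1$ being, respectively, unhappy and of degree two), whence shrinking~(2) removes $y_1$ and $b_1$, or else an analogous shrinking~(2) based on $x$ (using the degree-$2$ vertices $b_1, b_2$) applies.

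In every sub-case a shrinking satisfying (S1)--(S5) is obtained, contradicting the standing assumption; hence $G_1 - a$ has no cut vertex and is $2$-connected. The main obstacle is the unhappy sub-case, where the deleted edge must be chosen to preserve both $2$-connectivity (requiring a cycle through its endpoints, which in turn requires a suitably placed neighbor of $b_1$) and the Sperner property (requiring $N_{G'}(b_1)$ to avoid being contained in any other neighborhood, using the specific pair of low codegree witnessing $b_1$'s unhappiness). This is parallel in spirit to the delicate edge-selection arguments appearing in the proofs of Claims~\ref{la4} and~\ref{la8}, and it combines those structural facts with the newly derived separation on $G_1 - a - b_1$.
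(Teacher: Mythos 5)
Your proposal captures the paper's high-level strategy: locate the potential cut vertex of $G_1 - a$ (it must be $b_1$, separating $y_1$ from $x$), then split on $b_1$'s happiness and degree. The happy-with-degree-$\geq 3$ sub-case matches the paper's argument essentially verbatim (a second common neighbor $b_1'$ of $\{y_1,x\}$ gives a path contradicting the separation). The isolation argument you use to pin down $v = b_1$ is a nice explicit version of what the paper leaves implicit in its Case 1.

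However, the other two sub-cases have genuine gaps. For $d(b_1)=2$, the paper simply observes that $b_1$ then has exactly the same status as $b_2$, so the argument establishing~\eqref{s42} (``$G_2-a$ is $2$-connected'') applies by symmetry to $G_1-a$ --- this sub-case never produces a cut vertex. You instead try to manufacture a shrinking, and your fallback, ``an analogous shrinking~(2) based on $x$ (using the degree-$2$ vertices $b_1,b_2$)'', does not work: $x$ need not have degree $2$, so $x$ is not special and shrinking~(2) is unavailable. In fact, once you accept that the component $D$ containing $y_1$ in $G_1 - a - b_1$ can only be $\{y_1\}$ (otherwise $D - y_1$ would be isolated in $G - y_1$ by the same isolation argument and Claim~\ref{la12}), you see $d(y_1)=2$ is forced, so your ``or else'' branch never arises; but you did not establish that, and the branch as written is broken.

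The unhappy sub-case is also underspecified. You say you will delete ``a well-chosen edge incident to $b_1$'' and certify $2$-connectivity with the $6$-cycle $b_1 x b_2 y_2 a y_1 b_1$ ``augmented if necessary.'' That cycle passes through $b_1$ but not the third neighbor $y'$ of $b_1$; Lemma~\ref{c0} requires a cycle through both endpoints of the deleted edge, so the cycle you exhibit cannot directly certify the deletion of $b_1 y'$, and if you instead delete $b_1 y_1$ or $b_1 x$ the $6$-cycle uses the deleted edge. The paper resolves this precisely by routing a shortest path $P_1$ from $y'$ to $\{a,x\}$ in $G-b_1$, and then choosing which edge to delete --- $y_1 b_1$ if $P_1$ ends at $a$, or $x b_1$ if $P_1$ ends at $x$ --- so that a certifying cycle through both endpoints of the deleted edge can be built from $P_1$ and the hexagon. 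This edge-selection step is the heart of the sub-case and is missing from your sketch.
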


{\bf Proof.} 
{\bf Case 1:} $G-a-b_1$ has an $x,y_1$-path $P$. Then $P+b_1$ forms a cycle in $G_1-a$ containing $x$ and $y_1$.
Since $G$ is $2$-connected and $\{y_1,x\}$ is a separating set in $G_1$, this finishes the case.

{\bf Case 2:} $d(b_1)=2$. Then $y_1$ can play the role of $y_2$, and we are done by~(\ref{s42}).

{\bf Case 3:}
Vertex $b_1$ separates $y_1$ from $x$ in $G_1-a$, and $b_1$ has a neighbor $y'\notin\{x,y_1\}$. 
 If $b_1$ were happy, there would be $b'\neq b_1$ adjacent to $x$ and $y_1$ and we would have Case 1. So, $b_1$ is unhappy.
Let $P_1$ be a shortest path from $y'$ to $\{a,x\}$  in $G-b_1$.
 and $z$
be the  last vertex on $P_1$. 

Suppose first that $z=a$. Then by Claim~\ref{la12}, the second to last vertex of $P_1$ is $y_1$.
Consider $G'=G-y_1b_1$. As above,  (S2),(S4) and (S5) hold for $G'$. Since $b_1$ is unhappy,  all pairs in $N(b_1)$  are thin.
Thus
$G'$ is Sperner.  The cycle
$y'P_1ay_2b_2xb_1y'$ certifies that $G'$ is $2$-connected.

Suppose now that $z=x$. Since Case 1 does not hold, $y_1\notin P_1$.
Consider $G'=G-xb_1$. As above,  (S2),(S4) and (S5) hold for $G'$. Since   all pairs in $N(b_1)$  are thin,
$G'$ is Sperner.  The cycle
$y'P_1xb_2y_2ay_1b_1y'$ certifies that $G'$ is $2$-connected.\qed

\begin{claim}\label{la15}
 $G-C_1$ and $G-C_2$ are $2$-connected Sperner $r^-$-graphs.
\end{claim}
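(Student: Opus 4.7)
The plan is to verify three properties of $G - C_1$: that it is an $r^-$-bigraph, that it is Sperner, and that it is $2$-connected; the statement for $G - C_2$ will follow by the symmetry between the pairs $(y_1, b_1, C_1)$ and $(y_2, b_2, C_2)$. The $r^-$-property is immediate, since vertex deletion cannot increase the degree of any surviving $A$-vertex.

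For the Sperner property, I first note that because $C_1$ is a component of $G - \{a,x\}$, every $A$-vertex $a' \in A \setminus (C_1 \cup \{a\})$ has all its neighbors outside $C_1$: an edge from $a'$ to $C_1 \cap Y$ would force $a'$ itself into the component $C_1$. Hence deleting $C_1$ modifies only the neighborhood of $a$ itself, shrinking it to $\{y_2, \ldots, y_t\}$. Incomparabilities between two unchanged neighborhoods are inherited from $G$. For a pair involving $a$ and some $a' \neq a$: $N_G(a') \subseteq \{y_2,\ldots,y_t\} \subset N_G(a)$ would contradict the Sperner property of $G$, while $\{y_2,\ldots,y_t\} \subseteq N_G(a')$ --- using that $t \geq 3$ by the unhappiness of $a$ --- would make $a'$ share at least two neighbors with $a$ in $N(a)$, producing a pair in $N(a)$ of codegree at least $2$, contrary to Claim~\ref{la5}.

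For $2$-connectedness the main task is to place $a$ and $x$ on a common cycle in $G - C_1$. One $a$-to-$x$ path is $a\, y_2\, b_2\, x$. For the second, since $t \geq 3$ I take $y_3 \in N(a)$; Claims~\ref{la9} and~\ref{la11} ensure $y_3$ lies in a component $C_j$ of $G - \{a,x\}$ with $j \notin \{1,2\}$. Because $G$ is $2$-connected, $G - a$ is connected, which forces $x$ to have a neighbor $b' \in C_j$; concatenating $a y_3$, an internal path in $C_j$ from $y_3$ to $b'$, and the edge $b' x$ yields a second $a, x$-path whose interior lies in $C_j$ and is therefore internally disjoint from the first. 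These two paths form the desired cycle, so $a$ and $x$ lie in the same block of $G - C_1$.

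To conclude, I rule out cut vertices. If $v = a$, then $(G-C_1)-a = (G-a) - C_1$ is connected because $G - a$ is connected and $x$ must be adjacent to every component $C_l$ ($l \geq 2$) of $G - \{a,x\}$; the case $v = x$ is symmetric. For $v \notin \{a,x\}$, the cycle keeps $a$ and $x$ in a common component of $(G-C_1) - v$; any other component $D$ avoids both $a$ and $x$, and since vertices outside $C_1 \cup \{a,x\}$ have no neighbors in $C_1$, $D$ would be disconnected from the rest of $G$ in $G - v$, making $v$ a cut vertex of $G$ --- a contradiction. I expect this cut-vertex case analysis to be the most delicate part of the argument; the Sperner verification is short once the component-structure observation is in place, and the cycle construction is direct given Claims~\ref{la5},~\ref{la9}, and~\ref{la11}.
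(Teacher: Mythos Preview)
Your proof is correct and follows essentially the same route as the paper: both arguments verify the $r^-$ and Sperner properties in the same way (only $a$ loses neighbors, and Claim~\ref{la5} rules out any new containment), and both establish $2$-connectedness by exhibiting a cycle through $a$ and $x$ that uses the path $a\,y_{3-i}\,b_{3-i}\,x$ on one side and a $y_3,x$-path through a third component $C_j$ on the other. The paper simply takes a shortest $y_3,x$-path in $G-a$ and asserts that the resulting cycle ``certifies'' $2$-connectedness, whereas you spell out the cut-vertex case analysis explicitly; this extra detail is sound and fills in what the paper leaves implicit.
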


{\bf Proof.} Let $P$ be a shortest $y_3,x$-path in $G-a$. By Claim~\ref{la9} and~\ref{la11}, $P$ avoids $C_1\cup C_2$.
For $i=1,2$, the cycle $y_3Pxb_{3-i}y_{3-i}ay_3$ certifies that $G-C_i$ is $2$-connected. Since the degrees of the vertices in  $G-C_1$ and $G-C_2$ are 
dominated by those in $G$,  $G-C_1$ and $G-C_2$ are $r^-$-graphs. Since $a$ is the only vertex in $A\cap V(G-C_i)$ whose degree decreased w.r.t. $G$
and all pairs in $N(a)$ are thin, $G-C_1$ and $G-C_2$ are  Sperner.
\qed

Now set $B_1 = G_1 - a$, $B_2 = G_2 - a$, and $x_1 = x_2 = x$. Note that the choice of $y_t$ in~\eqref{s4} was arbitrary. So we may repeat the proof instead taking $G''$ to be the graph obtained by gluing $N(a) - y_1$ into a single vertex $y^{**}$. If $G''$ satisfies (S1) - (S5), then we  are  done. Otherwise we find some vertices $y_1', y_2' \in N(a) - y_1$ which play the role of $y_1$ and $y_2$. We may assume that $y_1' \notin \{y_1,y_2\}$ and it is coupled with some vertex $x'$ which plays the role of $x$.

Again, repeating the previous proofs for Claims~\ref{la6}-\ref{la15} with $y_1'$ and $y_2'$, we obtain that either $G$ admits a shrinking, or we can define $G_1'$ similarly to play the role of $G_1$ (defined after Claim~\ref{la12}) for $y_1'$. Let $B_3 = G_1' - a$, $y_3 = y_1'$, and $x_3 = x'$. We now show that (B1) - (B5) hold.

(B1) and (B3) are trivial. Since $G$ was Sperner each vertex of $A \cap V(B_i)$ has the sane neighborhood in $B_i$, $B_i$ is also Sperner. Hence together with~\eqref{s42} and Claim~\eqref{la14}, we get (B2). Claim~\ref{la15} proves (B4).  Claims~\ref{la9} and~\ref{la11} imply that $V(B_1) \cap V(B_2) = \{x\}$, and $y_1' (=y_3)$ is contained in a  component of $G - \{a,x\}$ 
not containing  $y_1$ and $y_2$. In particular, $B_3$ is disjoint from $B_1$ and $B_2$ except possibly at $x'$ if $x' = x$. This proves (B5) and thus the Lemma~\ref{c30}. 
\qed
%
%

\subsection{Consequences of Lemma~\ref{c30}}

This technical lemma implies the following more applicable fact.

\begin{lemma}\label{c2happy}
Suppose  $k\geq 5$, $r\geq 3$ are integers with $k\geq r$. Set $t = \lfloor (k-1)/2 \rfloor$.
Let   $G=(A,Y;E)$ be a Sperner layered $2$-connected $r^-$-bigraph with $c(G)<2k$ 
that is not happy.
Then either $G$ admits a shrinking such that the resulting graph satisfies (S1) - (S5), or there exists an unhappy vertex $a^* \in A$ and some block $B^*$ satisfying the hypothesis of Lemma 2.4 such that $B^*$ is happy and $|A \cap B^*| \leq  {t \choose \min\{r-1, \lfloor t/2 \rfloor\}}(|Y \cap B^*| -2)$.
\end{lemma}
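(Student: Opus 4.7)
The plan is to apply Lemma~\ref{c30} recursively until a happy block emerges, and then bound it via Lemma~\ref{c2} and Theorem~\ref{kopyblock}.

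First apply Lemma~\ref{c30} to $G$. If it returns a shrinking, we are done. Otherwise, for every unhappy $a\in A$ we obtain the three-block decomposition $(B_1,B_2,B_3)$ with separator vertices $x_i$ and specified neighbors $y_i\in N(a)$ satisfying (B1)--(B5). Pick any unhappy $a$; if some $B_i$ is happy and meets the stated size bound, set $(a^*,B^*):=(a,B_i)$ and stop. Otherwise, recurse into an unhappy $B_i$: the hypotheses of Lemma~\ref{c30} are inherited because $B_i$ is $2$-connected Sperner by (B2) and $c(B_i)\le c(G)<2k$; and the separation (B3) forces every common neighbor of two vertices of $Y\cap B_i$ to lie in $A\cap B_i$, so pair-codegrees (hence the ``unhappy'' predicate) agree on $A\cap B_i$ between $B_i$ and $G$. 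Thus any shrinking produced at a deeper level lifts to a shrinking of $G$: modifications remain interior to the current block, the surrounding structure is untouched, and the Sperner property across the boundary is automatic because each $b\in A\setminus A(B_i)$ has $|N_G(b)\cap V(B_i)|\le 1$. Vertex counts strictly drop at each level, so the recursion terminates at a happy block $B^*$ paired with an unhappy $a^*\in A$ of $G$.

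It remains to prove the size bound. Since $B^*$ is happy, Sperner, $2$-connected, $r^-$, and $c(B^*)<2k$, Lemma~\ref{c2} gives that $\partial(B^*)$ has no cycle of length $\ge k$, and Theorem~\ref{kopyblock} bounds $|A\cap B^*|\le \max\{f(n',k,r,2),f(n',k,r,t)\}$ with $n'=|Y\cap B^*|$. The main obstacle I foresee is sharpening this maximum into the stated linear form $\binom{t}{s}(n'-2)$ with $s=\min\{r-1,\lfloor t/2\rfloor\}$, since for small $k$ the Theorem~\ref{kopyblock} bound alone is too weak. My plan is to exploit the distinguished vertices $x^*,y^*$ of $B^*$: any $x^*,y^*$-path in $B^*$ extends, via the edge $a^*y^*$ and an $a^*,x^*$-path of length $\ge 2$ in the $2$-connected graph $G-(B^*-x^*)-a^*$ provided by (B4), to a cycle of $G$; since $c(G)<2k$, all $x^*,y^*$-paths in $B^*$ (hence in $\partial(B^*)$) are forced to be short. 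Feeding this constraint into Kopylov's $t$-disintegration underlying Theorem~\ref{kopyblock} should keep $x^*$ and $y^*$ in the core until the final stage, so the $n'-2$ peeled vertices each contribute at most $\binom{t}{s}$ Sperner cliques (via Theorem~\ref{sperner} applied to their $\le t$ neighbors) while the residual pair $\{x^*,y^*\}$ contributes nothing further to a Sperner family of $\le r$-cliques, producing the claimed bound.
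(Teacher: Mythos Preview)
There are two genuine gaps.

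\textbf{The recursion does not lift.} You apply Lemma~\ref{c30} to $B_i$ and assert that any shrinking of $B_i$ lifts to a shrinking of $G$ satisfying (S1)--(S5). This is not justified: for instance, operation~(4) glues the neighbours of an unhappy $a'\in A\cap B_i$, and one of those neighbours may be the boundary vertex $x_i$; the gluing then identifies $x_i$ with interior vertices of $B_i$, and there is no reason the resulting $G'$ still has $c(G')<2k$. Similarly, a vertex that is special in $B_i$ need not be special in $G$ (its $G$-degree can exceed~$2$ if it equals $x_i$). The paper avoids lifting entirely. It assumes once and for all that $G$ admits no shrinking, notes that any $a'$ unhappy inside a sub-block is also unhappy in $G$ (your codegree remark), and applies Lemma~\ref{c30} to $G$ at $a'$. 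Property~(B5) forces at least two of the three resulting blocks to lie inside the current block. One then picks, over \emph{all} unhappy $a\in A$ and associated blocks $B_i$ (in $G$) with no $(x_i,y_i)$-path of length $\ge k$, a pair minimising $|V(B_i)|$; minimality makes this $B_i$ happy. The crucial preliminary step you are missing is that such a pair exists: at most one of the three $B_i$'s can contain a long $(x_i,y_i)$-path, since two such paths splice through $a$ into a cycle of length $\ge 2k$ in $G$.

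\textbf{The size bound uses the wrong tool.} Theorem~\ref{kopyblock} gives $\max\{f(n',k,r,2),f(n',k,r,t)\}$, which has the wrong shape and does not even apply when $n'<k$. The paper instead uses the no-long-$(x^*,y^*)$-path property (which you do identify) via Lemma~\ref{c2path}: it forces $\partial(B^*)$ to fail to be $\lceil(k+1)/2\rceil$-path connected, and then Theorem~\ref{kpaththm} directly yields
\[
|A\cap B^*|\le \frac{|Y\cap B^*|-2}{\alpha-2}\binom{\alpha}{\min\{r,\lfloor\alpha/2\rfloor\}},\qquad \alpha=\Bigl\lceil\tfrac{k-1}{2}\Bigr\rceil,
\]
after which an elementary inequality converts the coefficient to $\binom{t}{\min\{r-1,\lfloor t/2\rfloor\}}$. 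Your proposed fix of ``keeping $x^*,y^*$ in the core during disintegration'' is unproven; note also that in your recursive setup (B4) is only known relative to the parent block, not to $G$, so you have not actually established that long $(x^*,y^*)$-paths in $B^*$ close up to long cycles in $G$.
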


\begin{proof}
Suppose $G$ does not admit any shrinking. By Lemma~\ref{c30}, for each unhappy vertex $a$ we obtain some $\{y_i,x_i,B_i\}$ for $i \in \{1,2,3\}$ satisfying (B1) - (B5). 

\begin{claim}\label{xyk}For each unhappy $a$,  at most one $B_i$ has a $(x_i,y_i)$-path of length $k$ or longer. \end{claim}
\begin{proof}
Suppose without loss of generality that for $i \in \{1,2\}$, there exists a $(y_i, x_i)$-path $P_i$ in $B_i$ of length at least $k$. Recall that $y_1,y_2 \in N(a)$. Let $P_3$ be a $(x_1,x_2)$-path internally disjoint from $V(B_1) \cup V(B_2)$ (where $P_3$ may be a singleton). Then $P_1 \cup P_3 \cup P_2 \cup a$ is a cycle of length at least $2k - 1$, i.e., length at least $2k$.
\end{proof}

Among all vertices in $A$ that are not happy, choose $a$ and a corresponding 2-connected graph $B_1$ from Lemma 2.4  so that (a) $B_1$ does not have a $(x_i,y_i)$-path of length $k$ or longer, and (b) subject to (a), $|V(B_1)|$ is minimized.

Suppose first that $B_1$ contains an unhappy vertex $a'$. By Lemma 2.4, there exists $\{x_i',y_i',B_i'\}$ for $i \in \{1,2,3\}$ satisfying (B1)-(B5) with $a'$.

\begin{claim}
At most one $j \in \{1,2,3\}$ satisfies $V(B_j') \not\subseteq V(B_1)$. 
\end{claim}
\begin{proof}
Suppose without loss of generality $V(B_2') \not\subseteq V(B_1)$ and $V(B_3') \not\subseteq V(B_1)$. Then since $\{x_1,a\}$ separates $B_1$ from $G - (B_1-x) - a$, and $B_2'$ and $B_3'$ are 2-connected, $\{x_1, a\} \subseteq V(B_2')$ and $\{x_1, a\} \subseteq V(B_3')$. But this violates (B5).
\end{proof}

Therefore we may assume $V(B_1'), V(B_2') \subseteq V(B_1)$. By Claim~\ref{xyk}, we can also assume that $V(B_1')$ has no $(x_1', y_1')$-path of length $k$ or longer. Furthermore, since $a' \in V(B_1) - V(B_1')$, $|V(B_1')| < |V(B_1)|$.  But this contradicts the choice of $a$ and $B_1$. Thus $B_1$ cannot have any unhappy vertices, i.e., $B_1$ is happy. 
%
%
%

Consider the shadow $\partial(B_1)$ of $B_1$. By Lemma~\ref{c2path}, $\partial(B_1)$ is not $\lceil (k+1)/2 \rceil$-path connected, otherwise $B_1$ would contain an $(x_1, y_1)$-path of length at least $2\lceil (k+1)/2 \rceil - 1 \geq k$, a contradiction.

Let $\alpha = \lceil (k-1)/2 \rceil$, $\beta = \lfloor (k-1)/2 \rfloor$. 

\begin{claim}$\frac{1}{\alpha - 2}{\alpha \choose \min\{r, \lfloor \alpha/2 \rfloor\}} \leq {\beta \choose \min\{r-1, \lfloor \beta/2 \rfloor\}}$.
\end{claim}
\begin{proof}
First suppose $\alpha = \beta$, i.e., $k$ is odd. Then the case $\min\{r, \lfloor \alpha/2 \rfloor\} = \alpha/2$ is trivial. Otherwise $\frac{1}{\alpha-2}{\alpha \choose r} = \frac{1}{\alpha - 2}\frac{\alpha-r+1}{r} {\beta \choose r-1}\leq {\beta \choose r-1}$. 
So assume $\alpha = \beta + 1$. If  $\min\{r, \lfloor \alpha/2 \rfloor\} = r$ (so $\min\{r-1,\lfloor \beta/2 \rfloor\ = r-1$), then we have $\frac{1}{\alpha-2}{\alpha \choose r} = \frac{1}{\beta - 1}\frac{\beta+1}{r} {\beta \choose r-1}\leq {\beta \choose r-1}$. Otherwise if $\lfloor \alpha/2 \rfloor  < r$, then $\lfloor \beta/2 \rfloor \leq r-1$, and $\frac{1}{\alpha-2}{\alpha \choose \lfloor \alpha/2 \rfloor} =\frac{1}{\beta-1}{\beta + 1 \choose \lfloor (\beta+1)/2 \rfloor} = \frac{1}{\beta - 1} \frac{\beta+1}{\lfloor (\beta+1)/2 \rfloor} {\beta \choose \lfloor (\beta+1)/2 \rfloor - 1} \leq {\beta \choose \lfloor \beta/2 \rfloor}.$
\end{proof}

Therefore because $\partial(B_1)$ is not ($\alpha + 1$)-path connected, by Theorem~\ref{kpaththm} and the previous claim, \[|A\cap B_1| \leq N_{\rm{Sp}}(\partial(B_1), r) \leq \frac{|Y \cap B_1| - 2}{\alpha - 2} {\alpha \choose \min\{r, \lfloor \alpha/2 \rfloor\}} \leq (|Y \cap B_1| - 2) {\beta \choose \min\{r-1, \lfloor \beta /2 \rfloor\}}.\]
\end{proof}

\section{Constructing happy $r^-$-graphs}

In this section, we translate Lemma~\ref{c30} into the language of $r^-$-graphs. We also refine it.
\subsection{Unhappy $r^-$-graphs}
A Sperner $r^-$-graph $\cH$ is {\em happy} if its layered incidence bigraph $I(\cH)$ is happy, and is {\em unhappy}
otherwise. The happy and unhappy vertices in $I(\cH)$ correspond to happy and unhappy edges in $\cH$.

For an unhappy edge $e$ in an unhappy $r^-$-graph $\cH$ and a vertex $v\in e$, let $F(\cH,e,v)$ denote the 
$r^-$-graph obtained from $\cH$ by replacing $e$ with $e-v$.

A vertex $v$ of degree $2$ in an unhappy $r^-$-graph $\cH$ is {\em special} if each of the two incident edges,
say   $e_1$ and $e_2$, is either unhappy or
a graph edge (i.e., contains exactly two vertices). If $v$ is special and incident with $e_1$ and $e_2$, then $F(\cH,v,e_1,e_2)$ is the $r^-$-graph obtained from $\cH$ by deleting
$v$ and for $i=1,2$ deleting $e_i$ if $|e_i|=2$ and replacing $e_i$ with $e_i-v$ otherwise.

A graph edge $vu$  in an unhappy $r^-$-graph $\cH$ is {\em special} if both $v$ and $u$ are special, and both adjacent to $vu$ edges are unhappy. If $vu$ is special and adjacent to $e_1$ and $e_2$, then $F(\cH,vu)$ is the $r^-$-graph obtained from $\cH$ by deleting
$v$ and  $u$, replacing $e_1$ with $e_1 - v$, and replacing $e_2$ with $e_2 - u$.


A {\em $2$-block} in a  2-connected  $\cH$ is a 2-connected $\cH'\subset \cH$  such that only two vertices of $\cH'$ have neighbors outside of $\cH'$.
These two vertices will be called {\em outer vertices of} $\cH'$.

A $2$-block $\cH'$ with outer vertices $x$ and $y$ in an unhappy Sperner $r^-$-graph $\cH$ is {\em special} if \\
$\cH'$ is happy and there is exactly one edge, say $a$, in $G-E(\cH')$ containing $y$, and this edge does not contain $x$.

Given a special $2$-block $\cH'$ with outer vertices $x$ and $y$ in an unhappy Sperner $r^-$-graph $\cH$, the
$r^-$-graph $F(\cH,\cH',x,y)$ is obtained from $\cH$ by deleting all vertices of $\cH'-x-y$ together with the edges containing them and
adding edge $\{x,y\}$ if it is not in $\cH$.

Translating from the language of incidence bipartite graphs to hypergraphs, we obtain the following versions of Lemmas~\ref{c2} and~\ref{c2path} about Berge cycles and Berge paths.

\begin{lemma}\label{c2'}
Let $r\geq 3$. Let $\cH$ be a happy $r^-$-graph. If the 2-shadow $\partial_2\cH$ contains a cycle of length $\ell \geq r+1$, then $\cH$ contains a Berge cycle of length $\ell$ on the same base vertices. Furthermore, if $\partial_2 \cH$ contains a path, then $\cH$ contains a Berge path with the same base vertices.
\end{lemma}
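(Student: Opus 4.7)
The plan is to mimic, in the language of $r^-$-graphs, the proof of Lemma~\ref{c2}, with the sharper hypothesis $\ell\ge r+1$ replacing the finer case analysis (which required the Sperner assumption) needed for the $\ell=r$ case of Lemma~\ref{c2}.

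Given a cycle $C=x_1x_2\cdots x_\ell x_1$ of length $\ell\ge r+1$ in $\partial_2\cH$, I would introduce the auxiliary bipartite graph $F$ with parts $Q:=E(C)$ and $E(\cH)$, placing an edge between $\{x_i,x_{i+1}\}$ and $e\in E(\cH)$ precisely when $\{x_i,x_{i+1}\}\subseteq e$. A matching of $F$ saturating $Q$ produces $\ell$ pairwise distinct hyperedges $e_1,\dots,e_\ell$ with $\{x_i,x_{i+1}\}\subseteq e_i$, which is a Berge cycle of length $\ell$ with the prescribed base vertices. No vertex of $Q$ is isolated in $F$ because each pair $\{x_i,x_{i+1}\}$ lies in some edge of $\partial_2\cH$.

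To apply the Hall-type Folklore lemma preceding Lemma~\ref{c2}, I need $d_F(\{x_i,x_{i+1}\})\ge d_F(e)$ whenever $(\{x_i,x_{i+1}\},e)$ is an edge of $F$. Since $|e|\le r<\ell$, the set $V(C)\cap e$ is a proper subset of $V(C)$, so the edges of $C$ with both endpoints in $e$ form a subgraph of the path obtained from $C$ by deleting any vertex not in $e$, yielding $d_F(e)\le |V(C)\cap e|-1\le |e|-1$. On the other hand, $d_F(\{x_i,x_{i+1}\})$ equals the codegree of the pair $\{x_i,x_{i+1}\}$ in $\cH$, and since $\cH$ is happy and $\{x_i,x_{i+1}\}\subseteq e$, this codegree is at least $|e|-1$. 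The required inequality holds, the matching exists, and the Berge cycle is obtained.

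For the path claim, I would run the same construction with $Q:=E(P)$. The bound $d_F(e)\le |V(P)\cap e|-1\le |e|-1$ holds for any path length because any path on $s$ vertices has only $s-1$ edges, so no extra hypothesis on $\ell$ is needed; the happiness-based lower bound on pair-degrees is unchanged, and the Folklore lemma again yields the desired system of distinct representatives, producing a Berge path with the prescribed base vertices. I do not anticipate any serious obstacle: the only subtle point is confirming that a hyperedge cannot contain the entire cycle $C$, which is precisely what the assumption $\ell\ge r+1$ guarantees, sidestepping the delicate $\ell=r$ situation handled via Sperner in Lemma~\ref{c2}.
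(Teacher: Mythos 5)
Your proof is correct and follows essentially the same route as the paper: the paper obtains Lemma~\ref{c2'} by translating Lemma~\ref{c2} (the $\ell\geq r+1$ case, which does not use the Sperner hypothesis) and Lemma~\ref{c2path} into hypergraph language, and your argument is precisely that translation made explicit — the same auxiliary bipartite graph $F$, the same degree bounds ($d_F(e)\leq|V(C)\cap e|-1\leq|e|-1$ versus the happiness-driven lower bound $d_F(\{x_i,x_{i+1}\})\geq|e|-1$), and the same appeal to the Hall-type Folklore lemma. You have also correctly identified why $\ell\geq r+1$ lets you drop the Sperner case analysis present in Lemma~\ref{c2}, and why the path case needs no analogous restriction.
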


For simplicity, for an $r^-$-graph $\cH$, denote $\sum |E(\cH)| := \sum_{e \in E(\cH)} |e|$. For example, if $\cH$ is $r$-uniform, then $\sum|E(\cH)| = r|E(\cH)|$. We also obtain the following as a corollary of Lemma~\ref{c2happy}.

\begin{lemma}\label{c3}
Suppose  $k\geq r\geq 3$  are integers, and set $t = \lfloor (k-1)/2 \rfloor$.
Let   $\cH$ be a Sperner  $2$-connected $r^-$-graph with $c(\cH)<k$ 
that is not happy.
Then we can obtain a Sperner  $2$-connected $r^-$-graph $\cH'$
 such that\\
${}$\quad  {\rm (i)}    $\sum |E(\cH')|\leq \sum |E(\cH)|$, $|V(\cH')|\leq |V(\cH)|$, and $\sum |E(\cH')|+|V(\cH)|<\sum |E(\cH)|+|V(\cH')|$; \\
${}$\quad  {\rm (ii)}   $|E(\cH)|-|E(\cH')|\leq {t \choose \min\{r-1, \lfloor t/2 \rfloor\}} (|V(\cH)|-|V(\cH')|)$; and \\
${}$\quad  {\rm (iii)}  $c(\cH')<k$\\
using one of the following transformations:\\
${}$\quad  {\rm (T1)}  for an unhappy edge $e$ and $v\in e$, replacing $H$ with $F(H,e,v)$;\\
${}$\quad  {\rm (T2)}  for a special vertex $v$ with incident edges $e_1$ and $e_2$, replace $H$ with $F(H,v,e_1,e_2)$;\\
${}$\quad  {\rm (T3)}  for a special edge $vu$, replace $H$ with $F(H,vu)$;\\
${}$\quad  {\rm (T4)}  glue together all but one vertices of an unhappy  edge;\\
${}$\quad  {\rm (T5)}  for a special $2$-block $H'$ with outer vertices say $x,y$, replace $H$ with $F(H,H',x,y)$.\\
Furthermore, if {\rm (T5)} is not applied, then instead of {\rm (ii)}, we obtain $|E(\cH)|-|E(\cH')|\leq  (|V(\cH)|-|V(\cH')|)$.
\end{lemma}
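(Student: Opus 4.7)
The plan is to work in the incidence bigraph, apply Lemma~\ref{c2happy}, and translate the outcome back to a hypergraph transformation. Set $G = I(\cH) = (A,Y;E)$, so $A = E(\cH)$, $Y = V(\cH)$, and $|E| = \sum|E(\cH)|$. Since $\cH$ is Sperner, $2$-connected, an $r^-$-graph, and unhappy, $G$ is a Sperner layered $2$-connected $r^-$-bigraph that is not happy; moreover, Berge cycles of length $\ell$ in $\cH$ correspond to cycles of length $2\ell$ in $G$, so $c(G) < 2k$. By Lemma~\ref{c2happy}, either $G$ admits one of the four bigraph shrinkings (yielding $G'$ satisfying (S1)--(S5)), or there exist an unhappy $a^* \in A$ and a happy $2$-block $B^*$ of $G$ (in the sense of Lemma~\ref{c30}) with $|A \cap B^*| \leq {t \choose \min\{r-1, \lfloor t/2 \rfloor\}}(|Y \cap B^*| - 2)$.

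Case~A (bigraph shrinking). Each of the four bigraph shrinkings corresponds directly to one of (T1)--(T4): shrinking~(1) is (T1); shrinking~(2) is (T2) applied to the special hypergraph vertex $v = y$ (the bigraph condition ``each neighbor of $y$ is unhappy or has degree $2$'' matches the hypergraph definition of a special vertex, since $A$-degree equals hyperedge size); shrinking~(3) is (T3) applied to the special size-$2$ edge $\{x,y\}$; and shrinking~(4) is (T4). Under the bijection between incidence bigraphs and hypergraphs, (S1), (S3), (S5) immediately give that $\cH'$ is $2$-connected, Sperner, and has $c(\cH')<k$, while (S2) yields (i). For (ii), inspection of (T1)--(T4) shows that edges are lost in at most a one-to-one ratio with vertices (indeed $|V(\cH')|=|V(\cH)|$ in (T1) and (T4), and each removed vertex in (T2) and (T3) accounts for at most one removed edge), so in fact $|E(\cH)|-|E(\cH')|\leq |V(\cH)|-|V(\cH')|$---the sharper bound promised in the final sentence.

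Case~B (special block). Let $\cH^*$ be the sub-hypergraph whose incidence bigraph is $B^*$, and let $x,y$ be its outer vertices with $y \in a^*$. Properties (B1)--(B4) of Lemma~\ref{c30} translate to: $\cH^*$ is $2$-connected, Sperner, and (by the conclusion of Lemma~\ref{c2happy}) happy; its outer vertices are exactly $\{x,y\}$; $a^*$ is the unique edge of $\cH - E(\cH^*)$ containing $y$; and $x\notin a^*$. Thus $\cH^*$ is a special $2$-block, and we set $\cH' = F(\cH,\cH^*,x,y)$. Sperner is preserved: no edge of $\cH-E(\cH^*)$ contains both $x$ and $y$ (since $a^*$ is the unique such edge containing $y$, and $x\notin a^*$), so adding $\{x,y\}$ (when it is new) creates no containment among the surviving edges. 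The $2$-connectedness of $\cH'$ follows by replacing in the bigraph the $2$-connected block $B^*$ with the length-$2$ path $x$--$\{x,y\}_A$--$y$, using (B4). For (iii), any Berge cycle in $\cH'$ not using $\{x,y\}$ already lies in $\cH$; a Berge cycle of length $\ell\geq k$ using $\{x,y\}$ can be turned into a Berge cycle of length $\geq \ell$ in $\cH$ by substituting the new edge with a Berge $x,y$-path in $\cH^*$ of length $\geq 1$ (which exists by $2$-connectedness of $\cH^*$; the path's interior vertices and edges lie in $\cH^*$ and hence are disjoint from the rest of the cycle), contradicting $c(\cH)<k$. Since $|V(\cH)|-|V(\cH')|=|V(\cH^*)|-2$ and $|E(\cH)|-|E(\cH')|\leq |E(\cH^*)|$, the size bound on $|A\cap B^*|$ from Lemma~\ref{c2happy} gives (ii).

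The main technical obstacles are (a) confirming that the four bigraph shrinkings translate cleanly into (T1)--(T4)---in particular matching the notions of ``special vertex,'' ``special edge,'' and ``unhappy edge'' between the two languages---and (b) verifying in (T5) that the special $2$-block $\cH^*$ can indeed be collapsed to a single edge $\{x,y\}$ without destroying the Sperner property, the $2$-connectedness, or the bound on the circumference; the last of these is where the happiness of $\cH^*$ plays no direct role in the circumference argument, but its structural attachment (outer vertices $\{x,y\}$ with the only external edge at $y$ being $a^*$ and $x\notin a^*$) is essential for all three conditions.
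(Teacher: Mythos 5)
Your proposal is correct and follows exactly the route the paper intends (and leaves implicit): translate Lemma~\ref{c2happy} from the incidence-bigraph language back to hypergraphs, matching the four bigraph shrinkings to (T1)--(T4) and the special-block case to (T5). One small slip worth fixing: your justification of the final ``Furthermore'' clause by ``inspection of (T1)--(T4)'' is not quite right as stated --- a special vertex $v$ could in principle have both incident edges of size $2$, so (T2) could delete two hyperedges for one vertex --- but this is harmless because the sharper bound $|E(\cH)|-|E(\cH')|\le |V(\cH)|-|V(\cH')|$ is precisely (S4), which Lemma~\ref{c2happy} already guarantees for the chosen shrinking, so you should just cite (S4) directly (and similarly, $|V(\cH')|=|V(\cH)|$ holds for (T1) but not for (T4), where $t-2$ vertices disappear; again (S4) covers it).
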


\subsection{A refinement of Lemma~\ref{c3}}

Suppose we start from    a Sperner  $2$-connected unhappy $r^-$-graph $\cH$ with at least $k$ vertices and $c(\cH)<k$.
 Lemma~\ref{c3} provides that we can obtain from $\cH$ a happy Sperner  $2$-connected  $r^-$-graph in several steps using the following rule at each step:
\begin{equation}\label{e1}
	\mbox{\em  if possible, apply {\rm (T1)}; if not then try {\rm (T2)}, then {\rm (T3)} and so on.}
\end{equation}

We may think that we have started from $\cH=\cH_0$ and after Step $i$ obtain $\cH_i$ from $\cH_{i-1}$ using one of (T1)--(T5).

Claims 2.7--2.8 in the proof of  Lemma~\ref{c3} yield that following~(\ref{e1}), at each Step $i$,
\begin{equation}\label{e2}
\parbox{15cm}{\em  if  {\rm (T1)} is not applied on Step $i+1$,  then in each unhappy edge $a$ of $\cH_i$, thick pairs form a matching,}
\end{equation}
and 
\begin{equation}\label{e3}
\parbox{15cm}{\em  if neither  {\rm (T1)} nor {\rm (T2)} is  applied on Step $i+1$,  then all pairs of vertices in each unhappy edge $a$
of $\cH_i$ are thin.}
\end{equation}

\begin{claim}\label{cla1}
If {\rm (T2)} was applied on Step $i$, then {\rm (T1)} cannot be applied on Step $i+1$.
\end{claim}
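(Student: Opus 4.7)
The plan is a proof by contradiction. Suppose (T1) applies on step $i+1$, witnessed by an unhappy edge $e' \in \cH_i$ and vertex $v' \in e'$ with $F(\cH_i, e', v')$ satisfying (S1)--(S5). The goal is to exhibit an unhappy $e^* \in \cH_{i-1}$ and $w \in e^*$ such that $F(\cH_{i-1}, e^*, w)$ also satisfies (S1)--(S5), which contradicts rule~(\ref{e1}): that rule chose (T2) over (T1) at step $i$ precisely because no such $(e^*, w)$ exists in $\cH_{i-1}$. Let $v$ denote the special vertex removed at step $i$, with incident edges $e_1, e_2$.

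Since (T2) only alters $v$ and the edges $e_1, e_2$, and since $e'$ being unhappy forces $|e'| \geq 3$, exactly one of the following holds: case~(i), $v \notin e'$ and $e' \in E(\cH_{i-1})$; or case~(ii), $e' = e_j - v$ for a unique $j \in \{1,2\}$ with $|e_j| \geq 4$. I would set $(e^*, w) := (e', v')$ in case~(i), and $(e^*, w) := (e_j, v')$ in case~(ii). The edge $e^*$ is unhappy in $\cH_{i-1}$: in case~(ii) because $v$ is special and $|e_j| \geq 3$; in case~(i) because every pair of $e'$ avoids $v$ and so retains its codegree when passing from $\cH_{i-1}$ to $\cH_i$ (the shrunken edges $e_j - v$ keep every pair disjoint from $v$). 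Properties (S2), (S4), (S5) are automatic for a (T1) move, using $c(\cH_{i-1}) < k$.

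For the Sperner check (S3), suppose $e^* - w \subseteq f$ for some $f \in E(\cH_{i-1}) \setminus \{e^*\}$; Sperner of $\cH_{i-1}$ forces $w \notin f$. If $v \in f$, then $f \in \{e_1, e_2\}$. In case~(i), $v \notin e^* - w$, so $e^* - w \subseteq f - v$, which is an edge of $F(\cH_i, e', v')$. In case~(ii), $v \in e_j - v'$, so $f = e_{3-j}$, giving $e_j - v - v' \subseteq e_{3-j} - v$ in $F(\cH_i, e', v')$. Both violate the Sperner property of $F(\cH_i, e', v')$. If instead $v \notin f$, then $f$ is unchanged between $\cH_{i-1}$ and $\cH_i$, and $e^* - w \subseteq f$ descends to the same containment in $F(\cH_i, e', v')$, again a contradiction.

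For 2-connectivity (S1), I would pass to the incidence bigraph: $I(F(\cH_{i-1}, e^*, w))$ is obtained from the 2-connected $I(F(\cH_i, e', v'))$ by adjoining $v$, and (for each $j$ with $|e_j| = 2$) the $A$-vertex $[e_j]$, as degree-$2$ internal vertices of a single path whose endpoints already lie in $I(F(\cH_i, e', v'))$. Adjoining such a path preserves 2-connectivity, which yields the required contradiction. The main obstacle is the Sperner step in case~(ii): the key observation is that any would-be nesting $e_j - v' \subseteq e_{3-j}$ in $\cH_{i-1}$ survives the shrinking to produce $e_j - v - v' \subseteq e_{3-j} - v$ in $F(\cH_i, e', v')$, which the hypothesis of a valid (T1) move at step $i+1$ forbids.
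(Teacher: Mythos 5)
Your proof is correct and is essentially the paper's own: your cases~(i) and~(ii) are exactly the paper's Case~1 (where $e_0$ is neither $e_1-v$ nor $e_2-v$) and Case~2 (where $e_0=e_j-v$), the observation that $e^*$ stays unhappy in $\cH_{i-1}$ (codegrees of pairs avoiding $v$ are preserved; $v$ special and $|e_j|\geq 3$ forces $e_j$ unhappy) is the same, and the $2$-connectivity argument via reconstructing $I(F(\cH_{i-1},e^*,w))$ from the $2$-connected $I(\cH_{i+1})$ by attaching $v$ (and the $A$-vertices of any graph edges among $e_1,e_2$) as an ear is the paper's argument verbatim. The Sperner verification is logically identical, only phrased in the contrapositive: you trace a hypothetical nesting $e^*-w\subseteq f$ forward to a Sperner violation in $\cH_{i+1}$, whereas the paper checks Sperner-ness of the reconstructed hypergraph directly.
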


{\bf Proof.} Suppose $\cH_{i}=F(\cH_{i-1},v,e_1,e_2)$ and $\cH_{i+1}=F(\cH_{i},e_0,w)$. 

\medskip
{\bf Case 1:} Edge $e_0$ is neither $e_1-v$ nor $e_2-v$.
We want to show that in this case, $e_0$ is unhappy in 
$\cH_{i-1}$ and $\cH'=F(\cH_{i-1},e_0,w)$ is a Sperner  $2$-connected $r^-$-graph satisfying (i)--(iii) with $\cH_{i-1}$ in place of $\cH$.
That would contradict Rule~(\ref{e1}).

To prove the first part (that $e_0$ is unhappy in 
$\cH_{i-1}$), recall that $e_0$ is unhappy in $\cH_i$. But the codegree in $\cH_i$ of each  pair in $V(\cH_i)$ is the same as in $\cH_{i-1}$.

To prove the second part, we use the fact that $\cH'$ can be obtained from $\cH_{i+1}$ by adding back vertex $v$ and for $j=1,2$ constructing $e_j$ either by
adding $v$ to $e_j-v\in \cH_{i+1}$ when $|e_j|\geq 3$ or adding edge $e_j$ when  $|e_j|=2$. Since the incidence graph $I(\cH_{i+1})$
is $2$-connected and this operation corresponds to adding a vertex of degree $2$ or an ear to $I(\cH_{i+1})$,
$I(\cH')$ also is 2-connected. Since $\cH_{i+1}$ is Sperner, and   $\cH'$ differs from it only $e_1,e_2$ and $v$, $H'$ is also Sperner:
new edges are not contained in any old edge because of $v$, and no old edge can be contained in $e_j$, since otherwise it would be contained in $e_j-v$ in $\cH_{i+1}$. Properties (i)--(iii) are trivial.

\medskip
{\bf Case 2:}  $e_0=e_1-v$. In this case,  we know that $e_1$ is unhappy in 
$\cH_{i-1}$ and  want to show that  $\cH'=F(\cH_{i-1},e_1,w)$ is a Sperner  $2$-connected $r^-$-graph satisfying (i)--(iii) with $\cH_{i-1}$ in place of $\cH$. Now $\cH'$ can be obtained from $\cH_{i+1}$ by adding back vertex $v$, adding $v$ to $e_0-w$
 and  constructing $e_2$ either by
adding $v$ to $e_2-v\in \cH_{i+1}$ when $|e_2|\geq 3$ or adding edge $e_2$ when  $|e_2|=2$. The rest is as in Case 1.\qed

Practically the same proof yields the following similar claim.

\begin{claim}\label{cla2}
If {\rm (T3)} was applied on Step $i$, then {\rm (T1)} cannot be applied on Step $i+1$.\qed
\end{claim}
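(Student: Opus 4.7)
\textbf{Proof plan for Claim~\ref{cla2}.} The plan is to mirror the argument of Claim~\ref{cla1} verbatim, with the single difference that the (T3)-operation removes two vertices and modifies two edges simultaneously rather than one vertex.

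Suppose $\cH_i = F(\cH_{i-1},vu)$, where $vu$ is a special edge in $\cH_{i-1}$ with incident (unhappy) edges $e_1\ni v$ and $e_2\ni u$, and suppose $\cH_{i+1}=F(\cH_i,e_0,w)$ for some unhappy edge $e_0$ of $\cH_i$ and some $w\in e_0$. The goal is to produce from $\cH_{i-1}$ an alternative shrinking by (T1) that satisfies all conclusions of Lemma~\ref{c3}, contradicting the priority rule~(\ref{e1}). I would split into three cases exactly as in Claim~\ref{cla1}.

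\emph{Case 1:} $e_0\notin\{e_1-v,\,e_2-u\}$. Then $e_0$ is already an edge of $\cH_{i-1}$, and since codegrees of pairs of vertices in $V(\cH_i)\subseteq V(\cH_{i-1})$ are the same in both hypergraphs, $e_0$ is unhappy also in $\cH_{i-1}$. Put $\cH':=F(\cH_{i-1},e_0,w)$. Properties (i)--(iii) of Lemma~\ref{c3} are immediate, so only sperner-ness and 2-connectivity of $\cH'$ require work. \emph{Cases 2 and 3:} $e_0=e_1-v$ (respectively $e_0=e_2-u$); use unhappiness of $e_1$ (resp.\ $e_2$) in $\cH_{i-1}$ and set $\cH':=F(\cH_{i-1},e_1,w)$ (resp.\ $F(\cH_{i-1},e_2,w)$).

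For the verification, in every case I would recover $\cH'$ from $\cH_{i+1}$ by the reverse of the (T3)-operation: add back the vertex $v$ together with the edge $vu$, then reinstate $e_1$ by either attaching $v$ to $e_1-v\in\cH_{i+1}$ (if $|e_1|\ge 3$) or adding the size-2 edge $e_1$ (if $|e_1|=2$); then do the symmetric operation for $u$ and $e_2$. Each of these is either adding a degree-2 vertex to the incidence bigraph or adding an ear, so 2-connectedness of $I(\cH_{i+1})$ transfers to $I(\cH')$. Sperner-ness is checked exactly as in Claim~\ref{cla1}: any newly reintroduced edge contains $v$ or $u$ and so cannot be contained in any edge already present in $\cH_{i+1}$; and no old edge can be contained in $e_1$ or $e_2$, for otherwise it would already have been contained in $e_1-v$ or $e_2-u$ inside the Sperner hypergraph $\cH_{i+1}$.

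The only real subtlety — and the main obstacle to a fully routine proof — is confirming in Cases 2 and 3 that removing $w$ from $e_1$ (or $e_2$) in $\cH_{i-1}$ does not collide with the special-edge structure of $vu$. This requires noting that $v,u\notin \{w\}$ (since $v,u\notin V(\cH_i)$ while $w\in V(\cH_i)$) and that the relevant codegree inequalities witnessing unhappiness of $e_1$ and $e_2$ in $\cH_{i-1}$ are inherited from $\cH_i$ because $v,u$ lie in exactly two edges each. Once this is in place, $\cH'$ contradicts rule~(\ref{e1}), finishing the proof.
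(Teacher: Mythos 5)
Your proposal is correct and takes exactly the route the paper intends: the paper dismisses Claim~\ref{cla2} with ``practically the same proof'' as Claim~\ref{cla1}, and you have spelled out what that means — split on whether $e_0$ coincides with $e_1-v$ or $e_2-u$, apply (T1) in $\cH_{i-1}$ in each case, then recover $\cH'$ from the 2-connected $\cH_{i+1}$ by adding back $v$, $u$, and the edge $vu$, which in the incidence bigraph is an ear through $e_1-v$, $v$, $vu$, $u$, $e_2-u$ and so preserves 2-connectivity, with Sperner-ness checked as in Case 1/Case 2 of Claim~\ref{cla1}. One small remark: your final paragraph argues that unhappiness of $e_1,e_2$ in $\cH_{i-1}$ is ``inherited'' from $\cH_i$ via codegree preservation; this works, but it is simpler to note that unhappiness of the two edges adjacent to $vu$ is built directly into the definition of a special edge, so no inheritance argument is needed (and also $|e_1|,|e_2|\geq 3$ automatically, making the $|e_1|=2$ sub-case you flag vacuous here).
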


The proof of the next claim is somewhat different.

\begin{claim}\label{cla3}
If {\rm (T4)} was applied on Step $i$, then {\rm (T1)} cannot be applied on Step $i+1$.
\end{claim}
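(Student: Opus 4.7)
Follow the contradiction strategy used in Claims~\ref{cla1} and~\ref{cla2}: assume (T1) is applied at Step $i+1$, then exhibit a valid (T1)-operation on $\cH_{i-1}$, contradicting rule~(\ref{e1}). Say (T4) at Step $i$ glues the vertices $y_1,\dots,y_{t-1}\in N(a)$ of an unhappy edge $a$ into a single new vertex $y^*$ of $\cH_i$, keeping $y_t$ fixed; by~(\ref{e3}) all pairs in $N(a)$ are thin. Say (T1) at Step $i+1$ removes the incidence $(e_0,w)$ with $e_0$ unhappy in $\cH_i$ and $w\in e_0$. Since $|a|=2$ in $\cH_i$, $a$ is happy, so $e_0\neq a$. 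Define the preimage $e_0'$ of $e_0$ in $\cH_{i-1}$: $e_0'=e_0$ if $y^*\notin e_0$, or $e_0'=(e_0\setminus\{y^*\})\cup\{y_j\}$ for the unique $y_j\in N(a)\setminus\{y_t\}$ lying in $e_0'$ (uniqueness from thinness of pairs in $N(a)$). A codegree comparison shows $e_0'$ is unhappy in $\cH_{i-1}$: codegrees of pairs in $e_0'$ not containing $y_j$ are preserved under the gluing, while for a pair $\{u,y_j\}$ the codegree in $\cH_{i-1}$ is bounded by that of $\{u,y^*\}$ in $\cH_i$ (the latter aggregates contributions from every glued vertex). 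Finally set $w'=w$ if $w\neq y^*$ and $w'=y_j$ if $w=y^*$; the candidate shrinking is $\cH':=F(\cH_{i-1},e_0',w')$.

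\textbf{Sperner property and auxiliary conditions.} To see $\cH'$ is Sperner, suppose $e''\neq e_0'$ in $\cH_{i-1}$ contained $e_0'\setminus\{w'\}$. Pushing forward to $\cH_i$ under the gluing---mapping $e''$ to $(e''\setminus\{y_{j'}\})\cup\{y^*\}$ if $e''$ meets the glued vertices at some $y_{j'}$, and to $e''$ itself otherwise---produces an edge of $\cH_i$ distinct from $e_0$ that contains $e_0\setminus\{w\}$, contradicting Sperner-validity of the (T1)-step on $\cH_i$. The analogues of (S2), (S4), and (S5) are immediate: (T1) removes exactly one incidence, so $\sum|E(\cH')|<\sum|E(\cH_{i-1})|$ while $|V(\cH')|=|V(\cH_{i-1})|$, and $c(\cH')\leq c(\cH_{i-1})<2k$ since removing an incidence cannot create longer cycles.

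\textbf{The $2$-connectivity check via cycle lifting is the main obstacle.} By Lemma~\ref{c0}(1) applied to $\cH_{i+1}=\cH_i-(e_0,w)$ (which is $2$-connected), there is a cycle $C$ in $I(\cH_i)-(e_0,w)$ passing through $e_0$ and $w$; I aim to lift $C$ to a cycle $C'$ in $I(\cH_{i-1})-(e_0',w')$ passing through $e_0'$ and $w'$, whereupon Lemma~\ref{c0}(1) again certifies that $\cH'$ is $2$-connected. If $y^*\notin C$ the lift is verbatim. If $y^*\in C$ with bigraph-neighbors $f_1,f_2$ on $C$, I lift either by replacing $y^*$ with some $y_{j'}$ common to $f_1,f_2$ in $\cH_{i-1}$ (always possible when one of $f_1,f_2$ equals $a$, since $a$ is incident in $\cH_{i-1}$ to every $y_{j'}$), or else by inserting the bridge $f_1-y_{j_1}-a-y_{j_2}-f_2$ through $a$; the bridge is legal because when $f_1,f_2\neq a$ the vertex $a$ cannot appear elsewhere on $C$ (its only bigraph-neighbors in $\cH_i$ are $y^*$ and $y_t$, and $y^*$ is not adjacent to $a$ in $C$). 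The most delicate case is $w=y^*$, where $w'=y_j$ and the lifted cycle must further pass through $y_j$; this is arranged by choosing the lift of $(a,y^*)$ to be $(a,y_j)$ when either $f_1$ or $f_2$ equals $a$, and in the remaining cases by combining pair-thinness in $N(a)$, unhappiness of $e_0'$, and the Sperner-validity of the (T1)-step on $\cH_i$ to produce a compatible detour through $y_j$. Since $C$ avoids $(e_0,w)$ and the incidence correspondence preserves this, $C'$ avoids $(e_0',w')$. Having verified all required conditions, $\cH'$ is a valid (T1)-shrinking of $\cH_{i-1}$, contradicting rule~(\ref{e1}).
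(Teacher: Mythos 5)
The overall structure of your proof matches the paper's: assume (T1) is applied at Step $i+1$, define a preimage edge $e_0'$ in $\cH_{i-1}$ (with $w'$ equal to $w$ or to $y_j$), verify it is unhappy there, and conclude that (T1) could already have been applied at Step $i$, contradicting rule~\eqref{e1}. Your codegree and Sperner arguments are essentially sound. But your $2$-connectivity verification takes a genuinely different route from the paper's and has a real gap.

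The paper does not lift a cycle. It constructs $I(\cH')$ directly from the already-known $2$-connected $I(\cH_{i+1})$: split $y^*$ into $y_1,\dots,y_{t-1}$ (each receiving a nonempty set of incidences determined by $\cH_{i-1}$), delete $y^*$, and re-route the vertex $a$ to be adjacent to all of $y_1,\dots,y_t$. Each piece is a local operation that is readily seen to preserve $2$-connectivity. Your approach instead goes ``backward'' via Lemma~\ref{c0}(1): take a cycle $C$ through $e_0$ and $w$ in $I(\cH_i)-(e_0,w)$, lift it to $I(\cH_{i-1})$, and verify the lift avoids $(e_0',w')$ while still containing $e_0'$ and $w'$.

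The gap is exactly where you flagged ``the most delicate case.'' When $w=y^*$ (so $w'=y_j$, the unique vertex of $N(a)\setminus\{y_t\}$ lying in $e_0'$) and neither $f_1$ nor $f_2$ equals $a$, the lift of $y^*$ is forced to pass through $y_{j_1}$ (determined by $f_1$) and, if distinct, $y_{j_2}$ (determined by $f_2$), optionally bridged by $a$. None of $y_{j_1},y_{j_2}$ need equal $y_j$. If $y_{j_1}=y_{j_2}\neq y_j$, the lifted cycle avoids $a$ entirely and never touches $y_j$. If $y_{j_1}\neq y_{j_2}$ and both differ from $y_j$, the bridge $y_{j_1}\text{-}a\text{-}y_{j_2}$ uses $a$ but still skips $y_j$; inserting a detour $a\text{-}y_j\text{-}g\text{-}\cdots$ for some edge $g\ni y_j$ is not a local modification, since $g$ and its continuation may collide with the rest of the lifted cycle. ``Pair-thinness of $N(a)$, unhappiness of $e_0'$, and Sperner-validity of the (T1)-step'' do not obviously conjure the needed disjoint detour; you would have to construct it explicitly, likely via a separate connectivity argument in $I(\cH_{i-1})$ rather than by deforming $C$. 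As written, the claim that a lift through $y_j$ exists is asserted, not proved, and there are plausible configurations where the naive lift definitively misses $y_j$.

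If you want to keep the cycle-lifting framework, you need a supplementary argument guaranteeing an $e_0'$-$y_j$ path of length $\geq 3$ in $I(\cH_{i-1})$ disjoint (except at endpoints) from a second such path. Otherwise, it is cleaner to switch to the paper's forward construction from $I(\cH_{i+1})$, which sidesteps the question of routing through $y_j$ altogether.
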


{\bf Proof.} Suppose $\cH_{i-1}$ has an unhappy edge $a=\{y_1,\ldots,y_t\}$ such that
$\cH_{i}$ is obtained from $H_{i-1}$ by gluing $\{y_1,\ldots,y_{t-1}\}$ into a new vertex $y^*$,
 and $\cH_{i+1}=F(\cH_{i},e,w)$. By~(\ref{e3}),
\begin{equation}\label{e4}
\parbox{15cm}{\em  all pairs of vertices in each unhappy edge 
of $\cH_{i-1}$ are thin. In particular, the size of each edge in $\cH_i$ apart from the edge $y^*y_t$ is the same as in $\cH_{i-1}$. }
\end{equation}

{\bf Case 1:} $w\neq y^*$. By~(\ref{e4}), in $\cH_{i-1}$, $|e\cap a|\leq 1$. So, since $e$ is unhappy in $\cH_i$, it is also unhappy 
in $\cH_{i-1}$. We want to show that  $\cH'=F(\cH_{i-1},e,w)$ is a Sperner  $2$-connected $r^-$-graph satisfying (i)--(iii). Since
each pair in $e$ is thin, $\cH'$ is  Sperner. Properties (i)--(iii) are evident, so we need to check that $\cH'$ is $2$-connected.

By construction, $\cH'$ can be obtained from the $2$-connected $\cH_{i+1}$ by blowing up vertex $y^*$ into vertices 
$y_1,\ldots,y_{t-1}$ (each of a positive degree) and replacing edge $y^*y_t$ with $a$. In terms of the incidence graphs,
in the $2$-connected $I(\cH_{i+1})$, we split $y^*$ into $t-1$ vertices of degree at least $1$, delete vertex $y^*$ and add
vertex $a$ adjacent to $y_1,\ldots,y_t$. It is easy to check that the new graph is $2$-connected.

\medskip
{\bf Case 2:} $w= y^*$. By~(\ref{e4}), there is a unique $v_1\in a-y_t$ such that $e'=e-y^*+v_1\in \cH_{i-1}$. Since $e$
is unhappy in $\cH_i$, it has a pair $xy$ of codegree at most $|e|-2$. If $y^*\notin \{x,y\}$, then the codegree of $xy$ in
$\cH_{i-1}$ also is at most $|e|-2$. And if $y^*=y$, then the codegree of $y_1x$ in $\cH_{i-1}$  is at most $|e|-2$.
Thus $e'$ is unhappy in $\cH_{i-1}$. The rest is as in Case 1.\qed

\subsection{Stopping at $k-1$ vertices}
\begin{lemma}\label{c3k-1}
Suppose  $r\geq 3$ and $k\geq r$ are integers.
Let   $\cH$ be a Sperner  $2$-connected $r^-$-graph with $c(\cH)<k$ and at least $k$ vertices
that is not happy. Suppose $\cH= \cH_0, \ldots, \cH_i, \cH_{i+1}$ is a sequence of $r^-$-graphs obtained by iteratively applying Lemma~\ref{c3} following Rule~\eqref{e1} to $\cH$ until $\cH_{i+1}$ is happy. If {\rm (T5)} was never applied and $|V(\cH_{i+1})| = k-1$, then $|E(\cH_{i+1})| \leq {k-2 \choose \min\{r, \lfloor (k-2)/2 \rfloor\}} + 2$. 
\end{lemma}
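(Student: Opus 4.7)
The target bound $\binom{k-2}{\min\{r,\lfloor (k-2)/2\rfloor\}} + 2$ matches exactly what one obtains by isolating one vertex $v \in V(\cH_{i+1})$ of hypergraph degree at most $2$: the edges of $\cH_{i+1}$ avoiding $v$ form a Sperner family of $\leq r$-subsets of the $(k-2)$-set $V(\cH_{i+1})\setminus\{v\}$, bounded by Theorem~\ref{sperner} by $\binom{k-2}{\min\{r,\lfloor (k-2)/2\rfloor\}}$; adding the at most $2$ edges through $v$ gives the claim. So my plan reduces the lemma to producing such a low-degree vertex.

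To locate $v$ I would trace back through the chain $\cH_0,\dots,\cH_{i+1}$. Because $|V(\cH)|\geq k > k-1 = |V(\cH_{i+1})|$, at least one step strictly decreased the vertex count; since T1 preserves $|V|$ and T5 is forbidden, that step is of type T2, T3, or T4. Let $\cH_j \to \cH_{j+1}$ denote the \emph{last} such step. Claims~\ref{cla1}--\ref{cla3} guarantee that every subsequent step is T1, which only deletes edges, so the hypergraph degree of a vertex of $\cH_{j+1}$ can only decrease on the way to $\cH_{i+1}$. Thus it suffices to exhibit a vertex of $\cH_{j+1}$ of hypergraph degree at most $2$.

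Cases T2 and T3 follow directly from the definitions of special vertex and special pair: after removing the special vertex (resp.\ pair) together with its size-$2$ or shrunk incident edges, the surviving neighbors each retain only a bounded number of edges, and at least one of them ends up with hypergraph degree at most $2$. The essential case is T4, where the candidate is the freshly glued vertex $y^\ast$ obtained from the unhappy edge $a=\{y_1,\dots,y_t\}$. By property~\eqref{e3} (valid at step $j+1$ because Rule~\eqref{e1} selects T4 only after T1, T2 fail), every pair inside $a$ is thin in $\cH_j$, so every edge of $\cH_j$ different from $a$ meets $\{y_1,\dots,y_{t-1}\}$ in at most one vertex. Hence $d_{\cH_{j+1}}(y^\ast)$ equals $1$ (for the image of $a$, which becomes the graph edge $\{y^\ast,y_t\}$) plus the number of distinct ``outgoing'' edges from $\{y_1,\dots,y_{t-1}\}$ in $\cH_j$.

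The hard part will be bounding these outgoing edges by $1$. I plan to argue by contradiction: if $y^\ast$ carries three or more edges in $\cH_{j+1}$, then three pre-gluing edges together with $a$ itself --- which can be used as a single connector between any two of $y_1,\dots,y_t$ --- and the $2$-connectedness of $\cH_j$ can be threaded into a Berge path that winds through enough of $y_1,\dots,y_{t-1}$ and their distinct outgoing edges to close up into a Berge cycle of length at least $k$ in $\cH_j$; pulling this structure back through the earlier shrinkings (each of which preserves $c<k$ by Lemma~\ref{c3}(iii)) would contradict $c(\cH)<k$. Making this lifting precise, while tracking how the matching condition on thick pairs from~\eqref{e2} and the happiness of $\cH_{j+1}$ force the outgoing edges to genuinely contribute distinct ``spokes'' at $y^\ast$, is the technical core.
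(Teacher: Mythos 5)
Your plan of finding a vertex $v$ of degree at most $2$ and then applying Theorem~\ref{sperner} to the edges missing $v$ is appealing, but it does not work, for two independent reasons.

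\textbf{Misreading of Claims~\ref{cla1}--\ref{cla3}.} These claims say the opposite of what you use. They assert that after an application of (T2), (T3) or (T4), (T1) \emph{cannot} be applied on the next step. Since (T2)--(T4) all decrease $|V|$ and (T5) is excluded, the last vertex-decreasing step is necessarily the last step of the sequence, so $j=i$ and $\cH_{j+1}=\cH_{i+1}$. Your statement that ``every subsequent step is T1'' is therefore false, and the observation that degrees only decrease along later T1 steps is vacuous. The real force of Claims~\ref{cla1}--\ref{cla3} is structural: they guarantee that the \emph{last} operation is (T2), (T3), or (T4), and that at the moment it was applied, (T1) had already failed, which encodes a cut-vertex constraint.

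\textbf{The degree-$2$ vertex need not exist, and $y^\ast$ in particular has high degree.} In the (T4) case, all pairs inside the unhappy edge $a=\{y_1,\dots,y_t\}$ are thin (by~\eqref{e3}), so the edges of $\cH_i$ other than $a$ incident to distinct $y_\ell$'s are pairwise distinct. Consequently
\[
d_{\cH_{i+1}}(y^\ast)\;=\;1+\sum_{\ell=1}^{t-1}\bigl(d_{\cH_i}(y_\ell)-1\bigr)\;\geq\;1+(t-1)\;\geq\;3,
\]
using $d_{\cH_i}(y_\ell)\geq 2$ from $2$-connectedness and $t\geq 3$ since $a$ is unhappy. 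So $y^\ast$ is a \emph{high}-degree vertex, not a low-degree one, and the contradiction you propose to derive when $d(y^\ast)\geq 3$ cannot exist — that inequality is always true. Nor is there any guarantee that $y_t$, or any other vertex of $\cH_{i+1}$, has degree $\leq 2$; the paper in fact handles the case of minimum degree $\leq 3$ separately and assumes $\delta(\cH_{i+1})\geq 4$ in the main argument, so a low-degree vertex is genuinely unavailable.

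\textbf{What the paper actually does.} Rather than isolating a low-degree vertex, the paper exploits the fact that (T1) was \emph{not} applicable at step $i$: if it had been, Rule~\eqref{e1} would have chosen it. In the (T4) case this forces the incidence graph $I(\cH_i-a)$ to have a vertex $x_t$ separating $y_t$ from $\{y_1,\dots,y_{t-1}\}$, which translates into a cut vertex of $\cH_{i+1}-y^\ast y_t$. The edge set is then split across the resulting $2$-blocks, each of which has at most $k-2$ vertices, and the Sperner bound is applied to each block separately; adding back the one edge $y^\ast y_t$ gives $\binom{k-2}{\min\{r,\lfloor(k-2)/2\rfloor\}}+2$. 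The (T2) and (T3) cases follow the same pattern: a cut vertex appears because the corresponding single-edge shrink $F(\cH_i,e_1,v')$ would otherwise have been a valid (T1) move. This cut-vertex/block-decomposition mechanism is the core of the proof, and your proposed route has no substitute for it.
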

%
%
\begin{proof}
Since (T1) does not change the number of vertices and $\cH_0$ has at least $k$ vertices,  one of (T2), (T3), or (T4) was applied.
Moreover, by Claims~\ref{cla1}--\ref{cla3}, one of (T2), (T3), or (T4) was applied to $\cH_i$ to obtain the happy $r^-$-graph $\cH_{i+1}$. For short, denote $\cH'=\cH_{i+1}$.

If $\cH'$ has a vertex of degree at most 3, then the number of edges in $\cH'$ is at most ${k-3 \choose \min\{r, \lfloor (k-3)/2 \rfloor\}} + {3 \choose \min\{r-1, 1\}}$, and we  are  done. Hence
\begin{equation}\label{e9}
\delta(\cH')\geq 3.
\end{equation}

In the following, for any $r^-$-graph $\cA$ and any vertex $v \in V(\cA)$, we use $\cA - v$ to denote the $r^-$-graph obtained by removing vertex $v$ and shrinking any edge $e$ that contains $v$ to the edge $e-v$, unless $|e| = 2$, in which case we simply delete $e$ in $\cA - v$. Note that $\cA-v$ need not be Sperner, even if $\cA$ is Sperner.

\medskip
{\bf Case 1:} (T4) was the last applied operation. Let $a=\{y_1,\ldots,y_t\}$ be the unhappy edge such that
$\cH'$ is obtained from $H_i$ by gluing $\{y_1,\ldots,y_{t-1}\}$ into a new vertex $y^*$. Since $\cH'$ is happy, $\cH_i - a$ is happy. The $r^-$-graph $F(\cH_i, a, y_t)$ satisfies (i)-(iii) and is Sperner by~\eqref{e4}. So if $F(\cH', a, y_t)$ is 2-connected, then we would have applied (T1) to $\cH_i$ instead 
of (T4), a contradiction to Rule~\eqref{e1}. Therefore
\begin{equation}\label{e5}
\parbox{15cm}{\em  the incidence graph $I(\cH_i-a)$ has a vertex $x_t$ separating $y_t$ from 
$\{y_1,\ldots,y_{t-1}\}$. }
\end{equation}
If $x_t$ corresponds to an edge $b$ in $\cH_i-a$, then some pair of its vertices is thin. So, since $\cH_i-a$ is happy,
$|b|=2$. Then instead of $x_t$, we can choose as a vertex $x'_t$ separating $y_t$ from  $\{y_1,\ldots,y_{t-1}\}$ the neighbor
of $x_t$ that is farther from $y_t$. 
Thus we may assume that  $x_t$ corresponds to a vertex in $\cH_i-a$. 

If $x_t \notin \{y_1, \ldots, y_{t-1}\}$, then $y_t$ and $y^*$ are also separated by $x_t$ in $\cH' - y^*y_t$. Since there are at least 2 components in $\cH' - y^*y_t - x_t$, the largest block of $\cH' - y^*y_t$ has at most $|V(\cH') - 1| = k-2$ vertices.

We have that \[|E(\cH')| = |E(\cH' - y^*y_t)| + 1 \leq {k-2 \choose \min\{r, \lfloor(k-2)/2 \rfloor\}} + 1 + 1 ={k-2 \choose \min\{r, \lfloor(k-2)/2 \rfloor\}} + 2.\]

If $x_t \in \{y_1,\ldots, y_{t-1}\}$, then let $\cC$ be a component of $(\cH_i - a) - x_t$ which does not contain $y_t$. Then $\cC$ contains a vertex $y \notin \{y_1, \ldots, y_{t-1}\}$, otherwise every edge of $\cC +x_t$ in $\cH_i$ would be a subset of the edge $a$, contradicting that $\cH_i$ is Sperner. Thus in $\cH' - y^*y_t$, $y$ and $y_t$ are in different blocks. Hence we again get $|E(\cH')| \leq {k-2 \choose \min\{r, \lfloor(k-2)/2 \rfloor\}} + 2.$

\medskip
{\bf Case 2:} $\cH_{i+1}=F(\cH_i,v,e_1,e_2)$ for some special vertex $v$. By~(\ref{e2}), if $|e_1|\geq 4$, then some pair 
in $e_1-v$ is thin, and hence $e_1-v$ is unhappy in $\cH_{i+1}$, a contradiction the happiness of $\cH_{i+1}$. Thus $|e_1|,|e_2|\leq 3$.
Since $\cH_i$ was unhappy, we may assume that $|e_1|=3$, say $e_1=\{v,v',v''\}$. By~(\ref{e2}), either $vv'$ or $vv''$ is a thin pair in
$\cH_i$. Suppose $vv''$ is thin. Consider $\cH''=F(\cH_i,e_1,v')$. Since $vv''$ is thin, $\cH''$ is Sperner. If $\cH''$ is $2$-connected, we get
a contradiction to Rule~(\ref{e1}). Thus the incidence graph $I(\cH'')$ has a cut vertex $x$ separating $v'$ from $\{v,v''\}$.
We claim that
\begin{equation}\label{e11}
\mbox{\em we can choose  $x$ corresponding to a vertex in $\cH''$ distinct from $v$. (We allow  $x=v''$.)}
\end{equation}
Indeed, if $v$ separates $v'$ from $v''$ in $I(\cH'')$, then vertex $e_1$ in the incidence graph $I(\cH_i)$ separates $v'$ from $v''$, a contradiction to the $2$-connectedness of $\cH_i$. 
If $x$ corresponds to an edge in $I(\cH_i)$, then again $x$ contains thin pairs. 
If $|x|\geq 3$. Then $x$ is unhappy. By the choice $\cH_{i+1}$, the only unhappy edge in $\cH''$ could be $e_2$. Recall that in this case,
$|e_2|=3$, say $x=e_2=\{v,w,w'\}$. But in this case, one of $v,w$ and $w'$ also separates $v'$ from $v''$, and we know that
it is not $v$. Recall that $vv''$ is a thin pair, and so $v''\notin \{w,w'\}$. Otherwise if $|x| = 2$, then both of its vertices are cut vertices. This proves~(\ref{e11}).

Recall that $|V(\cH'')|=|V(\cH_i)|=k$ and $e(\cH'')=e(\cH_i)\leq e(\cH_{i+1})+1$. Suppose first that each  component of $\cH'' - x$ has at least 3 vertices. Since $\cH'' - x$ has $k-1$ vertices and at least 2 connected components, $k \geq 7$, and the largest component of $\cH''-x$ has at most $k-4$ vertices. Therefore we obtain
$$e(\cH_{i+1})\leq e(\cH'')\leq {k-3\choose \min\{r, \lfloor (k-3)/2 \rfloor\}}+{4\choose 2}\leq {k-2\choose \min\{r, \lfloor (k-2)/2 \rfloor\}}+2.$$

Now suppose that some component $\cC$ of $\cH'' - x$ contains at most 2 vertices. By (\ref{e9}), $|\cC|=2$ and each of the two vertices in $\cC$ either has degree in $\cH''$ less than in $\cH_{i+1}$ or is $v$. But the only vertex having degree in $\cH''$ less than in $\cH_{i+1}$ is $v'$, and the vertices $v$ and $v'$ are
in distinct components of $\cH''-x$.

\medskip
{\bf Case 3:} $\cH_{i+1}=F(\cH_i,vu)$ for some special edge $vu$. Let $e_1$ be the unhappy edge incident to $v$ and 
$e_2$ be the unhappy edge incident to $u$. By~(\ref{e3}), all pairs in $e_1$ and $e_2$ are thin. So since $\cH_{i+1}$ is 
happy, $|e_1|=|e_2|=3$. Let $e_1=\{v,v',v''\}$ and $e_2=\{u,u',u''\}$, where possibly $v'=u'$.
As in Case 2, consider $\cH''=F(\cH_i,e_1,v')$. Since $vv''$ is thin, $\cH''$ is Sperner. If $\cH''$ is $2$-connected, we get
a contradiction to Rule~(\ref{e1}). Thus the incidence graph $I(\cH'')$ has a cut vertex $x$ separating $v'$ from $\{v,v''\}$.

Similarly to the proof of~(\ref{e11}), we derive
\begin{equation}\label{e12}
\parbox{14.5cm}{\em we can choose  $x$ corresponding to a vertex in $\cH''$ distinct from $v$ and $u$. (We allow  $x=v''$.)
}
\end{equation}
Furthermore, $x \notin \{u', u''\}$. Now $|V(\cH'')|=|V(\cH_i)|=k+1$ and $e(\cH'')=e(\cH_i)= e(\cH_{i+1})+1$.

Note that there cannot be any isolated vertices in $\cH'' - x$ since by (\ref{e9}), $\delta(\cH'')\geq 3$. Also, as in the previous case, there cannot be a component of $\cH'' - x$  with exactly 2 vertices. So we may assume that each component of $\cH'' - x$ has at least 3 vertices.

Let $\cC$ be the component of $\cH'' - x$ that contains $v$. Then $\cC$ must also contain $u$ and at least two of the vertices in $\{v'', u', u''\}$. Therefore $|\cC| \geq 4$. In particular, since $\cH'' - x$ contains exactly $k$ vertices and at least 2 connected components, $k \geq |\cC| + 3 \geq 7$.

As in Case 2, if the largest component of $\cH''-x$ has at most $k-4$ vertices (so $k \geq 8$ since $|\cC| \geq 4$), then
$$e(\cH_{i+1})\leq e(\cH'')\leq {k-3\choose \min\{r, \lfloor (k-3)/2 \rfloor\}}+{5\choose 2}\leq {k-2\choose \min\{r, \lfloor (k-2)/2 \rfloor\}}+2,$$ 
a contradiction. 

Now suppose a component $\cC'$ of  $\cH''-x$ has  $k-3$ or $k-2$ vertices. If $\cC'$ contains $v$, (i.e., $\cC' = \cC)$, then since $\cC$ contains $u$ as well, and $u$ and $v$ are incident to exactly 3 edges ($vu, e_1$, and $e_2$),
$$e(\cH''[\cC+x])\leq {|\cC'| - 2 + 1\choose  \min\{r, \lfloor (|\cC'| - 2 + 1)/2 \rfloor\}}+3.$$

For $|\cC'| = k-3$ we get  $$e(\cH'')\leq {k-4\choose  \min\{r, \lfloor (k-3)/2 \rfloor\}}+3+{4\choose 2}\leq {k-2\choose  \min\{r, \lfloor (k-2)/2 \rfloor\}}+2,$$ and for $|\cC'| = k-2$ we get $$e(\cH'')\leq {k-3\choose  \min\{r, \lfloor (k-3)/2 \rfloor\}}+3+{3\choose 2}\leq {k-2\choose  \min\{r, \lfloor (k-2)/2 \rfloor\}}+2.$$ 
So $\cC' \neq \cC$. But since $|\cC| \geq 4$, we have $|V(\cH'')| \geq |\cC'| + |\cC| + 1 \geq 4 + (k-3) + 1 = k+2$, a contradiction.
\end{proof}
\section{Proof of Theorem~\ref{main2conn}}

\begin{proof}Apply Lemma~\ref{c3} repeatedly to $\cH$ following Rule~\eqref{e1} to obtain an $r^-$-hypergraph $\cH'$ that is happy. By Lemma~\ref{c2'}, $\partial_2 \cH'$ has no cycle of length $k$ or longer.

Let $n_S$ and $m_S$ be the number of vertices and $r^-$-edges respectively that were deleted going from $\cH$ to $\cH'$ by applying operations (T1)-(T4), and let $n_B$ and $m_B$ be the number of vertices and $r^-$-edges respectively that were deleted from applying operation (T5). So $n = |V(\cH')| + n_S + n_B$ and $|E(\cH)| \leq N_{\rm{Sp}}(\partial_2\cH',r) + m_S + m_B$. If $|V(\cH')| \geq k$, then by Theorem~\ref{kopyblock} (applied to $\partial_2 \cH'$) and Lemma~\ref{c3}, we have

\begin{equation*}
|E(\cH')| \leq N_{\rm{Sp}}(\partial_2\cH',r) + m_S + m_S 
\end{equation*}
\begin{equation}\label{smallbig}
\leq \max \{f(|V(\cH')|,k,r,2), f(|V(\cH')|,k,r,t)\} + n_S + {t \choose \min\{r-1, \lfloor t/2 \rfloor\}} n_B
\end{equation}
First suppose that $n_B=0$, i.e., (T5) was never applied. Examining the coefficient of $n_S$ we see $1 \leq \min\{2, {t \choose \min\{r-1, \lfloor t/2 \rfloor\} } \}$.  So in the case $|V(\cH')|\geq k$, from~\eqref{smallbig}, we get $|E(\cH')| \leq \max\{f(n, r, k, 2), f(n, r, k, t)\}$, as desired. Otherwise, if $|V(\cH')| \leq k-1$, then either 
$$|E(\cH')| \leq {k-2 \choose \min\{r, \lfloor (k-2)/2 \rfloor\}} + 2 = f(k-1, k, r, 2)$$ by Lemma~\ref{c3k-1}, or 
$|V(\cH')| \leq k-2$ and $$|E(\cH')| \leq {|V(\cH')| \choose \min\{r, \lfloor |V(\cH')|/2 \rfloor\}} \leq f(|V(\cH')|, k, r, 2).$$ 
Either way we obtain $|E(\cH)| \leq f(n,k,r,2)$. 

So we may assume that at least one application of (T5) was required to obtain $\cH'$. 

Denote $H' := \partial_2 \cH'$ and let $Q$ be the $t$-core of $H'$ (that is, the resulting graph from applying $t$-disintegration to $H'$).
If $H'$ is $t$-disintegrable, i.e., $Q$ is empty, then $N_{\rm{Sp}}(H',r) \leq f(|V(H')|, k,r, t)$ and so by~\eqref{smallbig}, we get $|E(\cH)| \leq f(n, k, r, t)$.  So we may assume that $Q$ is non-empty. In particular, since $\delta(Q) \geq t+1$, $|V(Q)| \geq t+2$.

 \begin{claim}
The graph $Q$ is 1-hamiltonian. 
 \end{claim}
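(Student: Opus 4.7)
The plan is to show $Q$ satisfies an Ore-type minimum-degree condition, whence $Q$ is Hamilton-connected, and then to deduce 1-Hamiltonicity from Hamilton-connectivity. Write $H' = \partial_2 \cH'$ and $q = |V(Q)|$. First I would verify that $H'$ is 2-connected: since $\cH'$ is 2-connected its incidence bigraph $I(\cH')$ has no cut vertex, so $I(\cH') - y$ is connected for every $y \in V(\cH')$, and hence $H' - y$ is connected. Next, because $\cH'$ is happy and has no Berge cycle of length $\geq k$, Lemma~\ref{c2'} forces $H'$ to contain no cycle of length $\geq k$. Applying Theorem~\ref{le:Kopy} to $H'$ and using that $Q$ is non-empty, I fall into case (\ref{le:Kopy}.2), so $t+2 \leq q \leq k-2$. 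By definition of the $(t+1)$-core, $\delta(Q) \geq t+1$.

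Second, since $q \leq k-2$ and $2t = 2\lfloor (k-1)/2\rfloor \geq k-2$, we have
\[
2\delta(Q) \;\geq\; 2(t+1) \;=\; 2t+2 \;\geq\; q+2,
\]
that is, $\delta(Q) \geq (q+2)/2 > (q+1)/2$. A classical theorem of Ore (1963) states that any graph on $q \geq 3$ vertices with minimum degree at least $(q+1)/2$ is Hamilton-connected; since $q \geq t+2 \geq 3$, $Q$ is Hamilton-connected.

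Finally, to conclude 1-Hamiltonicity, fix any linear forest $L = \{uv\}$ on $V(Q)$. Hamilton-connectivity supplies a Hamilton $u,v$-path $P$ in $Q$, and then $P$ together with the edge $uv$ is a Hamilton cycle of $Q \cup L$ containing $L$; this works whether or not $uv \in E(Q)$, since in the latter case $Q \cup L = Q$ and $P + uv$ is already a Hamilton cycle of $Q$ through $uv$. The only mild obstacle is confirming the two structural properties of $H'$ at the start, but both reduce quickly to the definitions and to the hypotheses accumulated so far; the Hamilton-connectivity step itself is a direct appeal to a classical theorem.
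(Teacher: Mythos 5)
Your route is genuinely different from the paper's, and in fact simpler. The paper establishes that $Q$ is $3$-connected (via a cut-set counting argument) and then invokes Enomoto's theorem (Theorem~\ref{eno}) to get $q$-path connectivity, whence Hamilton-connectivity. You replace the $3$-connectivity step and Enomoto entirely by Ore's degree condition for Hamilton-connectivity, which only needs $\delta(Q)\geq (q+1)/2$; since $\delta(Q)\geq t+1$ and $q\leq k-1$, this works out. The translation from Hamilton-connected to $1$-hamiltonian is also fine. That is a clean shortcut.

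There is, however, one slip in the way you bound $q$. You apply Theorem~\ref{le:Kopy} to $H'$ to conclude that you fall into case (\ref{le:Kopy}.2) and hence $q\leq k-2$. But Kopylov's theorem requires $|V(H')|\geq k$, and after the shrinking operations $\cH'$ (hence $H'$) may well have fewer than $k$ vertices; in that regime you only get $q\leq |V(H')|\leq k-1$. Your subsequent inequality $2t+2\geq q+2$ then fails in the extremal case $q=k-1$ with $k$ even, where $2t+2=k<k+1=q+2$. Fortunately Ore's theorem only needs $2\delta(Q)\geq q+1$, i.e.\ $2t+2\geq q+1$, which does hold whenever $q\leq k-1$ (with equality possible for even $k$). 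So the conclusion survives, but the derivation as written asserts a strict inequality $\delta(Q)>(q+1)/2$ that is not guaranteed; you should instead split into the cases $|V(H')|\geq k$ (apply Kopylov, get $q\leq k-2$) and $|V(H')|\leq k-1$ (trivially $q\leq k-1$), and then check only the non-strict Ore bound $\delta(Q)\geq (q+1)/2$.
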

 \begin{proof}

First note that $|V(Q)| \leq k-1$: the case for $|V(H')| \leq k-1$ is trivial, and if $|V(H')| \geq k$, then by applying Kopylov's Theorem (Theorem~\ref{le:Kopy}), we obtain $|V(Q)| \leq k-2$. 

%

Next, we claim that $Q$ is 3-connected. If not, then there exists a cut set $\{x,y\}\subset V(Q)$ and at least two components in $H' - \{x,y\}$. Since $\delta(Q) \geq t+1$, for each of these components $C$, $|C \cup \{x,y\}| \geq t+2$. Hence $|V(Q)| \geq 2(t+2) - 2 \geq k$, a contradiction to $|V(Q)| \leq k-1$.

Therefore $Q$ is 3-connected. By Enomoto's Theorem (Theorem~\ref{eno}), $Q$ is $s$-path connected where $s = \min\{|V(Q)|, 2(t+1)\} = |V(Q)|$. I.e., $Q$ is 1-hamiltonian.
%
\end{proof}

Let $q := |V(Q)|$. 
Let $\cB$ be a special (in particular, happy) block that was removed in some application of (T5), and set $B = \partial_2 \cB$.  Let $x_B$ and $a_B$ be the vertex-edge cut pair corresponding to $\cB$, where some vertex $y_B \in V(\cB) \setminus V(\cH')$ is contained in $a_B$.

\begin{claim}\label{k-s+1}
Suppose $H'$ is $s$-path connected. There does not exist a $(x_B, y_B)$-path of length at least $k-s + 1$ in $B$.
\end{claim}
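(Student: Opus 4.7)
The plan is to argue by contradiction. Suppose $B = \partial_2 \cB$ contains an $(x_B, y_B)$-path $P$ of length $\ell \geq k - s + 1$. I will then exhibit a Berge cycle of length at least $k$ in $\cH$ (or, more precisely, in the intermediate hypergraph in which $\cB$ is still intact), which contradicts the fact that the bound $c < k$ is preserved by every one of the reduction operations in Lemma~\ref{c3}. The cycle will be assembled from three pieces, glued along the vertices $x_B$, $y_B$, and a third vertex $z$ to be chosen: a long Berge path in $\cB$, the bridge edge $a_B$, and a long exterior Berge path supplied by the $s$-path-connectedness of $H'$.

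The first piece is immediate: since $\cB$ is happy, Lemma~\ref{c2'} applied to $\cB$ lifts $P$ to a Berge $(x_B, y_B)$-path $\tilde{P}$ inside $\cB$ on the same base vertices and therefore of length $\ell$. For the exterior piece, note that because $\{x_B, a_B\}$ separates the interior of $\cB$ from the rest and $x_B \notin a_B$, the edge $a_B$ must contain some vertex $z \neq x_B$ with $z \notin V(\cB)$; such a $z$ is untouched when (T5) removes $\cB$, and I would pick it to survive the remaining reductions so that $z \in V(\cH')$. Since $H'$ is $s$-path connected, $H'$ contains an $(x_B, z)$-path with at least $s$ vertices, which by happiness of $\cH'$ and Lemma~\ref{c2'} lifts to a Berge $(x_B, z)$-path $\tilde{Q}$ in $\cH'$ of length at least $s - 1$. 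Each edge of $\tilde{Q}$ either descends from a genuine edge of $\cH$ lying outside $\cB$'s interior, or, if it was introduced as a new 2-edge by a (T5) applied to some other special block $\cC \neq \cB$, it can be re-expanded along a Berge $(x_\cC, y_\cC)$-path inside $\cC$; in either case the lifted path uses only edges disjoint from the interior of $\cB$.

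Concatenating $\tilde{P}$, the edge $a_B$ regarded as a Berge bridge between $y_B$ and $z$, and the reverse of the lifted $\tilde{Q}$ will produce a closed walk of length at least $\ell + 1 + (s - 1) = \ell + s \geq k + 1$. Distinctness of base vertices is automatic: the internal base vertices of $\tilde{P}$ lie in $V(\cB) \setminus \{x_B, y_B\}$, while those of $\tilde{Q}$ lie in $V(\cH')$, and these sets are disjoint because both the interior of $\cB$ and the vertex $y_B$ are absent from $V(\cH')$. The main obstacle is the possibility that $\tilde{Q}$ itself uses the descendant of $a_B$ in $\cH'$, so that the naive concatenation uses $a_B$ twice. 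Because $x_B \notin a_B$, this edge cannot be the first edge of $\tilde{Q}$; if it is the last edge then rerouting the final step through $y_B$ already gives a Berge cycle of length $\ell + (s - 1) \geq k$. For an intermediate position $j$ of the offending edge, I plan to split $\tilde{Q}$ at that edge, use $a_B$ as the bridge at the split point, and invoke the $2$-connectedness of $\cH'$ to produce an internally-disjoint alternative Berge path in $\cH'$ that avoids the descendant of $a_B$; the slack of $1$ in the estimate $\ell + s \geq k + 1$ leaves just enough room for this rerouting to still yield a Berge cycle of length at least $k$, completing the contradiction.
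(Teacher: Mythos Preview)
Your approach has the right overall shape—assemble a long cycle from a long path inside $\cB$, the bridge edge $a_B$, and a long path supplied by the $s$-path-connectedness of $H'$—but it has a genuine gap in how you connect $\cB$ back to $\cH'$.

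You assume that $x_B$ lies in $V(\cH')$ and that some vertex $z\in a_B\setminus V(\cB)$ can be ``picked to survive the remaining reductions'' so that $z\in V(\cH')$; you then apply the $s$-path-connectedness of $H'$ to the pair $\{x_B,z\}$. Neither survival claim is justified. The (T5) that removes $\cB$ is in general not the last reduction; arbitrarily many further steps—additional (T5)'s deleting blocks containing $x_B$ or $z$, (T2)/(T3) deletions, (T4) gluings—can occur before we reach $\cH'$. There is no reason any vertex of $a_B$, nor $x_B$ itself, must persist through all of them. Your treatment of the case in which the descendant of $a_B$ appears inside $\tilde{Q}$ is also only a plan: for the ``intermediate position'' you merely invoke $2$-connectedness of $\cH'$ without producing a rerouting that provably keeps the length at least $k$.

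The paper sidesteps both difficulties at once by working in the incidence bigraph of the \emph{original} $\cH$. Since $\cH$ is $2$-connected, there are two internally disjoint paths $P_1,P_2$ from the pair $\{x_B,a_B\}$ to $V(\cH')$, landing at some $v_1,v_2\in V(\cH')$; these paths may be trivial if $x_B$ or a vertex of $a_B$ happens to lie in $V(\cH')$, but they exist regardless. The long path coming from $s$-path-connectedness of $H'$ is then taken between $v_1$ and $v_2$. In this packaging $a_B$ is used exactly once (as the initial segment of one connector path), so no edge-reuse case analysis is needed, and nothing hinges on any particular vertex of $\cB$ or of $a_B$ surviving to $\cH'$.
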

\begin{proof}
Since $\cH$ is 2-connected, its incidence bigraph contains two shortest disjoint paths $P_1$, $P_2$ from $\{x_B, a_B\}$ to $V(\cH')$ (where possibly $|V(P_1)$ or $V(P_2) = 1)$. Note that these paths are internally disjoint from $V(\cH') \cup V(\cB)$. In $\cH$, $P_1$ and $P_2$ yield Berge paths $\mathcal P_1$ and $ a \cup \mathcal P_2$ from $x_B$ to $V(\cH')$ and $y_B$ to $V(\cH')$ respectively. Say $P_i$ has endpoint $v_i \in V(\cH')$.

Now suppose there exists a path of length at least $k-s+1$ from $x_B$ to $y_B$. This yields a Berge path $\mathcal P_3$ from $x_B$ to $y_B$ with at least $k-s+1$ base vertices such that all edges of $\mathcal P_3$ are contained in $V(\cB)$. Similarly, we find a Berge path $\mathcal P_4$ from $v_1$ to $v_2$ with at least $s$ base vertices such that all edges of $\mathcal P_4$ are contained in $V(\cH')$.

Then $\mathcal P_1 \cup \mathcal P_3 \cup a \cup \mathcal P_2 \cup \mathcal P_4$ is a Berge cycle of length at least $(k-s+1) + s - 1 = k$, a contradiction.

\end{proof}

\begin{claim}\label{ssub}
If $H'$ contains a subgraph $S$ that is $s$-path connected, then $H'$ is also $s$-path connected.
\end{claim}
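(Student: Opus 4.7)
The plan is to show that any two distinct vertices $x, y \in V(H')$ are joined by an $H'$-path on at least $s$ vertices, by routing through the long path inside $S$ and exploiting 2-connectedness of $H'$. A preliminary observation is that $H' = \partial_2 \cH'$ is 2-connected: a cut vertex of the 2-shadow would correspond to a cut vertex of the incidence bigraph of $\cH'$, contradicting the 2-connectedness of $\cH'$. The statement is trivial for $s \leq 1$, so we assume $s \geq 2$, and hence $|V(S)| \geq s \geq 2$, giving at least two ``gateway'' vertices in $V(S)$.

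The proof then splits by how many of $x, y$ lie in $V(S)$.  If $\{x,y\} \subseteq V(S)$, the required path comes directly from the $s$-path connectedness of $S$.

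Next, suppose $x \in V(S)$ and $y \notin V(S)$. Since $H'$ is 2-connected and $|V(H')|\ge 3$, the graph $H'-x$ is connected, so there is a path from $y$ to $V(S)\setminus\{x\}$ in $H'-x$; let $Q$ be a shortest such path, ending at $y'$. By minimality, the interior of $Q$ avoids $V(S)$. Picking an $x,y'$-path $R$ in $S$ with $|V(R)| \geq s$ and concatenating $R$ with $Q$ produces a simple $x,y$-path on at least $s+1$ vertices (since $V(Q)\cap V(S)=\{y'\}$ and $V(R)\subseteq V(S)$). Finally, if $x, y \notin V(S)$, we apply Menger's theorem to the disjoint vertex sets $\{x,y\}$ and $V(S)$ in the 2-connected graph $H'$: this produces two vertex-disjoint paths $P_x$ from $x$ to some $x' \in V(S)$ and $P_y$ from $y$ to some $y' \in V(S)$, with $x' \ne y'$ and interiors disjoint from $\{x,y\}\cup V(S)$. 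Concatenating $P_x$, an $x',y'$-path in $S$ on at least $s$ vertices, and $P_y^{-1}$ yields a simple $x,y$-path in $H'$ on at least $s+2$ vertices.

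The only mild obstacle is in the last case: one must ensure that the external paths $P_x, P_y$ meet $V(S)$ only at their endpoints and are disjoint from each other, otherwise the concatenation is merely a walk. This is precisely what Menger's theorem between two disjoint vertex sets delivers, so the argument goes through without further work.
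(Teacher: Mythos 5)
Your proof is correct and follows essentially the same route as the paper's: use $2$-connectedness of $H'$ to route from $\{x,y\}$ into $V(S)$ along two disjoint paths (the paper allows these to be singletons when $x$ or $y$ already lies in $V(S)$, which covers your three cases uniformly), then splice in a long $v_x,v_y$-path inside $S$. The only differences are presentational — you spell out the Menger/fan argument and the degenerate cases that the paper treats implicitly.
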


\begin{proof}
Let $\{x,y\} \subset V(H')$. We will show that there exists an $(x,y)$-path in $H'$ with at least $s$ vertices. Let $P_x, P_y$ be two disjoint shortest paths from $\{x,y\}$ to $V(S)$, say with endpoints $v_x$ and $v_y$ respectively (where possibly one or both paths are singletons). Such paths exist because $H'$ is 2-connected. Let $P_S$ be a $(v_x, v_y)$-path in $S$ of length at least $S$. Then $P_x \cup P_S \cup P_y$ has length at least $s$.  
\end{proof}

Therefore the previous claim shows that $H'$ is $q$-path connected. Applying Claim~\ref{k-s+1} and Theorem~\ref{kpaththm}, we get 
\begin{equation}
e(\cB) \leq N_{\rm{Sp}}(B, r) \leq \frac{|V(B)| - 2}{k-q-2}{k-q \choose \min\{r, \lfloor (k-q)/2 \rfloor\}}. 
\end{equation}
 Summing up over all blocks deleted via big cuts, we obtain
\begin{equation}\label{Bineq}
m_B \leq n_B \left(\frac{1}{k-q-2}{k-q \choose \min\{r, \lfloor (k-q)/2 \rfloor\}}\right)
\end{equation}

\begin{claim}\label{sineq} For each  integer $s \geq 3$, $\frac{1}{s-2}{s \choose \min \{r, \lfloor s/2 \rfloor\}} \leq {s \choose \min\{r-1, \lfloor s/2 \rfloor\}}$. 
\end{claim}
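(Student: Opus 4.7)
The plan is to prove Claim~\ref{sineq} by a straightforward case split on the relative size of $r$ and $\lfloor s/2 \rfloor$. The intuition is that the binomial coefficient ${s \choose j}$ is unimodal with its maximum at the middle index $\lfloor s/2 \rfloor$, so the inequality should be nearly tight only in the ``middle'' regime (where both minima are active) and comfortably slack at the edges.

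First I would dispose of the range $r > \lfloor s/2 \rfloor$. In that case $r-1 \geq \lfloor s/2 \rfloor$ as well, so both minima equal $\lfloor s/2 \rfloor$, the two binomial coefficients coincide, and the claim reduces to the trivial bound $\frac{1}{s-2} \leq 1$, which holds for every $s \geq 3$.

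In the remaining range $r \leq \lfloor s/2 \rfloor$ the two minima evaluate to $r$ and $r-1$ respectively. Here I would apply the standard identity ${s \choose r} = \frac{s-r+1}{r} {s \choose r-1}$, which reduces the desired inequality to $s - r + 1 \leq r(s-2)$, equivalently $(r-1)s \geq r+1$. Since the running assumption of the paper is $r \geq 3$ (so certainly $r \geq 2$), we have $\frac{r+1}{r-1} \leq 2 \leq s$ and the inequality follows.

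There is no real obstacle here beyond bookkeeping; the only mild subtlety is to make sure the split correctly treats the boundary case $r = \lfloor s/2 \rfloor$, which lands in the second case and is covered by the same arithmetic. I would also briefly remark that the claim genuinely needs $r \geq 2$: the case $r = 1$ would force $\binom{s}{0} = 1$ on the right-hand side while the left is $s/(s-2) > 1$, which is why the hypothesis $r \geq 3$ from the main theorem matters here.
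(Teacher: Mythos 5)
Your proof is correct and follows essentially the same route as the paper: a case split on whether $r$ exceeds $\lfloor s/2\rfloor$, followed by the identity $\binom{s}{r}=\frac{s-r+1}{r}\binom{s}{r-1}$ in the nontrivial range. The one small improvement over the paper's phrasing is that you place the boundary case $r=\lfloor s/2\rfloor$ into the binomial-identity branch, where the two minima really are $r$ and $r-1$; the paper labels $\min\{r,\lfloor s/2\rfloor\}=\lfloor s/2\rfloor$ (which includes $r=\lfloor s/2\rfloor$) as ``trivial,'' but in that subcase the right-hand side is $\binom{s}{\lfloor s/2\rfloor-1}$ rather than $\binom{s}{\lfloor s/2\rfloor}$, so it is not literally a matter of the two binomials coinciding. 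Your split handles this cleanly and avoids relying on $r\geq3$ to rescue that subcase.
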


\begin{proof}
The case for $\min\{r, \lfloor s/2 \rfloor\} = \lfloor s/2 \rfloor$ is trivial. So we may assume $s \geq 2r+2$. We have $\frac{1}{s-2}{s \choose r} = \frac{1}{s-2}\frac{s-r+1}{r} {s \choose r-1} \leq {s \choose r-1}$. 
\end{proof}

So first suppose that $|V(\cH')| \geq k$. By Kopylov's theorem, $t+2 \leq q \leq k-2$, and $V(H') - V(Q)$ can be removed via $(k-s)$-disintegration. Therefore
\[e(\cH') \leq {q \choose \min \{r, \lfloor q/2 \rfloor\}} + (|V(\cH')|-q){k-q \choose \min \{r-1, \lfloor (k-q)/2 \rfloor\}},\]

and hence by~\eqref{Bineq} and the previous claim, \[e(\cH) = e(\cH') + m_B + m_S \leq 
\]\[\leq 
{q \choose \min \{r, \lfloor q/2 \rfloor\}} + (|V(\cH')|-q){k-q \choose \min \{r-1, \lfloor (k-q)/2 \rfloor\}} +
n_B \left(\frac{1}{k-q-2}{k-q \choose \min\{r, \lfloor (k-q)/2 \rfloor\}}\right) + n_S\]
\[\leq {q \choose \min \{r, \lfloor q/2 \rfloor\}} + (n-q){k-q \choose \min \{r-1, \lfloor (k-q)/2 \rfloor\}} \leq \max\{f(n,k,r,t), f(n,k,r,2)\},\]
where the last inequality follows from the convexity of the function $f$. So from now on we may assume $|V(H')| \leq k-1$.
\begin{claim}Let $S$ be a 1-hamiltonian subgraph of $H'$ with $s:=|V(S)|$ and $t+2 \leq s \leq k-2$. Let $S'$ be the result of $(k-s)$-disintegration applied to $H'$. Then $S'$ is also 1-hamiltonian. 
\end{claim}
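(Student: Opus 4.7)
My plan is to show first that $V(S)\subseteq V(S')$, and then to conclude that $S'$ is Hamilton-connected by applying Enomoto's theorem (Theorem~\ref{eno}) to $S'$, exploiting the minimum-degree boost $\delta(S')\ge k-s+1$ produced by $(k-s)$-disintegration.

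For the inclusion step I would argue by contradiction. Suppose $v\in V(S)$ is the first vertex of $S$ removed during the $(k-s)$-disintegration of $H'$. At the moment of removal the current graph $\tilde H$ still contains $V(S)$, so $\deg_S(v)\le\deg_{\tilde H}(v)\le k-s$. The Hamilton-connectedness of $S$ supplies, for any $u\in V(S)\setminus\{v\}$, a Hamilton $(v,u)$-path $P$ in $S$ of length $s-1$. Combining $P$ with a $(u,v)$-detour in $H'$ through the ``shell'' of previously removed neighbors of $v$ (which is reachable because $H'$ is $2$-connected and each vertex deleted before $v$ left behind a chain of surviving neighbors at the time of its removal), one constructs a cycle in $H'$ of length at least $(s-1)+(k-s+1)=k$, contradicting $c(H')<k$. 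The main work here lies in carefully chaining paths in $H'\setminus(V(S)\setminus\{u,v\})$ so that the detour has length at least $k-s+1$.

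Having established $V(S)\subseteq V(S')$, the graph $S'$ is $3$-connected (it contains the $3$-connected Hamilton-connected graph $S$, and any added vertex has degree at least $k-s+1\ge 3$). Every non-adjacent pair $u,w\in V(S')$ satisfies $d(u)+d(w)\ge 2(k-s+1)$, so by Enomoto's theorem $S'$ is $\min\{|V(S')|,\,4(k-s)+3\}$-path connected. To upgrade this to Hamilton-connectedness it suffices to verify $|V(S')|\le 4(k-s)+3$. This follows by combining Dirac's theorem applied to the $2$-connected $S'$ with $\delta(S')\ge k-s+1$, which gives $c(S')\ge\min\{2(k-s+1),|V(S')|\}$, with the bound $c(S')\le c(H')<k$; in the parameter range $t+2\le s\le k-2$ this forces $|V(S')|$ to be small enough for Enomoto to yield the Hamilton $(u,w)$-path for every pair $u,w$.

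The hardest part will be the contradiction step for $V(S)\subseteq V(S')$: Hamilton-connectedness of $S$ does not supply a strong lower bound on $\delta(S)$, so we cannot rule out the removal of $v$ directly. Instead, we must use the Hamilton paths of $S$ globally, together with the $2$-connectivity of $H'$ and the recorded history of the disintegration, to manufacture a long cycle; this is especially delicate when $s$ is close to $k-2$, since then little room remains outside $V(S)$ to build the required detour.
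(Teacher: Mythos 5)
Your approach is fundamentally different from the paper's, and it has gaps. The paper does \emph{not} prove the claim as a free-standing structural fact: its argument shows that \emph{if} $S'$ fails to be $(k-|V(S')|)$-hamiltonian, then Theorem~\ref{PPS} (the P\'osa-type bound) applies with $d=k-s$ and $\ell=k-|V(S')|$, and chaining the resulting bound on $N_{\rm{Sp}}(S',r)$ with the contributions of the earlier deletions already yields $e(\cH)\le\max\{f(n,k,r,2),f(n,k,r,t)\}$ --- i.e.\ the conclusion of Theorem~\ref{main2conn}. So the claim is really ``either $S'$ is $1$-hamiltonian, or the main theorem is already proved,'' which is a much weaker statement than the unconditional structural fact you attempt to establish.

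Both of your two steps have concrete holes. The inclusion $V(S)\subseteq V(S')$ is only sketched and the sketch does not work: there is no reason a Hamilton $(v,u)$-path of $S$ can be extended through $H'\setminus(V(S)\setminus\{u,v\})$ to a cycle of length at least $k$, since the part of $H'$ outside $V(S)$ could be tiny. Moreover, $1$-hamiltonianness of $S$ does not bound $\delta(S)$ from below by $k-s+1$ (a wheel is Hamilton-connected with minimum degree $3$), so vertices of $S$ can genuinely be removed by the $(k-s)$-disintegration; in the paper's application the inclusion is trivial for a different reason, namely each $S=Q_{i-1}$ is itself the core of an earlier disintegration and hence $\delta(S)\ge k-s+1$, information not contained in the claim's hypotheses. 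Your $3$-connectedness of $S'$ is also unsupported: ``contains a $3$-connected subgraph and has minimum degree $\ge 3$'' does not imply $3$-connectedness (take $K_4$ on $\{1,2,3,4\}$ and add $5,6$ with $N(5)=\{1,2,6\}$, $N(6)=\{1,2,5\}$: minimum degree $3$, contains $K_4$, but $\{1,2\}$ is a $2$-cut). The paper's $3$-connectedness argument for $Q$ uses the much larger degree $\delta(Q)\ge t+1$ against $|V(Q)|\le k-1$; the analogous argument for $S'$ gives only $|V(S')|\ge 2(k-s+1)$, which is no contradiction once $s>(k+3)/2$. Finally, even granting $3$-connectedness, Enomoto gives $\min\{|V(S')|,4(k-s)+3\}$-path connectedness, and for $s$ close to $k-2$ the second term is a constant while $|V(S')|$ may be near $k-1$; the circumference bound $c(S')<k$ does not repair this. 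So your upgrade to Hamilton-connectedness fails in exactly the regime $s$ near $k-2$ that the claim must cover.
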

\begin{proof}
We will show a stronger statement: $S'$ is $(k-|V(S')|)$-hamiltonian. Suppose not. Set $s':= |V(S')|$. Applying Theorem~\ref{PPS} with $d = k-s$ (so $d \leq 2t+2 - (t+2) = t$) and $\ell = k-s'$, we get 
\[N_{\rm{Sp}}(S', r) \leq \max\{ h_{\rm{Sp}}(s', k-s',r,k-s), h_{\rm{Sp}}(s',  k-s',r,\lfloor s'/2 \rfloor,)\}.\]
If $ h_{\rm{Sp}}(q', k-s',r,k-s) \geq  h_{\rm{Sp}}(s', k-s', r,\lfloor s'/2 \rfloor)$, then 

\begin{eqnarray*}
N_{\rm{Sp}}(S',r) &\leq & h_{\rm{Sp}}(s', k-s', r,k-s) \\
&=& {s \choose \min\{r, \lfloor s/2 \rfloor\}} + (s' - s) {k-s \choose \min\{r-1, \lfloor (k-s)/2 \rfloor\}}\\
& = & f(s', k, r, k-s),
\end{eqnarray*}

Recall that since $S$ is 1-hamiltonian, $H'$ is $s$-path connected. Hence for each $\cB$ deleted in an application of (T5), $\partial_2 \cB$ is not $(k-s+1)$-path connected.

It follows that \[e(\cH) \leq N_{\rm{Sp}}(H', r) + m_B + m_S \]\[\leq f(s', k, r, k-s) + (|V(H')|-s' + n_B){k-s \choose \min\{r-1, \lfloor (k-s)/2 \rfloor\}} + n_S \leq f(n, k, r, k-s).\]
So by the convexity of the function $f$, we are done.

Next suppose $h_{\rm{Sp}}(s', k-s',r, k-s) \leq h_{\rm{Sp}}(s',k-s', r, \lfloor s'/2 \rfloor)$. For simplicity, let $a := \lfloor s'/2 \rfloor$. We have that $2 \leq a \leq \lfloor (k-1)/2 \rfloor = t$. 

\begin{eqnarray*}
N_{\rm{Sp}}(S', r) &\leq & h_{\rm{Sp}}(s', k-s',r,a) \\ 
& = & {s'-(a-k+s') \choose \min\{r, \lfloor (s'-(a-k+s'))/2 \rfloor \}} + (a - k + s') {a \choose \min\{r-1, \lfloor a/2 \rfloor\}}\\
& = & {k-a \choose \min\{r,\lfloor (k-a)/2 \rfloor\}} + (s' - (k-a)){a \choose \min\{r-1, \lfloor a/2 \rfloor\}}\\ 
& \leq & f(s',k,r,a) \leq  f(s', k, r, t). 
\end{eqnarray*}

Therefore \[e(\cH) \leq f(s', k, r, t) + (|V(H')|-s' + n_B){k-s \choose \min\{r-1, \lfloor (k-s)/2 \rfloor\}} + n_S \leq f(n, k, r, t).\]
\end{proof}

Starting from the 1-hamiltonian subgraph $Q$ of $H'$, we obtain a sequence of graphs $Q = Q_0 \subset Q_1 \subset \ldots \subset Q_q$ such that $Q_i$ is the resulting 1-hamiltonian subgraph obtained from $(k- |V(Q_{i-1})|)$-disintegration applied to $H'$. The sequence ends when either the graph $Q_{q+1}$ resulting from the $(k-|V(Q_q)|)$-disintegration of $H'$ is exactly $Q_{q}$, or $|V(Q_{q})| = k-1$. 
In the former case, we have that $|V(Q_{q+1})| = |V(Q_q)| =: q'$. Then

\[e(\cH) \leq N_{\rm{Sp}}(H', r) + m_B + m_S \]\[\leq f(q', k, r, k-q') + (|V(H')|-q' + n_B){k-q' \choose \min\{r-1, \lfloor (k-q')/2 \rfloor\}} + n_S \leq f(n, k, r, k-q').\]


Finally suppose that $|V(Q_{q})| =k- 1$. Then $H'$ is $(k-1)$-path connected. Because $\cH'$ is 2-connected, we can complete a Berge path in $\cH'$ with at least $k-1$ vertices to a Berge cycle of length at least $k$.
This proves the theorem.

\end{proof}
\section{Proof of Theorem~\ref{main_paths} for paths}

%
\begin{proof}
 Let $\cH$ be a counterexample of Theorem~\ref{main_paths} with minimum $\sum_{e \in E(\cH)} |e|$ on at least $k+1$ vertices. If $\cH$ contains a Berge cycle of length $k+1$ or longer, then removing any edge from this Berge cycle yields a Berge path with at least $k+1$ base vertices, a contradiction. If $\cH$ contains a Berge cycle of length exactly $k$, then we use the following Lemma which contradicts that $n:= |V(\cH)| \geq k+1$. 
\begin{lemma}[Gy\H{o}ri, Katona, and Lemons~\cite{lemons}]
Let $\mathcal H$ be a connected hypergraph with no Berge path of length $k$. If there is a Berge cycle of length $k$ on the vertices $v_1, \ldots, v_{k}$ then these vertices constitute a component of $\mathcal H$. 
\end{lemma}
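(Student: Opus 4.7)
The plan is to argue by contradiction: assume $V_0 := \{v_1,\ldots,v_k\}$ is not a component of $\mathcal H$, and construct a Berge path of length $k$ in $\mathcal H$, contradicting the hypothesis. Let $e_1,\ldots,e_k$ be the edges of the given Berge cycle, with $\{v_i,v_{i+1}\} \subseteq e_i$ (indices mod $k$). Since $\mathcal H$ is connected and $V_0$ does not exhaust its component, there must exist some edge $e \in E(\mathcal H)$ that meets both $V_0$ and $V(\mathcal H)\setminus V_0$. Pick such an $e$, a vertex $v_j \in e \cap V_0$, and a vertex $u \in e \setminus V_0$. The proof then splits into two cases depending on whether $e$ is one of the cycle edges.

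In the first case, suppose $e \notin \{e_1,\ldots,e_k\}$. Then the sequence of base vertices $u, v_j, v_{j+1}, \ldots, v_{j+k-1}$ (indices mod $k$) together with the edges $e, e_j, e_{j+1}, \ldots, e_{j+k-2}$ forms a Berge path of length $k$: the $k+1$ base vertices are distinct because $u \notin V_0$ and the $v_i$'s are distinct, and the $k$ edges are distinct because $e$ is not among the $e_i$.

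In the second case, $e = e_j$ for some $j$ (after relabeling). Here $e_j$ contains $v_j$, $v_{j+1}$, and the extra vertex $u \notin V_0$. Now take the base vertices $u, v_{j+1}, v_{j+2}, \ldots, v_{j+k}$ (which is $u$ followed by all $k$ cycle vertices read cyclically starting from $v_{j+1}$) and the edges $e_j, e_{j+1}, \ldots, e_{j+k-1}$, i.e.\ all $k$ cycle edges in cyclic order starting from $e_j$. The first incidence $\{u, v_{j+1}\} \subseteq e_j$ holds because $u, v_{j+1} \in e_j$; the remaining incidences $\{v_{j+i}, v_{j+i+1}\} \subseteq e_{j+i}$ are just the cycle conditions. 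The $k$ edges are distinct (they are all the $e_i$), and the $k+1$ base vertices are distinct because $u \notin V_0$ and the $v_i$ are distinct. Thus again we obtain a Berge path of length $k$, a contradiction.

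The argument is essentially a case analysis with no real obstacle — the only slightly delicate point is handling the case $e = e_j$, where one must verify that using $e_j$ as the very first edge of the path still leaves room to traverse all $k$ cycle edges exactly once and land on $k+1$ distinct vertices. Both cases produce the forbidden Berge path, so $V_0$ must be a component of $\mathcal H$.
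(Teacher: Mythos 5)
Your proof is correct: in both cases the constructed object has $k+1$ distinct base vertices and $k$ distinct edges with the required containments, so it is a Berge path of length $k$, giving the contradiction. Note that the paper itself gives no proof of this lemma (it is quoted from Gy\H{o}ri, Katona, and Lemons~\cite{lemons}); your argument---pick an edge $e$ meeting both the cycle vertices and their complement, and unroll the cycle into a path starting with $e$, distinguishing whether $e$ is a cycle edge or not---is exactly the standard argument behind the cited result, so there is nothing further to compare.
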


Therefore $\cH$ contains no Berge cycle of length $k$ or longer. If $\cH$ is 2-connected, then by Theorem~\ref{main2conn}, $e(\cH) \leq \max\{f(n,k,r,2), f(n,k,r,\lfloor(k-1)/2 \rfloor)\}$, and we are done. 

Now suppose $\cH$ is not 2-connected. Then the incidence bigraph $I_{\cH}$ of $\cH$ contains a set of cut vertices. 
If a cut vertex $x$ of $I_{\cH}$ corresponds to an edge in $\cH$, then we say $x$ is a cut edge of $\cH$. Otherwise, we say $x$ is a cut vertex of $\cH$. 

Suppose $\cH$ has an cut-edge $e$. We claim that for each  component $\cC$ of $\cH \setminus e$, 

\begin{equation}\label{cutedge}
|V(\cC) \cap e| \leq 1.
\end{equation}

Indeed suppose that some component $\cC$ of $\cH \setminus e$ contains at least 2 vertices in $e$. Let $\cH'$ be the $r^-$-graph obtained by shrinking $e$ to remove all but one vertex in $\cC$ from $e$. Then $\cH'$ is still connected and Sperner (since $e$ is a cut edge of $\cH$). Furthermore, after this operation, the length of a longest path cannot increase. This contradicts the choice of $\cH$.

Now suppose $\cH$ contains a cut edge $e$. By ~\eqref{cutedge}, $e$ intersects every component of $\cH \setminus e$ in at most one vertex. Let $\cH'$ be the $r^-$-graph obtained by contracting two vertices of $e$ into a single vertex (and then deleting $e$ if it now contains only one vertex). The new $r^-$-graph $\cH'$ is Sperner, contains no Berge $P_k$, and is connected. If $|V(\cH')| \geq k+2$, we obtain that $\cH'$ contradicts the choice of $\cH$ (note that $e(\cH') \geq e(\cH) - 1 \geq \max\{f(n,k,r,1), f(n, k, r, \lfloor(k-1)/2 \rfloor)\} - 1 \geq \max\{f(n-1,k,r,1), f(n-1, k, r, \lfloor(k-1)/2 \rfloor)\})$.

Iterating this process,  we may assume that $\cH$ contains no cut edges unless $n = k+1$.

{\bf Case 1:} $\cH$ does not have a cut edge. 
 
Any block $\cB$ of $\cH$ is a subhypergraph of $\cH$. In particular, $\cB$ is a Sperner 2-connected $r^-$-graph. Let $\cB_1, \ldots, \cB_p$ be the blocks of $\cH$. For each $i$, let $s_i$ be the length of a longest Berge cycle in $\cB_i$. Without loss of generality, we may assume $s_1 \geq \ldots \geq s_p$.

\begin{claim}For all $i \geq 2$, $s_1 + s_i \leq k+1$. 
\end{claim}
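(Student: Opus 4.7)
The plan is to construct an explicit long Berge path in $\cH$ from the two longest cycles and then invoke the hypothesis that $\cH$ has no Berge path of length $k$ (so every Berge path in $\cH$ has length at most $k-1$). Two structural facts underpin everything: since the blocks of $\cH$ partition its edge set we have $E(C_1) \cap E(C_i) = \emptyset$, and since distinct blocks share at most one vertex we have $|V(C_1) \cap V(C_i)| \leq 1$.

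Suppose first that $V(C_1) \cap V(C_i) = \{v\}$. Then I cut $C_1$ at one of its two edges incident to $v$ to obtain a Berge path $P_1$ of length $s_1 - 1$ with $v$ as an endpoint, and cut $C_i$ analogously to obtain $P_i$ of length $s_i - 1$ with $v$ as an endpoint. Because $E(C_1) \cap E(C_i) = \emptyset$ and $V(C_1) \cap V(C_i) = \{v\}$, concatenating $P_1$ and $P_i$ at $v$ is a legitimate Berge path in $\cH$ of length $s_1 + s_i - 2$, forcing $s_1 + s_i \leq k+1$.

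Now suppose $V(C_1) \cap V(C_i) = \emptyset$. Let $Q = u_1, f_1, w_1, f_2, \ldots, f_{|Q|}, u_i$ be a shortest Berge path in $\cH$ from $V(C_1)$ to $V(C_i)$, so $|Q| \geq 1$. Minimality of $Q$ forces its internal base vertices $w_1, \ldots, w_{|Q|-1}$ to avoid $V(C_1) \cup V(C_i)$, and a further minimality argument forces that for $2 \leq j \leq |Q|-1$ the edge $f_j$ lies in neither $E(C_1)$ nor $E(C_i)$: if, for instance, $f_j \in E(C_1)$ with $j \geq 2$, then $f_j$ contains two $C_1$-base vertices $v_a, v_{a+1}$, and using either one gives a strictly shorter Berge path $v_a, f_j, w_j, f_{j+1}, \ldots, u_i$ of length $|Q| - j + 1$ from $V(C_1)$ to $V(C_i)$, contradicting the choice of $Q$. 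The same reasoning lets me assume, after possibly replacing $u_1$ by a $C_1$-base vertex of $f_1$ (and symmetrically $u_i$ by a $C_i$-base vertex of $f_{|Q|}$), that if $f_1 \in E(C_1)$ then $u_1$ is a $C_1$-base vertex of $f_1$, and analogously for $f_{|Q|}$ and $u_i$. Now I build $P_1$ by cutting $C_1$ at $f_1$ when $f_1 \in E(C_1)$, or at any edge of $C_1$ incident to $u_1$ otherwise; this yields a Berge path of length $s_1 - 1$ ending at $u_1$ and edge-disjoint from $Q$. Constructing $P_i$ symmetrically and concatenating $P_1$, $Q$, $P_i$ gives a Berge path in $\cH$ of length $(s_1-1) + |Q| + (s_i-1) \geq s_1 + s_i - 1$, yielding the stronger $s_1 + s_i \leq k$.

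The main obstacle is the edge bookkeeping in the disjoint case: in a hypergraph a single edge of $Q$ may coincide as a set with an edge of $C_1$ while using completely different base-vertex pairs in $Q$ versus in $C_1$, so the minimality argument must be set up so that any such coincidence at an interior position of $Q$ produces a strictly shorter path by pivoting into a $V(C_1)$-vertex sitting inside that edge, and the two terminal positions $f_1, f_{|Q|}$ are handled by the swap of $u_1, u_i$ and by the choice of which edges of $C_1, C_i$ to cut. Once these details are checked, the two cases together give $s_1 + s_i \leq k+1$, with equality possible only when the two longest cycles share a cut vertex of $\cH$.
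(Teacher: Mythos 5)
Your proof is correct and takes essentially the same approach as the paper: take a shortest Berge path $Q$ connecting the two longest cycles, show it can share at most one edge with each cycle, cut each cycle appropriately, and concatenate into a Berge path covering $V(C_1)\cup V(C_i)$. The paper compresses the edge bookkeeping into the one-line observation that $P$ contains at most one edge from each cycle (and lets $P$ run between the blocks rather than between the cycles), whereas you spell out the pivot argument for interior edges of $Q$ and handle the endpoint edges by reselecting $u_1,u_i$; the extra care is harmless and the two cases you distinguish (shared cut vertex vs. disjoint cycles) together reproduce the paper's bound $s_1+s_i\leq k+1$.
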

In particular, $s_i \leq (k+1)/2$ for all $i \geq 2$. 

\begin{proof}Suppose $s_1 + s_i \geq k+2$. 
Let $C_1$ be a Berge cycle of $\cB_1$ of length $s_1$ and let $C_i$ be a Berge cycle of $\cB_i$ of length $s_i$. Let $P$ be a shortest Berge path from $V(\cB_1)$ to $V(\cB_i)$. Note that $P$ contains at most one edge from each Berge cycle. Then removing an edge from each Berge cycle, we obtain together with $P$ a Berge path whose base vertices cover $V(C_1) \cup V(C_i)$. Since $|V(C_1) \cap V(C_i)| \leq 1$, this path has at least $s_1 + s_i - 1 \geq k+1$ base vertices. 
\end{proof}

For each block $\cB_i$, let $n_i := |V(\cB_i)|$. If $n_i = s_i$, then \[e(\cB_i) \leq {s_i \choose \min\{r, \lfloor s_i/2 \rfloor\}} \leq (n_i-1) {s_i-1 \choose \min\{r-1, \lfloor (s_i-1)/2 \rfloor\}}. \]
  If $n_i \geq s_i + 1$, then we apply Theorem~\ref{main2conn} to $\cB_i$ with cycle length $s_i + 1$. We obtain \[e(\cB_i) \leq \max\{f(n_i, s_i +1, r, 2), f(n_i, s_i+1,\lfloor s_i/2 \rfloor\}.\]
Furthermore, 

\begin{eqnarray*}
f(n_i, s_i + 1, r, 2) &=& {s_i - 1 \choose \min\{r, \lfloor (s_i-1)/2 \rfloor\}} + 2(n_i - s_i + 1)\\
& \leq &  (s_i-1){s_i - 2 \choose \min\{r-1, \lfloor (s_i - 2)/2 \rfloor\}} + (n_i - s_i) {s_i - 2 \choose \min\{r-1, \lfloor (s_i - 2)/2 \rfloor\}} \\
&=& (n_i - 1) {s_i - 2 \choose \min\{r-1, \lfloor (s_i - 2)/2 \rfloor\}}.
\end{eqnarray*}
And $f(n_i, s_i+1, r, \lfloor s_i/2 \rfloor) \leq (n_i - 1) {s_i - 1 \choose \min\{r-1, \lfloor (s_i - 1)/2 \rfloor\}}$.

In all cases we get 
\begin{equation}\label{bi}
e(\cB_i) \leq (n_i - 1) {s_i - 1 \choose \min\{r-1, \lfloor (s_i - 1)/2 \rfloor\}}.
\end{equation}
%

For $\cB_1$, if $n_1 = s_1$ then $e(\cB_1) \leq {s_1 \choose \min\{r, \lfloor s_1/2 \rfloor\}}$ and so by~\eqref{bi},

\begin{equation}\label{s1si}e(\cH) \leq  {s_1 \choose \min\{r, \lfloor s_1/2 \rfloor\}} + \sum_{i=2}^p (n_i-1) {s_i - 1 \choose \min\{r-1, \lfloor (s_i - 2)/2 \rfloor\}}.
\end{equation}

If $s_1 \geq \lceil (k+1)/2 \rceil$, then from~\eqref{s1si} we obtain

\[e(\cH) \leq  {s_1 \choose \min\{r, \lfloor s_1/2 \rfloor\}} + \sum_{i=2}^p (n_i-1) {k-s_1 \choose \min\{r-1, \lfloor (k-s_1)/2 \rfloor\}} \leq f(n, k, r, k-s_1)\]
\[\leq \max\{f(n, k, r, 1), f(n, k, r, \lfloor (k-1)/2 \rfloor\}).\]

Otherwise, \[e(\cH) \leq  {s_1 \choose \min\{r, \lfloor s_1/2 \rfloor\}} + \sum_{i=2}^p (n_i-1) {s_1-1 \choose \min\{r-1, \lfloor (s_1-1)/2 \rfloor\}} \leq f(n,k,r, \lfloor (k-1)/2 \rfloor).\]

If $n_1 \geq s_1 + 1$, then we get

\[e(\cB_1) \leq \max\{f(n_1, s_1+1, r, 2), f(n_1, s_1+1, r, \lfloor s_1/2 \rfloor\}).\] 

If $f(n_1, s_1 + 1, r, \lfloor s_1/2 \rfloor) \geq f(n_1, s_1+1, r, 2)$, then together with~\eqref{bi}, we get

\[e(\cH) \leq f(n_1, s_1+1, r, \lfloor s_1/2 \rfloor) + \sum_{i=2}^p  (n_i-1){\lfloor\frac{k-1}{2}\rfloor \choose \min\{r-1, \lfloor\frac{k-1}{4}\rfloor\}} \leq f(n, k, r, \lfloor (k-1)/2 \rfloor).\]

If $f(n_1, s_1 + 1, r, \lfloor s_1/2 \rfloor) < f(n_1, s_1+1, r, 2)$,   then \[f(n_1, s_1+1, r, 2) = {s_1 - 1 \choose \min\{r, \lfloor (s_1-1)/2 \rfloor\}} + 2 (n_1 - s_1 + 1) \leq {s_1 \choose \min\{r, \lfloor s_1/2 \rfloor\}} + 2(n_1 - s_1).\]

Thus we obtain

\[e(\cH) \leq {s_1 \choose \min\{r, \lfloor s_1/2 \rfloor\}} + 2(n_1 - s_1)+ \sum_{i=2}^p (n_i-1) {s_i - 1 \choose \min\{r-1, \lfloor (s_i - 1)/2 \rfloor\}},\] and we are done as in the the case for~\eqref{s1si}.

{\bf Case 2}: $n = k+1$ and $\cH$ contains a cut edge.

Let $e$ be a cut edge of $\cH$. By~\eqref{cutedge}, each component $\cC$ of $\cH \setminus e$ contains only at most one vertex of $e$. If $|e| \geq 3$, then $e(\cH \setminus e) \leq {k+1 - 2 \choose \min\{r, \lfloor (k+1 - 2)/2 \rfloor\}}$. Hence
$e(\cH) \leq {k-1 \choose \min\{r, \lfloor (k-1)/2 \rfloor\}} + 1 < f(n,k,r,1)$.

So we may assume $|e| = 2$. Suppose first that  $\cH \setminus e$ contains a component $\cC$ with $2 \leq |V(\cC)|\leq k-1$. 

Then \[e(\cH) \leq 1 + {|V(\cC)| \choose \min\{r, \lfloor |V(\cC)|/2 \rfloor\}} + {(k+1) - |V(\cC)| \choose \min\{r, \lfloor ((k+1) - |V(\cC)|)/2 \rfloor\}} \leq 1 + {k-1 \choose \min\{r, \lfloor (k-1)/2 \rfloor\}} + 1\] \[ = f(n,r,k, 1).\]

Thus $\cH \setminus e$ must consist of one component of size $k$ and one of size 1. The same also holds for every other cut edge $e'$ of $\cH$. This together with~\eqref{cutedge} implies that if $\cH$ has two cut edges $e, e'$, then $e'$ is a cut edge of $\cH \setminus e$, and vice versa. Therefore $e(\cH) \leq {k-1 \choose \min\{r, \lfloor (k-1)/2 \rfloor\}} + 2 = f(n,k,r,1)$. 

So we may assume that $e$ is the only cut edge of $\cH$. Let $\cC$ be the component of $\cH$ of size $k$. This component cannot contain a Berge cycle of length $k$, otherwise with $e$ we would obtain Berge path with of length $k$.

If $\cC$ is 2-connected, then by Theorem~\ref{main2conn},
\[e(\cH) = e(\cC) + 1 \leq \max\{f(k,k,r,2), f(k, k, r, \lfloor (k-1)/2 \rfloor)\} < f(n, k, r, 1).\]

Otherwise $\cC$ has a cut vertex $v$ and a block $\cB$ with $2\leq|V(\cB)|\leq k-1$. Therefore
\[e(\cC) \leq {|V(\cB)| \choose \min\{r, \lfloor |V(\cB)|/2 \rfloor\}} + { k-|V(\cB)|+1 \choose \min\{r, \lfloor (k-|V(\cB)|+1)/2 \rfloor\}} \leq {k-1 \choose \min\{r, \lfloor (k-1)/2 \rfloor\}} + 1,\] so we get $e(\cH) = e(\cC) + 1 \leq f(n,k,r,1)$. 
This proves the theorem.
\end{proof}
\section{Concluding remarks}

\begin{enumerate}
\item As it is mentioned in Theorem~\ref{c23}, if $ k \geq 4r$ and $n$ is asymptotically larger than $\frac{2^{r-1}}{r}k$, then our
bound is also exact for $r$-graphs: a
 sharpness example is $\cH_{n,k,r,\lfloor (k-1)/2 \rfloor}$. We think that for smaller $n$, our bound for $r$-graphs is not exact.
 It would be interesting and challenging to find exact bounds for the number of edges in $n$-vertex 
 $2$-connected $r$-graphs with no cycles of length $k$ or longer for $k>r$ and  $k\leq n<\frac{2^{r-1}}{r}k$.
 
 \item When $r$ is large, $k\geq 4r$ and $n$ is polynomial in $k$, then $\cH_{n,k,r,2}$
 has not much more than
  ${k-2\choose r}$ edges. Also $\cH_{n,k,r,2}$ is not uniform whenever $r \geq 4$. 
 The following  construction of $2$-connected $r$-uniform hypergraphs  also has  more than ${k-2\choose r}$ edges in this case,
 although fewer  edges than $\cH_{n,k,r,2}$ has (and it works only for $n$ such that $n-k+2$ is divisible by $r-1$). 
 
\begin{const}\label{cons3}
Fix $k\geq 4r\geq 12$, $s\geq 1$, $n = k-2+s(r-1)$. Define the $n$-vertex $r$-graph $F_{n,k,r,s}$ as follows.
 The vertex set of $F_{n,k,r,s}$ is partitioned into $s+1$ sets $A_1,\ldots, A_s,C$ such that $|C| = k-2$ and $|A_i| = r-1$ 
 for all $ i\in [s]$. We fix two special vertices $c_1,c_2\in C$.
 The edge set of $F_{n,k,r,s}$ consists of all edges contained in $C$ and of the $2(r-1)$ edges of the form
  $A_i \cup \{c_j\}$ for $i\in [s]$ and $j\in [2]$.
 \end{const}
 We do not currently know of any {\em uniform} hypergraphs with more edges and no Berge cycles of length $k$ or longer.
 
 \item  Note that  here we use $r^-$-graphs to prove a bound for $r$-graphs when $k>r$ and in~\cite{KL} we used
 $r^+$-graphs (i.e. hypergraphs with the lower rank at least $r$) in the case $k<r$.
 
 \end{enumerate}

\newpage
\section*{Appendix about convexity}
\begin{claim}\label{convex}For fixed positive integers $n$, $k$, and $r$,
the function \[f(n,k,r,a) = {k-a \choose \min\{r, \lfloor\frac{k-a}{2}\rfloor\}} + (n-k+a){a \choose \min\{r-1, \lfloor a/2 \rfloor\}}\] is convex over integers 
$\max\{ 0, k-n\} \leq a \leq k$. 
 \end{claim}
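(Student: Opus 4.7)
Decompose the function as $f(n,k,r,a) = P(k-a) + (n-k+a)\,B(a)$, where
\[P(m) := \binom{m}{\min\{r,\lfloor m/2\rfloor\}}, \qquad B(a) := \binom{a}{\min\{r-1,\lfloor a/2\rfloor\}}.\]
I will show that the discrete second difference $\Delta^2 f(a) := f(a+1)-2f(a)+f(a-1)$ is non-negative on the given domain. A short product-rule calculation yields
\[\Delta^2 f(a) \;=\; \Delta^2_m P(m)\big|_{m=k-a} \;+\; (n-k+a)\,\Delta^2 B(a) \;+\; \bigl(B(a+1) - B(a-1)\bigr).\]
Since $a \geq \max\{0,k-n\}$ we have $n-k+a\geq 0$, and $B$ is visibly non-decreasing in $a$, so the entire claim reduces to showing that the one-variable functions $P$ and $B$ are non-decreasing and convex.

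Both $P$ and $B$ are piecewise of the same shape. The inner $\min$ gives $P(m) = \binom{m}{\lfloor m/2\rfloor}$ for $m \leq 2r+1$ and $P(m) = \binom{m}{r}$ for $m \geq 2r$, and analogously $B(a) = \binom{a}{\lfloor a/2\rfloor}$ for $a \leq 2r-1$ and $B(a) = \binom{a}{r-1}$ for $a \geq 2r-2$. For the fixed-upper-index regime, Pascal's identity applied twice yields the tidy formula $\Delta^2_m \binom{m}{R} = \binom{m-1}{R-2}\geq 0$. For the central binomial regime, I split $m$ by parity: the odd case $m=2s+1$ uses the identity $\binom{2s+2}{s+1} = 2\binom{2s+1}{s}$, immediately giving $\Delta^2 = \binom{2s}{s}\geq 0$; and the even case $m=2s$ reduces, after dividing through by $\binom{2s}{s}$, to $(2s+1)/(s+1)+1/2-2 = (s-1)/(2(s+1)) \geq 0$ for $s\geq 1$, with the boundary $s=0$ checked by inspection.

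The only subtle point is the transition between the two branches of $\min$. At the overlapping integer arguments ($m\in\{2r,2r+1\}$ for $P$, $a\in\{2r-2,2r-1\}$ for $B$) the two formulas coincide by the symmetry $\binom{m}{R}=\binom{m}{m-R}$, so on any three-term window $\{a-1,a,a+1\}$ one may use whichever single closed form is defined throughout the window; the two convexity computations above then apply directly. I expect the bookkeeping around this transition to be the main (and only) obstacle; once it is sorted out cleanly, assembling the three non-negative contributions to $\Delta^2 f(a)$ finishes the proof.
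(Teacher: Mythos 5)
Your proposal is correct and follows essentially the same route as the paper: decompose $f$ into the two summands, show each piece is convex by splitting the $\min$ into two overlapping regimes where a single closed binomial formula holds, and absorb the linear factor $(n-k+a)$ via the monotonicity of $B$. The only difference is cosmetic — you explicitly verify the convexity of $\binom{m}{R}$ and of the central binomial coefficient, which the paper asserts as easy-to-check facts, and you fold the paper's separate "product convexity" computation into a single discrete product-rule identity for $\Delta^2 f$.
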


 In particular, if we consider $f(n,k,r,a)$ over a domain of integers, say $\{c, \ldots, d\}$ where $c, d\in \mathbb Z$, 
$\max\{ 0, k-n\} \leq c\leq d \leq k$
then $f(n,r,k,a)$ attains its maximum at either $a = c$ or $a = d$.

\begin{proof}
Since we only consider integer values of $a$, we may view $f(n,k,r,a)$ as a sequence of numbers. 

We say a sequence of real numbers $(f_i)_{i=u}^v$ is {\em convex} if $f_{i-1} + f_{i+1} \geq 2f_i$ for all $u < i < v$.

\begin{fact}\label{fusion}Let $u < v < w$ be integers. Suppose $(f_i)_{i=u}^{v+1}$ and $(g_i)_{i=v}^w$ are convex sequences of real numbers such that $f_v = g_v$ and $f_{v+1} = g_{v+1}$. Then the sequence $(h_i)_{i=u}^w$ where \[h_i:= \left\{ \begin{array}{ll} 
f_i, & u \leq i \leq v+1 \\ 
g_i, & v \leq i \leq w
\end{array}\right.\] is convex. 

\end{fact}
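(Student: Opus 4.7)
The plan is a simple case analysis on the index $i$ at which we verify the second-difference inequality $h_{i-1} + h_{i+1} - 2 h_i \geq 0$, which must hold for every $u < i < w$. I would split on the position of $i$ relative to $v$, noting that this position determines which of the two defining formulas for $h$ to use at each of $i-1$, $i$, $i+1$.

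For $i \leq v$, all three indices $i-1, i, i+1$ lie in $[u, v+1]$, so $h$ agrees with $f$ at each of them, and the expression reduces to $\Delta^2 f(i) \geq 0$ by convexity of $f$. Symmetrically, for $i \geq v+2$ the three indices lie in $[v, w]$, so the expression reduces to $\Delta^2 g(i) \geq 0$ by convexity of $g$. The only interesting index is $i = v+1$: here $i-1 = v$ and $i = v+1$ both lie in the overlap $\{v, v+1\}$ where the two definitions of $h$ \emph{agree} by hypothesis, while $i+1 = v+2$ lies only in the $g$-range. Writing $h_v = g_v$ and $h_{v+1} = g_{v+1}$ using the agreement hypothesis, the cross-boundary expression becomes
\[
h_v + h_{v+2} - 2 h_{v+1} \;=\; g_v + g_{v+2} - 2 g_{v+1} \;\geq\; 0
\]
by convexity of $g$. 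This disposes of all three cases, so $(h_i)_{i=u}^{w}$ is convex.

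There is no real obstacle. The only conceptual point worth flagging is that one cannot simply check convexity separately on each piece: the cross-boundary second difference at $i = v+1$ mixes the overlap value $h_v$ (equal to both $f_v$ and $g_v$) with a $g$-only value $h_{v+2}$, and it is precisely the two-point agreement hypothesis $f_v = g_v$, $f_{v+1} = g_{v+1}$ that allows one to rewrite this mixed expression as a genuine $\Delta^2 g(v+1)$. A single-point agreement $f_v = g_v$ alone would leave the term $-2 h_{v+1}$ ambiguous between $-2 f_{v+1}$ and $-2 g_{v+1}$ and could produce a kink at the seam; the hypothesis is therefore tight.
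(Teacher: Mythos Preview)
Your proof is correct and follows the same approach as the paper's. The paper compresses the entire argument into one line---``for any $u < i < w$, either $(h_{i-1}, h_i, h_{i+1}) = (f_{i-1}, f_i, f_{i+1})$ or $(h_{i-1}, h_i, h_{i+1}) = (g_{i-1}, g_i, g_{i+1})$''---which is exactly your case split, with the overlap hypothesis tacitly used at $i=v+1$ to place that triple entirely on the $g$-side.
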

Indeed for any $u < i < w$, either $(h_{i-1}, h_i, h_{i+1}) = (f_{i-1}, f_i, f_{i+1})$ or $(h_{i-1}, h_i, h_{i+1}) = (g_{i-1}, g_i, g_{i+1})$.

The following two facts are easy to check. 
\begin{fact}The sequence $(x_i)_{i=0}^\infty$ where $x_i:={i \choose \lfloor i/2 \rfloor}$ is convex. 
\end{fact}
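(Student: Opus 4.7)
The plan is to prove convexity by computing the forward differences $d_i := x_i - x_{i-1}$ explicitly and verifying that the sequence $(d_i)$ is non-decreasing. Convexity of $(x_i)$ is equivalent to $d_{i+1} \ge d_i$ for all $i \ge 1$, so this is the natural reduction.

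First I will split into cases by the parity of $i$ and use Pascal's identity together with the symmetry $\binom{n}{k}=\binom{n}{n-k}$ to write down $d_i$ in closed form. For $i=2m$, the identity $\binom{2m-1}{m-1}=\binom{2m-1}{m}$ combined with $\binom{2m-1}{m-1}+\binom{2m-1}{m}=\binom{2m}{m}$ gives $\binom{2m-1}{m-1}=\tfrac12\binom{2m}{m}$, hence
\[
d_{2m}=\binom{2m}{m}-\binom{2m-1}{m-1}=\tfrac{1}{2}\binom{2m}{m}.
\]
For $i=2m+1$, Pascal's identity gives
\[
d_{2m+1}=\binom{2m+1}{m}-\binom{2m}{m}=\binom{2m}{m-1}.
\]

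With these two formulas in hand, the remaining work is to verify the two inequalities $d_{2m}\le d_{2m+1}$ and $d_{2m+1}\le d_{2m+2}$. The first reduces to $\tfrac12\binom{2m}{m}\le \binom{2m}{m-1}$, i.e.\ $\binom{2m}{m}\le 2\binom{2m}{m-1}$; since $\binom{2m}{m-1}/\binom{2m}{m}=m/(m+1)$, this is equivalent to $m/(m+1)\ge 1/2$, which holds for every $m\ge 1$. The second inequality, applied with $m$ replaced by $m$ so that $d_{2m+2}=\tfrac12\binom{2m+2}{m+1}=\binom{2m+1}{m}$, reduces to $\binom{2m}{m-1}\le \binom{2m+1}{m}$, which is immediate from Pascal's identity $\binom{2m+1}{m}=\binom{2m}{m}+\binom{2m}{m-1}$. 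Finally, one handles the base case $i=1$ directly: $x_0=x_1=1$, $x_2=2$, so $x_0+x_2=3\ge 2x_1$.

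There is essentially no obstacle here: the only thing to be careful about is the parity bookkeeping and making sure the small values $m=0,1$ are not vacuous. The whole argument fits in a short display of binomial manipulations, and no tools beyond Pascal's identity and binomial symmetry are required.
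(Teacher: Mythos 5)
Your proof is correct. The paper itself offers no argument for this fact (it is one of the two facts dismissed as ``easy to check''), so there is nothing to compare against; your computation of the differences $d_{2m}=\tfrac12\binom{2m}{m}$ and $d_{2m+1}=\binom{2m}{m-1}$ and the verification that they are non-decreasing (with the $i=1$ base case checked directly) is a complete and natural way to supply the missing verification.
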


\begin{fact}For any fixed positive integer $r$, the sequence $(y_i)_{i=0}^\infty$ where $y_i = {i \choose r}$ is convex. 
\end{fact}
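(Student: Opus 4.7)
The plan is to verify the defining second-difference inequality $y_{i+1} + y_{i-1} - 2y_i \geq 0$ for each $i \geq 1$ by a direct telescoping argument using Pascal's identity. The key ingredient is $\binom{n+1}{k} = \binom{n}{k} + \binom{n}{k-1}$, which under the standard convention $\binom{n}{k} = 0$ for $k < 0$ or $k > n$ holds for all integers $n \geq 0$ and all $k \in \mathbb{Z}$.

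Specifically, Pascal's identity rewrites each first difference as $y_{i+1} - y_i = \binom{i+1}{r} - \binom{i}{r} = \binom{i}{r-1}$ and, analogously, $y_i - y_{i-1} = \binom{i-1}{r-1}$. A third application of Pascal, this time to the difference of these two quantities, then gives
\[
y_{i+1} + y_{i-1} - 2 y_i \;=\; \binom{i}{r-1} - \binom{i-1}{r-1} \;=\; \binom{i-1}{r-2},
\]
which is manifestly non-negative. This establishes the convexity condition for every $i \geq 1$.

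There is essentially no substantive obstacle here beyond bookkeeping. The only thing to watch is the boundary behavior for small $i$ (where some of the intermediate binomial coefficients vanish, e.g.\ when $r - 2 < 0$ or $r - 2 > i - 1$), but the extended convention on $\binom{n}{k}$ makes Pascal's identity valid uniformly on the nonnegative-integer lattice, so the three-step telescoping goes through on the whole range $i \geq 1$ without a case split.
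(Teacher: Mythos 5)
Your argument is correct: the two applications of Pascal's identity give $y_{i+1}+y_{i-1}-2y_i=\binom{i}{r-1}-\binom{i-1}{r-1}=\binom{i-1}{r-2}\geq 0$, and the extended convention handles the small cases (e.g.\ $r=1$) uniformly. The paper states this fact without proof ("easy to check"), and your telescoping via Pascal is exactly the standard verification intended there, so there is nothing to reconcile.
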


By Facts 9.3--9.5, function $g_1(a) := {k-a \choose \min\{\lfloor\frac{k-a}{2}\rfloor, r\}}$ is convex for integers 
$0\leq a \leq k$,
and $g_2(a):={a \choose \min\{r-1, \lfloor a/2 \rfloor\}}$ is convex for integers $a \geq 0$. Here we use that $g_1(a) = {k-a \choose r}$ when $a \leq k - 2r$ and $g_1(a) = {k-a \choose \lfloor (k-a)/2 \rfloor}$ when $a \geq k-2r-1$. One can show similar cut-offs for $g_2(a)$.

Note that $g_2(a)$ is non-decreasing. We also show that $(n-k+a) \cdot g_2(a)$ is convex for integers 
$a\geq \max\{ 0, k-n\}$:
\begin{eqnarray*}
& & (n-k+(a-1)) \cdot g_2(a-1) + (n-k+(a+1)) \cdot g_2(a+1)  \\
&= &(n-k + a) \cdot (g_2(a-1) + g_2(a+1)) - g_2(a-1) + g_2(a+1)\\
&\geq & (n-k+a)\cdot(2g_2(a)) + 0\\
&=& 2 (n-k+a) \cdot g_2(a).
\end{eqnarray*} 
Since the sum of two convex sequences is also convex, function $g_1(a) + (n-k+a) \cdot g_2(a) = f(n,k,r,a)$ is also convex for integers 
$\max\{ 0, k-n\} \leq a \leq k$. 
This proves the claim.
\end{proof}
\end{document}